\setlist[enumerate]{label=(\textit{\roman*}\hspace{.08em}),font=\rm,itemsep=.25em}
\newcommand{\itemi}{(\textit{i}\hspace{.08em})}
\newcommand{\itemii}{(\textit{ii}\hspace{.08em})}
\newcommand{\itemiii}{(\textit{iii}\hspace{.08em})}
\newcommand{\itemiv}{(\textit{iv})}
\newcommand{\itemv}{(\textit{v})}
\newcommand{\hair}{\ifmmode\mskip1.5mu\else\kern0.08em\fi}
\DeclareRobustCommand{\SkipTocEntry}[5]{}
\tikzset{>=stealth}
\renewcommand{\to}{\mathrel{\tikz[baseline]\draw[ ->,line width=.4pt] (0ex,0.65ex) -- (3ex,0.65ex);}}
\renewcommand{\mapsto}{\mathrel{\tikz[baseline]\draw[|->,line width=.4pt] (0pt,0.65ex) -- (3ex,0.65ex);}}
\renewcommand{\rightarrow}{\mathrel{\tikz[baseline]\draw[->,line width=.4pt] (0ex,0.65ex) -- (3ex,0.65ex);}}
\renewcommand{\leftarrow}{\mathrel{\tikz[baseline]\draw[<-,line width=.4pt] (0ex,0.65ex) -- (3ex,0.65ex);}}
\renewcommand{\longrightarrow}{\mathrel{\tikz[baseline]\draw[->,line width=.4pt] (0ex,0.65ex) -- (4ex,0.65ex);}}
\renewcommand{\leftrightarrow}{\mathrel{\tikz[baseline]\draw[<->,line width=.4pt] (0ex,0.65ex) -- (3.5ex,0.65ex);}}
\newcommand{\toarg}[1]{\mathrel{\tikz[baseline]\path[->,line width=.4pt] (0ex,0.65ex) edge node[above=-.4ex, overlay, font=\scriptsize] {$#1$} (3.5ex,.65ex);}}
\newcommand{\leftarrowarg}[1]{\mathrel{\tikz[baseline]\path[<-,line width=.4pt] (0ex,0.65ex) edge node[above=-.4ex, overlay, font=\scriptsize,pos=.6] {$#1$} (3.5ex,.65ex);}}
\definecolor{stopcolour}{HTML}{000000}
\definecolor{arccolour}{HTML}{00AB57}
\renewcommand{\theta}{\vartheta}
\renewcommand{\phi}{\varphi}
\numberwithin{equation}{section}
\newtheorem{theorem}[equation]{Theorem}
\newtheorem*{theorem*}{Theorem}
\newtheorem{proposition}[equation]{Proposition}
\newtheorem{lemma}[equation]{Lemma}
\newtheorem*{lemma*}{Lemma}
\newtheorem*{corollary*}{Corollary}
\theoremstyle{definition}
\newtheorem{definition}[equation]{Definition}
\newtheorem{notation}[equation]{Notation}
\theoremstyle{remark}
\newtheorem{remark}[equation]{Remark}
\newcommand{\add}{\operatorname{add}}
\renewcommand{\b}{\mathrm{b}}
\renewcommand{\Bar}{\operatorname{Bar}}
\newcommand{\e}{\mathrm{e}}
\DeclareMathOperator{\Ext}{Ext}
\DeclareMathOperator{\End}{End}
\renewcommand{\H}{\operatorname{H}}
\DeclareMathOperator{\HH}{HH}
\DeclareMathOperator{\Hom}{Hom}
\newcommand{\id}{\mathrm{id}}
\DeclareMathOperator{\rad}{rad}
\DeclareMathOperator{\per}{per}
\DeclareMathOperator{\proj}{proj}
\newcommand{\s}{\mathrm{s}}
\DeclareMathOperator{\Spec}{Spec}
\DeclareMathOperator{\tw}{tw}
\DeclareMathOperator{\Tw}{Tw}
\DeclareMathOperator{\w}{w}
\newcommand{\I}{\textup{\textsc{i}}}
\newcommand{\II}{\textup{\textsc{i\hspace{-.9pt}i}}}
\newcommand{\III}{\textup{\textsc{i\hspace{-.9pt}i\hspace{-.9pt}i}}}
\newcommand{\IV}{\textup{\textsc{i\hspace{-.9pt}v}}}
\newcommand{\V}{\textup{\textsc{v}}}
\newcommand{\VI}{\textup{\textsc{v\hspace{-.9pt}i}}}
\DeclareFontFamily{U}{mathc}{}
\DeclareFontShape{U}{mathc}{m}{it}%
{<->s*[1.01] mathc10}{}
\DeclareMathAlphabet{\mathcal}{U}{mathc}{m}{it}
\begin{document}


\title[Deformations of Fukaya categories of surfaces]{Deformations of partially wrapped Fukaya categories of surfaces}

\author{Severin Barmeier}
\address{University of Cologne, Mathematical Institute, Weyertal 86-90, 50931 Köln, Germany}
\email{s.barmeier@gmail.com}
\author{Sibylle Schroll}
\address{University of Cologne, Mathematical Institute, Weyertal 86-90, 50931 Köln, Germany}
\email{schroll@math.uni-koeln.de}
\author{Zhengfang Wang}
\address{Nanjing University, School of Mathematics, Nanjing 210093, Jiangsu, China}
\email{zhengfangw@nju.edu.cn}

\keywords{gentle algebras, Fukaya categories of surfaces, A$_\infty$ deformations, Hochschild cohomology}

\subjclass[2010]{Primary 18G70 Secondary 16S80, 53D37, 16E35}

\begin{abstract}
We give a complete description of the A$_\infty$ deformation theory of partially wrapped Fukaya categories of graded surfaces. We show that any abstract A$_\infty$ deformation is ``geometric'', namely it is equivalent to the partially wrapped Fukaya category of an orbifold surface obtained as a partial compactification of the original surface. For certain genus $0$ surfaces, these deformations are generically Fukaya categories of compact pillowcases.

We introduce the notion of a weak dual and use unbounded twisted complexes to overcome the curvature problem that naturally arises when some of the boundary components are fully wrapped.

Our results provide a first account of the relationship between A$_\infty$ deformations of Fukaya categories and partial compactifications, as advocated in P.~Seidel's ICM 2002 address, in the presence of stop data. All of our results also hold when the original surface has finitely many order $2$ orbifold points.
\end{abstract}

\maketitle

\setcounter{tocdepth}{2}
\tableofcontents

\section{Introduction}

Deformations of derived categories are a deeply fascinating topic at the crossroads of algebra and geometry. Kontsevich's homological mirror symmetry conjecture \cite{kontsevich1} entails an equivalence between derived categories of coherent sheaves and Fukaya categories, as well as an equivalence of the deformation theories on both sides. On the B~side, deformations of categories of coherent sheaves give rise not only to deformations of the underlying variety, but also to ``noncommutative varieties'', such deformations being intimately tied to quantizations of Poisson structures \cite{kontsevich2,lowenvandenbergh1,kaledin2}. On the A~side, A$_\infty$ deformations of Fukaya categories arise from partial compactifications of the underlying symplectic manifold or from deformations of the symplectic form \cite{seidel1}. For example, results by Auroux, Katzarkov and Orlov \cite{aurouxkatzarkovorlov1,aurouxkatzarkovorlov2} relate noncommutative deformations of $\mathbb P^2$ and del Pezzo surfaces to non-exact deformations of the exact symplectic form on their mirror Landau--Ginzburg models.

In order to describe the deformation theory of derived categories, one has to overcome several hurdles, both technical and conceptual. The deformation theory of triangulated categories starts behaving well only at the enhanced level, either in a DG or A$_\infty$ enhancement or a stable $\infty$-categorical enhancement. Unless the category in question admits a tilting object, such deformations will involve not only associative deformations, but general DG or A$_\infty$ deformations. In this setting, one quickly encounters the curvature problem which presents serious obstacles to passing back to the triangulated level.

Deformation theory itself involves an often intricate obstruction calculus which works either infinitesimally or formally, that is, over a local $\Bbbk$-algebra. However, to compare the deformed category with the original category on equal footing, one ideally would hope to pass from an infinitesimal or formal deformation to a ``strict'' deformation, by evaluating the deformation parameters to nonzero constants. This requires one to construct algebraic or at least convergent deformations. Lastly, even if successful, it is generally still unclear how to interpret any such deformations geometrically. For example, it is rather difficult to associate a classical geometric object, such as a locally ringed space, to a ``noncommutative'' deformation of a derived category of coherent sheaves --- at least if one asks this geometric object to contain enough information to recover the deformed derived category. Likewise, it is a priori unclear if a general abstract A$_\infty$ deformation of a Fukaya category has an intrinsic symplectic interpretation, say as another Fukaya category.

In this paper, we are able to give a most satisfactory solution to all of these problems for one particular class of categories --- the partially wrapped Fukaya categories of graded surfaces. These Fukaya categories are derived equivalent to gentle and skew-gentle algebras studied in the representation theory of algebras. For every partially wrapped Fukaya category of a graded surface (possibly with isolated order $2$ orbifold singularities), we construct a semi-universal family of A$_\infty$ deformations parametrized by an affine space associated to Hochschild $2$-cocycles. This family can be viewed as an algebraization of the formal deformation problem, recovering the formal deformation theory in a formal neighbourhood of the central fiber of the family. A natural description of deformations of (enhanced) categories is via deformations of the endomorphism algebra of a classical generator. However, whenever the surface has ``fully wrapped'' boundary components (without stops) and winding number $1$ or $2$, such deformations are always subject to the curvature problem. We use certain categories of unbounded twisted complexes to describe partially wrapped Fukaya categories of surfaces and all of their deformations in terms of weak generators. This bypasses the curvature problem that naturally appears for the description via classical generators. Finally, we show that all categories in this semi-universal family of deformations do in fact have a natural geometric interpretation --- they are partially wrapped Fukaya categories of surfaces with a finite number of orbifold points in the sense of \cite{chokim,amiotplamondon,barmeierschrollwang}. In other words, our results show that all abstract A$_\infty$ deformations controlled by the Hochschild complex of the partially wrapped Fukaya category are geometric and correspond to partial orbifold compactifications of the underlying surface.

\subsection{Main results}

We give a more detailed summary of our main results. Let $\mathbf S = (S, \Sigma, \eta)$ be a {\it graded orbifold surface with stops}, i.e.\ an orbifold surface $S$ with grading structure $\eta$ given by a line field on $S$, together with a closed subset $\Sigma \subset \partial S$ of stop data. We denote by $\mathcal W (\mathbf S)$ the partially wrapped Fukaya category of $\mathbf S$ (see Section~\ref{section:fukayacategories}).

\addtocontents{toc}{\SkipTocEntry}
\subsection*{Hochschild cohomology}

The deformation theory of $\mathcal W (\mathbf S)$ as an A$_\infty$ category is controlled by its second Hochschild cohomology. We show that the dimension of this Hochschild cohomology group can be read directly from the surface --- as the number of certain boundary components with winding number $1$ or $2$.

\begin{theorem}[Theorem \ref{theorem:hochschild}]
\label{theorem:mainhochschild}
Let $\mathbf S = (S, \Sigma, \eta)$ be a graded orbifold surface with stops on the boundary and let $\mathcal W (\mathbf S)$ be its partially wrapped Fukaya category. Then the dimension of $\HH^2 (\mathcal W (\mathbf S), \mathcal W (\mathbf S))$ is given by the total number of the following types of boundary components:
\begin{enumerate}
\item boundary components with a single boundary stop and winding number $1$
\item boundary components with a full boundary stop and winding number $1$ or $2$
\item boundary components without stops and winding number $1$ or $2$.
\end{enumerate}
\end{theorem}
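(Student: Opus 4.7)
The plan is to reduce the statement to a combinatorial computation in a (skew-)gentle A$_\infty$-model for $\mathcal W (\mathbf S)$ and then read the dimension off the boundary data of the surface. Choose a full formal arc system on $\mathbf S$ so that the corresponding collection of Lagrangian arcs is a (weak) generator of $\mathcal W (\mathbf S)$. Its endomorphism A$_\infty$-algebra is quasi-equivalent to a (skew-)gentle A$_\infty$-algebra $A_{\mathbf S}$, arcs giving vertices and the local compositions at boundary angles producing arrows, relations, and higher A$_\infty$-products. For boundary components without stops, the naive endomorphism algebra is curved, so at this point one replaces $\mathcal W (\mathbf S)$ by the uncurved model of unbounded twisted complexes over a weak dual introduced earlier in the paper; this model computes the same Hochschild cohomology. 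Derived invariance then reduces the theorem to computing $\HH^2 (A_{\mathbf S}, A_{\mathbf S})$.

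Next, I would compute $\HH^\bullet (A_{\mathbf S}, A_{\mathbf S})$ from an explicit projective bimodule resolution of Bardzell/Cibils type adapted to the graded gentle/skew-gentle setting. In this model, cochains in degree $n$ are parametrized by pairs consisting of an $n$-fold antipath $\alpha_1 \cdots \alpha_n$ (a sequence of composable arrows with each $\alpha_i \alpha_{i+1}$ lying in the ideal of relations) together with a compatible path. A direct analysis of the Hochschild differential in the relevant bidegrees isolates, in degree $2$, a basis indexed by certain "rotation" cocycles attached to antipaths of length $1$ or $2$, plus boundary-correction classes associated to antipaths that fail to close.

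It then remains to match this combinatorial basis with the three classes of boundary components. Under the surface-to-algebra dictionary for (skew-)gentle algebras, maximal antipaths rooted at a fixed vertex correspond bijectively to passages along boundary components, and the length of the antipath, computed from the grading induced by $\eta$, equals the winding number $\w (\partial)$ of that boundary component. Whether an antipath closes into a cycle is governed by the stop data: a single boundary stop breaks a potential cycle and leaves exactly one surviving contribution when $\w (\partial) = 1$; a full boundary stop produces a double-trip configuration contributing iff $\w (\partial) \in \{1, 2\}$; and a stopless boundary component yields a genuine antipath cycle of length $\w (\partial)$, again contributing iff $\w (\partial) \in \{1, 2\}$. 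Summing one dimension per boundary component of each admissible type produces exactly the three-term formula in the theorem, and the skew-gentle case (order $2$ orbifold points) introduces extra idempotents that affect $\HH^0$ and $\HH^1$ but not $\HH^2$.

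The principal obstacle is the first step for stopless boundary components, where the curvature of the naive endomorphism algebra means that the standard gentle-algebra bimodule resolution is not directly applicable; the weak-dual + unbounded-twisted-complex construction from earlier in the paper is exactly what is required to produce an uncurved A$_\infty$-model in which Hochschild cohomology is well-defined and still governs deformations of $\mathcal W (\mathbf S)$. Once this is in place, the combinatorial computation is a finite inspection, but one must still carry out the geometric-versus-combinatorial bookkeeping of winding numbers carefully across the three boundary types to ensure that only $\w (\partial) \in \{1, 2\}$ contributes and with multiplicity one.
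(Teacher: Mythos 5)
Your overall plan is in the same family as the paper's argument (choose a generator, invoke agreement of Hochschild cohomology, compute via a Bardzell-type resolution, then match the combinatorics to boundary data), but there is a genuine misconception at the heart of the proposal. You write that ``for boundary components without stops, the naive endomorphism algebra is curved,'' and you treat the weak-dual construction as a prerequisite for making the Hochschild computation well-defined. This is not the situation. The endomorphism algebra $A = \End(\Gamma)$ of any classical generator $\Gamma$ of $\mathcal W(\mathbf S)$ -- in particular the one coming from the standard dissection -- is an honest (uncurved) DG gentle algebra: exact Lagrangians carry no curvature term, and Proposition~\ref{proposition:generator} writes $A$ down explicitly as a quiver with relations and a square-zero differential. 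Curvature enters only when one tries to \emph{deform} $A$: the $2$-cocycles $\phi^\V(e^\V_i)=q^\V_i$ (or ${q^\V_i}^2$) sitting in Hochschild degree $0$ give rise to curved deformations (Proposition~\ref{proposition:curveddeformations}), which is why the weak dual and unbounded twisted complexes are introduced in Section~\ref{section:curved}. For the purpose of computing $\HH^2(\mathcal W(\mathbf S), \mathcal W(\mathbf S)) \simeq \HH^2(A,A)$, one applies the Bardzell-type resolution of Section~\ref{subsection:bardzellresolution} directly to $A$ with no detour at all. Your route through $A^{\vee_J}$ would eventually give the right number (Proposition~\ref{proposition:Binfinityalgebra}), but it is unnecessary work and, worse, it signals a misunderstanding of \emph{why} the curvature problem exists and at which stage it matters.

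The combinatorial bookkeeping is also off in a way that would matter if you actually carried out the computation. You claim the relevant cocycles live on ``antipaths of length $1$ or $2$'' and that ``the length of the antipath equals the winding number $\w(\partial)$.'' Neither is correct. The $2$-cocycles of total degree $2$ range over Hochschild degree $0$ (the idempotent cocycles $\phi^\V$ attached to stopless boundary components, i.e.\ your third class), degree $1$ and $2$ (the loop cocycles $\phi^\IV$ on ${p^\IV_i}$ or ${p^\IV_i}^2$, with Hochschild degree $2$ when $\w=1$ and $1$ when $\w=2$ -- note this is the \emph{opposite} correlation to the one you posit), degree $1$ (the arrow cocycles $\phi^\III_j$), and arbitrarily large degree (the maximal-overlap cocycle $\varphi^\VI$ of case (c), whose length grows with genus, number of orbifold points, etc.). After Lemma~\ref{lemma:killoverlap} reduces to values on maximal overlaps, the four case families (a)--(d) in the paper's proof each account for one of the boundary types, with the distinguished component $\partial_0 S$ handled separately via the long maximal overlap. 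Your summary omits the $\HH$-degree $0$ contributions entirely and gets the correlation between antipath length and winding number backwards, so a naive implementation would under- and mis-count.
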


To prove Theorem \ref{theorem:mainhochschild}, we choose a suitable generator of $\mathcal W (\mathbf S)$ whose endomorphism algebra $A$ is what we call a {\it DG gentle algebra} (see Section~\ref{section:dggentle}). This allows us to identify $\mathcal W (\mathbf S) \simeq \tw (\add A)$ as the category of twisted complexes over the DG category $\add A$ formed by the indecomposable direct summands of $A$ (viewed as a right DG $A$-module). This identification yields an isomorphism $\HH^2 (\mathcal W (\mathbf S), \mathcal W (\mathbf S)) \simeq \HH^2 (A, A)$. We compute the latter by generalizing Bardzell's resolution for monomial algebras to a natural class of DG monomial algebras (Section~\ref{subsection:bardzellresolution}). In the case of a DG gentle algebra, this resolution can be viewed as a perturbation of the Koszul resolution of the underlying graded gentle algebra.

\addtocontents{toc}{\SkipTocEntry}
\subsection*{Deformations of Fukaya categories of surfaces}

For every graded orbifold surface $\mathbf S$ with at least one partially wrapped boundary component we construct a semi-universal family of deformations of $\mathcal W (\mathbf S)$.

\begin{theorem}[Theorem \ref{theorem:solution} and Theorem \ref{theorem:deformationwrapped}]
\label{theorem:maincompactification}
Let $\mathbf S = (S, \Sigma, \eta)$ be a graded orbifold surface with stops in the boundary and let $d = \dim \HH^2 (\mathcal W (\mathbf S), \mathcal W (\mathbf S))$. Then, up to localization, the partially wrapped Fukaya category $\mathcal W (\mathbf S)$ admits a family $\{ \mathcal W_\lambda \}_{\lambda \in \mathbb A^d}$ of A$_\infty$ deformations, with central fiber $\mathcal W_0 = \mathcal W (\mathbf S)$.

This family of deformations has the following properties:
\begin{enumerate}
\item For each $\lambda \in \mathbb A^d \smallsetminus \{ 0 \}$, there is an equivalence $\mathcal W_\lambda \simeq \mathcal W (\mathbf S_\lambda)$, where $\mathbf S_\lambda$ is a graded orbifold surface with stops obtained from $\mathbf S = \mathbf S_0$ by partial compactification.
\item For each $\lambda \in \mathbb A^d$, the category $\mathcal W_\lambda \simeq \mathcal W (\mathbf S_\lambda)$ admits a formal generator which is intrinsically formal if $\lambda$ is generic.
\item The family $\{ \mathcal W_\lambda \}_{\lambda \in \mathbb A^d}$ of pretriangulated A$_\infty$ categories is induced by a family $\{ B_\lambda \}_{\lambda \in \mathbb A^d}$ of DG algebras which is versal at all $\lambda \in \mathbb A^d$ and semi-universal at $\lambda = 0$. 
\item If $\mathbf S$ has a boundary component that does not contribute to $\HH^2 (\mathcal W (\mathbf S), \mathcal W (\mathbf S))$, then $\mathcal W_\lambda$ is generically rigid, i.e.\ $\HH^2 (\mathcal W_\lambda, \mathcal W_\lambda) = 0$ for generic $\lambda$.
\end{enumerate}
\end{theorem}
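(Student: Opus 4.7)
The plan is to use the DG gentle model from the proof of Theorem \ref{theorem:mainhochschild}. Choose a generator so that $\mathcal{W}(\mathbf{S}) \simeq \tw(\add A)$ with $A$ DG gentle, and fix an explicit basis $c_1, \ldots, c_d$ of $\HH^2(A, A)$ by Hochschild $2$-cocycles in bijection with the boundary components of the three types listed in Theorem \ref{theorem:mainhochschild}. The Bardzell-type resolution of Section \ref{subsection:bardzellresolution} provides these cocycles in explicit combinatorial form, and crucially each $c_i$ is supported on compositions living near its designated boundary component.

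The family $\{B_\lambda\}_{\lambda \in \mathbb{A}^d}$ is then constructed by deforming $A$ via $\sum_i \lambda_i c_i$. The Maurer--Cartan equation has to be checked for every $\lambda \in \mathbb{A}^d$, not only formally; I would reduce this to the pairwise commutation of the cocycles up to Gerstenhaber bracket, which follows from the combinatorial disjointness of their supports, since different boundary components give rise to disjoint sets of Hochschild generators. Unobstructedness at all $\lambda$ yields an algebraic family of DG algebras, and passing to $\tw(\add B_\lambda)$ defines the family $\mathcal{W}_\lambda$.

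The main obstacle is part (\itemi), the geometric identification $\mathcal{W}_\lambda \simeq \mathcal{W}(\mathbf{S}_\lambda)$. Given $\lambda$, define $\mathbf{S}_\lambda$ by partial orbifold compactification of $\mathbf{S}$: at each boundary component with $\lambda_i \neq 0$, glue in a disk carrying an orbifold point whose order is dictated by the type of boundary component in Theorem \ref{theorem:mainhochschild}, in the sense of \cite{chokim, amiotplamondon, barmeierschrollwang}. The image of the original generator in $\mathcal{W}(\mathbf{S}_\lambda)$ yields a collection of arcs whose endomorphism DG algebra should match $B_\lambda$ up to quasi-isomorphism, because the new arcs running through each orbifold point introduce exactly the compositions encoded by $c_i$. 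The delicate point, and precisely what the weak-dual and unbounded-twisted-complex framework announced in the introduction is designed to handle, is the curvature that appears when $\mathbf{S}$ has fully wrapped boundary components: the endomorphism algebra of a classical generator acquires a curvature term after compactification, so one must pass to a weak generator whose unbounded twisted complex model absorbs this curvature and still computes $\mathcal{W}(\mathbf{S}_\lambda)$.

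The remaining statements follow quickly from (\itemi) and Theorem \ref{theorem:mainhochschild}. For (\itemiv), applying Theorem \ref{theorem:mainhochschild} to $\mathbf{S}_\lambda$ shows that for generic $\lambda$ no boundary component of $\mathbf{S}_\lambda$ remains of any of the three contributing types, so $\HH^2(\mathcal{W}_\lambda, \mathcal{W}_\lambda) = 0$; the same calculation yields intrinsic formality of the generator in (\itemii), since all obstructions to formality live in Hochschild $2$-classes that have been killed by the compactification. For (\itemiii), semi-universality at $\lambda = 0$ is immediate because by construction the Kodaira--Spencer map $T_0 \mathbb{A}^d \to \HH^2(A, A)$ is $\lambda \mapsto \sum_i \lambda_i [c_i]$, an isomorphism; versality at arbitrary $\lambda$ then follows from the geometric identification in (\itemi) together with another application of Theorem \ref{theorem:mainhochschild} to $\mathbf{S}_\lambda$.
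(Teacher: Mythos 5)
Your plan has a structural problem that goes beyond a delicate technical point: the family you propose to construct does not exist as a family of DG algebras. You say you will deform $A$ itself via $\sum_i \lambda_i c_i$ and then pass to $\tw(\add B_\lambda)$. But as the paper's Proposition \ref{proposition:curveddeformations} shows, whenever $\mathbf S$ has a fully wrapped boundary component of winding number $1$ or $2$, the endomorphism algebra of \emph{any} classical generator --- in particular $A$ --- admits only \emph{curved} deformations along those cocycles. So the family you write down is a family of curved A$_\infty$ algebras from $\lambda=0$ onward; there is no twisted-complex category $\tw$ attached to it, no homotopy category, and nothing to compare with $\mathcal W(\mathbf S_\lambda)$. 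You acknowledge the curvature issue at the end of part {\itemi}, but you present it as something to fix while verifying the geometric identification. In fact it obstructs the very first step. The paper's resolution is to construct the family from the \emph{weak dual} $B = A^{\vee_J}$, which is proper and admits no curved deformations at all; the category $\mathcal W_\lambda$ is then defined as $\Tw^{-_J,\b}(\add B_\lambda)$, a category of specific unbounded twisted complexes, not $\tw$ of anything. This is not a patch on top of your construction, it is a different starting point.

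A second gap: you do not address what happens when $A$ is not locally proper, i.e.\ when $\mathbf S$ has fully wrapped boundary components of winding number $0$. In that case $A^{\vee_J}$ does not recover $\mathcal W(\mathbf S)$ on the nose (only up to completion), and the paper handles this via a localization step (Proposition \ref{proposition:localization} and Theorem \ref{theorem:localization}): one first passes to $\bar{\mathbf S}$ by adding single stops to those boundary components, deforms $\mathcal W(\bar{\mathbf S})$, and then localizes. Your argument would need this reduction; as written it silently assumes local properness. On the positive side: your observation that unobstructedness comes from the combinatorial disjointness of the cocycle supports is a reasonable gloss on why the explicit formulas in the paper's Theorem \ref{theorem:solution} satisfy the A$_\infty$ equations on the nose, and your geometric description of $\mathbf S_\lambda$ (partial orbifold compactification, order dictated by the boundary type) matches the paper's part {\itemii}. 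But the identification $\End(\Gamma_\lambda) \simeq B_\lambda$ requires the concrete quasi-isomorphisms of Observations 1--3 in the paper's proof; asserting that the arcs through orbifold points ``introduce exactly the compositions encoded by $c_i$'' is where the real work lies, and it has to be done for the weak dual $B_\lambda$, not for a curved deformation of $A$.
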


To prove this result in the stated generality, we use the observation that $\mathcal W (\mathbf S)$ for general $\mathbf S$ can be obtained by localization from $\mathcal W (\bar{\mathbf S})$, where $\bar{\mathbf S}$ is the graded orbifold surface obtained from $\mathbf S$ by adding a single boundary stop to every boundary component of winding number $0$ (in case such boundary components exist). This ensures that $\mathcal W (\bar{\mathbf S})$ is {\it locally proper}, i.e.\ the space of morphisms in the homotopy category is a graded vector space which is finite-dimensional in every degree. We show that any deformation of $\mathcal W (\mathbf S)$ is induced by a deformation of $\mathcal W (\bar{\mathbf S})$ (Theorem \ref{theorem:localization}).

\addtocontents{toc}{\SkipTocEntry}
\subsection*{Weak generators, weak duality, and a solution to the curvature problem}

The identification $\mathcal W (\mathbf S) \simeq \tw (\add A)$ shows that the deformation theory of $\mathcal W (\mathbf S)$ is equivalent to the deformation theory of $A$. However, as soon as $\mathbf S$ has boundary components of winding number $1$ or $2$ without stops, i.e.\ when these boundary components are fully wrapped, then any choice of classical generator gives rise to {\it curved} deformations of $\mathcal W (\mathbf S)$ (Proposition \ref{proposition:curveddeformations}). In order to construct the semi-universal family of Theorem \ref{theorem:maincompactification} without any restrictions on $\mathbf S$ (such as nonexistence of such boundary components), we describe deformations of $\mathcal W (\mathbf S)$ via deformations of a category of certain unbounded twisted complexes over an algebra $B$, where $B$ is the endomorphism algebra of a weak generator of the derived Fukaya category $\mathrm D \mathcal W (\bar{\mathbf S})$. This allows us to circumvent the curvature problem of $\mathcal W (\mathbf S)$ in the following sense.

\begin{theorem}[Proposition \ref{proposition:localization}, Theorem \ref{theorem:localization} and Theorem \ref{theorem:solution}]
\label{theorem:maindual}
Let $\mathbf S$ be any graded orbifold surface with stops in the boundary. Then deformations of $\mathcal W (\mathbf S)$ can be uncurved in the following sense.
\begin{enumerate}
\item We have an equivalence $\mathcal W (\mathbf S) \simeq \mathcal W (\bar{\mathbf S}) [W^{-1}]$, induced by a localization functor given by ``stop removal'', where $\bar{\mathbf S}$ is the surface obtained from $\mathbf S$ by adding a single stop to each fully wrapped boundary component of winding number $0$.
\item There exists a weak generator of $\mathrm D \mathcal W (\bar{\mathbf S})$ with finite-dimensional DG endomorphism algebra $B$ such that $\mathcal W (\bar{\mathbf S})$ is equivalent to the category $\Tw^{-_J, \b} (\add B)$ formed by certain unbounded twisted complexes over the DG category $\add B$.
\item $B$ and $\Tw^{-_J, \b} (\add B) \simeq \mathcal W (\bar{\mathbf S})$ have equivalent deformation theories, with $B$ having no curved deformations.
\item Any deformation of $\mathcal W (\mathbf S)$ is induced by a deformation of $B$.
\end{enumerate}
\end{theorem}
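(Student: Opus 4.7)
The four parts will be proved sequentially, with (iv) a formal consequence of (i) together with (ii)--(iii) and Theorem \ref{theorem:localization}. For part (i), the plan is to invoke the surface model of stop-removal localization developed earlier in the paper: the localizing set $W$ is generated by the continuation/stop-removal morphisms associated to the finitely many stops added in passing from $\mathbf S$ to $\bar{\mathbf S}$, and formally inverting these undoes the stop addition to give $\mathcal W (\bar{\mathbf S}) [W^{-1}] \simeq \mathcal W (\mathbf S)$. Since $\bar{\mathbf S}$ and $\mathbf S$ coincide away from the added stops, this should be a direct specialization of a general stop-removal result, essentially checking that arcs approaching each added stop are sent to isomorphisms after inverting $W$.

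\textbf{Weak generator via unbounded twisted complexes.} For part (ii), the strategy is to choose on $\bar{\mathbf S}$ a finite arc collection $\{L_i\}_{i \in I}$ such that pairs $L_i, L_j$ have degree-wise finite and overall bounded intersection, ensuring that $B = \bigoplus_{i,j} \Hom(L_i, L_j)$ is finite-dimensional as a graded vector space. Although $\bar{\mathbf S}$ is locally proper, it may still carry fully wrapped boundary components of winding number $1$ or $2$, and arcs that wind infinitely many times around such components are not classically generated by $\{L_i\}$; instead they are realized as one-sidedly unbounded twisted complexes, which motivates the boundedness condition $-_J$, where $J \subset I$ indexes arcs pointing into unstopped winding boundary components and $\b$ denotes boundedness in the remaining directions. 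I would then verify that every arc, and every morphism in $\mathrm D \mathcal W (\bar{\mathbf S})$, can be realized by a convergent twisted complex in $\Tw^{-_J, \b} (\add B)$, thereby proving the claimed equivalence.

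\textbf{Equivalence of deformation theories and uncurving.} Part (iii) is the heart of the argument and is where I expect the main obstacle to lie. The first sub-claim, that the embedding $\add B \hookrightarrow \Tw^{-_J, \b} (\add B)$ induces an isomorphism on Hochschild cohomology, should follow from a Morita-style argument adapted to this one-sidedly unbounded setting, using the weak-dual pairings introduced earlier in the paper to verify the required cohomological criterion and to control convergence. The second sub-claim, that $B$ admits no curved deformations, is more subtle: one must show that every Maurer--Cartan element of degree $2$ in the curved Hochschild complex of $B$ is gauge-equivalent to one with vanishing curvature $m_0$. The plan is to exploit the strict unitality of $B$ together with its finite-dimensional graded structure (inherited from the finite arc system) to argue that any candidate $m_0 \in B^2$ can be absorbed via a degree-$1$ gauge transformation, effectively showing that the relevant $m_0$-cokernel in the curved deformation complex vanishes.

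\textbf{Descent to $\mathbf S$.} Part (iv) is then a formal consequence: by (i) and Theorem \ref{theorem:localization}, any A$_\infty$ deformation of $\mathcal W (\mathbf S)$ is induced by a deformation of $\mathcal W (\bar{\mathbf S})$, and by (ii)--(iii) such deformations correspond bijectively to (uncurved) A$_\infty$ deformations of the finite-dimensional DG algebra $B$. The hardest step throughout is the no-curved-deformations claim of (iii), since this is precisely the step that genuinely solves the curvature problem in a way that the classical-generator description cannot.
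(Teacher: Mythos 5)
Your plan for parts (i), (ii) and (iv) is roughly in line with the paper. Part (i) is indeed a stop-removal localization proved by a Morita equivalence of the algebras associated to different dissections of $\mathbf S$. Part (ii) takes $B$ to be the \emph{weak dual} $A^{\vee_J}$, obtained by replacing arcs into fully wrapped boundary components of nonzero winding number by arcs going around those components: $B$ is finite-dimensional because it is \emph{proper}, which is a structural consequence of this replacement (Theorem~\ref{theorem:weakdual}), not a choice of arc system with controlled intersections as your phrasing suggests. Part (iv) is indeed a formal corollary once (i)--(iii) and Theorem~\ref{theorem:localization} are in place. For the first half of (iii) the paper does not use a weak-dual pairing argument but rather sandwiches the Hochschild complexes of $\add B$ and $\Tw^{-_J,\b}(\add B)$ between those of the full module category $\mathrm{DGMod}(A^{\vee_J})$ and invokes the Lowen--Van den Bergh quasi-isomorphism for fully faithful embeddings admitting a fully faithful localization; this is quite different from your ``Morita-style argument with weak-dual pairings''.

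The genuine gap is in your approach to the no-curved-deformations claim in (iii). You propose to start with an arbitrary Maurer--Cartan element in the \emph{curved} Hochschild complex and show that its curvature component $m_0$ can be gauged away using unitality and finite-dimensionality. That is not the mechanism at work, and in general it is false that finite-dimensionality alone forces $m_0$ to be gaugeable to zero. What the paper actually establishes (in the proof of Theorem~\ref{theorem:hochschild}) is that, for a \emph{proper} DG gentle algebra such as $B=A^{\vee_J}$, the Hochschild $2$-cocycles coming from a Bardzell-type resolution only involve terms deforming $\mu^1$ and $\mu^2$; the $\mu^0$-type cocycles $\phi(e_i)=q$ appear precisely at boundary components of winding number $1$ or $2$ \emph{without} stops, which are exactly the components removed when passing to $\bar{\mathbf S}$ and then to the weak dual. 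So the absence of curvature is a computed feature of the cohomology of $B$, not the result of a gauge-fixing argument. Without that specific Hochschild computation your plan cannot deliver the uncurving, which as you rightly observe is the heart of the theorem.
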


The algebra $B$ is itself a DG gentle algebra and $A$ and $B$ can be chosen in a way that they satisfy a type of duality that we call {\it weak duality}: For certain choices of classical and weak generator with endomorphism algebras $A$ and $B$ we have $B = A^{\vee_J}$ and we call $B$ the {\it weak dual} of $A$. Here $J$ is a subset of indecomposable objects of $\add A$ or $\add B$, or equivalently a subset of the vertices of the quivers for $A$ and $B$. It is a duality since we have $B^{\vee_J} = (A^{\vee_J})^{\vee_J} \simeq A$ (see Section~\ref{subsection:weakduality}). Weak duality generalizes Koszul duality for graded gentle or skew-gentle algebras in the following sense. If $A$ is homologically smooth, then the algebra $B = A^{\vee_J}$ is derived equivalent to the Koszul dual $A^!$ and $\H^0 \Tw^{-_J, \b} (\add B) \simeq \mathrm D^\b (B) \simeq \mathrm D^\b (A^!) \simeq \per (A)$. When $A$ is proper, then $A$ has no curved deformations and we may take $J = \varnothing$ in which case $A = B = A^{\vee_J}$. In other words, weak duality may be used to turn an arbitrary DG gentle algebra $A$ into a proper DG gentle algebra $A^{\vee_J}$, in a way that the perfect derived category of $A$ can still be recovered from $A^{\vee_J}$, as long as $A$ is locally proper (equivalently, $A$ is finite-dimensional in degree $0$).

\addtocontents{toc}{\SkipTocEntry}
\subsection*{Fukaya categories of pillowcases}

In Theorem \ref{theorem:maincompactification}, the surface $\mathbf S_\lambda$ is obtained from $\mathbf S$ by compactifying certain boundary components of winding number $1$ or $2$ to orbifold points or to smooth points, respectively. In particular, it is possible that all boundary components contribute to $\HH^2$ and are all compactified for generic values of the deformation parameters. In this case, the generic deformation is the Fukaya category of a {\it pillowcase}, a sphere with four orbifold points of order $2$.

\begin{theorem}[Theorem \ref{theorem:pillowcase}]
\label{theorem:mainpillowcase}
Let $\mathbf S$ be a graded orbifold surface such that all boundary components contribute to $\HH^2 (\mathcal W (\mathbf S), \mathcal W (\mathbf S))$. Then the generic deformations of $\mathcal W (\mathbf S)$ are Fukaya categories of compact pillowcases.
\end{theorem}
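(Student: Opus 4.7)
The plan is to combine the semi-universal family of deformations from Theorem~\ref{theorem:maincompactification} with a topological classification of the closed graded orbifold surface obtained by generic partial compactification.

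First, by Theorem~\ref{theorem:maincompactification}\itemi, for generic $\lambda \in \mathbb A^d$ we have $\mathcal W_\lambda \simeq \mathcal W(\mathbf S_\lambda)$, where $\mathbf S_\lambda$ is obtained from $\mathbf S$ by compactifying every boundary component whose associated deformation parameter is activated. Under the hypothesis that every boundary component of $\mathbf S$ contributes to $\HH^2(\mathcal W(\mathbf S), \mathcal W(\mathbf S))$, for generic $\lambda$ all boundary components are simultaneously compactified, so $\mathbf S_\lambda$ is a closed graded orbifold surface. A direct inspection of the compactification procedure shows that a boundary component of winding number $1$ is filled in with an order-$2$ orbifold point, while one of winding number $2$ is filled in with a smooth point.

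Next I would exploit the fact that the line field $\eta$ extends to a line field $\eta_\lambda$ on $\mathbf S_\lambda$. Since a line field on a closed oriented orbifold surface exists if and only if the orbifold Euler characteristic vanishes, letting $g$ denote the genus and $n$ the total number of order-$2$ orbifold points of $\mathbf S_\lambda$ (i.e.\ the original orbifold points of $\mathbf S$ together with the compactified winding-$1$ boundaries), one obtains the constraint $2 - 2g - n/2 = 0$. The only nonnegative integer solutions are $(g,n) = (0,4)$ and $(g,n) = (1,0)$; under the genericity hypothesis the case of interest is $(g,n) = (0,4)$, which is by definition a compact pillowcase. Hence $\mathcal W_\lambda \simeq \mathcal W(\mathbf S_\lambda)$ is the Fukaya category of a compact pillowcase.

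The key technical ingredient is the precise dictionary encoded in the compactification map produced by Theorem~\ref{theorem:maincompactification}: one must verify that each winding-$1$ boundary is filled with an order-$2$ orbifold point and each winding-$2$ boundary with a smooth point, and that the line field extends across each added point (necessarily with a half-integer index at the new orbifold points). Once this dictionary is in place, the orbifold Euler characteristic constraint pins down the topological type completely and the conclusion is immediate. The main obstacle is therefore the geometric bookkeeping of the compactification, not any category-theoretic step.
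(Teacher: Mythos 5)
Your overall strategy matches the paper's closely: both arguments reduce the claim to a topological constraint on the compactified surface $\mathbf S_\lambda$ coming from the existence of a global line field, and both use the compactification dictionary (winding number $1 \mapsto$ order-$2$ orbifold point, winding number $2 \mapsto$ smooth point). The paper applies the Poincaré--Hopf theorem directly to the surface with boundary $\mathbf S$, whereas you apply the orbifold Euler characteristic constraint to the closed surface $\mathbf S_\lambda$; these are equivalent computations and both lead to the equation $K + L + M_1 + N_1 = 4 - 4g$.

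However, there is a genuine gap in the final step. You correctly identify that the line field constraint on $\mathbf S_\lambda$ has two solutions $(g, n) = (0, 4)$ and $(g, n) = (1, 0)$, but you then dismiss the torus case ``under the genericity hypothesis.'' This is not correct: once all boundary components are compactified (which already happens for generic $\lambda$), the genus and orbifold point count of $\mathbf S_\lambda$ are fixed topological invariants of $\mathbf S$ and do not depend on $\lambda$ at all, so genericity in $\lambda$ cannot distinguish between the sphere and the torus. The correct reason the torus case $(1, 0)$ is excluded is the paper's standing assumption (Section \ref{section:fukayacategories}) that $\Sigma$ contains at least one boundary stop which is not a full boundary stop; under the hypothesis that every boundary component contributes to $\HH^2$, this boundary component must have a single boundary stop and winding number $1$ (Theorem \ref{theorem:hochschild}\itemi), so it compactifies to an order-$2$ orbifold point, forcing $n \geq 1$. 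In the paper's notation, this is precisely the inequality $L \geq 1$, which together with $K + L + M_1 + N_1 = 4 - 4g$ and nonnegativity forces $g = 0$. Without invoking this standing assumption, your argument leaves the torus as a possibility.
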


The case of Theorem \ref{theorem:mainpillowcase} is the only case, where we have a true $1$-parameter moduli. In this case, the family contains no rigid categories, as the Fukaya category of a pillowcase can always be further deformed to that of a ``nearby'' pillowcase, such deformations corresponding to deformations of the (non-exact) symplectic form.

\section{Partially wrapped Fukaya categories of surfaces}
\label{section:fukayacategories}

Following \cite{kontsevich1,seidel2} we view Fukaya categories as pretriangulated A$_\infty$ categories of twisted complexes over a certain A$_\infty$ algebra. In the case of graded surfaces with stops, this A$_\infty$ algebra can be taken to be the endomorphism algebra of a collection of curves (Lagrangian submanifolds) equipped with higher structure capturing the partially wrapped Floer theory of these curves in the sense of \cite{auroux1,auroux2,sylvan,ganatrapardonshende1}. Morphisms are then given by internal intersections or boundary flows (Reeb chords) and the differential, multiplication and higher multiplications count pseudo-holomorphic disks. 

We work with the partially wrapped Fukaya categories of surfaces studied in \cite{haidenkatzarkovkontsevich,lekilipolishchuk2} and their generalization to orbifold surfaces given from several different (but equivalent) perspectives in \cite{chokim,amiotplamondon,barmeierschrollwang}. More precisely, a {\it graded (orbifold) surface with stops in the boundary} $\mathbf S = (S, \Sigma, \eta)$ consists of
\begin{itemize}
\item a smooth compact real oriented (orbifold) surface $S$ with nonempty smooth boundary $\partial S$
\item a nonempty closed subset $\Sigma \subsetneq \partial S$ of {\it stop data}
\item a {\it grading structure} given by a smooth line field $\eta$, i.e.\ a global section of the projectivized tangent (orbi)bundle.
\end{itemize}
We call a contractible connected component of $\Sigma$ a {\it boundary stop} and a connected component homeomorphic to $\mathrm S^1$ a {\it full boundary stop} and we refer to both collectively as {\it stops}. (See \cite{haidenkatzarkovkontsevich,lekilipolishchuk2,barmeierschrollwang} for further details.)

Throughout we assume that $\Sigma$ contains at least one boundary stop which is not a full boundary stop. (In other words, we assume at least one of the boundary components to be partially wrapped.)

Throughout we work with $\Bbbk$-linear categories and $\Bbbk$-algebras, where $\Bbbk$ is a field of characteristic $0$. We take the partially wrapped Fukaya category $\mathcal W (\mathbf S)$ of $\mathbf S$ to be the ($\Bbbk$-linear) pretriangulated A$_\infty$ category
\[
\mathcal W (\mathbf S) = \tw (\add \End (\Gamma))^\natural
\]
where $\Gamma = \bigoplus_i \gamma_i$ is the direct sum of a finite collection of arcs in the surface. Here, $\End (\Gamma)$ denotes its A$_\infty$ endomorphism algebra and $\tw (\add \End (\Gamma))$ denotes the category of twisted complexes \cite{bondalkapranov,kontsevich1,seidel2} over the A$_\infty$ category formed by the indecomposable direct summands of $\End (\Gamma)$ (viewed as right A$_\infty$-module over itself), and $-^\natural$ denotes idempotent completion. Concrete descriptions of $\Gamma$ can be found in \cite{haidenkatzarkovkontsevich} in the smooth case and in \cite{chokim,amiotplamondon,barmeierschrollwang,kim} in the orbifold case. In the case of smooth surfaces, the curves $\gamma_i$ should have no intersections in the interior of the surface and dissect the surface into ``polygons'', each containing at most one stop given either by a boundary stop or by a puncture, the latter being modelled by a boundary component with a full boundary stop. In the case of orbifold surfaces, there is a more general notion of dissection, where curves may also intersect at orbifold points. We work with the dissections considered in \cite[Section~5]{barmeierschrollwang}. An equivalent but slightly different perspective can be found in \cite[Section~3]{chokim}.

\begin{remark}[Proper and/or homologically smooth]
\label{remark:propersmooth}
The stop data $\Sigma$ of the graded surface $\mathbf S = (S, \Sigma, \eta)$ determines whether the partially wrapped Fukaya category is homologically smooth and/or proper: one has that $\mathcal W (\mathbf S)$ is {\it proper} if and only if every boundary component contains either a full boundary stop or at least one boundary stop \cite[Proposition 3.5]{haidenkatzarkovkontsevich} and {\it homologically smooth} if and only if $\Sigma$ contains no full boundary stops (cf.\ \cite{kalck}). (See Sections \ref{subsection:weakduality} and \ref{subsection:localization} for a similar characterization for when $\mathcal W (\mathbf S)$ is {\it locally proper}.)
\end{remark}

\begin{remark}[From pretriangulated to triangulated categories]
\label{remark:triangulated}
The category $\mathcal W (\mathbf S)$ can be viewed as a DG or A$_\infty$ enhancement of the (idempotent complete and triangulated) {\it derived Fukaya category} $\mathrm D \mathcal W (\mathbf S) = \H^0 \tw (\End (\Gamma))^\natural$, where the homotopy category $\H^0 \mathcal C$ of a DG category $\mathcal C$ has the same objects as $\mathcal C$ and morphisms are given by the zeroth cohomology groups of the morphism complexes in $\mathcal C$.

Since Hochschild cohomology and deformation theory are better behaved at the enhanced level (see e.g.\ \cite{keller21}), we will almost exclusively work at the DG level. We use unbounded twisted complexes to circumvent the need to consider curved deformations (see Section~\ref{section:curved}). This implies that we are able to pass to the triangulated level at any point.
\end{remark}

A central result about the partially wrapped Fukaya category $\mathcal W (\mathbf S)$ is the following.

\begin{theorem}
\label{theorem:cosheaf}
Let $\mathbf S$ be a graded orbifold surface with stops in the boundary. Then $\mathcal W (\mathbf S)$ is the category of global sections of a cosheaf of pretriangulated A$_\infty$ categories on a Lagrangian core of the surface.
\end{theorem}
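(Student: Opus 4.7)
The plan is to deduce the statement from the general local-to-global principle for partially wrapped Fukaya categories of Ganatra, Pardon and Shende \cite{ganatrapardonshende1}, applied to the Weinstein sectorial covering of $\mathbf S$ induced by the arc system $\Gamma = \bigoplus_i \gamma_i$ that defines $\mathcal W (\mathbf S)$. First, I would extract from this arc system a concrete model of the Lagrangian core $L$ as the $1$-dimensional CW complex dual to the dissection: each polygon $P_a$ of the dissection of $\mathbf S$ by $\Gamma$ gives a vertex, each arc $\gamma_i$ gives an edge joining the two vertices of the polygons that it separates, and each orbifold point gives an additional marked vertex. The deformation retraction of $\mathbf S \smallsetminus \Sigma$ onto $L$ realises the polygons as Weinstein sectors, with the $\gamma_i$ playing the role of cocore Lagrangians.

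Next, I would identify the local pieces of the cosheaf. Because the dissection is chosen so that each polygon contains at most one stop, puncture or orbifold point, its local partially wrapped Fukaya category is modelled on the partially wrapped Fukaya category of one of these standard disks. In the smooth case these models are equivalent to derived categories of linearly oriented type $A_n$ path algebras \cite{haidenkatzarkovkontsevich,lekilipolishchuk2}, and in the orbifold case the corresponding A$_\infty$ algebras are described in \cite{barmeierschrollwang,chokim,amiotplamondon,kim}. Along each arc $\gamma_i$, the "edge" category is the endomorphism category of the single arc, which is $\Bbbk$ placed in degree $0$, so the gluing datum is prescribed by the inclusions of $\gamma_i$ into the two adjacent polygons.

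Assembling this data then gives a cosheaf on $L$ of pretriangulated A$_\infty$ categories, and I would show that its global sections recover $\mathcal W (\mathbf S) = \tw (\add \End (\Gamma))^\natural$. Concretely, the polygonal dissection yields a presentation of the A$_\infty$ algebra $\End (\Gamma)$ as the (A$_\infty$) colimit of its polygon subalgebras along the $\Bbbk$-subalgebras generated by the boundary arcs; passing to twisted complexes and idempotent completion is compatible with this colimit, so the global sections of the cosheaf agree with $\mathcal W (\mathbf S)$. When a direct verification is preferred, this is precisely the descent statement of \cite{ganatrapardonshende1} specialised to the surface case.

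The main obstacle will be the orbifold case, where the Weinstein sectorial framework of \cite{ganatrapardonshende1} is not directly available. Here one must replace the local disk model by the orbifold local model coming from \cite{barmeierschrollwang,chokim,amiotplamondon}, and check that the local A$_\infty$ structures are compatible with the cosheaf axiom at the orbifold vertices of $L$. A secondary technical point is the presence of fully wrapped boundary components: at such boundaries the local model is not proper, but this only affects the global picture (and is handled later via the unbounded twisted complexes discussed in Remark \ref{remark:triangulated}), not the local cosheaf description on $L$.
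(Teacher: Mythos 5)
The paper does not actually supply a proof of Theorem~\ref{theorem:cosheaf}: immediately after the statement it records that the result is established in \cite{dyckerhoffkapranov,haidenkatzarkovkontsevich} for smooth surfaces and in \cite{barmeierschrollwang} for orbifold surfaces, with \cite{ganatrapardonshende1,ganatrapardonshende2} cited for higher-dimensional analogues. Your sketch reconstructs essentially the dissection-based argument of those references --- Lagrangian core as the ribbon graph dual to a polygonal dissection, vertex categories modelled on Fukaya categories of disks with at most one stop/puncture/orbifold point, edge categories over a single arc, and the assembly via colimit of A$_\infty$ algebras with twisted complexes and idempotent completion --- so the fallback direct verification matches the strategy the paper relies on. Two points worth flagging. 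First, your primary plan of deducing the theorem directly from the sectorial descent of \cite{ganatrapardonshende1} does not literally apply here, for the reason you already identify: that framework is formulated for smooth Weinstein sectors and does not cover orbifold surfaces, so the orbifold vertices must be treated via the local models of \cite{barmeierschrollwang,chokim,amiotplamondon} as in the cited proof. Second, you should be slightly more careful in asserting that every edge category is $\Bbbk$ in degree~$0$: this is the generic case for an arc separating two distinct polygons, but a standard dissection of a surface with fully stopped or unstopped boundary components can produce arcs around a boundary component whose endomorphism algebra is a loop algebra $\Bbbk[p]/(p^2)$ or $\Bbbk[q]$ (cf.\ Lemma~\ref{lemma:propersmooth}); this does not break the cosheaf argument, but the gluing data along such edges needs the actual local endomorphism algebra rather than $\Bbbk$.
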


A cosheaf-theoretical description of wrapped Fukaya categories was conjectured by Kontsevich \cite{kontsevich3}, originally for the fully wrapped Fukaya categories of Stein manifolds. Theorem \ref{theorem:cosheaf} establishes this conjecture for surfaces and is proved in \cite{dyckerhoffkapranov,haidenkatzarkovkontsevich} for smooth surfaces (see also \cite{nadlerzaslow,sibillatreumannzaslow,dyckerhoff,pascaleffsibilla} for related results) and for orbifold surfaces in \cite{barmeierschrollwang}. See also \cite{ganatrapardonshende1,ganatrapardonshende2} for analogous results in higher dimensions.

For a graded (orbifold) surface $\mathbf S$ with stops, the Lagrangian core appearing in Theorem \ref{theorem:cosheaf} is a ribbon graph $\mathrm G$ endowed with extra structure encoding stops, grading structure and orbifold points. This ribbon graph is dual to a dissection $\Delta$ consisting of arcs in the surface.

\subsection{Generators}
\label{subsection:generators}

\subsubsection{Formal generators}

A striking result of \cite{haidenkatzarkovkontsevich} is the observation that when $\mathbf S$ is a smooth surface with stops, its partially wrapped Fukaya category $\mathcal W (\mathbf S)$ admits formal (classical) generators $\Gamma$ whose endomorphism algebras $A = \End (\Gamma)$ are graded {\it gentle algebras}. At the triangulated level this yields an equivalence
\begin{equation}
\label{eq:per}
\mathrm D \mathcal W (\mathbf S) \simeq \per (A)
\end{equation}
between the derived Fukaya category of $\mathbf S$ (cf.\ Remark \ref{remark:triangulated}) and the perfect derived category of $A$. Conversely, for any graded gentle algebra $A$, there exists a graded smooth surface $\mathbf S = (S, \Sigma, \eta)$ such that \eqref{eq:per} holds \cite{lekilipolishchuk2}. Here the presentation of $A$ by a quiver with relations determines the underlying surface $S$ as well as the set $\Sigma$ of stops via a ribbon graph that can be constructed directly from the quiver \cite{schroll}. For any grading on the quiver, there exists a line field $\eta$ (unique up to homotopy) inducing this grading \cite{lekilipolishchuk2}.

Gentle algebras were originally introduced in the representation theory of finite-dimensional algebras for their tractable representation theory \cite{assemhappel,assemskowronski}. They are quadratic monomial associative algebras whose presentation via quivers with relations has at each vertex at most two incoming and two outgoing arrows
\[
\begin{tikzpicture}[baseline=-2.75pt,x=1.5em,y=1.5em]
\node[shape=circle, scale=.7] (M) at (0,0) {};
\node[shape=circle, scale=.7] (UL) at (-2,1) {};
\node[shape=circle, scale=.7] (UR) at (2,1) {};
\node[shape=circle, scale=.7] (LL) at (-2,-1) {};
\node[shape=circle, scale=.7] (LR) at (2,-1) {};
\draw[line width=1pt, fill=black] (0,0) circle(0.2ex);
\path[->, line width=.5pt, font=\scriptsize] (UL) edge node[above,pos=.6] {$p_1$} (M);
\path[->, line width=.5pt, font=\scriptsize] (M) edge node[above,pos=.4] {$p_2$} (UR);
\path[<-, line width=.5pt, font=\scriptsize] (LL) edge node[below=-.1ex,pos=.6] {$q_2$} (M);
\path[<-, line width=.5pt, font=\scriptsize] (M) edge node[below,pos=.4] {$q_1$} (LR);
\draw[dash pattern=on 0pt off 1.2pt, line width=.6pt, line cap=round] ($(0,0)+(145:.9em)$) arc[start angle=145, end angle=30, radius=.9em];
\draw[dash pattern=on 0pt off 1.2pt, line width=.6pt, line cap=round] ($(0,0)+(-145:.9em)$) arc[start angle=-145, end angle=-30, radius=.9em];
\end{tikzpicture}
\]
where the dotted lines indicate that the compositions $p_2 p_1 = 0$ and $q_2 q_1 = 0$, the other two compositions being nonzero. In the surface, this configuration of a single object in the Fukaya category and two incoming and two outgoing morphisms is given by a curve connecting two (possibly equal) boundary components
\[
\begin{tikzpicture}[decoration={markings,mark=at position 0.63 with {\arrow[black]{Stealth[length=4.2pt]}}}]
\draw[line width=0pt,postaction={decorate}] (-2.15em,1.74em) arc[start angle=10, end angle=0, radius=10em];
\draw[line width=0pt,postaction={decorate}] (-2em,0) arc[start angle=0, end angle=-10, radius=10em];
\draw[line width=0pt,postaction={decorate}] (2.15em,-1.74em) arc[start angle=190, end angle=180, radius=10em];
\draw[line width=0pt,postaction={decorate}] (2em,0) arc[start angle=180, end angle=170, radius=10em];
\draw[line width=.5pt, line cap=round] (-2em,0) arc[start angle=0, end angle=10, radius=10em];
\draw[line width=.5pt, line cap=round] (-2em,0) arc[start angle=0, end angle=-10, radius=10em];
\draw[line width=.5pt, line cap=round] (2em,0) arc[start angle=180, end angle=170, radius=10em];
\draw[line width=.5pt, line cap=round] (2em,0) arc[start angle=180, end angle=190, radius=10em];
\draw[line width=.75pt, line cap=round, color=arccolour] (-2em,0) -- (2em,0);
\node[left,font=\scriptsize] at (-2em,.85em) {$p_1$};
\node[left,font=\scriptsize] at (-2em,-.85em) {$q_2$};
\node[right,font=\scriptsize] at (2em,.85em) {$p_2$};
\node[right,font=\scriptsize] at (2em,-.85em) {$q_1$};
\end{tikzpicture}
\]
where $p_1, p_2, q_1, q_2$ correspond to Reeb chords along the boundary to and from other curves in the surface. If there is a stop on the boundary, the corresponding arrow in the quiver does not exist (and vice versa), for example
\[
\begin{tikzpicture}[x=1.5em,y=1.5em,decoration={markings,mark=at position 0.63 with {\arrow[black]{Stealth[length=4.2pt]}}}]
\begin{scope}
\draw[line width=0pt,postaction={decorate}] (-2.15em,1.74em) arc[start angle=10, end angle=0, radius=10em];
\draw[line width=0pt,postaction={decorate}] (2.15em,-1.74em) arc[start angle=190, end angle=180, radius=10em];
\draw[fill=black] (-2.04em,-.85em) circle(.15em);
\draw[line width=0pt,postaction={decorate}] (2em,0) arc[start angle=180, end angle=170, radius=10em];
\draw[line width=.5pt, line cap=round] (-2em,0) arc[start angle=0, end angle=10, radius=10em];
\draw[line width=.5pt, line cap=round] (-2em,0) arc[start angle=0, end angle=-10, radius=10em];
\draw[line width=.5pt, line cap=round] (2em,0) arc[start angle=180, end angle=170, radius=10em];
\draw[line width=.5pt, line cap=round] (2em,0) arc[start angle=180, end angle=190, radius=10em];
\draw[line width=.75pt, line cap=round, color=arccolour] (-2em,0) -- (2em,0);
\node[left,font=\scriptsize] at (-2em,.85em) {$p_1$};
\node[right,font=\scriptsize] at (2em,.85em) {$p_2$};
\node[right,font=\scriptsize] at (2em,-.85em) {$q_1$};
\node at (5em,0) {$\leftrightarrow$};
\end{scope}
\begin{scope}[xshift=10em]
\node[shape=circle, scale=.7] (M) at (0,0) {};
\node[shape=circle, scale=.7] (UL) at (-2,1) {};
\node[shape=circle, scale=.7] (UR) at (2,1) {};
\node[shape=circle, scale=.7] (LR) at (2,-1) {};
\draw[line width=1pt, fill=black] (0,0) circle(0.2ex);
\path[->, line width=.5pt, font=\scriptsize] (UL) edge node[above,pos=.6] {$p_1$} (M);
\path[->, line width=.5pt, font=\scriptsize] (M) edge node[above,pos=.4] {$p_2$} (UR);
\path[<-, line width=.5pt, font=\scriptsize] (M) edge node[below,pos=.4] {$q_1$} (LR);
\draw[dash pattern=on 0pt off 1.2pt, line width=.6pt, line cap=round] ($(0,0)+(145:.9em)$) arc[start angle=145, end angle=30, radius=.9em];
\end{scope}
\end{tikzpicture}
\]

For orbifold surfaces $\mathbf S$, the existence of a large class of formal generators for $\mathcal W (\mathbf S)$ is shown in \cite{barmeierschrollwang,kim} where a  classification of so-called {\it formal dissections} is given. (See also \cite[Section~5]{amiotplamondon} where tagged dissections are used to study tilting objects in $\mathcal W (\mathbf S)$.) We conjecture in \cite[Conjecture 8.9]{barmeierschrollwang} that the endomorphism algebras of all formal generators arise from formal dissections. This would give a geometric criterion to determine all graded associative algebras derived equivalent to a given graded gentle or skew-gentle algebra. The full classification of formal generators and the description of the resulting class of graded associative algebras is the content of ongoing work \cite{barmeierchokimrhoschrollwang}.

\begin{figure}
\begin{tikzpicture}[x=1em,y=1em,decoration={markings,mark=at position 0.7 with {\arrow[black]{Stealth[length=4.2pt]}}}]
\draw[line width=.5pt, line cap=round] (64:11.5) arc[start angle=64, end angle=123, radius=11.5];
\draw[line width=.5pt, line cap=round] (127:11.5) arc[start angle=127, end angle=190, radius=11.5];
\draw[line width=.5pt, line cap=round] (194:11.5) arc[start angle=194, end angle=246, radius=11.5];
\draw[line width=.5pt, line cap=round] (250:11.5) arc[start angle=250, end angle=271, radius=11.5];
\draw[line width=.5pt, line cap=round] (275:11.5) arc[start angle=275, end angle=324, radius=11.5];
\draw[line width=.5pt, line cap=round] (328:11.5) arc[start angle=328, end angle=371.5, radius=11.5];
\draw[line width=.5pt, line cap=round] (375.5:11.5) arc[start angle=375.5, end angle=420, radius=11.5];
\node[font=\scriptsize] at (124:11.5) {.};
\node[font=\scriptsize] at (125:11.5) {.};
\node[font=\scriptsize] at (126:11.5) {.};
\node[font=\scriptsize] at (191:11.5) {.};
\node[font=\scriptsize] at (192:11.5) {.};
\node[font=\scriptsize] at (193:11.5) {.};
\node[font=\scriptsize] at (247:11.5) {.};
\node[font=\scriptsize] at (248:11.5) {.};
\node[font=\scriptsize] at (249:11.5) {.};
\node[font=\scriptsize] at (272:11.5) {.};
\node[font=\scriptsize] at (273:11.5) {.};
\node[font=\scriptsize] at (274:11.5) {.};
\node[font=\scriptsize] at (325:11.5) {.};
\node[font=\scriptsize] at (326:11.5) {.};
\node[font=\scriptsize] at (327:11.5) {.};
\node[font=\scriptsize] at (13.5:11.5) {.};
\node[font=\scriptsize] at (12.5:11.5) {.};
\node[font=\scriptsize] at (14.5:11.5) {.};
\node[font=\scriptsize] at (61:11.5) {.};
\node[font=\scriptsize] at (62:11.5) {.};
\node[font=\scriptsize] at (63:11.5) {.};
\draw[line width=0pt,postaction={decorate}] (119:11.5) -- (120:11.5);
\node[font=\scriptsize] at (118:12.25) {$u^\I_1$};
\draw[line width=0pt,postaction={decorate}] (294:11.5) -- (295:11.5);
\node[font=\scriptsize] at (294:12.3) {$u^\III_l$};
\draw[line width=0pt,postaction={decorate}] (320:11.5) -- (321:11.5);
\node[font=\scriptsize] at (319:12.25) {$u^\IV_1$};
\draw[line width=0pt,postaction={decorate}] (353:11.5) -- (354:11.5);
\node[font=\scriptsize] at (353:12.35) {$u^\IV_m$};
\draw[line width=0pt,postaction={decorate}] (8:11.5) -- (9:11.5);
\node[font=\scriptsize] at (7:12.35) {$u^\V_1$};
\draw[line width=0pt,postaction={decorate}] (20:11.5) -- (21:11.5);
\node[font=\scriptsize] at (18:12.85) {$u^\V_{n-1}$};
\draw[line width=0pt,postaction={decorate}] (28:11.5) -- (29:11.5);
\node[font=\scriptsize] at (28:12.35) {$p^\VI_1$};
\draw[line width=0pt,postaction={decorate}] (43:11.5) -- (44:11.5);
\node[font=\scriptsize] at (42:12.35) {$p^\VI_2$};
\draw[line width=0pt,postaction={decorate}] (57:11.5) -- (58:11.5);
\node[font=\scriptsize] at (56:12.35) {$p^\VI_3$};
\draw[line width=0pt,postaction={decorate}] (67:11.5) -- (68:11.5);
\node[font=\scriptsize] at (66:12.35) {$p^\VI_{s_0-1}$};
\draw[fill=black] (90:11.5) circle(.15em);
\begin{scope}[rotate=165]
\draw[fill=black] (270:11.5) circle(.15em);
\draw[-, line width=.75pt, draw=arccolour, line cap=round] (266:11.5) to[out=85, in=95, looseness=2] (274:11.5);
\end{scope}
\begin{scope}[rotate=140]
\draw[fill=black] (270:11.5) circle(.15em);
\draw[-, line width=.75pt, draw=arccolour, line cap=round] (266:11.5) to[out=85, in=95, looseness=2] (274:11.5);
\end{scope}
\begin{scope}[rotate=125]
\draw[fill=black] (270:11.5) circle(.15em);
\draw[-, line width=.75pt, draw=arccolour, line cap=round] (266:11.5) to[out=85, in=95, looseness=2] (274:11.5);
\end{scope}
\begin{scope}[rotate=-145]
\begin{scope}[yshift=.85em]
\end{scope}
\end{scope}
\begin{scope}[rotate=-71]
\begin{scope}[yshift=.1em]
\end{scope}
\end{scope}
\begin{scope}[rotate=0]
\begin{scope}[yshift=.5em]
\end{scope}
\end{scope}
\begin{scope}[rotate=0]
\begin{scope}[yshift=.5em]
\end{scope}
\end{scope}
\begin{scope}[rotate=84]
\begin{scope}[yshift=.5em]
\end{scope}
\end{scope}
\begin{scope}[rotate=-109]
\begin{scope}[yshift=-8.5em]
\draw[line width=0pt,postaction={decorate}] (270:3) -- (275:3);
\node[font=\scriptsize] at (0,-3.75) {$u^\I_g$};
\end{scope}
\end{scope}
\begin{scope}[rotate=-165]
\begin{scope}[yshift=-7em]
\draw[line width=0pt,postaction={decorate}] (0,-4.5) -- (.4,-4.5);
\draw[line width=0pt,postaction={decorate}] (-.8,-4.48) -- (-.4,-4.49);
\draw[line width=0pt,postaction={decorate}] (.8,-4.48) -- (1.2,-4.46);
\draw[line width=.5pt, line cap=round] (-1.75,-3) to[out=90, in=180, looseness=1.1] (0,2.2) to[out=0, in=90, looseness=1.1] (1.75,-3);
\draw[line width=.5pt, line cap=round] (-.05,.7) to[bend left=20] (-.05,-1.5);
\draw[line width=.5pt, line cap=round] (0,.5) to[bend right=20] (0,-1.3);
\draw[-, line width=.75pt, draw=arccolour, line cap=round] (-1.2,-4.42) to[out=85, in=180, looseness=.8] (0,1.4) to[out=0, in=85, looseness=.9] (.4,-4.48);
\draw[-, line width=.75pt, draw=arccolour, line cap=round] (-.4,-4.45) to[out=90, in=270, looseness=.8] (.12,-1) (1.8,-1.2) to[out=270, in=85, looseness=.8] (1.2,-4.42);
\draw[-, line width=1pt, dash pattern=on 0pt off 2pt, draw=arccolour, line cap=round] (.14,-.85) to[bend left=88, looseness=1.6] (1.8,-1.2);
\node[font=\scriptsize] at (-.9,-5.2) {$p^\I_1$\strut};
\node[font=\scriptsize] at (0,-5.25) {$q^\I_1$\strut};
\node[font=\scriptsize] at (.9,-5.2) {$r^\I_1$\strut};
\end{scope}
\end{scope}
\begin{scope}[rotate=-125]
\begin{scope}[yshift=-7em]
\draw[line width=0pt,postaction={decorate}] (0,-4.5) -- (.4,-4.5);
\draw[line width=0pt,postaction={decorate}] (-.8,-4.48) -- (-.4,-4.49);
\draw[line width=0pt,postaction={decorate}] (.8,-4.48) -- (1.2,-4.46);
\draw[line width=.5pt, line cap=round] (-1.75,-3) to[out=90, in=180, looseness=1.1] (0,2.2) to[out=0, in=90, looseness=1.1] (1.75,-3);
\draw[line width=.5pt, line cap=round] (-.05,.7) to[bend left=20] (-.05,-1.5);
\draw[line width=.5pt, line cap=round] (0,.5) to[bend right=20] (0,-1.3);
\draw[-, line width=.75pt, draw=arccolour, line cap=round] (-1.2,-4.42) to[out=85, in=180, looseness=.8] (0,1.4) to[out=0, in=85, looseness=.9] (.4,-4.48);
\draw[-, line width=.75pt, draw=arccolour, line cap=round] (-.4,-4.45) to[out=90, in=270, looseness=.8] (.12,-1) (1.8,-1.2) to[out=270, in=85, looseness=.8] (1.2,-4.42);
\draw[-, line width=1pt, dash pattern=on 0pt off 2pt, draw=arccolour, line cap=round] (.14,-.85) to[bend left=88, looseness=1.6] (1.8,-1.2);
\node[font=\scriptsize] at (-1,-5.2) {$p^\I_g$\strut};
\node[font=\scriptsize] at (0,-5.25) {$q^\I_g$\strut};
\node[font=\scriptsize] at (1,-5.2) {$r^\I_g$\strut};
\end{scope}
\end{scope}
\begin{scope}[rotate=-94]
\begin{scope}[yshift=-7.5em]
\node[font=\scriptsize] at (0,0) {$\times$};
\draw[line width=.5pt, line cap=round] (225:.5em) arc[start angle=225, end angle=-30, radius=.5em];
\draw[fill=black] (0,-.6em) circle(.1em);
\draw[line width=0pt,postaction={decorate}] (0,-4) -- (.4,-4);
\draw[line width=0pt,postaction={decorate}] (-49:.5em) -- ++(237:.001em);
\draw[line width=.75pt, line cap=round, color=arccolour] (240:.4em) edge (-1,-3.93);
\draw[line width=.75pt, line cap=round, color=arccolour] (300:.4em) edge (1,-3.93);
\node[font=\scriptsize] at (0,-4.75) {$p^\II_1$};
\node[font=\scriptsize] at (0,1.1) {$q^\II_1$};
\end{scope}
\end{scope}
\begin{scope}[rotate=-85]
\begin{scope}[yshift=-8.5em]
\draw[line width=0pt,postaction={decorate}] (270:3) -- (275:3);
\node[font=\scriptsize] at (0,-3.75) {$u^\II_1$};
\end{scope}
\end{scope}
\begin{scope}[rotate=-68]
\begin{scope}[yshift=-7.5em]
\node[font=\scriptsize] at (0,0) {$\times$};
\draw[line width=.5pt, line cap=round] (225:.5em) arc[start angle=225, end angle=-30, radius=.5em];
\draw[fill=black] (0,-.6em) circle(.1em);
\draw[line width=0pt,postaction={decorate}] (0,-4) -- (.4,-4);
\draw[line width=0pt,postaction={decorate}] (-49:.5em) -- ++(237:.001em);
\draw[line width=.75pt, line cap=round, color=arccolour] (240:.4em) edge (-1,-3.93);
\draw[line width=.75pt, line cap=round, color=arccolour] (300:.4em) edge (1,-3.93);
\node[font=\scriptsize] at (0,-4.75) {$p^\II_k$};
\node[font=\scriptsize] at (0,1.1) {$q^\II_k$};
\end{scope}
\end{scope}
\begin{scope}[rotate=-55]
\begin{scope}[yshift=-8.5em]
\draw[line width=0pt,postaction={decorate}] (270:3) -- (275:3);
\node[font=\scriptsize] at (0,-3.75) {$u^\II_k$};
\end{scope}
\end{scope}
\begin{scope}[rotate=-40]
\begin{scope}[yshift=-7em]
\draw[line width=0pt,postaction={decorate}] (85:0.99) -- (75:1.02);
\draw[line width=.5pt] circle(1em);
\draw[fill=black] (0,-1) circle(.15em);
\draw[line width=.75pt, line cap=round, color=arccolour] (225:1em) to[out=245, in=85, looseness=.5] (-1.3,-4.42);
\draw[line width=.75pt, line cap=round, color=arccolour] (315:1em) to[out=295, in=95, looseness=.5] (1.3,-4.42);
\draw[line width=0pt,postaction={decorate}] (0,-4.5) -- (.4,-4.5);
\node[font=\scriptsize] at (90:1.75) {$q^\III_1$};
\node[font=\scriptsize] at (0,-5.25) {$p^\III_1$};
\end{scope}
\end{scope}
\begin{scope}[rotate=-29]
\begin{scope}[yshift=-8.5em]
\draw[line width=0pt,postaction={decorate}] (270:3) -- (275:3);
\node[font=\scriptsize] at (0,-3.75) {$u^\III_1$};
\end{scope}
\end{scope}
\begin{scope}[rotate=0]
\begin{scope}[yshift=-7em]
\draw[line width=0pt,postaction={decorate}] (85:0.99) -- (75:1.02);
\draw[line width=.5pt, line cap=round] (285:1) arc[start angle=285, end angle=-35, radius=1];
\node[font=\scriptsize] at (295:1) {.};
\node[font=\scriptsize] at (305:1) {.};
\node[font=\scriptsize] at (315:1) {.};
\draw[fill=black] (160:1) circle(.15em);
\draw[fill=black] (20:1) circle(.15em);
\draw[fill=black] (230:1) circle(.15em);
\draw[line width=0pt,postaction={decorate}] (-2.7,-4.18) arc[start angle=256, end angle=261, radius=11.5];
\draw[line width=0pt,postaction={decorate}] (-1.65,-4.39) arc[start angle=262, end angle=267, radius=11.5];
\draw[line width=0pt,postaction={decorate}] (1.8,-4.37) arc[start angle=280, end angle=285, radius=11.5];
\draw[line width=.75pt, line cap=round, color=arccolour] (125:1em) to[out=145, in=85, looseness=.9] (-2.7,-4.2);
\draw[line width=.75pt, line cap=round, color=arccolour] (195:1em) to[out=215, in=86, looseness=.5] (-1.7,-4.36);
\draw[line width=.75pt, line cap=round, color=arccolour] (265:1em) to[out=245, in=88, looseness=.9] (-.7,-4.42);
\draw[line width=.75pt, line cap=round, color=arccolour] (55:1em) to[out=35, in=95, looseness=.9] (2.7,-4.2);
\draw[line width=.75pt, line cap=round, color=arccolour] (-15:1em) to[out=-35, in=95, looseness=.5] (1.7,-4.36);
\node[font=\scriptsize] at (90:1.75) {$q^\III_l$};
\node[font=\scriptsize] at (-2.5,-5.2) {$p^\III_{l,1}$};
\node[font=\scriptsize] at (-1,-5.4) {$p^\III_{l,2}$};
\node[font=\scriptsize] at (2.4,-5.25) {$p^\III_{l,s_l}$};
\end{scope}
\end{scope}
\begin{scope}[rotate=39]
\begin{scope}[yshift=-7.5em]
\draw[line width=.2em] circle(.8em);
\draw[line width=.75pt, line cap=round, color=arccolour] (-1,-3.92) to[out=85, in=240, looseness=.8] (150:1.5) arc[start angle=150, end angle=30, radius=1.5] to[out=300, in=95, looseness=.8] (1.05,-3.92);
\draw[line width=0pt,postaction={decorate}] (270:4) -- (275:4);
\node[font=\scriptsize] at (0,-4.75) {$p^\IV_1$};
\end{scope}
\end{scope}
\begin{scope}[rotate=68]
\begin{scope}[yshift=-7.5em]
\draw[line width=.2em] circle(.8em);
\draw[line width=.75pt, line cap=round, color=arccolour] (-1,-3.92) to[out=85, in=240, looseness=.8] (150:1.5) arc[start angle=150, end angle=30, radius=1.5] to[out=300, in=95, looseness=.8] (1.05,-3.92);
\draw[line width=0pt,postaction={decorate}] (270:4) -- (275:4);
\node[font=\scriptsize] at (0,-4.9) {$p^\IV_m$};
\end{scope}
\end{scope}
%
\begin{scope}[rotate=93]
\begin{scope}[yshift=-7.5em]
\draw[line width=0pt,postaction={decorate}] (85:0.89) -- (75:.92);
\draw[line width=.5pt] circle(.9em);
\draw[line width=.75pt, line cap=round, color=arccolour] (0,-.9) to (0,-4);
\node[font=\scriptsize] at (90:1.55) {$q^\V_1$};
\end{scope}
\end{scope}
\begin{scope}[rotate=114]
\begin{scope}[yshift=-7.5em]
\draw[line width=0pt,postaction={decorate}] (85:0.89) -- (75:.92);
\draw[line width=.5pt] circle(.9em);
\draw[line width=.75pt, line cap=round, color=arccolour] (0,-.9) to (0,-4);
\node[font=\scriptsize] at (90:1.55) {$q^\V_n$};
\end{scope}
\end{scope}
\end{tikzpicture}
\caption{The standard dissection of a graded orbifold surface $\mathbf S$ with stops in each boundary component given by curves whose endomorphism algebra generates $\mathcal W (\mathbf S)$}
\label{fig:dissection}
\end{figure}

In order to study the deformation theory of $\mathcal W (\mathbf S)$, we may choose any suitable generator, since \cite{keller03} shows that the Hochschild complex of $\mathcal W (\mathbf S)$ is isomorphic to that of $\End (\Gamma)$ for any (classical) generator $\Gamma$ of $\mathcal W (\mathbf S)$ (see Section~\ref{subsection:keller}). A suitable classical generator of $\mathcal W (\mathbf S)$ can be obtained from a certain ``standard'' dissection of $\mathbf S$ illustrated in Fig.~\ref{fig:dissection}. (We refer to \cite{barmeierschrollwang} for the details.) The corresponding endomorphism algebra is described via a DG quiver with relations in Fig.~\ref{fig:quiver}, the grading and differential being given in Proposition \ref{proposition:generator}. Whereas the statement of Proposition \ref{proposition:generator} is immediate from the results of \cite{barmeierschrollwang}, we record it here to fix a systematic labelling of the arrows in the quiver for later use.

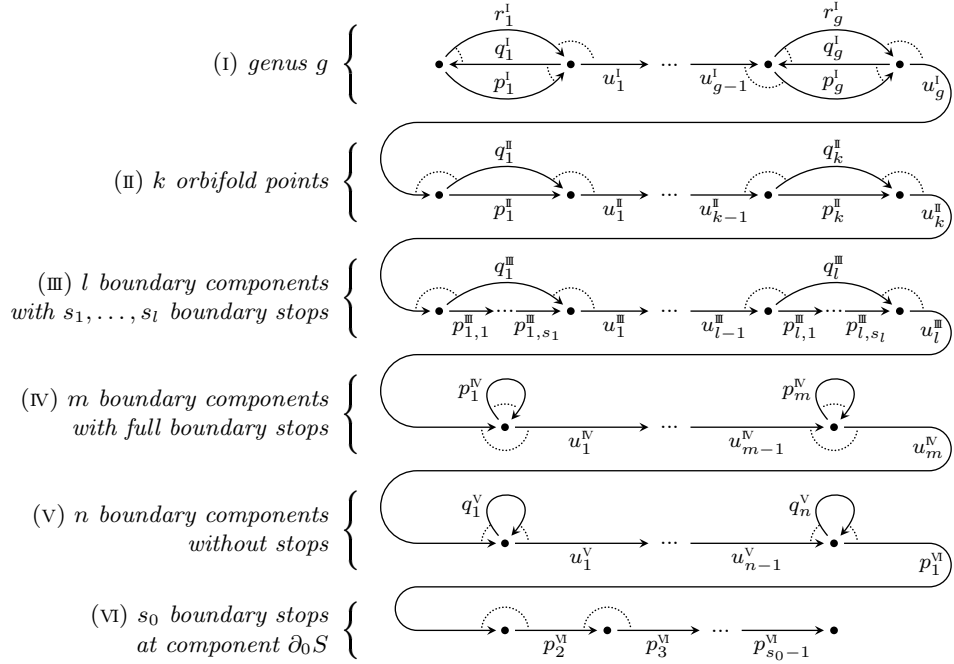
\begin{figure}
\begin{tikzpicture}[baseline=-2.75pt,x=1.5em,y=1.5em]
\begin{scope}
\node[font=\scriptsize] at (-2,0) {\rotatebox{-90}{$\underbrace{\hspace{3.5em}}$}};
\node[font=\footnotesize, left=1ex, align=right] at (-2,0) {(\I) \it genus $g$};
\foreach \label/\x in {0/0,3/3,5/5,6/5.5,8/7.5,11/10.5} {
\node[shape=circle, scale=.7] (\label) at (\x,0) {};
}
\foreach \x in {0,3,7.5,10.5} {
\draw[line width=1pt, fill=black] (\x,0) circle(0.2ex);
}
\path[->, line width=.5pt, font=\scriptsize, bend right=50] (0) edge node[above=-.5ex] {$p^\I_1$} (3);
\path[->, line width=.5pt, font=\scriptsize] (3) edge node[above=-.5ex] {$q^\I_1$} (0);
\path[->, line width=.5pt, font=\scriptsize, bend left=50] (0) edge node[above=-.5ex] {$r^\I_1$} (3);
\path[->, line width=.5pt, font=\scriptsize] (3) edge node[below=-.5ex] {$u^\I_1$} (5);
\node[font=\scriptsize] at (5.25,0) {...};
\path[->, line width=.5pt, font=\scriptsize] (6) edge node[below=-.5ex] {$u^\I_{g-1}$} (8);
\path[->, line width=.5pt, font=\scriptsize, bend right=50] (8) edge node[above=-.5ex] {$p^\I_g$} (11);
\path[->, line width=.5pt, font=\scriptsize] (11) edge node[above=-.5ex] {$q^\I_g$} (8);
\path[->, line width=.5pt, font=\scriptsize, bend left=50] (8) edge node[above=-.5ex] {$r^\I_g$} (11);
\draw[->, line width=.5pt] (11) -- ++(.5,0) arc[start angle=90, end angle=-90, radius=1em] -- ++(-11.5,0) arc[start angle=90, end angle=270, radius=1.25em] -- ++(.3,0);
\node[font=\scriptsize,left] at (11.8,-.8em) {$u^\I_g$};
\draw[dash pattern=on 0pt off 1.2pt, line width=.6pt, line cap=round] ($(0,0)+(8:.8em)$) arc[start angle=8, end angle=45, radius=.8em];
\draw[dash pattern=on 0pt off 1.2pt, line width=.6pt, line cap=round] ($(3,0)+(120:.8em)$) arc[start angle=120, end angle=10, radius=.8 em];
\draw[dash pattern=on 0pt off 1.2pt, line width=.6pt, line cap=round] ($(3,0)+(188:.8em)$) arc[start angle=188, end angle=225, radius=.8em];
\draw[dash pattern=on 0pt off 1.2pt, line width=.6pt, line cap=round] ($(7.5,0)+(8:.8em)$) arc[start angle=8, end angle=45, radius=.8em];
\draw[dash pattern=on 0pt off 1.2pt, line width=.6pt, line cap=round] ($(10.5,0)+(120:.8em)$) arc[start angle=120, end angle=10, radius=.8em];
\draw[dash pattern=on 0pt off 1.2pt, line width=.6pt, line cap=round] ($(10.5,0)+(188:.8em)$) arc[start angle=188, end angle=225, radius=.8em];
\draw[dash pattern=on 0pt off 1.2pt, line width=.6pt, line cap=round] ($(7.5,0)+(188:.8em)$) arc[start angle=188, end angle=310, radius=.8em];
\end{scope}
\begin{scope}[yshift=-4.5em]
\node[font=\scriptsize] at (-2,.3) {\rotatebox{-90}{$\underbrace{\hspace{3.5em}}$}};
\node[font=\footnotesize, left=1ex, align=right] at (-2,.3) {(\II) \it $k$ orbifold points};
\foreach \label/\x in {0/0,3/3,5/5,6/5.5,8/7.5,11/10.5} {
\node[shape=circle, scale=.7] (\label) at (\x,0) {};
}
\foreach \x in {0,3,7.5,10.5} {
\draw[line width=1pt, fill=black] (\x,0) circle(0.2ex);
}
\path[->, line width=.5pt, font=\scriptsize] (0) edge node[below=-.5ex] {$p^\II_1$} (3);
\path[->, line width=.5pt, font=\scriptsize, bend left=40] (0) edge node[above=-.5ex] {$q^\II_1$} (3);
\path[->, line width=.5pt, font=\scriptsize] (3) edge node[below=-.5ex] {$u^\II_1$} (5);
\node[font=\scriptsize] at (5.25,0) {...};
\path[->, line width=.5pt, font=\scriptsize] (6) edge node[below=-.5ex] {$u^\II_{k-1}$} (8);
\path[->, line width=.5pt, font=\scriptsize] (8) edge node[below=-.5ex] {$p^\II_k$} (11);
\path[->, line width=.5pt, font=\scriptsize, bend left=40] (8) edge node[above=-.5ex] {$q^\II_k$} (11);
\draw[->, line width=.5pt] (11) -- ++(.67,0) arc[start angle=90, end angle=-90, radius=.75em] -- ++(-11.67,0) arc[start angle=90, end angle=270, radius=1.25em] -- ++(.3,0);
\node[font=\scriptsize,left] at (11.8,-.7em) {$u^\II_k$};
\draw[dash pattern=on 0pt off 1.2pt, line width=.6pt, line cap=round] ($(0,0)+(172:.8em)$) arc[start angle=172, end angle=40, radius=.8em];
\draw[dash pattern=on 0pt off 1.2pt, line width=.6pt, line cap=round] ($(3,0)+(8:.8em)$) arc[start angle=8, end angle=140, radius=.8em];
\draw[dash pattern=on 0pt off 1.2pt, line width=.6pt, line cap=round] ($(7.5,0)+(172:.8em)$) arc[start angle=172, end angle=40, radius=.8em];
\draw[dash pattern=on 0pt off 1.2pt, line width=.6pt, line cap=round] ($(10.5,0)+(8:.8em)$) arc[start angle=8, end angle=140, radius=.8em];
\end{scope}
\begin{scope}[yshift=-8.5em]
\node[font=\scriptsize] at (-2,.3) {\rotatebox{-90}{$\underbrace{\hspace{3.5em}}$}};
\node[font=\footnotesize, left=1ex, align=right] at (-2,.3) {(\III) \it $l$ boundary components \\ \it with $s_1, \dotsc, s_l$ boundary stops};
\foreach \label/\x in {0/0,3/3,5/5,6/5.5,8/7.5,11/10.5} {
\node[shape=circle, scale=.7] (\label) at (\x,0) {};
}
\foreach \x in {0,3,7.5,10.5} {
\draw[line width=1pt, fill=black] (\x,0) circle(0.2ex);
}
\path[->, line width=.5pt, font=\scriptsize] (0) edge node[below=-.5ex] {$p^\III_{1,1}$} (1.25,0);
\node[font=\scriptsize] at (1.5,0) {...};
\path[->, line width=.5pt, font=\scriptsize] (1.75,0) edge node[below=-.5ex] {$p^\III_{1,s_1}$} (3);
\path[->, line width=.5pt, font=\scriptsize, bend left=40] (0) edge node[above=-.5ex] {$q^\III_1$} (3);
\path[->, line width=.5pt, font=\scriptsize] (3) edge node[below=-.5ex] {$u^\III_1$} (5);
\node[font=\scriptsize] at (5.25,0) {...};
\path[->, line width=.5pt, font=\scriptsize] (6) edge node[below=-.5ex] {$u^\III_{l-1}$} (8);
\path[->, line width=.5pt, font=\scriptsize] (8) edge node[below=-.5ex] {$p^\III_{l,1}$} (8.75,0);
\node[font=\scriptsize] at (9,0) {...};
\path[->, line width=.5pt, font=\scriptsize] (9.25,0) edge node[below=-.5ex] {$p^\III_{l,s_l}$} (11);
\path[->, line width=.5pt, font=\scriptsize, bend left=40] (8) edge node[above=-.5ex] {$q^\III_l$} (11);
\draw[->, line width=.5pt] (11) -- ++(.67,0) arc[start angle=90, end angle=-90, radius=.75em] -- ++(-11.67,0) arc[start angle=90, end angle=270, radius=1.25em] -- ++(1.8,0);
\node[font=\scriptsize,left] at (11.75,-.7em) {$u^\III_l$};
\draw[dash pattern=on 0pt off 1.2pt, line width=.6pt, line cap=round] ($(0,0)+(172:.8em)$) arc[start angle=172, end angle=40, radius=.8em];
\draw[dash pattern=on 0pt off 1.2pt, line width=.6pt, line cap=round] ($(3,0)+(8:.8em)$) arc[start angle=8, end angle=140, radius=.8em];
\draw[dash pattern=on 0pt off 1.2pt, line width=.6pt, line cap=round] ($(7.5,0)+(172:.8em)$) arc[start angle=172, end angle=40, radius=.8em];
\draw[dash pattern=on 0pt off 1.2pt, line width=.6pt, line cap=round] ($(10.5,0)+(8:.8em)$) arc[start angle=8, end angle=140, radius=.8em];
\end{scope}
\begin{scope}[yshift=-12.5em]
\node[font=\scriptsize] at (-2,.3) {\rotatebox{-90}{$\underbrace{\hspace{3.5em}}$}};
\node[font=\footnotesize, left=1ex, align=right] at (-2,.3) {(\IV) \it $m$ boundary components \\ \it with full boundary stops};
\foreach \label/\x in {2/1.5,5/5,6/5.5,9/9} {
\node[shape=circle, scale=.7] (\label) at (\x,0) {};
}
\foreach \x in {1.5,9} {
\draw[line width=1pt, fill=black] (\x,0) circle(0.2ex);
}
\path[->, line width=.5pt, font=\scriptsize] (2) edge[out=130, in=50, looseness=14] node[pos=.25, left=-.5ex] {$p^\IV_1$} (2);
\path[->, line width=.5pt, font=\scriptsize] (2) edge node[below=-.5ex] {$u^\IV_1$} (5);
\node[font=\scriptsize] at (5.25,0) {...};
\path[->, line width=.5pt, font=\scriptsize] (6) edge node[below=-.5ex] {$u^\IV_{m-1}$} (9);
\path[->, line width=.5pt, font=\scriptsize] (9) edge[out=130, in=50, looseness=14] node[pos=.25, left=-.5ex] {$p^\IV_m$} (9);
\draw[->, line width=.5pt] (9) -- ++(2.17,0) arc[start angle=90, end angle=-90, radius=.75em] -- ++(-11.67,0) arc[start angle=90, end angle=270, radius=1.25em] -- ++(1.8,0);
\node[font=\scriptsize,left] at (11.75,-.7em) {$u_m^\IV$};
\draw[dash pattern=on 0pt off 1.2pt, line width=.6pt, line cap=round] ($(1.5,0)+(188:.8em)$) arc[start angle=188, end angle=354, radius=.8em];
\draw[dash pattern=on 0pt off 1.2pt, line width=.6pt, line cap=round] ($(1.5,0)+(116:.8em)$) arc[start angle=116, end angle=58, radius=.8em];
\draw[dash pattern=on 0pt off 1.2pt, line width=.6pt, line cap=round] ($(9,0)+(188:.8em)$) arc[start angle=188, end angle=354, radius=.8em];
\draw[dash pattern=on 0pt off 1.2pt, line width=.6pt, line cap=round] ($(9,0)+(116:.8em)$) arc[start angle=116, end angle=58, radius=.8em];
\end{scope}
\begin{scope}[yshift=-16.5em]
\node[font=\scriptsize] at (-2,.3) {\rotatebox{-90}{$\underbrace{\hspace{3.5em}}$}};
\node[font=\footnotesize, left=1ex, align=right] at (-2,.3) {(\V) \it $n$ boundary components \\ \it without stops};
\foreach \label/\x in {2/1.5,5/5,6/5.5,9/9} {
\node[shape=circle, scale=.7] (\label) at (\x,0) {};
}
\foreach \x in {1.5,9} {
\draw[line width=1pt, fill=black] (\x,0) circle(0.2ex);
}
\path[->, line width=.5pt, font=\scriptsize] (2) edge[out=130, in=50, looseness=14] node[pos=.25, left=-.5ex] {$q^\V_1$} (2);
\path[->, line width=.5pt, font=\scriptsize] (2) edge node[below=-.5ex] {$u^\V_1$} (5);
\node[font=\scriptsize] at (5.25,0) {...};
\path[->, line width=.5pt, font=\scriptsize] (6) edge node[below=-.5ex] {$u^\V_{n-1}$} (9);
\path[->, line width=.5pt, font=\scriptsize] (9) edge[out=130, in=50, looseness=14] node[pos=.25, left=-.5ex] {$q^\V_n$} (9);
\draw[->, line width=.5pt] (9) -- ++(2.17,0) arc[start angle=90, end angle=-90, radius=.75em] -- ++(-11.67,0) arc[start angle=90, end angle=270, radius=.75em] -- ++(1.8,0);
\node[font=\scriptsize,left] at (11.75,-.7em) {$p^\VI_1$};
\draw[dash pattern=on 0pt off 1.2pt, line width=.6pt, line cap=round] ($(1.5,0)+(173:.8em)$) arc[start angle=173, end angle=130, radius=.8em];
\draw[dash pattern=on 0pt off 1.2pt, line width=.6pt, line cap=round] ($(1.5,0)+(45:.8em)$) arc[start angle=45, end angle=5, radius=.8em];
\draw[dash pattern=on 0pt off 1.2pt, line width=.6pt, line cap=round] ($(9,0)+(173:.8em)$) arc[start angle=173, end angle=130, radius=.8em];
\draw[dash pattern=on 0pt off 1.2pt, line width=.6pt, line cap=round] ($(9,0)+(45:.8em)$) arc[start angle=45, end angle=5, radius=.8em];
\end{scope}
\begin{scope}[yshift=-19.5em]
\node[font=\scriptsize] at (-2,0) {\rotatebox{-90}{$\underbrace{\hspace{2.5em}}$}};
\node[font=\footnotesize, left=1ex, align=right] at (-2,0) {(\VI) \it $s_0$ boundary stops \\ \it at component $\partial_0 S$};
\foreach \label/\x in {2/1.5,4/3.833,6/6.167,7/6.667,9/9} {
\node[shape=circle, scale=.7] (\label) at (\x,0) {};
}
\foreach \x in {1.5,3.833,9} {
\draw[line width=1pt, fill=black] (\x,0) circle(0.2ex);
}
\path[->, line width=.5pt, font=\scriptsize] (2) edge node[below=-.5ex] {$p^\VI_2$} (4);
\path[->, line width=.5pt, font=\scriptsize] (4) edge node[below=-.5ex] {$p^\VI_3$} (6);
\node[font=\scriptsize] at (6.417,0) {...};
\path[->, line width=.5pt, font=\scriptsize] (7) edge node[below=-.5ex] {$p^\VI_{s_0-1}$} (9);
\draw[dash pattern=on 0pt off 1.2pt, line width=.6pt, line cap=round] ($(1.5,0)+(172:.8em)$) arc[start angle=172, end angle=6, radius=.8em];
\draw[dash pattern=on 0pt off 1.2pt, line width=.6pt, line cap=round] ($(3.833,0)+(172:.8em)$) arc[start angle=172, end angle=6, radius=.8em];
\end{scope}
\end{tikzpicture}
\caption{The quiver for the DG endomorphism algebra for the generator of $\mathcal W (\mathbf S)$ corresponding to the dissection of Fig.~\ref{fig:dissection}. The dotted lines indicate the quadratic monomial relations. The differential and grading are given in Proposition \ref{proposition:generator}.}
\label{fig:quiver}
\end{figure}

\begin{proposition}
\label{proposition:generator}
Let $\mathbf S = (S, \Sigma, \eta)$ be a graded orbifold surface with stops with at least one boundary component $\partial_0 S$ containing at least one boundary stop. We denote by
\begin{itemize}
\item[\textup{(\I)}] $g$ the genus of $S$
\item[\textup{(\II)}] $k$ the number of orbifold points
\item[\textup{(\III)}] $l$ the number of additional boundary components with boundary stops (not including $\partial_0 S$), denoted each by $\partial^\III_1 S, \dotsc, \partial^\III_l S$, and for each $1 \leq j \leq l$, the boundary component $\partial^\III_j S$ contains $s_j \geq 1$ boundary stops
\item[\textup{(\IV)}] $m$ the number of boundary components with full boundary stops, denoted by $\partial^\IV_1 S, \dotsc, \partial^\IV_m S$
\item[\textup{(\V)}] $n$ the number of boundary components without stops, denoted by $\partial^\V_1 S, \dotsc, \partial^\V_n S$
\item[\textup{(\VI)}] $s_0 \geq 1$ the number of boundary stops of $\partial_0 S$.
\end{itemize}

Then $\mathcal W (\mathbf S)$ admits a formal (classical) generator $\Gamma$ whose DG endomorphism algebra $\End (\Gamma)$ is the DG gentle algebra given by the graded quiver of Fig.~\ref{fig:quiver} with quadratic monomial relations
\begin{flalign*}
&&&& q^\I_i p^\I_i, \; r^\I_i q^\I_i, \; u^\I_i r^\I_i &\quad (1 \leq i \leq g) & p^\I_{i+1} u^\I_i &\quad (1 \leq i < g) && \\
&& q^\II_1 u^\I_g && \eqmakebox[pqr][r]{$u^\II_i q^\II_i$} &\quad (1 \leq i \leq k) & q^\II_{i+1} u^\II_i &\quad (1 \leq i < k) && \\
&& q^\III_1 u^\II_k && \eqmakebox[pqr][r]{$u^\III_i q^\III_i$} &\quad (1 \leq i \leq l) & q^\III_{i+1} u^\III_i &\quad (1 \leq i < l) && \\
&& u^\IV_1 u^\III_l && \eqmakebox[pqr][r]{${p^\IV_i}^2$} &\quad (1 \leq i \leq m) & u^\IV_{i+1} u^\IV_i &\quad (1 \leq i < m) && \\
&& q^\V_1 u^\IV_m && \eqmakebox[pqr][r]{$u^\V_i q^\V_i$} &\quad (1 \leq i \leq n) & q^\V_{i+1} u^\V_i &\quad (1 \leq i < n) && \\
&& p^\VI_1 q^\V_n && \eqmakebox[pqr][r]{$p^\VI_{i+1} p^\VI_i$} &\quad (1 \leq i \leq s_0 - 2) &&&&
\end{flalign*}
and differential
\begin{align*}
d p^\II_i = q^\II_i \quad (1 \leq i \leq k)
\end{align*}
such that $d$ is zero on all other arrows. The grading of the arrows is given by
\begin{flalign*}
&& \lvert q_i^\II \rvert  &= \eqmakebox[gr1][l]{$1$}                              \quad (1 \leq i \leq k) &
   \lvert q_i^\III \rvert &= \eqmakebox[gr2][l]{$\w_\eta (\partial_i^\III S)$}    \quad (1 \leq i \leq l) && \\
&& \lvert p_i^\IV \rvert  &= \eqmakebox[gr1][l]{$1 - \w_\eta (\partial_i^\IV S)$} \quad (1 \leq i \leq m) &
   \lvert q_i^\V \rvert   &= \eqmakebox[gr2][l]{$\w_\eta (\partial_i^\V S)$}      \quad (1 \leq i \leq n) &&
\end{flalign*}
all other arrows being graded in degree $0$.
\end{proposition}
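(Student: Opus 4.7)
The plan is to apply the general construction of \cite{barmeierschrollwang}, which associates to a formal dissection of a graded orbifold surface a DG gentle algebra arising as the endomorphism algebra in $\mathcal W(\mathbf S)$ of the direct sum $\Gamma$ of the dissecting curves. First I would check directly that the collection of curves shown in Fig.~\ref{fig:dissection} is indeed a formal dissection in the sense of \cite[Section~5]{barmeierschrollwang}: each complementary region is either a polygon containing at most one boundary stop, a region containing exactly one order-$2$ orbifold point (type (\II)), or a once-punctured polygon around a full boundary stop (type (\IV)), while the handles (\I) and the fully-wrapped components (\V) correspond to standard local models appearing in \emph{loc.\ cit.} Since the dissection is formal, $\Gamma$ is a formal classical generator of $\mathcal W(\mathbf S)$.

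Next I would read off the quiver, the relations and the grading from the general recipe: vertices correspond to the dissecting curves, arrows correspond to Reeb chords running between endpoints of curves through a single corner of the dissection, and the quadratic monomial relations $ba = 0$ arise precisely when the composition $ba$ would ``cross a curve'', i.e.\ would correspond to a flow through two consecutive corners separated by a curve endpoint. Matching this combinatorial recipe against Fig.~\ref{fig:dissection} reproduces exactly the quiver of Fig.~\ref{fig:quiver} and the listed monomial relations. The grading of each arrow is the winding number of the corresponding Reeb chord relative to a fixed trivialization of $\eta$ along the dissecting curves; a choice of trivialization compatible with the dissection places the translation arrows $u^\I_i, u^\II_i, u^\III_i, u^\IV_i, u^\V_i$ and the polygon arrows $p^\I_i, r^\I_i, p^\II_i, p^\III_{i,j}, p^\VI_i$ in degree $0$, while each loop arrow $q^\II_i, q^\III_i, q^\V_i$ records the winding number of its boundary component and $p^\IV_i$ acquires the additional shift by $1$ from the puncture model of a full boundary stop.

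Finally, the differential $d p^\II_i = q^\II_i$ arises from the order-$2$ orbifold point in the local model (\II): the two arrows $p^\II_i, q^\II_i$ form a DG resolution of the ``orbifold arc'' passing through the orbifold point, exactly as explained in \cite{barmeierschrollwang} (see also \cite{chokim, amiotplamondon}), and this contribution is captured on the nose at the level of the formal dissection. All remaining arrows receive zero differential because their associated regions contain no orbifold points. The main obstacle I expect is the careful verification of sign conventions and of the gauge choice for $\eta$ that realises the stated grading; both are standard once the conventions of \cite{barmeierschrollwang} are fixed, which is why we merely record Proposition~\ref{proposition:generator} and defer the combinatorial details to \emph{loc.\ cit.}
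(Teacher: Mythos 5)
Your proposal takes essentially the same route as the paper: the paper's proof simply verifies that the curves in Fig.~\ref{fig:dissection} form a formal DG dissection in the sense of \cite[Section~8]{barmeierschrollwang}, then cites \cite[Proposition~8.4 and Theorem~8.6]{barmeierschrollwang} for the identification of $\End(\Gamma)$ as a formal DG algebra and its description as a DG quiver with relations. You spell out in more detail how the quiver, relations, grading and differential would be read off from the combinatorics of the dissection, which the paper delegates to the reference, but the underlying reduction is the same.

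One imprecision worth flagging in your description of the grading: you group $q^\II_i$ together with $q^\III_i$ and $q^\V_i$ as ``loop arrows'' whose degree records the winding number of the associated boundary component. In fact $q^\II_i$ is not a loop (it is parallel to $p^\II_i$, connecting two distinct vertices, as seen in Fig.~\ref{fig:quiver}), and it is not attached to a boundary component but to an order-$2$ orbifold point; its degree is the constant $1$ dictated by the local orbifold model, not a winding number. Similarly, saying $p^\IV_i$ ``acquires an additional shift by~$1$'' slightly misdescribes the formula $\lvert p^\IV_i\rvert = 1 - \w_\eta(\partial^\IV_i S)$ (it is $1$ \emph{minus} the winding number, not the winding number plus $1$). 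These are cosmetic slips in the informal narrative rather than gaps in the argument; the underlying appeal to the recipe of \cite{barmeierschrollwang} is sound and matches the paper's proof.
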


\begin{proof}
The curves in Fig.~\ref{fig:dissection} define a formal DG dissection in the sense of \cite[Section~8]{barmeierschrollwang}. Hence $\Gamma$ generates $\mathcal W (\mathbf S)$ and by \cite[Proposition 8.4]{barmeierschrollwang} its endomorphism algebra $\End (\Gamma)$ is a DG algebra which by \cite[Theorem 8.6]{barmeierschrollwang} is formal, i.e.\ A$_\infty$-quasi-isomorphic to its (associative) cohomology algebra. The description of $\End (\Gamma)$ as a DG quiver with relations is given in op.\ cit.\ as well.
\end{proof}

We will see in Section~\ref{subsection:hochschildwrapped} that the cohomology $\HH^2 (\End (\Gamma), \End (\Gamma))$ is spanned by $2$-cocycles giving rise to deformations of $\End (\Gamma)$ as a curved DG algebra. This is due to the particular choice of generator. For other generators, the $2$-cocycles of its endomorphism algebra naturally parametrize deformations as a curved A$_\infty$ algebra which generally feature nontrivial higher multiplications. Such deformations are described in \cite{barmeierschrollwang} (see also Remark \ref{remark:general} for A$_\infty$ deformations and Section~\ref{section:curved} for how to deal with curved deformations).

\subsubsection{Classical and weak generators}
\label{subsubsection:weak}

Recall that a {\it weak generator} of a triangulated category $\mathcal C$ is an object $E$ such that for any other object $X$, there exists a nontrivial morphism from $E$ to some shift of $X$, i.e.\ $\Hom (E, X [n]) \neq 0$ for some $n \in \mathbb Z$. If the smallest triangulated subcategory of $\mathcal C$ containing $E$ is all of $\mathcal C$, then $E$ is called a {\it classical generator}.

Using the classification of indecomposable objects and morphisms in partially wrapped Fukaya categories of surfaces \cite{haidenkatzarkovkontsevich,opperplamondonschroll}, it is possible to characterize geometrically when a partial dissection constitutes a weak generator. This difference only occurs when the category has boundary components without stops and of winding number $\neq 0$. In other words, to see the difference between classical and weak generators, one needs to consider gentle or skew-gentle algebras which are non-proper and not concentrated in degree $0$.

It turns out that this difference allows us to replace a classical generator by a weak generator and pass to certain unbounded twisted complexes to recover $\mathcal W (\mathbf S)$ (see Section~\ref{subsection:unbounded}).

\begin{theorem}
\label{theorem:weak}
Let $\mathbf S$ be a graded orbifold surface and let $\Gamma$ be a collection of curves in $\mathbf S$ that do not intersect in the interior of $\mathbf S$. Then $\Gamma$ is a weak generator of the derived partially wrapped Fukaya category $\mathrm D \mathcal W (\mathbf S)$ if and only if the curves in $\Gamma$ cut the surface into connected components such that each connected component is of one of the following types:
\begin{enumerate}
\item a smooth disk containing at most one stop
\item an orbifold disk containing no boundary stops 
\item a smooth annulus containing a full boundary stop and no further boundary stops
\item a smooth annulus containing no stops and a boundary component of winding number $\neq 0$.
\end{enumerate}
If there are no connected components of the last type, then $\Gamma$ is a classical generator.
\end{theorem}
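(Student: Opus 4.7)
The argument combines the geometric classification of indecomposables of $\mathrm D \mathcal W(\mathbf S)$ from \cite{haidenkatzarkovkontsevich,opperplamondonschroll} and its orbifold extension in \cite{barmeierschrollwang} with an intersection-theoretic description of morphisms. Every indecomposable is represented either by a graded arc (ending at a stop, an orbifold point, or spiralling into a fully wrapped boundary) or by a graded closed curve equipped with a local system, and $\Hom_{\mathrm D\mathcal W(\mathbf S)}(\gamma,\gamma'[n])$ is nonzero for some $n\in\mathbb Z$ exactly when $\gamma$ and $\gamma'$, placed in minimal position, either cross transversally in the interior or share a Reeb-chord-compatible pair of boundary endpoints. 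The theorem is therefore a purely geometric statement: which collections $\Gamma$ of pairwise non-crossing curves meet every other curve in one of these two ways.

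For the ``if'' direction I would place an indecomposable curve $\gamma_X$ in minimal position with respect to $\Gamma$; if it does not cross $\Gamma$ then it lies in a single complementary region $R$, and I would inspect each admissible type of $R$ in turn to show that either $\gamma_X$ is zero in $\mathrm D\mathcal W(\mathbf S)$ or a morphism from $\Gamma$ still arises. In case (i) a disk with at most one stop supports only boundary-parallel or trivial arcs; in case (ii) an orbifold disk with no boundary stops only admits arcs with orbifold-point endpoints, which are shared with the adjacent arcs of $\Gamma$ and produce a morphism at the orbifold point; in case (iii) an annulus with a unique full boundary stop carries no essential arc or band since the full stop obstructs wrapping; and in case (iv) the only essential curves in the annulus are isotopic to the stopless boundary $\partial_0 S$, and the condition $\w_\eta(\partial_0 S)\neq 0$ forces Reeb chords from the $\Gamma$-arcs on the other side of $R$ into the corresponding band object. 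For the ``only if'' direction I would exhibit, in each failing region type, an explicit indecomposable with no crossing or Reeb chord with $\Gamma$: an arc between two stops in a disk with at least two stops, an essential arc in a pair of pants or higher-genus piece, the core circle of an annulus whose stopless boundary has winding $0$, and so on.

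The final statement on classical generation follows by an iterated mapping cone argument: when all complementary regions are of types (i)--(iii), any $\gamma_X$ can be resolved into a finite iterated cone on shifts of arcs of $\Gamma$ by smoothing its intersections with $\Gamma$ one at a time. When a region of type (iv) is present, the core band object $B$ of the annulus still receives morphisms from $\Gamma$, but its class in $K_0$ cannot be reached from $\add\Gamma$ because the Reeb chords contributing to the Euler pairing $\chi(\Gamma,B)$ cancel by the winding condition, so $B$ lies outside the thick subcategory generated by $\Gamma$. The main obstacle is case (iv): one has to simultaneously exhibit Reeb-chord-based morphisms from $\Gamma$ into the annulus core (so that weak generation holds) and rule out any iterated cone construction reaching that core (so that classical generation fails), both consequences of the same nonzero winding number.
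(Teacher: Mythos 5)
Your overall strategy (minimal position, case analysis on the complementary region, comparison of Euler characteristics or winding numbers) is broadly in the spirit of the paper's argument, and the ``only if'' direction is handled much as the paper does it. However, your treatment of the type-(iv) region contains a genuine error that undermines both the weak-generation claim and the non-classical-generation claim.

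You assert that in a type-(iv) annulus ``the only essential curves are isotopic to the stopless boundary $\partial_0 S$'' and that the winding condition ``forces Reeb chords from the $\Gamma$-arcs on the other side of $R$ into the corresponding band object.'' Both halves of this are wrong. First, when $\w_\eta(\partial_0 S) \neq 0$ the core circle is \emph{not gradeable} as a closed curve, so the band object you are trying to hit simply does not exist in the $\mathbb Z$-graded category; this is precisely why the theorem requires nonzero winding number in case (iv), and it is the clause of the statement your argument never uses. Second, even if a band object existed, it has no boundary and so cannot receive Reeb-chord morphisms; the only morphisms into a band come from transverse intersections, and a core circle does not intersect $\Gamma$. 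The objects that the paper actually worries about are the \emph{arcs} inside $R$ with one or both endpoints on $\partial_0 S$ (or the $\partial S$-pieces of the other boundary of $R$): these are nonzero, they are not classically generated, but they do admit either an intersection or a boundary Reeb chord with $\Gamma$, which is what makes $\Gamma$ a weak generator. Your proposal never mentions these arcs.

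The $K_0$ argument you sketch for the last sentence of the theorem inherits the same problem: the ``core band object $B$'' is not an object, so there is no class $[B]$ to keep outside $K_0(\add\Gamma)$. A more robust argument is that when a type-(iv) region is present the arc $\gamma$ from $\partial_0 S$ has infinite-dimensional (graded) endomorphism algebra from wrapping, so $\mathrm D\mathcal W(\mathbf S)$ is not proper, while $\End(\Gamma)$ is proper, and a proper DG algebra cannot classically generate a non-proper category. Your remaining cases (i)--(iii) are essentially the paper's appeal to the dissection result of Haiden--Katzarkov--Kontsevich and its orbifold analogue, phrased more explicitly; the statement in case (iii) that the annulus ``carries no essential arc or band'' is also inaccurate (bands around the puncture are nonzero objects), but this does not affect the logic since those objects are classically generated.
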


\begin{proof}
Let $\Gamma = \{ \gamma_1, \dotsc, \gamma_n \}$ and write $S \smallsetminus \Gamma = \bigsqcup_i C_i$. For the ``only if'' direction one observes that as soon as a connected component $C_i$ contains anything else, it is possible to construct an object $X$ represented by a curve lying entirely inside $C_i$ such that there is no morphism from $\Gamma$ to $X [n]$, violating the assumption of weak generation on $\Gamma$. For the ``if'' direction, observe that the conditions on the connected components $\Gamma$ imply that $\Gamma$ is a classical generator, given by a dissection of $\mathbf S$ as considered in \cite{haidenkatzarkovkontsevich,barmeierschrollwang}, unless we have components of the fourth type, in which case the ``missing'' objects which are not classically generated by $\Gamma$, are curves connecting to the boundary component without stops. But these curves must connect to a different boundary component inside the same connected component, or cross one of the arcs cutting out the component. In both these cases, there exists a nontrivial morphism of some degree, showing that $\Gamma$ satisfies the condition of being a weak generator.
\end{proof}

\begin{remark}[Classification of weak generators]
We expect Theorem \ref{theorem:weak} to hold in general, namely we expect that the conditions give a full characterization of weak generators, i.e.\ that any weak generator of $\mathrm D\mathcal W (\mathbf S)$ is given by an object whose corresponding curves dissect the surface into connected components satisfying the conditions {\itemi}--{\itemiv}, without the assumption on intersections in the interior. Note that this general result would imply that for any graded gentle or skew-gentle algebra $A$ that is either proper or (possibly nonproper and) concentrated in degree $0$, any weak generator of $\per (A)$ is a classical generator. 
\end{remark}

\section{Hochschild cohomology, deformations and families of DG algebras}

We now recall some basic notions on the Hochschild cohomology of DG algebras needed to compute the Hochschild cohomology of $\mathcal W (\mathbf S)$. We first fix notations and sign conventions as follows.

\begin{notation}
\label{notation:gvs}
Let $(V, d_V)$ be a cochain complex of $\Bbbk$-vector spaces, i.e.\ $V$ is a $\mathbb Z$-graded $\Bbbk$-vector space and $d_V \colon V \to V$ is a differential of degree $1$. We denote the degree $i$ component of $V$ by $V^i$ and by $\s V$ the $1$-shifted complex with components $(\s V)^i = V^{i+1}$ and differential $d_{\s V} (\s v) = -\s d_V (v)$. For a homogeneous element $v \in V^i$ we denote by $\lvert v \rvert = i$ its degree. The degree of the corresponding element $\s v \in \s V$ is given by $\lvert \s v \rvert = \lvert v \rvert - 1$. (One may also view $\s V$ as the tensor product $\Bbbk [1] \otimes V$ where $\Bbbk [1]$ is the $1$-dimensional complex concentrated in degree $-1$ so that $\s v$ may be identified with $1 \otimes v$, where $1 \in \Bbbk$ is the unit of $\Bbbk$ but placed in degree $-1$.)

We often suppress the differential from the notation and simply write $V$ for a cochain complex with the differential $d_V$ being implicit.

For two cochain complexes $U$ and $V$, let $\Hom (U, V)$ be the cochain complex whose degree $i$ component $\Hom (U, V)^i$ is the $\Bbbk$-vector space of $\Bbbk$-linear maps of degree $i$. Its differential is given by the graded commutator
\begin{equation}
\label{eq:dhom}
d_{\Hom (U, V)} (f) = d_V \circ f - (-1)^{\lvert f \rvert} f \circ d_U
\end{equation}
where the sign is given by the Koszul sign rule.

Similarly, the tensor product of two cochain complexes $U$ and $V$ is the cochain complex $U \otimes V$ where
\[
(U \otimes V)^i = \bigoplus_{j \in \mathbb Z} U^{i-j} \otimes V^j
\]
and differential
\begin{equation}
\label{eq:tensordifferential}
d_{U \otimes V} (u \otimes v) = d_U (u) \otimes v + (-1)^{\lvert u \rvert} u \otimes d_V (v). 
\end{equation}
(This also explains the sign for the differential of $\s V = \Bbbk [1] \otimes V$.)
\end{notation}

\subsection{Bar resolution}

Let $A$ be a DG algebra and denote by $A^\e = A \otimes A^{\mathrm{op}}$ its enveloping algebra, where the opposite DG algebra $A^{\mathrm{op}}$ has the same underlying graded cochain complex as $A$ with multiplication $b \cdot^{\mathrm{op}} a = (-1)^{\lvert a \rvert \lvert b \rvert} a b$. The {\it bar resolution} of $A$ is the left DG $A^\e$-module 
\begin{equation}
\label{eq:bar}
\Bar(A) = \bigoplus_{n \geq 0} A\otimes (\s A)^{\otimes n} \otimes A.
\end{equation}
For $1 \leq i \leq j \leq n$ we use the shorthand
\begin{equation}
\label{eq:shorthand}
\s a_{i \dotsc j} := \s a_i \otimes \s a_{i+1} \otimes \cdots \otimes \s a_j
\end{equation}
for elements in $j - i + 1$ tensor factors of $(\s A)^{\otimes n}$. Since $\s a_{i \dotsc j}$ contains $j - i + 1$ instances of $\s$, its degree is
\[
\lvert \s a_{i \dotsc j} \rvert = \lvert a_i \rvert + \lvert a_{i+1} \rvert + \dotsb + \lvert a_j \rvert - (j - i + 1).
\]
The differential of $\Bar (A)$ can then be expressed as the sum $d_{\Bar (A)} = d^{\,\mathrm{int}} + d^{\,\mathrm{ext}}$ of the ``internal'' differential induced from $A$ via \eqref{eq:tensordifferential}
\begin{equation*}
\begin{split}
d^{\,\mathrm{int}} (a_0 \otimes \s a_{1 \dotsc n} \otimes a_{n+1}) &= d_A (a_0) \otimes \s a_{1 \dotsc n} \otimes a_{n+1} \\ &\quad + \sum_{1 \leq i \leq n} (-1)^{\lvert a_0 \rvert + \lvert \s a_{1 \dotsc i-1} \rvert} a_0 \otimes \s a_{1 \dotsc i-1} \otimes d_{\s A} (\s a_i) \otimes \s a_{i+1 \dotsc n} \otimes a_{n+1} \\ &\quad + (-1)^{\lvert a_0 \rvert + \lvert \s a_{1 \dotsc n} \rvert} a_0 \otimes \s a_{1 \dotsc n} \otimes d_A (a_{n+1})
\end{split}
\end{equation*}
and the ``external'' differential of the bar construction
\begin{equation*}
\begin{split}
d^{\,\mathrm{ext}} (a_0 \otimes \s a_{1 \dotsc n} \otimes a_{n+1}) &= (-1)^{\lvert a_0 \rvert} a_0 a_1 \otimes \s a_{2 \dotsc n} \otimes a_{n+1} \\
 &\quad + \sum_{1 \leq i < n} (-1)^{\lvert a_0 \rvert + \lvert \s a_{1 \dotsc i} \rvert} a \otimes \s a_{1 \dotsc i-1} \otimes \s a_i a_{i+1} \otimes \s a_{i+1 \dotsc n} \otimes a_{n+1} \\
&\quad - (-1)^{\lvert a_0 \rvert + \lvert \s a_{1 \dotsc n-1} \rvert} a_0 \otimes \s a_{1\dotsc n-1} \otimes a_n a_{n+1}.
\end{split}
\end{equation*}

The graded $A^\e$-module structure on $A\otimes (\s A)^{\otimes n} \otimes A$ is given by 
\[
(a \otimes b) (a_0 \otimes \s a_{1 \dotsc n} \otimes a_{n+1}) := (-1)^{\lvert b \rvert (\lvert a_0 \rvert + \lvert \s a_{1 \dotsc n} \rvert + \lvert a_{n+1} \rvert)} a a_0 \otimes \s a_{1 \dotsc n} \otimes a_{n+1} b.
\]
There is a natural morphism of DG $A^\e$-modules $\varepsilon \colon  \Bar(A)\rightarrow A$ given by the composition
\begin{align}\label{equ:bar}
\Bar(A) \stackrel{\mathrm{pr}_0} \longrightarrow A \otimes A \stackrel{m_2}\longrightarrow A
\end{align}
where $\mathrm{pr}_0$ is the canonical projection and $m_2$ is the multiplication of $A$. It is well known that $\varepsilon$ is a quasi-isomorphism.

Since $\Bar(A)$ is a DG projective resolution of the left DG $A^\e$-module $A$, the cochain complex $\Hom_{A^\e} (\Bar(A), A)$ computes the Hochschild cohomology of $A$. Its differential is given by $d (f):= d_A \circ f - (-1)^{\lvert f \rvert} f \circ d_{\Bar (A)}$ as in \eqref{eq:dhom}. The natural isomorphism 
\begin{align}
\label{identification-bimodule}
\Hom((\s A)^{\otimes n}, A) \stackrel{\sim}\longrightarrow \Hom_{A^\e}(A\otimes (\s A)^{\otimes n}\otimes A, A)
\end{align}
which sends $f$ to the map $\widetilde f$ defined by 
\[
a_0 \otimes \s a_{1\dotsc n} \otimes a_{n+1} \mapsto (-1)^{\lvert a_0 \rvert \lvert f \rvert} a_0 f(\s a_{1\dotsc n}) a_{n+1}.
\]
induces an isomorphism of cochain complexes between $\Hom_{A^\e} (\Bar (A), A)$ and the classical Hochschild cochain complex $\mathrm C^\bullet (A, A)$ of $A$.
 
\subsection{Bardzell resolution for DG monomial algebras}
\label{subsection:bardzellresolution}

The bar resolution \eqref{eq:bar} is often too large to be of use for explicit computations. For finitely generated algebras, a workable replacement is the resolution associated to a reduction system, see \cite{bardzell,chouhysolotar} and \cite{barmeierwang1,barmeierwang2,barmeierwang3} for its role in deformation theory. We now explain how Bardzell's minimal resolution \cite{bardzell} for (trivially graded) monomial algebras can be generalized to the DG setting.

Let $Q$ be a (finite) quiver. A {\it grading} on $Q$ is given by a function $\lvert - \rvert \colon Q_1 \to \mathbb Z$ assigning any arrow $x \in Q_1$ its degree $\lvert x \rvert$. A path $p = p_n \dotsb p_2 p_1$ of length $n$ is given by a sequence of $n$ composable arrows $p_1, \dotsc, p_n \in Q_1$ and we denote by $Q_n$ the set of paths of length $n$. (In particular, $Q_0$ corresponds to the vertices of $Q$.) The grading on $Q_1$ extends to $Q_n$ by setting
\[
\lvert p \rvert = \lvert p_1 \rvert + \dotsb + \lvert p_n \rvert
\]
which turns the path algebra $\Bbbk Q$ into a graded algebra. Note that $\Bbbk Q$ is naturally bigraded since each path has both a (path) length and a degree.

Let $I$ be a homogeneous ideal of $\Bbbk Q$. Since we will only need the quadratic monomial case, we shall assume that $I$ is generated by a subset $R \subset Q_2$. (The general monomial case is similar and the general non-monomial case follows for example by homological perturbation, see \cite{barmeierwang1}.) Let $A = \Bbbk Q / I$ be the graded quotient algebra. 

Set $W_0 = Q_0$ and $W_1 = Q_1$ and for $n \geq 2$ let 
\[
W_n = \{ w_1 w_2 \dotsb w_n \mid w_1, \dotsc, w_n \in Q_1, \; w_i w_{i+1} \in R \text{ for each $1 \leq i < n$} \} \subset Q_n.
\]
In particular, we have $W_2 = R$. An element of $W_3$ is called an overlap ambiguity and an element of $W_n$ for $n \geq 4$ a higher overlap ambiguity. We call $w \in W_n$ for $n \geq 1$ a {\it maximal overlap ambiguity} or simply {\it maximal overlap} if $w$ is not a subpath of any $w' \in W_m$ for $m > n$.

Now consider the following left DG $A^\e$-module 
\[
\mathrm P_\bullet (Q, R) = \bigoplus_{n \geq 0} A \otimes \s^n \Bbbk W_n \otimes A
\]
where $\otimes = \otimes_{\Bbbk Q_0}$ and $\s^n \Bbbk W_n$ is the $n$th shift of the graded vector space spanned by $W_n$. Analogous to \eqref{eq:shorthand} we shall use the shorthand
\[
\s w_{i \dotsc j} := \s^{j-i+1} w_i w_{i+1} \dotsb w_j
\]
for $i \leq j$ and $w_i w_{i+1} \dotsb w_j \in W_{j-i+1}$ and write elements in $A \otimes \s^n \Bbbk W_n \otimes A$ as
\[
a \otimes \s w_{1 \dotsc n} \otimes b 
\] 
for $a, b \in A$ and $w = w_1 w_2 \dotsb w_n \in W_n$. The differential $\partial^{\,\mathrm{ext}} $ is given as 
\begin{equation}
\begin{split}
\partial^{\,\mathrm{ext}}_n (a_0 \otimes \s w_{1 \dotsc n} \otimes a_{n+1}) &= (-1)^{\lvert a_0 \rvert} a_0 w_1 \otimes \s w_{2 \dotsc n} \otimes a_{n+1} \\&\quad - (-1)^{\lvert a_0 \rvert + \lvert \s w_{1 \dotsc n-1} \rvert} a \otimes \s w_{1 \dotsc n-1} \otimes w_n a_{n+1}.
\end{split}
\end{equation}
Note that we have 
\[
\partial^{\,\mathrm{ext}}_1 (a \otimes \s x \otimes  b) = (-1)^{\lvert a \rvert} (a x \otimes b - a \otimes x b)
\]
for any arrow $x \in Q_1$ and elements $a, b \in A$. Then $\mathrm P_\bullet (Q, R)$ together with $\partial^{\,\mathrm{ext}}$ is a DG projective resolution of $A$. 

If $(A, d)$ is a DG algebra it is possible to perturb the differential $\partial^{\,\mathrm{ext}}$ of $\mathrm P_\bullet (Q, R)$ to obtain a DG projective resolution of $(A, d)$. The general formula for $\partial^{\,\mathrm{ext}}$ is rather complicated. However, it simplifies if we assume that $d(w) =0$ whenever there is a relation through $w$, namely if either $w w' \in I$ or $w'' w\in I$ for some $w', w''\in Q_1$. This simplifying assumption suffices for our purposes, as it holds if $(A, d)$ is a {\it DG gentle algebra} (see Definition \ref{definition:dggentle}). In this case, the internal differential $\partial^{\,\mathrm{int}}$ is simply given as   
\begin{multline*}
\partial^{\,\mathrm{int}}_n (a_0 \otimes \s w_{1 \dotsc n} \otimes a_{n+1}) \\
  {} = d(a_0) \otimes \s w_{1 \dotsc n} \otimes a_{n+1} + (-1)^{|a_0|+|\s w_{1 \dotsc n}|} a_0 \otimes \s w_{1 \dotsc n} \otimes d(a_{n+1})
\end{multline*} 
for any $n \geq 0$. Denote $\partial = \partial^{\,\mathrm{int}} + \partial^{\,\mathrm{ext}}$. Then we have the following result.

\begin{proposition}
Let $(A, d)$ be a quadratic monomial DG algebra such that $d$ vanishes on arrows appearing in the quadratic relations, i.e.\ writing $A = \Bbbk Q / I$ we have $d (w) = 0$ and $d (w') = 0$ for any $ww' \in I$ with $w, w'\in Q_1$.

Then $\mathrm P_\bullet (Q, R)$ together with the differential $\partial$ is a DG projective resolution of $A$ as a left DG $A^\e$-module.
\end{proposition}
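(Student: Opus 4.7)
The plan is to verify first that $\partial = \partial^{\,\mathrm{int}} + \partial^{\,\mathrm{ext}}$ is a differential, then that each component module is DG projective, and finally that the augmentation $\varepsilon\colon \mathrm P_\bullet(Q,R) \to A$ is a quasi-isomorphism.

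For $\partial^2 = 0$, I expand into four pieces. The identity $(\partial^{\,\mathrm{int}})^2 = 0$ is immediate from $d_A^2 = 0$ together with the Koszul signs in \eqref{eq:tensordifferential}. The identity $(\partial^{\,\mathrm{ext}})^2 = 0$ reduces formally to Bardzell's computation for (trivially graded) quadratic monomial algebras: only two middle terms survive, and they cancel because each appears via an application of left- and right-multiplication by the outermost arrows $w_1$ and $w_n$ of a maximal overlap. The mixed term $\partial^{\,\mathrm{int}}\partial^{\,\mathrm{ext}} + \partial^{\,\mathrm{ext}}\partial^{\,\mathrm{int}}$ would in general produce cross-terms containing $d(w_1)$ and $d(w_n)$; but since $w_1$ and $w_n$ each appear in a relation (namely $w_1 w_2$ and $w_{n-1} w_n$), these terms vanish by hypothesis, and the remaining cross-terms cancel in pairs by the Leibniz rule and a sign check against the degree shift of $\s$.

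Next, each summand $A \otimes \s^n \Bbbk W_n \otimes A$ decomposes over $w \in W_n$ as a direct sum of shifts of $A e_{s(w_1)} \otimes e_{t(w_n)} A$, each being a direct summand of $A^{\,\mathrm e}$ and thus a projective graded $A^{\,\mathrm e}$-module. The finite filtration $F_p = \bigoplus_{n \leq p} A \otimes \s^n \Bbbk W_n \otimes A$ is preserved by $\partial^{\,\mathrm{int}}$ and lowered by $\partial^{\,\mathrm{ext}}$, so $\mathrm P_\bullet(Q,R)$ is semi-free, hence K-projective, as a DG $A^{\,\mathrm e}$-module.

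The main step is acyclicity of the augmented complex. The cleanest route is to construct a map of augmented DG $A^{\,\mathrm e}$-modules $\iota\colon \mathrm P_\bullet(Q,R) \hookrightarrow \Bar(A)$ by
\[
a_0 \otimes \s w_{1 \dotsc n} \otimes a_{n+1} \longmapsto a_0 \otimes \s w_1 \otimes \s w_2 \otimes \cdots \otimes \s w_n \otimes a_{n+1},
\]
and check that $\iota$ commutes with differentials: the external parts match by definition of Bardzell's differential, and compatibility with the internal parts is precisely where the hypothesis is used, since in $\Bar(A)$ the internal differential produces intermediate terms $\s w_{1 \dotsc i-1} \otimes d_{\s A}(\s w_i) \otimes \s w_{i+1 \dotsc n}$ which vanish because each $w_i$ lies in a relation and thus satisfies $d(w_i)=0$. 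It then suffices to show $\iota$ is a quasi-isomorphism, which I would do via the filtration by tensor length: on both sides this yields a convergent spectral sequence whose $E_0$-page is the same complex with differential $\partial^{\,\mathrm{int}}$, and passing to $E_1$ reduces to comparing the cohomologies of $\Bar(A)$ and $\mathrm P_\bullet(Q,R)$ for the underlying graded (non-DG) monomial algebra, where the result is Bardzell's theorem. Combined with the DG projectivity of $\mathrm P_\bullet(Q,R)$ and the standard fact that $\varepsilon_{\Bar(A)}$ is a quasi-isomorphism, this gives the conclusion. The main obstacle is the bookkeeping of Koszul signs in verifying $\partial^2=0$ and in showing that $\iota$ respects differentials; everything else is formal once the hypothesis $d(w)=0$ is installed.
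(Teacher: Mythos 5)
Your overall strategy --- constructing a chain map $\iota\colon \mathrm P_\bullet(Q,R)\hookrightarrow \Bar(A)$ and comparing via a spectral sequence --- is a genuinely different route from the paper, which instead regards $\mathrm P_\bullet(Q,R)$ directly as the total complex of a double complex (tensor length against internal degree), notes that the rows with $\partial^{\,\mathrm{ext}}$ are the exact Bardzell complexes for $A$ viewed as a graded (non-DG) monomial algebra, and then invokes the Acyclic Assembly Lemma. Your observation that $\iota$ is a chain map (using that the middle bar terms $\s w_i w_{i+1}$ vanish since $w_i w_{i+1}\in R\subset I$, and the hypothesis $d(w_i)=0$ for the internal part) is correct and a nice alternative entry point.

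However, there is a real gap in your spectral-sequence step, and it comes from the choice of filtration. Filtering by tensor length preserves $\partial^{\,\mathrm{int}}$ and strictly decreases $\partial^{\,\mathrm{ext}}$, so the $E_0$-differential is $\partial^{\,\mathrm{int}}$ as you say --- but then the $E_1$-page is computed by taking cohomology with respect to the internal differential, which (using Künneth and the fact that $\Bbbk W_n$ has zero differential by hypothesis) yields $\H(A)\otimes \s^n\Bbbk W_n\otimes \H(A)$ for $\mathrm P_\bullet$ and $\H(A)\otimes(\s\H(A))^{\otimes n}\otimes\H(A)$ for $\Bar(A)$. These are complexes built on the cohomology algebra $\H(A)$, not on the underlying graded monomial algebra $A$, and the two need not coincide: the hypothesis constrains $d$ only on arrows appearing in relations, so $d$ may be nonzero on other arrows and $\H(A)$ may have a different quiver (some $w_i$ can even be coboundaries, killing classes at $E_1$). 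In particular $W_n$ was built from $(Q,R)$ for $A$, so the $E_1$-page of $\mathrm P_\bullet$ is \emph{not} the Bardzell complex of $\H(A)$, and Bardzell's theorem does not apply in the form you need. The fix is to filter by internal degree instead: then $\partial^{\,\mathrm{ext}}$ is the $E_0$-differential on both sides, the rows are the honest Bardzell and bar complexes of $A$ as a graded monomial algebra, and $E_1$ collapses to $A$ in tensor length $0$ on both sides with $\iota$ inducing the identity. Equivalently, and more efficiently, one can dispense with the bar resolution altogether and apply the Acyclic Assembly Lemma directly to the double complex for $\mathrm P_\bullet(Q,R)$, as the paper does; this also sidesteps the conditional-convergence issues that the filtration by tensor length would force you to address for the unbounded bar resolution.
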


\begin{proof}
First we note that $\partial^2 = (\partial^{\,\mathrm{ext}} + \partial^{\,\mathrm{int}})^2 = 0$. That is, $(\mathrm P_\bullet (Q, R), \partial)$ is a left DG $A^\e$-module. We may view $\mathrm P^\bullet (Q, R)$ as the total complex of a double complex whose vertical complexes are $A \otimes \s^n \Bbbk W_n \otimes A$ with the internal differential. It is known that each horizontal complex with the external (Hochschild) differential $\partial^{\,\mathrm{ext}}$ is exact. Then it follows from the Acyclic Assembly Lemma (see e.g.\ \cite[Section~2.7]{weibel}) that the total complex $(\mathrm P_\bullet (Q, R), \partial)$ is also exact. 
\end{proof}

We may use the projective resolution $\mathrm P_\bullet (Q, R)$ to compute the Hochschild cohomology of graded gentle algebras $A$. Using the observation that
\[
\Hom_{A^\e} (A \otimes \s^i \Bbbk W_i \otimes A, A) \simeq \Hom_{\Bbbk Q_0^\e} (\s^i \Bbbk W_i, A)
\]
one has that $\HH^\bullet (A, A)$ is isomorphic to the cohomology of the following complex 
\begin{align}\label{algin:smallcomplex}
\mathrm P^\bullet (Q, R) = \Biggl( \prod_{i \geq 0} \Hom_{\Bbbk Q_0^\e} (\s^i \Bbbk W_i, A), \delta \Biggr)
\end{align}
where the differential $\delta$ is given by 
\begin{align}\label{differentialsmaller}
\delta(f) (\s^{i+1} w) = -(-1)^{\lvert \s w_1 \rvert \lvert f \rvert} w_1 f (\s w_{2 \dotsc i+1}) + (-1)^{\lvert \s w_{1 \dotsc i} \rvert -  \lvert f \rvert} f (\s w_{1 \dotsc i}) w_{i+1}
\end{align}
for any $f \in \Hom_{\Bbbk Q_0^\e} (\s^i \Bbbk W_i, A)$ and any $w = w_1 w_2 \dotsb w_{i+1} \in W_{i+1}$.

\subsection{Deformations of pretriangulated DG and \texorpdfstring{A$_\infty$}{A-infinity} categories}
\label{subsection:keller}

Deformations of algebras and categories are controlled by a suitable notion of the Hochschild complex \cite{kontsevichsoibelman}, with $\HH^2$ being identified with the set of first-order deformations up to equivalence.

A result of Keller \cite{keller03} shows that the Hochschild cohomology of a DG category is isomorphic to the Hochschild cohomology of its DG category of twisted complexes. This isomorphism can be seen as a shadow of a much stronger general result: for any (small) DG category $\mathcal A$, the Hochschild complexes of $\mathcal A$ and $\tw (\mathcal A)$ are isomorphic in the homotopy category of B$_\infty$ algebras \cite{keller03} and hence in the homotopy category of DG Lie or L$_\infty$ algebras. (The B$_\infty$ structure includes the Gerstenhaber bracket which controls the deformation theory.) One may thus study the deformation theory of $\tw (\mathcal A)$ via the deformation theory of $\mathcal A$ itself. (See \cite{lowenvandenbergh1,lowenvandenbergh2} for analogous results for deformations of Abelian categories.)

For $\mathcal W (\mathbf S)$, we take $\mathcal A = \add \End (\Gamma)$ for $\Gamma$ the standard generator of $\mathrm D \mathcal W (\mathbf S)$ in Fig.~\ref{fig:dissection}. The results of \cite{keller03} then yield the following isomorphism
\begin{equation}
\label{equation:isomorphismhh}
\HH^\bullet (\mathcal W (\mathbf S), \mathcal W (\mathbf S)) \simeq \HH^\bullet (\End (\Gamma), \End (\Gamma))
\end{equation}
(See \cite[Section~2.4]{perutzsheridan} for the case when $\mathcal A$ is an A$_\infty$ category.) Moreover, the full deformation theory of $\mathcal W (\mathbf S)$ may be described via the deformation theory of $\End (\Gamma)$. When $\Gamma$ is a formal generator, $\End (\Gamma)$ is a DG algebra and its Hochschild complex generally controls deformations of $\End (\Gamma)$ as a curved A$_\infty$ algebra. In Section~\ref{subsection:hochschildwrapped} we will show that for the standard generator of Proposition \ref{proposition:generator}, there is a standard basis of $\HH^2 (\End (\Gamma), \End (\Gamma))$ consisting of $2$-cocycles which parametrize deformations of $\End (\Gamma)$ as a curved DG algebra. We show that it even suffices to choose $\Gamma$ to be weak generator of $\mathcal W (\mathbf S)$ (see Section~\ref{section:curved}) in which case it suffices to deform $\End (\Gamma)$ as (uncurved) DG algebra.

\subsection{Families of DG algebras}
\label{subsection:families}

Whereas the Hochschild complex of a DG algebra $A$ can be seen to parametrize formal or infinitesimal deformations of $A$, we shall consider a slightly stronger notion of deformation of DG algebras, namely that of families of DG algebras ``parametrized'' by some affine scheme $\Spec R$. Given such a family, we can consider the completion at a closed point of $\Spec R$ which gives a formal deformation of the fiber of the family at this point. Conversely, such a family can be viewed as an algebraization of the formal family.\footnote{This is an unfortunate clash of standard terminology. A {\it formal generator} $\Gamma$ is a generator whose endomorphism DG algebra is A$_\infty$-quasi-isomorphic to its (associative) $\Ext$-algebra $A = \Ext^\bullet (\Gamma, \Gamma)$ without higher structures. A {\it formal deformation} of $A$ is a deformation over a complete local Noetherian $\Bbbk$-algebra $R$, i.e.\ here ``formal'' refers to the completeness of $R$, and not to a statement about the vanishing of higher structures.}

The passage from a formal deformation to an algebraic family is generally highly nontrivial. However, for DG or A$_\infty$ algebras there are additional hurdles for obtaining a good formal theory of deformations in the first place: the {\it curvature problem} presents an obstacle for defining a suitable notion of equivalence of deformations. Fortunately, for deformations of $\mathcal W (\mathbf S)$ we are able to surmount all of these difficulties. Even though $\mathcal W (\mathbf S)$ naturally admits curved deformations when described via a classical generator, we bypass the curvature problem by giving an equivalent description in terms of a weak generator (see Section~\ref{section:curved}) and then construct an algebraic family.

We shall give a detailed exposition of our notion of family, including the associated Kodaira--Spencer map to Hochschild cohomology which is used to introduce the notions of a versal or semi-universal family.

For our applications, deformations of $\mathcal W (\mathbf S)$ turn out to be unobstructed and parametrized by a finite number of parameters, whence it suffices to work over the base algebra $R = \Bbbk [x_1, \dotsc, x_d]$, although the same definitions make sense for any commutative Noetherian $\Bbbk$-algebra $R$. We shall assume that $\Bbbk$ is algebraically closed and of characterstic $0$, so that the maximal ideals of $R = \Bbbk [x_1, \dotsc, x_d]$ are of the form $\mathfrak m_\lambda = (x_1 - \lambda_1, \dotsc, x_d - \lambda_d)$ for some $d$-tuple $\lambda = (\lambda_1, \dotsc, \lambda_d) \in \Bbbk^d$. Of course $\Spec R$ is affine $d$-space $\mathbb A^d$ and we identify maximal ideals of $R$, closed points of $\Spec R$ and elements $\lambda \in \Bbbk^d$ as usual.

\subsubsection{DG algebras}
\label{subsubsection:dgalgebras}

Let $V$ be a $\mathbb Z$-graded $\Bbbk$-vector space whose degree $i$ component we denote by $V^i$ as in Notation \ref{notation:gvs}. For sign reasons, it is convenient to define DG or A$_\infty$ structures on the shifted vector space $\s V$ whose degree $i$ component is $V^{i+1}$ and whose elements we denote by $\s v$ for $v \in V$.  This shift ensures that all signs arise from the Koszul sign rule. A DG algebra structure on $V$ is then given by a pair of degree $1$ maps
\begin{equation}
\label{eq:m1m2}
\mu^1 \in \Hom (\s V, \s V) \qquad \text{and} \qquad \mu^2 \in \Hom ((\s V)^{\otimes 2}, \s V)
\end{equation}
satisfying the (shifted) A$_\infty$ equations
\begin{equation}
\label{eq:ainfinityequations}
\begin{aligned}
\mu^1 (\mu^1 (\s v)) &= 0 \\
\mu^2 (\s v, \mu^1 (\s u)) + \mu^1 (\mu^2 (\s v, \s u)) + (-1)^{\lvert \s u \rvert} \mu^2 (\mu^1 (\s v), \s u) &= 0 \\
\mu^2 (\s w, \mu^2 (\s v, \s u)) + (-1)^{\lvert \s u \rvert} \mu^2 (\mu^2 (\s w, \s v), \s u) &= 0
\end{aligned}
\end{equation}
for any $u, v, w \in V$, where $\lvert \s u \rvert = \lvert u \rvert - 1$ is the shifted degree of $u$. Of course, these three equations encode that $\mu^1$ is a differential which satisfies the graded Leibniz rule with respect to the graded associative multiplication given by $\mu^2$. (See \cite[Section~2.1]{barmeierschrollwang} for more details, in particular \cite[Remark 2.4]{barmeierschrollwang}, for recovering the usual sign and degree conventions of a DG algebra.) We usually write $A = (V, \mu^1, \mu^2)$ for a DG algebra and without further qualification we always mean a DG $\Bbbk$-algebra.

\subsubsection{Families of DG algebras}

Now let $R$ be a commutative Noetherian $\Bbbk$-algebra. A {\it family of DG algebras over $R$} is a DG $R$-algebra $\widetilde A = (V \otimes R, \widetilde \mu^1, \widetilde \mu^2)$, defined as in Section~\ref{subsubsection:dgalgebras}, but where everything is not $\Bbbk$-linear, but $R$-linear, in particular $\Hom$'s and $\otimes$'s are over $R$. More precisely, $V$ is a graded $\Bbbk$-vector space so that $V \otimes R$ is a free graded $R$-module and $\widetilde \mu^1 \in \Hom_R (\s V \otimes R, \s V \otimes R)$ and $\widetilde \mu^2 \in \Hom_R ((\s V \otimes R)^{\otimes_R 2}, \s V \otimes R)$ are required to satisfy the analogue of \eqref{eq:ainfinityequations}.

Note that $R$-linearity implies that $\widetilde \mu^1$ and $\widetilde \mu^2$ are determined by their values on $\s V$ or $(\s V)^{\otimes 2}$, respectively, i.e.\ we may view $\widetilde \mu^1$ and $\widetilde \mu^2$ as maps
\[
\widetilde \mu^1 \in \Hom (\s V, \s V \otimes R), \qquad \widetilde \mu^2 \in \Hom ((\s V)^{\otimes 2}, \s V \otimes R)
\]
i.e.\ as maps $\s V \to \s V$ or $(\s V)^{\otimes 2} \to \s V$ but with coefficients given by elements of $R$ which for us will simply be polynomials in $x_1, \dotsc, x_d$.

A family of DG algebras is indeed a family of DG $\Bbbk$-algebras in the naive sense whence we also write a family over $R$ as $\{ A_\lambda \}_{\lambda \in \Spec R}$. Namely, for $R = \Bbbk [x_1, \dotsc, x_d]$ and every $\lambda = (\lambda_1, \dotsc, \lambda_d) \in \Bbbk^d$, we may write $\Bbbk_\lambda = R / \mathfrak m_\lambda \simeq \Bbbk$ for the $1$-dimensional $R$-module on which each $x_i$ acts by multiplication by $\lambda_i$. For each $\lambda \in \Bbbk^d$, applying the functor $- \otimes_R \Bbbk_\lambda$ turns a graded $R$-module into a graded $\Bbbk$-vector space and a DG $R$-algebra $\widetilde A$ into the DG algebra $\widetilde A \otimes_R \Bbbk_\lambda \simeq A_\lambda = (V, \mu^1_\lambda, \mu^2_\lambda)$. Here, $\mu^1_\lambda = \widetilde \mu^1 \otimes_R \Bbbk_\lambda$ and $\mu^2_\lambda = \widetilde \mu^2 \otimes_R \Bbbk_\lambda$ are maps as in \eqref{eq:m1m2} obtained by evaluating $x_i \mapsto \lambda_i$ for all coefficient polynomials of $\widetilde \mu^1$ and $\widetilde \mu^2$.

Since $\widetilde \mu^1$ and $\widetilde \mu^2$ satisfy \eqref{eq:ainfinityequations}, so do $\mu^1_\lambda$ and $\mu^2_\lambda$, so that $A_\lambda = (V, \mu^1_\lambda, \mu^2_\lambda)$ is indeed a DG $\Bbbk$-algebra.

Note that $\Hom_R ((\s V \otimes R)^{\otimes_R n}, \s V \otimes R)$ is a right $R$-module via
\[
(\phi \cdot r) (x_1 \otimes \dotsb \otimes x_n) = \phi (x_1 \otimes \dotsb \otimes x_n) \cdot r
\]
so that we may identify $\mathrm C^\bullet_R (\widetilde A, \widetilde A) \otimes_R \Bbbk_\lambda \simeq \mathrm C^\bullet (A_\lambda, A_\lambda)$. That is, applying the functor $- \otimes_R \Bbbk_\lambda$ turns the $R$-linear Hochschild complex of the DG $R$-algebra $\widetilde A$ into the usual $\Bbbk$-linear Hochschild complex of the DG algebra $A_\lambda$. (Note that the $R$-linear Hochschild complex uses the $R$-linear DG structure $\widetilde \mu^1, \widetilde \mu^2$ which reduces to the $\Bbbk$-linear structure $\mu^1_\lambda, \mu^2_\lambda$ after tensoring by $\Bbbk_\lambda$.)

\subsubsection{Kodaira--Spencer map}

Given a family of DG algebras $\widetilde A = (V \otimes R, \widetilde \mu^1, \widetilde \mu^2)$ over $R = \Bbbk [x_1, \dotsc, x_d]$, its tangent space at a closed point $\lambda$ can be viewed as a $d$-parameter first-order deformation of $A_\lambda$. The Kodaira--Spencer map is the natural map that assigns to a tangent vector at $\lambda$ the cohomology class corresponding to this first-order deformation. (See for example \cite[Section~3.4]{iwanari} for the Kodaira--Spencer map in the context of deformations of stable $\infty$-categories.)

Concretely, under the $\Bbbk$-linear identification $R / \mathfrak m_\lambda^2 \simeq \Bbbk_\lambda \oplus (\mathfrak m_\lambda / \mathfrak m_\lambda^2)$ we may view the images of $\widetilde \mu^1, \widetilde \mu^2$ under $- \otimes_R R / \mathfrak m_\lambda^2$ as first-order deformations of $\mu^1_\lambda, \mu^2_\lambda$.

For example, if
\[
\widetilde \mu^1 (\s v) = \sum_{1 \leq i \leq k} \s w_i \otimes r_i
\]
for some $w_i \in V$ and $r_i \in R$, the Nullstellensatz allows us to write
\[
r_i = \underbrace{r_i (\lambda)}_{\in \Bbbk_\lambda} \oplus \underbrace{r_i - r_i (\lambda)}_{\in \mathfrak m_\lambda}.
\]
We may thus write the images of $\widetilde \mu^1, \widetilde \mu^2$ under $- \otimes_R(R / \mathfrak m_\lambda^2)$ as $\mu^1_\lambda + \widehat \mu^1_\lambda$ and $\mu^2_\lambda + \widehat \mu^2_\lambda$, respectively, where now
\begin{equation}
\label{eq:cocycles}
\widehat \mu^i_\lambda \in \Hom ((\s V)^{\otimes i}, \s V) \otimes (\mathfrak m_\lambda / \mathfrak m_\lambda^2), \qquad i = 1, 2.
\end{equation}
Viewing the first tensor factor as coefficients of $\mathfrak m_\lambda / \mathfrak m_\lambda^2 \simeq \Bbbk^d$, let us denote by $(\widehat \mu^i_\lambda)_j$ the coefficient of the $j$th basis vector of $\Bbbk^d$.

\begin{lemma}
The elements $((\widehat \mu^1_\lambda)_j, (\widehat \mu^2_\lambda)_j)$ for $1 \leq j \leq d$ define $2$-cocycles in $\mathrm C^\bullet (A_\lambda, A_\lambda) \otimes (\mathfrak m_\lambda / \mathfrak m_\lambda^2)$.
\end{lemma}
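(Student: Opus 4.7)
The plan is to obtain the cocycle conditions by linearizing the three A$_\infty$ equations \eqref{eq:ainfinityequations} of the family $\widetilde A$ modulo $\mathfrak m_\lambda^2$. After tensoring by $R/\mathfrak m_\lambda^2 \simeq \Bbbk_\lambda \oplus \mathfrak m_\lambda/\mathfrak m_\lambda^2$, the structure maps become
\[
\widetilde \mu^i \otimes_R R/\mathfrak m_\lambda^2 \;=\; \mu^i_\lambda + \widehat \mu^i_\lambda, \qquad i = 1, 2,
\]
where, crucially, any composition containing two or more factors of $\widehat \mu^i_\lambda$ vanishes because its coefficients land in $\mathfrak m_\lambda \cdot \mathfrak m_\lambda = 0$ inside $R/\mathfrak m_\lambda^2$.

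First I would substitute this expansion into each of \eqref{eq:ainfinityequations} and separate the result by its order in $\mathfrak m_\lambda/\mathfrak m_\lambda^2$. The terms lying in $\Bbbk_\lambda$ recover the A$_\infty$ equations for $A_\lambda$, and hence vanish. The terms linear in $\mathfrak m_\lambda/\mathfrak m_\lambda^2$ form a sum over ``replace exactly one factor $\mu^i_\lambda$ by the corresponding $\widehat \mu^i_\lambda$''. The first equation yields
\[
\mu^1_\lambda \circ \widehat \mu^1_\lambda + \widehat \mu^1_\lambda \circ \mu^1_\lambda = 0,
\]
which is precisely $d \widehat \mu^1_\lambda = 0$ with respect to \eqref{eq:dhom}. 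The second (Leibniz) equation produces
\begin{multline*}
\mu^2_\lambda(\s v, \widehat \mu^1_\lambda(\s u)) + \mu^1_\lambda(\widehat \mu^2_\lambda(\s v, \s u)) + (-1)^{\lvert \s u \rvert} \mu^2_\lambda(\widehat \mu^1_\lambda(\s v), \s u) \\
+ \widehat \mu^2_\lambda(\s v, \mu^1_\lambda(\s u)) + \widehat \mu^1_\lambda(\mu^2_\lambda(\s v, \s u)) + (-1)^{\lvert \s u \rvert} \widehat \mu^2_\lambda(\mu^1_\lambda(\s v), \s u) = 0,
\end{multline*}
and the third (associativity) equation produces an analogous identity in which every occurrence of $\mu^2_\lambda$ is replaced exactly once by $\widehat \mu^2_\lambda$.

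Matching these linearized identities against the explicit formula \eqref{differentialsmaller} for the Hochschild differential, one sees that they are precisely the components, in arities $1$ and $2$, of the equation $\delta\bigl( \widehat \mu^1_\lambda, \widehat \mu^2_\lambda\bigr) = 0$ in $\mathrm C^\bullet(A_\lambda, A_\lambda) \otimes (\mathfrak m_\lambda/\mathfrak m_\lambda^2)$. Finally, since $\mathfrak m_\lambda/\mathfrak m_\lambda^2 \simeq \Bbbk^d$ and the Hochschild differential is $\Bbbk$-linear in the coefficient direction, decomposing $\widehat \mu^i_\lambda = \sum_{j=1}^d (\widehat \mu^i_\lambda)_j \otimes e_j$ and extracting the $j$th coordinate gives that each pair $\bigl((\widehat \mu^1_\lambda)_j, (\widehat \mu^2_\lambda)_j\bigr)$ is itself a $2$-cocycle.

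The only point requiring care is the sign bookkeeping in the linearization, since the A$_\infty$ equations mix terms of different arities with Koszul signs; but no new phenomenon intervenes, as the signs produced by the linearization are manifestly those appearing in \eqref{eq:ainfinityequations}, and these are exactly the signs entering \eqref{differentialsmaller}. The rest is a routine expansion.
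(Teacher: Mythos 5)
Your proof is correct and takes essentially the same approach as the paper's; the paper compresses the argument into a single sentence (``since $\widehat\mu^1_\lambda, \widehat\mu^2_\lambda$ are the infinitesimals of a $d$-parameter first-order deformation of $A_\lambda$, their components satisfy the cocycle condition''), whereas you explicitly linearize the three A$_\infty$ equations modulo $\mathfrak m_\lambda^2$ to produce that cocycle condition. One small inaccuracy: you cite \eqref{differentialsmaller} as ``the explicit formula for the Hochschild differential,'' but that equation is the differential $\delta$ on the Bardzell/reduction-system complex $\mathrm P^\bullet(Q,R)$; the cocycle conditions you derive should be matched against the Hochschild differential on $\mathrm C^\bullet(A_\lambda,A_\lambda)$ coming from the bar construction (the one built from \eqref{eq:dhom} and $d^{\,\mathrm{ext}}$), which is what you in fact write out. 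This is a misattribution of a label rather than a flaw in the reasoning.
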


\begin{proof}
Since $\widehat \mu^1_\lambda$ and $\widehat \mu^2_\lambda$ are the infinitesimals of a $d$-parameter first-order deformation of $A_\lambda$, their components satisfy the cocycle condition in $\mathrm C^2 (A_\lambda, A_\lambda)$.
\end{proof}

Since $\mathfrak m_\lambda / \mathfrak m_\lambda^2$ is finite dimensional we have a natural isomorphism 
\begin{align}\label{align:identification}
\Hom ((\s V)^{\otimes i}, \s V) \otimes (\mathfrak m_\lambda / \mathfrak m_\lambda^2) \simeq \Hom( (\mathfrak m_\lambda / \mathfrak m_\lambda^2)^*,(\s V)^{\otimes i}, \s V)).
\end{align}  Note that we may identify the tangent space $\mathrm T_\lambda \mathbb A^d $ at $\lambda$ with the dual space $(\mathfrak m_\lambda / \mathfrak m_\lambda^2)^*$.  For each $\lambda$ we thus obtain a well-defined {\it Kodaira--Spencer map}
\begin{equation}
\label{eq:kodairaspencer}
\mathrm{KS}_\lambda \colon \mathrm T_\lambda \mathbb A^d \to \HH^2 (A_\lambda, A_\lambda), 
\end{equation}
which is the $\Bbbk$-linear map induced by the cohomology class of the cocycle $((\widehat \mu^1_\lambda)_j, (\widehat \mu^2_\lambda)_j)$. This shows that the Kodaira--Spencer map has a natural interpretation --- it is simply the natural map induced by the evaluation map $\mathfrak m_\lambda / \mathfrak m_\lambda^2 \otimes (\mathfrak m_\lambda / \mathfrak m_\lambda^2)^* \to \Bbbk$ where $\partial_{x_j-\lambda_j} = (x_j - \lambda_j)^*$ acts by evaluation on the right tensor factor of \eqref{eq:cocycles}.

\begin{definition}
We call a family $\widetilde A$ of DG algebras over $R$ {\it versal} resp.\ {\it semi-universal} at a closed point $\lambda \in \Spec R$, if the Kodaira--Spencer map \eqref{eq:kodairaspencer} is surjective resp.\ bijective.
\end{definition}

The following proposition shows that the notion of being versal or semi-universal at $\lambda$ is invariant under quasi-isomorphisms.

\begin{proposition}
Let $\widetilde A = (V\otimes R, \widetilde \mu^1, \widetilde \mu^2)$ and $\widetilde A' = (V'\otimes R, \widetilde{\mu}'{}^1, \widetilde{\mu}'{}^2)$ be two families of DG algebras over $R$. Then any $R$-linear quasi-isomorphism $\Phi\colon \widetilde A \to \widetilde A'$ of DG algebras induces a commutative diagram
\begin{align}\label{align:KSmaps}
\begin{tikzpicture}[baseline=-2.6pt]
\matrix (m) [matrix of math nodes, row sep=.25em, text height=1.5ex, column sep=3em, text depth=0.25ex, ampersand replacement=\&, inner sep=3pt]
{
\& \HH^2 (A_\lambda, A_\lambda) \\
\mathrm T_\lambda \mathbb A^d \& \\
\& \HH^2 (A'_\lambda, A'_\lambda)\mathrlap{.} \\
};
\path[->,line width=.4pt]
(m-2-1) edge node[font=\scriptsize, above] {$\mathrm{KS}_\lambda$} (m-1-2.west)
(m-2-1) edge node[font=\scriptsize, below=-.2ex] {$\mathrm{KS}_\lambda$} (m-3-2.west)
(m-1-2) edge node[font=\scriptsize, right=-.2ex] {$\simeq$} (m-3-2)
;
\end{tikzpicture}
\end{align}
As a result, the property of being versal or semi-universal at a closed point $\lambda \in \Spec R$ is invariant under quasi-isomorphisms of the family.
\end{proposition}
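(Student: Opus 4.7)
The plan is to reduce the $R$-linear quasi-isomorphism $\Phi$ to quasi-isomorphisms at the level of the fibers and of the first-order neighbourhood of $\lambda$, and then invoke the standard fact that equivalent first-order deformations define the same class in Hochschild cohomology. Since $\widetilde A = V \otimes R$ and $\widetilde A' = V' \otimes R$ are $R$-flat and $\Phi$ is $R$-linear, tensoring over $R$ by $\Bbbk_\lambda$ and by $R / \mathfrak m_\lambda^2$ preserves quasi-isomorphisms; denote the resulting quasi-isomorphisms by $\Phi_\lambda \colon A_\lambda \to A'_\lambda$ and $\Phi^{(1)}_\lambda \colon \widetilde A \otimes_R R / \mathfrak m_\lambda^2 \to \widetilde A' \otimes_R R / \mathfrak m_\lambda^2$. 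The map $\Phi_\lambda$ induces the right-hand vertical isomorphism $\HH^2 (A_\lambda, A_\lambda) \xrightarrow{\sim} \HH^2 (A'_\lambda, A'_\lambda)$ of \eqref{align:KSmaps} by Keller's invariance of Hochschild cohomology under DG quasi-isomorphism \cite{keller03}, realised via the zig-zag $\mathrm C^\bullet (A_\lambda, A_\lambda) \to \mathrm C^\bullet (A_\lambda, A'_\lambda) \leftarrow \mathrm C^\bullet (A'_\lambda, A'_\lambda)$ in which both maps are quasi-isomorphisms.

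Under the splitting $R / \mathfrak m_\lambda^2 \simeq \Bbbk_\lambda \oplus (\mathfrak m_\lambda / \mathfrak m_\lambda^2)$, the DG $(R / \mathfrak m_\lambda^2)$-algebras $\widetilde A \otimes_R R / \mathfrak m_\lambda^2$ and $\widetilde A' \otimes_R R / \mathfrak m_\lambda^2$ are precisely the first-order deformations of $A_\lambda$ and $A'_\lambda$ whose infinitesimals are the Hochschild $2$-cocycles $\widehat \mu^i_\lambda$ and $\widehat{\mu}'{}^i_\lambda$ appearing in \eqref{eq:cocycles}. By construction, $\Phi^{(1)}_\lambda$ is a quasi-isomorphism between these first-order deformations whose reduction modulo $\mathfrak m_\lambda$ is $\Phi_\lambda$. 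Standard deformation theory then forces the two infinitesimal cocycle classes, viewed as elements of $\HH^2 (A_\lambda, A_\lambda) \otimes (\mathfrak m_\lambda / \mathfrak m_\lambda^2)$ and $\HH^2 (A'_\lambda, A'_\lambda) \otimes (\mathfrak m_\lambda / \mathfrak m_\lambda^2)$ respectively, to correspond under the isomorphism of the previous paragraph. Pairing with tangent vectors via the duality $\mathrm T_\lambda \mathbb A^d \simeq (\mathfrak m_\lambda / \mathfrak m_\lambda^2)^*$ of \eqref{align:identification} then yields the commutativity of \eqref{align:KSmaps}, from which the final assertion is immediate, since post-composition with an isomorphism preserves both surjectivity and bijectivity.

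The hard part will be making precise that an $R / \mathfrak m_\lambda^2$-linear quasi-isomorphism $\Phi^{(1)}_\lambda$ extending $\Phi_\lambda$ forces the two infinitesimal cocycle classes to agree after transport along the induced isomorphism on $\HH^2$. The cleanest route is to treat $\Phi^{(1)}_\lambda$ as an $A_\lambda$-bimodule morphism via restriction of scalars along $\Phi_\lambda$, so that the ``infinitesimal part'' of $\Phi^{(1)}_\lambda - \Phi_\lambda$ provides an explicit homotopy in $\mathrm C^\bullet (A_\lambda, A'_\lambda)$ exhibiting the difference of the two cocycle classes as a coboundary. This is routine once the bimodule quasi-isomorphism $\mathrm C^\bullet (A_\lambda, A_\lambda) \to \mathrm C^\bullet (A_\lambda, A'_\lambda)$ has been fixed, but requires some care with sign conventions and with the chosen bar-type resolution.
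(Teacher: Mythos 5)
Your proof takes essentially the same route as the paper's: both reduce the statement to a first-order compatibility along $\Phi$ expressed via the zig-zag $\mathrm C^\bullet (A_\lambda, A_\lambda) \to \mathrm C^\bullet (A_\lambda, A'_\lambda) \leftarrow \mathrm C^\bullet (A'_\lambda, A'_\lambda)$, and both extract the commutativity of the triangle from the fact that $\Phi$ is an $R$-linear DG algebra morphism reduced modulo $\mathfrak m_\lambda^2$. Where the paper simply displays the commutative square encoding multiplicativity of $\Phi \otimes_R \id_{R / \mathfrak m_\lambda^2}$ and lets the reader infer the first-order consequence, your final paragraph makes the mechanism explicit — namely that the infinitesimal part $\widehat\Phi_\lambda = \Phi^{(1)}_\lambda - \Phi_\lambda$, viewed as a degree-one Hochschild cochain in $\mathrm C^\bullet (A_\lambda, A'_\lambda)$, realizes the homotopy between $f (\widehat\mu^i_\lambda)$ and $g (\widehat{\mu}'{}^i_\lambda)$ — which is a helpful clarification of a step the paper leaves implicit.
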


\begin{proof}
The morphism of DG $R$-algebras $\Phi\colon \widetilde A \to \widetilde A'$ induces a morphism of DG $\Bbbk$-algebras $\Phi_\lambda \colon A_\lambda \to A'_\lambda$. The latter induces two maps of complexes 
\[
\mathrm C^\bullet  (A_\lambda, A_\lambda)   \toarg{f} \mathrm C^\bullet   (A_\lambda, A'_\lambda) \leftarrowarg{g}   \mathrm C^\bullet  (A'_\lambda, A'_\lambda).
\]
which are quasi-isomorphisms.  
Then the vertical morphism in \eqref{align:KSmaps} is given by the composition 
\[
\HH^2  (A_\lambda, A_\lambda)  \toarg{f}  \HH^2   (A_\lambda, A'_\lambda)  \toarg{g^{-1}}  \HH^2 (A'_\lambda, A'_\lambda).
\]
It remains to show that the following diagram commutes 
\[
\begin{tikzpicture}[baseline=-2.6pt]
\matrix (m) [matrix of math nodes, row sep=0, text height=1.5ex, column sep=3em, text depth=0.25ex, ampersand replacement=\&, inner sep=3pt]
{
\& \HH^2 (A_\lambda, A_\lambda) \& \\
\mathrm T_\lambda \mathbb A^d \& \& \HH^2 (A_\lambda, A'_\lambda)\mathrlap{.} \\
\& \HH^2 (A'_\lambda, A'_\lambda) \& \\
};
\path[->,line width=.4pt]
(m-2-1) edge (m-1-2.west)
(m-2-1) edge (m-3-2.west)
(m-1-2.east) edge node[font=\scriptsize, above=-.1em] {$f$} (m-2-3.172)
(m-3-2.east) edge node[font=\scriptsize, below=-.1em] {$g$} (m-2-3.188)
;
\end{tikzpicture}
\]
Using the isomorphism in \eqref{align:identification} this follows from the following commutative diagram 
\[
\begin{tikzpicture}[baseline=-2.6pt]
\matrix (m) [matrix of math nodes, row sep=2em, text height=1.5ex, column sep=3em, text depth=0.25ex, ampersand replacement=\&, inner sep=3pt]
{
(\widetilde A  \otimes \widetilde A)  \otimes_R R / \mathfrak m_\lambda^2 \& \widetilde A  \otimes_R R / \mathfrak m_\lambda^2 \\
(\widetilde A' \otimes \widetilde A') \otimes_R R / \mathfrak m_\lambda^2 \& \widetilde A' \otimes_R R / \mathfrak m_\lambda^2\mathrlap{.} \\
};
\path[->,line width=.4pt]
(m-1-1) edge node[font=\scriptsize, right=-.1em] {$(\Phi \otimes \Phi) \otimes_R \id$} (m-2-1)
(m-1-1) edge (m-1-2)
(m-1-2) edge node[font=\scriptsize, right=-.1em] {$\Phi \otimes_R \id$} (m-2-2)
(m-2-1) edge (m-2-2)
;
\end{tikzpicture}
\]
The commutative diagram \eqref{align:KSmaps} shows that if one is surjective then so is the other. 
\end{proof} 

\section{DG gentle algebras}
\label{section:dggentle}

In this section we introduce the notion of a {\it DG gentle algebra} which can be used to model the partially wrapped Fukaya category $\mathcal W (\mathbf S)$ for any graded orbifold surface $\mathbf S$. It turns out that there is very little choice for turning a graded gentle algebra into a DG algebra. Firstly, the Leibniz rule implies that a differential is determined by its value on arrows. In combination with the particular shape of the quiver with relations for gentle algebras, we show that, up to coboundary, the differential can only be nonvanishing on the following three types of arrows:
\begin{equation}
\label{eq:d1}
\begin{tikzpicture}[x=3em,y=1.5em,baseline=.5em]
\node at (-1.7,0) {};
\node at (8,0) {};
\begin{scope}[yshift=1.4em]
\node[shape=circle, scale=.7] (LL) at (-.5,0) {};
\node[shape=circle, scale=.7] (RR) at (3.5,0) {};
\node[shape=circle, scale=.7] (M) at (1.5,0) {};
\draw[line width=1pt, fill=black] (1.5,0) circle(0.2ex);
\path[->, line width=.5pt, font=\scriptsize] (M) edge[out=230, in=310, looseness=25] node[right=-.1ex,pos=.8] {$p$} (M);
\path[->, line width=.5pt, font=\scriptsize] (LL) edge node[below=-.3ex] {$u$} (M);
\path[->, line width=.5pt, font=\scriptsize] (M) edge node[below=-.3ex] {$v$} (RR);
\draw[dash pattern=on 0pt off 1.2pt, line width=.6pt, line cap=round] ($(1.5,0)+(170:.8em)$) arc[start angle=170, end angle=8, radius=.8em];
\draw[dash pattern=on 0pt off 1.2pt, line width=.6pt, line cap=round] ($(1.5,0)+(244:.8em)$) arc[start angle=244, end angle=302, radius=.8em];
\end{scope}
\node[right] at (5.25,.5) {$d (p) = e_{\mathrm s (p)}$};
\end{tikzpicture}
\end{equation}
\begin{equation}
\label{eq:d2}
\begin{tikzpicture}[x=3em,y=1.5em,baseline=.5em]
\node at (-1.7,0) {};
\node at (8,0) {};
\node[shape=circle, scale=.7] (LL) at (-1.5,0) {};
\node[shape=circle, scale=.7] (L) at (0,0) {};
\node[shape=circle, scale=.7] (R) at (3,0) {};
\node[shape=circle, scale=.7] (RR) at (4.5,0) {};
\node[shape=circle, scale=.7] (TL) at (.95,1.08) {};
\node[shape=circle, scale=.7] (TM) at (1.9,1.14) {};
\node[shape=circle, scale=.7] (TR) at (2.05,1.07) {};
\draw[line width=1pt, fill=black] (0,0) circle(0.2ex);
\draw[line width=1pt, fill=black] (3,0) circle(0.2ex);
\draw[line width=1pt, fill=black] (.95,1.06) circle(0.2ex);
\node[font=\scriptsize] at (1.98,1.112) {.};
\node[font=\scriptsize] at (1.92,1.135) {.};
\node[font=\scriptsize] at (2.04,1.082) {.};
\path[->, line width=.5pt, font=\scriptsize] (LL) edge node[below=-.3ex] {$u$} (L);
\path[->, line width=.5pt, font=\scriptsize] (L) edge node[below=-.3ex] {$p$} (R);
\path[->, line width=.5pt, font=\scriptsize] (R) edge node[below=-.3ex] {$v$} (RR);
\path[->, line width=.5pt, font=\scriptsize, bend left=9] (L) edge node[above=-.2ex] {$q_1$} (TL);
\path[->, line width=.5pt, font=\scriptsize, bend left=8, overlay] (TL) edge node[above=-.4ex] {$q_2$} (TM.170);
\path[->, line width=.5pt, font=\scriptsize, bend left=9] (TR.-13) edge node[above=-.1ex] {$q_n$} (R);
\draw[dash pattern=on 0pt off 1.2pt, line width=.6pt, line cap=round] ($(0,0)+(172:.8em)$) arc[start angle=172, end angle=40, radius=.8em];
\draw[dash pattern=on 0pt off 1.2pt, line width=.6pt, line cap=round] ($(3,0)+(8:.8em)$) arc[start angle=8, end angle=140, radius=.8em];
\node[right] at (5.25,.5) {$d (p) = q_n \dotsb q_2 q_1$};
\end{tikzpicture}
\end{equation}
\begin{equation}
\label{eq:d3}
\begin{tikzpicture}[x=3em,y=1.5em,baseline=.5em]
\node at (-1.7,0) {};
\node at (8,0) {};
\begin{scope}[yshift=.6em]
\node[shape=circle, scale=.7] (CL1) at (-.65,-.5) {};
\draw[line width=1pt, fill=black] (-.65,-.5) circle(0.2ex);
\node[shape=circle, scale=.7] (CL2) at (-.8,.5) {};
\node[shape=circle, scale=.7] (CL3) at (-.65,.52) {};
\node[shape=circle, scale=.7] (CR1) at (3.65,.5) {};
\draw[line width=1pt, fill=black] (3.65,.5) circle(0.2ex);
\node[shape=circle, scale=.7] (CR2) at (3.8,-.5) {};
\node[shape=circle, scale=.7] (CR3) at (3.65,-.52) {};
\node[shape=circle, scale=.7] (L) at (0,0) {};
\node[shape=circle, scale=.7] (R) at (3,0) {};
\draw[line width=1pt, fill=black] (0,0) circle(0.2ex);
\draw[line width=1pt, fill=black] (3,0) circle(0.2ex);
\node[font=\scriptsize] at (-.67,.53) {.};
\node[font=\scriptsize] at (-.73,.52) {.};
\node[font=\scriptsize] at (-.79,.5) {.};
\node[font=\scriptsize] at (3.67,-.53) {.};
\node[font=\scriptsize] at (3.73,-.52) {.};
\node[font=\scriptsize] at (3.79,-.5) {.};
\path[->, line width=.5pt, font=\scriptsize] (L) edge node[below=-.3ex] {$p$} (R);
\path[->, line width=.5pt, font=\scriptsize] (L) edge[in=10, out=210] node[below, pos=.2] {$u_1$} (CL1.8);
\path[->, line width=.5pt, font=\scriptsize] (CL1) edge[out=175, in=220, looseness=1.5] node[left=-.3ex] {$u_2$} (CL2.195);
\path[->, line width=.5pt, font=\scriptsize] (CL3.-5) edge[out=-10, in=150] node[above=-.3ex] {$u_k$} (L.140);
\path[->, line width=.5pt, font=\scriptsize] (R) edge[in=192, out=30] node[above, pos=.2] {$v_1$} (CR1.188);
\path[->, line width=.5pt, font=\scriptsize] (CR1) edge[out=-5, in=40, looseness=1.5] node[right=-.3ex] {$v_2$} (CR2.15);
\path[->, line width=.5pt, font=\scriptsize] (CR3.175) edge[out=170, in=-30] node[below=-.3ex] {$v_l$} (R.-40);
\draw[dash pattern=on 0pt off 1.2pt, line width=.6pt, line cap=round] ($(0,0)+(201:.8em)$) arc[start angle=201, end angle=150, radius=.8em];
\draw[dash pattern=on 0pt off 1.2pt, line width=.6pt, line cap=round] ($(3,0)+(21:.8em)$) arc[start angle=21, end angle=-30, radius=.8em];
\end{scope}
\node[right] at (5.25,.5) {$d (p) = v p u$.};
\end{tikzpicture}
\end{equation}
In \eqref{eq:d1} and \eqref{eq:d2} $u$ or $v$ may or may not be present in $Q$, but there must not be any more incoming or outgoing arrows at $\mathrm s (p)$ and $\mathrm t (p)$. In \eqref{eq:d3}, the cycles $u = u_k \dotsb u_2 u_1$ and $v = v_l \dotsb v_2 v_1$ must be present (i.e.\ $k, l \geq 1$), but likewise, there must not be more incoming or outgoing arrows at $\mathrm s (p)$ and $\mathrm t (p)$.

The Leibniz rule implies moreover, than on longer paths, the differential can only be nonvanishing on paths starting and/or ending with a loop as in \eqref{eq:d1}.

These observations are proved in the following lemma. (See also the proof of Theorem \ref{theorem:hochschild} for a more general discussion on the nontrivial cocycles of $\HH^2 (A, A)$.)

\begin{lemma}
\label{lemma:differential}
Let $A = \Bbbk Q / I$ be a graded gentle algebra and let $\widetilde d$ be a differential on $A$ satisfying the Leibniz rule with respect to the multiplication on $A$. Then $\widetilde d$ defines a Hochschild $2$-cocycle for $A$ which, up to isomorphism, is cohomologous to a differential $d$ of the following standard form:
\begin{enumerate}
\item $d (e_i) = 0$ for any vertex $i \in Q_0$
\item if $p \in Q_1$ belongs to a cycle without relations, i.e.\ $p = p_1$ for some cycle $p_n \dotsb p_2 p_1$ with $p_2 p_1, p_3 p_2, \dotsc, p_1 p_n \not\in I$, then $p_1 p_n p_{n-1} \dotsb p_2 p_1$ does not appear as a term with nonzero coefficient in $d (p)$
\item if $p \in Q_1$ belongs to a zero-relation involving another arrow, i.e.\ if either $p p' \in I$ or $p'' p \in I$ for some $p', p'' \in Q_1 \smallsetminus \{ p \}$, then $d (p) = 0$
\item if $p \in Q_1$ does not belong to a cycle without relations and not belong to a zero-relation involving another arrow, then $d (p) \neq 0$ only if 
\begin{itemize}
\item either $\mathrm s (p) \neq \mathrm t (p)$ in which case $d (p) = q$ where $q$ is the unique path parallel to $p$ which must have degree $\lvert q \rvert = \lvert p \rvert + 1$
\item or $\mathrm s (p) = \mathrm t (p)$ in which case we must have $\lvert p \rvert = -1$ and $d (p) = e_{\mathrm s (p)}$
\end{itemize}
\item if $r = r_n \dotsb r_2 r_1$ is a path of length $n \geq 2$, then $d (r) \neq 0$ only if none of the $r_i$'s belongs to a cycle without relations and we are moreover in one of the following three situations:
\begin{itemize}
\item $d (r_n) = 0$ and $r_1$ is a loop of degree $-1$ with $d (r_1) = e_{\mathrm s (r)} \neq 0$ in which case $d (r) =   (-1)^{|r_n\dotsb r_2|} r_n \dotsb r_3 r_2$
\item $d (r_1) = 0$ and $r_n$ is a loop of degree $-1$ with $d (r_n) = e_{\mathrm t (r)}$ in which case $d (r) = r_{n-1} \dotsb r_2 r_1$
\item $d (r_1) = e_{\mathrm s (s)} \neq 0$ and $d (r_n) = e_{\mathrm t (r)} \neq 0$ in which case
\[
d (r) = (-1)^{|r_n\dotsb r_2|} r_n \dotsb r_3 r_2 + r_{n-1} \dotsb r_2 r_1.
\]
\end{itemize}
\end{enumerate}
\end{lemma}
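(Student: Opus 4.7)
The plan is to exploit three facts in sequence: a degree $1$ map $\widetilde d \colon A \to A$ satisfying the graded Leibniz rule is automatically a Hochschild $2$-cocycle of the underlying graded algebra $A$; $\widetilde d$ is determined by its values on idempotents and arrows; and the rigidity of gentle algebras (at most two in/out arrows per vertex, with a prescribed matching of non-relation compositions) forces these values into the shapes depicted in \eqref{eq:d1}--\eqref{eq:d3}, up to coboundaries and rescaling isomorphisms of $A$.

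For the cocycle claim, view $\widetilde d$ as a Hochschild $1$-cochain; the associated coboundary $\delta \widetilde d$ vanishes precisely by the Leibniz rule, so $(\widetilde d, 0)$ is a $2$-cocycle in the shifted Hochschild complex controlling deformations of the trivial DG structure on $A$. For (i), the identity $e_i^2 = e_i$ combined with Leibniz forces $\widetilde d(e_i) \in e_i A(1-e_i) \oplus (1-e_i) A e_i$; the additional constraints $\widetilde d(e_i e_j) = 0$ for $i \neq j$ guarantee that the off-diagonal components of the $\widetilde d(e_i)$ patch together consistently, so subtracting a suitable inner derivation $[f, -]$ (a Hochschild coboundary) eliminates $\widetilde d(e_i)$ at every vertex simultaneously.

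For the normal form on arrows, fix $p \in Q_1$ from $i$ to $j$; then $\widetilde d(p)$ is a linear combination of paths from $i$ to $j$ of degree $|p|+1$. The Leibniz rule applied to each quadratic relation $p_2 p_1 \in I$ involving $p$ yields $\widetilde d(p_2) p_1 = \pm p_2 \widetilde d(p_1)$. The gentle condition forces, at each endpoint, the non-relation partner to be unique, so multiplication by this partner kills all but a short list of candidate summands in $\widetilde d(p)$. A case analysis based on the local in/out configuration at $\mathrm s(p)$ and $\mathrm t(p)$ and on whether $p$ is a loop, belongs to a zero-relation with another arrow, or belongs to a cycle without relations, combined with $\widetilde d^2 = 0$, isolates exactly the three templates \eqref{eq:d1}--\eqref{eq:d3}. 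Scalar coefficients are normalised by rescaling the arrow $p$ (a graded algebra automorphism), and any residual contribution corresponding to a cycle without relations is itself a Hochschild coboundary and can be removed, yielding (ii)--(iv). For paths of length $n \geq 2$, iterating Leibniz gives
\[ \widetilde d(r_n \dotsb r_1) = \sum_{i=1}^n (-1)^{|r_n \dotsb r_{i+1}|}\, r_n \dotsb r_{i+1}\, \widetilde d(r_i)\, r_{i-1} \dotsb r_1; \]
substituting the templates from (iv), any interior $\widetilde d(r_i)$ parallel to $r_i$ would contribute a composition with $r_{i\pm 1}$ lying in $I$ and therefore vanishes, so only loop contributions at the boundary arrows $r_1$ and $r_n$ survive, producing exactly (v).

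The main obstacle is the combinatorial case analysis underlying the normal form on arrows: enumerating the finitely many possible local configurations at the endpoints of each arrow and verifying in each case that the Leibniz rule, the constraint $\widetilde d^2 = 0$, and the available coboundaries and rescaling automorphisms leave precisely the listed standard forms. The rigidity of gentle quivers (two-in-two-out, quadratic monomial relations, and the uniqueness of non-relation composable pairs) is exactly what keeps this analysis finite and tractable.
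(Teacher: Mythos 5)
Your overall strategy coincides with the paper's: treat $\widetilde d$ as a Hochschild $1$-cochain whose coboundary vanishes (hence a $2$-cocycle of total degree $2$), reduce to its values on idempotents and arrows, and use the Leibniz rule together with the two-in/two-out structure of gentle quivers, Hochschild coboundaries, and rescaling automorphisms to reach the normal form. Your treatment of item {\itemi} is in fact a genuinely different (and somewhat more careful) route than the paper's: the paper argues $d(e_i) = d(e_i^2) = 2\,d(e_i)$ and invokes characteristic $0$, which implicitly uses $\Bbbk Q_0$-bilinearity of $d$, whereas you observe that $d(e_i)$ lies in the off-diagonal part $e_i A (1-e_i) \oplus (1-e_i) A e_i$, use the relations $d(e_ie_j)=0$ to show the off-diagonal pieces are compatible, and then cancel them by an explicit inner derivation. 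That argument checks out and removes the need for the bilinearity convention; it is a minor but real improvement.

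Where your proposal falls short is in the coboundary constructions that carry the technical weight of items {\itemii} and {\itemiii}, which you assert but do not exhibit. For {\itemii} the paper builds $\varphi \in \Hom(\Bbbk Q_0, A)$ supported on the vertices of a cycle $p_n \dotsb p_1$ without relations, with $\varphi(e_i) = \lambda_i\, p_{i-1}\dotsb p_1 p_0 p_n \dotsb p_i$, computes $\delta(\varphi)$ on each arrow of the cycle, and solves the resulting cyclic linear system for the $\lambda_i$; without carrying out this computation one cannot verify that the unwanted cycle terms can always be absorbed, since the system is not obviously consistent. For {\itemiii} your outline (``multiplication by the unique non-relation partner kills candidate summands'') captures only half the argument: the paper splits into the case where $d(p) = q$ contains $p$ as a subpath, in which the Leibniz constraint on $d(pp')$ forces $q = pu$ for a cycle $u$ and then shows $d$ is entirely the coboundary $\delta(\psi)$ with $\psi(e_{\mathrm s(p)}) = (-1)^{\lvert p\rvert - 1}u$, versus the case where $q$ does not contain $p$, which leads directly to a contradiction from gentleness. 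The first case does not give a contradiction at all; it shows the cocycle is trivial, a distinction your sketch elides. Finally, your assertion in {\itemv} that interior terms vanish because $r_{i\pm 1}$ compose to $I$ with the image of $d(r_i)$ is correct but rests on precisely the sort of two-in/two-out bookkeeping you flag as ``the main obstacle''; in the loop case $d(r_i)=e_{\mathrm s(r_i)}$ with $1<i<n$ one must check $r_{i+1}r_{i-1}\in I$, which follows from gentleness but is not immediate.
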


\begin{proof}
For {\itemi} observe that for any idempotent $e_i$ with $i \in Q_0$ we have for {\it any} differential $d$
\[
d (e_i) = d (e_i^2) = e_i d(e_i) + d(e_i)e_i = 2 d (e_i).
\]
Since we work over a field of characteristic $0$ we conclude that $d (e_i) = 0$.

Let us verify {\itemii}. First observe that if $\widetilde d (p) \neq 0$, then there can be at most two paths $q, r$ parallel to $p$ and of degree $\lvert p \rvert + 1$. Since there are at most two outgoing arrows at $\mathrm s (p)$, $p$ itself must belong to at least one of these two paths, say $r$. Writing $p = p_0$, we thus have $r = p_0 p_n \dotsb p_2 p_1 p_0$, where $p_n \dotsb p_2 p_1 p_0$ is a cycle without relations. (Even though the paths $p_0 (p_n \dotsb p_2 p_1 p_0)^k$ are parallel to $p = p_0$ for all $k \geq 0$, only for $k = 1$ can such a path have degree one more than $p_0$.) We now show that the term of $\widetilde d (p_0)$ corresponding to $r$ can be killed by a coboundary. In other words, we show that
\[
\widetilde d (p_0) = \lambda q + \lambda' p_0 p_n \dotsb p_2 p_1 p_0
\]
is cohomologous to
\[
d (p_0) = \lambda q.
\]
Consider the element $\varphi \in \Hom(\Bbbk Q_0, A)$ given by 
\begin{align*}
\varphi(e_i) = \lambda_i p_{i-1}\dotsb p_2 p_1 p_0 p_n  \dotsb p_{i} 
\end{align*}
for $0\leq i\leq n$ where we write $p = p_0$. Then using the formula \eqref{differentialsmaller} we obtain for $0 \leq i \leq n$
\begin{align*}
\delta(\varphi) (\s p_i) &=  -(-1)^{|p_i|-1}  p_i\varphi(e_{i}) +(-1)^{-|\varphi|} \varphi(e_{i+1}) p_{i}\\
&= (-1)^{|p_i|}  \lambda_{i} p_i  \dotsb p_1p_0p_n  \dotsb p_{i} - \lambda_{i+1}  p_i  \dotsb p_1p_0p_n  \dotsb p_{i}\\
&= ((-1)^{|p_i|}  \lambda_{i}  - \lambda_{i+1}) p_i  \dotsb p_1p_0p_n  \dotsb p_{i}\end{align*}
where we denote $\lambda_{n+1} = \lambda_0$. Note that $\delta(\varphi)(\s p') = 0$ for any arrow $p'$ which does not belong to the cycle since $p'p_i=0=p_ip'$ in $A$.  In order to obtain $\delta (\varphi) = r$ we need to have 
\begin{align*}
(-1)^{|p_0|} \lambda_0 -  \lambda_1 &=1\\
(-1)^{|p_i|}  \lambda_i - \lambda_{i+1}& =0 \quad  1\leq i \leq n-1\\
(-1)^{|p_n|} \lambda_n -  \lambda_0 &=0.
\end{align*}
This is equivalent to that $(-1)^{|p_0|} \lambda_0 -  \lambda_1 =1$ and 
\[
\lambda_{i} = (-1)^{|p_i|+\dotsb +|p_n|} \lambda_0 \quad \text{for $1\leq i \leq n$}.
\]
Thus, we obtain a (unique) solution $\lambda_0 = (-1)^{|p_0|}$ and 
$$\lambda_i = (-1)^{|p_0|+|p_i|+\dotsb +|p_n|} \lambda_0 \quad \text{for $1\leq i \leq n$}.$$
This shows that $\widetilde d$ is cohomologous to $d$.

To show {\itemiii}, let $p \in Q_1$ with $d (p) \neq 0$ and let $p' \in Q_1 \smallsetminus \{ p \}$ such that $p p' \neq 0$ in $\Bbbk Q$, but $p p' \in I$. By {\itemii} and the Leibniz rule we may assume that $d (p)$ has no terms which belong to a cycle without relations. In this case there is at most one nonzero path $q$ parallel to $p$. Up to rescaling $p$ we may thus assume $d (p) = q$. Let us first assume that $q$ contains $p$ as a subpath, i.e.\ $q = v p u$ for some cycles $u = u_k \dotsb u_2 u_1$ at $\mathrm s (p)$ and $v = v_l \dotsb v_2 v_1$ at $\mathrm t (p)$ of length $k, l \geq 0$. Note that since $\Bbbk Q / I$ is gentle we must have $u_1 u_k, v_1 v_l \in I$ since $p u_k, v_1 p \not\in I$, i.e.\ the cycles $u$ and $v$ have a single relation at $\mathrm s (p)$ and $\mathrm t (p)$, respectively. 
\begin{equation*}
\begin{tikzpicture}[x=3em,y=1.5em,baseline=.5em]
\begin{scope}[yshift=.6em]
\node[shape=circle, scale=.7] (CL1) at (-.65,-.5) {};
\draw[line width=1pt, fill=black] (-.65,-.5) circle(0.2ex);
\node[shape=circle, scale=.7] (CL2) at (-.8,.5) {};
\node[shape=circle, scale=.7] (CL3) at (-.65,.52) {};
\node[shape=circle, scale=.7] (CR1) at (3.65,.5) {};
\draw[line width=1pt, fill=black] (3.65,.5) circle(0.2ex);
\node[shape=circle, scale=.7] (CR2) at (3.8,-.5) {};
\node[shape=circle, scale=.7] (CR3) at (3.65,-.52) {};
\node[shape=circle, scale=.7] (L) at (0,0) {};
\node[shape=circle, scale=.7] (R) at (3,0) {};
\node[shape=circle, scale=.7] (M) at (1,-1.4) {};
\draw[line width=1pt, fill=black] (0,0) circle(0.2ex);
\draw[line width=1pt, fill=black] (3,0) circle(0.2ex);
\node[font=\scriptsize] at (-.67,.53) {.};
\node[font=\scriptsize] at (-.73,.52) {.};
\node[font=\scriptsize] at (-.79,.5) {.};
\node[font=\scriptsize] at (3.67,-.53) {.};
\node[font=\scriptsize] at (3.73,-.52) {.};
\node[font=\scriptsize] at (3.79,-.5) {.};
\path[->, line width=.5pt, font=\scriptsize] (L) edge node[below=-.3ex] {$p$} (R);
\path[->, line width=.5pt, font=\scriptsize] (M) edge[bend left=18] node[below=.1ex, pos=.7] {$p'$} (L);
\path[->, line width=.5pt, font=\scriptsize] (L) edge[in=10, out=210] node[below, pos=.2] {$u_1$} (CL1.8);
\path[->, line width=.5pt, font=\scriptsize] (CL1) edge[out=175, in=220, looseness=1.5] node[left=-.3ex] {$u_2$} (CL2.195);
\path[->, line width=.5pt, font=\scriptsize] (CL3.-5) edge[out=-10, in=150] node[above=-.3ex] {$u_k$} (L.140);
\path[->, line width=.5pt, font=\scriptsize] (R) edge[in=192, out=30] node[above, pos=.2] {$v_1$} (CR1.188);
\path[->, line width=.5pt, font=\scriptsize] (CR1) edge[out=-5, in=40, looseness=1.5] node[right=-.3ex] {$v_2$} (CR2.15);
\path[->, line width=.5pt, font=\scriptsize] (CR3.175) edge[out=170, in=-30] node[below=-.3ex] {$v_l$} (R.-40);
\draw[dash pattern=on 0pt off 1.2pt, line width=.6pt, line cap=round] ($(0,0)+(201:.8em)$) arc[start angle=201, end angle=150, radius=.8em];
\draw[dash pattern=on 0pt off 1.2pt, line width=.6pt, line cap=round] ($(0,0)+(-7:.85em)$) arc[start angle=-7, end angle=-50, radius=.85em];
\draw[dash pattern=on 0pt off 1.2pt, line width=.6pt, line cap=round] ($(3,0)+(21:.8em)$) arc[start angle=21, end angle=-30, radius=.8em];
\end{scope}
\end{tikzpicture}
\end{equation*}
We have
\[
0 = d (0) = d (p p') = d (p) p' + (-1)^{\lvert p \rvert} p d (p') = v p u p' + (-1)^{\lvert p \rvert} p d (p').
\]
This equality holds only if $d(p')= (-1)^{|p|+1} up'$ and $v= \mathrm t (p)$ is a vertex (i.e.\ $l = 0$). In this case, $d$ is actually a coboundary since if we consider $\psi(e_{\mathrm s(p)}) = (-1)^{|p|-1} u$ then  $\delta(\psi) =d$.

If $d (p)$ does not contain $p$ as a subpath, i.e.\ $q = q_m \dotsb q_2 q_1$ with $q_i \neq p$ for all $1 \leq i \leq m$, then $d (p) p' = q p' \neq 0$ since both $q = q_m \dotsb q_2 q_1 \neq 0$ and $q_1 p' \neq 0$. This is because we already have $p p' \in I$ whence $q_1 p' \not\in I$ since $\Bbbk Q / I$ is gentle. If $d (p') = 0$, this already yields a contradiction. If $d (p') = q' \neq 0$ we have
\[
0 = d (p p') = q p'+(-1)^{|p|} p q'
\]
but since $q_m \neq p$, the right-hand side is nonzero, yielding a contradiction to the well-definedness of $d$ on $\Bbbk Q / I$.

Using a similar argument one verifies {\itemiv} and {\itemv}. 
\end{proof}

Lemma \ref{lemma:differential} motivates the following definition.

\begin{definition}
\label{definition:dggentle}
A {\it DG gentle algebra} is a DG algebra $(A, d)$ such that $A = \Bbbk Q / I$ is a graded gentle algebra and for any arrow $p \in Q_1$, the paths appearing with nonzero coefficient in $d (p)$ do not start or end with $p$.
\end{definition}

\begin{remark}
Lemma \ref{lemma:differential} shows that the condition on the differential in a DG gentle algebra is always satisfied up to a coboundary. The three possible situations where $d (p) \neq 0$ are given in \eqref{eq:d1}--\eqref{eq:d3} and the local pictures are given in Fig.~\ref{fig:differential}. On paths of length $\geq 2$, the differential can only be nonvanishing if the path starts and/or ends with a loop that is mapped to the idempotent by the differential.
\end{remark}

\subsection{Surface model and formality for DG gentle algebras}

In \cite{barmeierschrollwang} we introduce several notions of dissections of graded orbifold surfaces and associate to each dissection $\Delta$ an A$_\infty$ category $\mathbf A_\Delta$ such that $\mathcal W (\mathbf S) \simeq \tw (\mathbf A_\Delta)^\natural$. In \cite[Section~8]{barmeierschrollwang} we define the notion of a {\it DG dissection} whose associated A$_\infty$ category $\mathbf A_\Delta$ is a DG category. This notion turns out to coincide with the notion of a DG gentle algebra in the following sense.

\begin{proposition}
\label{proposition:dgdissection}
Let $A$ be a DG gentle algebra viewed as a DG category with objects $Q_0$. Then there exists a graded orbifold surface $\mathbf S$ with stops and a DG dissection $\Delta$ of $\mathbf S$ such that $A$ is quasi-isomorphic to the DG category $\mathbf A_\Delta$.
\end{proposition}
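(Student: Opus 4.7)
The plan is to work locally, arrow by arrow, guided by the classification of differentials in Lemma \ref{lemma:differential}. Since quasi-isomorphic DG gentle algebras give quasi-isomorphic DG categories, we may first replace $(A,d)$ by an isomorphic DG gentle algebra in which the differential is in the standard form given by Lemma \ref{lemma:differential}, where the only arrows with nonzero differential fall into the three local configurations \eqref{eq:d1}, \eqref{eq:d2}, \eqref{eq:d3}. This reduces the problem to realizing each of these local configurations geometrically and gluing them into a global surface.

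First I would construct the underlying graded marked surface from the graded gentle algebra $\Bbbk Q / I$ (forgetting $d$). This is provided by the well-known surface model for graded gentle algebras (as used in \cite{haidenkatzarkovkontsevich,lekilipolishchuk2,opperplamondonschroll,schroll}): the quiver with relations gives a ribbon graph whose fattening produces a graded smooth surface $\mathbf S_0$ together with a dissection $\Delta_0$ such that $\mathbf A_{\Delta_0}$ realizes $\Bbbk Q / I$ as a graded $\Bbbk$-algebra. One checks moreover that any grading compatible with the quiver is induced by an appropriate line field $\eta$, unique up to homotopy.

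Next I would promote $(\mathbf S_0, \Delta_0)$ to an orbifold surface $\mathbf S$ with a DG dissection $\Delta$ realizing $d$. For each arrow $p$ with $d(p)\neq 0$, the local situation is dictated by Lemma \ref{lemma:differential}:
\begin{itemize}
\item For configuration \eqref{eq:d2} (a parallel path $d(p) = q_n\cdots q_1$), insert an order-$2$ orbifold point inside the appropriate polygon of $\Delta_0$. This matches the local type (\II) configuration of Proposition \ref{proposition:generator} where $d p_i^\II = q_i^\II$, and is the prototypical DG-dissection feature of \cite[Section~8]{barmeierschrollwang}.
\item For configuration \eqref{eq:d1} (a loop $p$ with $d(p) = e_{\mathrm s(p)}$), modify the stop data at the relevant boundary component to produce a fully wrapped boundary component, matching the type (\IV) or (\V) configuration of Proposition \ref{proposition:generator}.
\item For configuration \eqref{eq:d3} (a ``sandwich'' $d(p) = vpu$ between two cyclic subquivers), combine the two local modifications above on either side of $p$.
\end{itemize}
In each case, the differential predicted by the DG-dissection rules of \cite[Section~8]{barmeierschrollwang} agrees on the nose with the standard-form differential from Lemma \ref{lemma:differential}. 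Because all relevant modifications are local and disjoint, they can be carried out simultaneously, yielding a single graded orbifold surface $\mathbf S = (S,\Sigma,\eta)$ and a DG dissection $\Delta$.

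The final step is to identify $\mathbf A_\Delta$ with $A$. By construction the underlying graded quiver with relations of $\mathbf A_\Delta$ is that of $A$, and by \cite[Proposition~8.4]{barmeierschrollwang} the resulting A$_\infty$ category is a DG category, i.e.\ all higher operations $m^{\geq 3}$ vanish; the differential coincides with $d$ in standard form by the case analysis above. The main obstacle I anticipate is bookkeeping: one must verify that every simultaneous choice of local modifications produces a consistent set of grading data (the line field $\eta$ has to induce the prescribed degrees on all arrows of Figure \ref{fig:quiver} type), and that the global combinatorics (orientation of the ribbon graph, winding numbers at newly created boundary components) are compatible with the gentle structure. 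Once these compatibilities are checked, the identification $\mathbf A_\Delta \simeq A$ is immediate, and even a strict isomorphism, not merely a quasi-isomorphism.
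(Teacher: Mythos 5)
Your overall strategy --- reduce to the standard form of Lemma \ref{lemma:differential}, then realize each of the local configurations \eqref{eq:d1}--\eqref{eq:d3} by a local modification of the surface --- matches the approach the paper conveys through Fig.~\ref{fig:differential}, and your \eqref{eq:d2} case (compactify a boundary component with a single stop to an order-$2$ orbifold point) is correct. But your prescriptions for \eqref{eq:d1} and \eqref{eq:d3} fail, and the concluding claim of a strict isomorphism is false.

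For \eqref{eq:d1} ($p$ a loop of degree $-1$ with $d(p) = e_{\mathrm s (p)}$) you propose to realize the differential via a type (\IV) or (\V) configuration of Proposition \ref{proposition:generator}. But those configurations carry \emph{no} differential in a DG dissection: $d(p_i^\IV) = 0$ and $d(q_i^\V) = 0$. More fundamentally, no DG dissection can contain a generating arc $\gamma$ with $(\End(\gamma), d) \simeq (\Bbbk[p]/(p^2), d(p)=e_\gamma)$, because this complex is acyclic and $\gamma$ would contribute nothing to $\mathcal W(\mathbf S)$. What Fig.~\ref{fig:differential} (bottom left) depicts is that the object $P_{\mathrm s(p)}$ is \emph{removed}, with $u,p,v$ collapsing into a single boundary morphism labelled $vpu$; so $\mathbf A_\Delta$ genuinely has fewer objects than $A$ whenever a configuration \eqref{eq:d1} occurs. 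This is precisely why the proposition asserts only a quasi-isomorphism, and your final sentence (``and even a strict isomorphism'') overreaches. For \eqref{eq:d3} the instruction to ``combine the two local modifications'' is moot given the above, and is not what happens: the paper's picture compactifies the boundary component carrying a boundary stop (around which the cycles $u,v$ wind) to an order-$2$ orbifold point while \emph{retaining} the vertex $\mathrm s(p)$ --- which must be retained, since by Theorem \ref{theorem:formality} this DG gentle algebra is non-formal and that vertex has nontrivial cohomology, so no deletion is possible.
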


\begin{figure}
\begin{tikzpicture}[x=1em,y=1em]
\begin{scope}[xshift=-7em]
\node[font=\small] at (0,2) {$A$};
\end{scope}
\begin{scope} 
\draw[line width=.2em] (0,3) circle(.8em);
\draw[line width=.75pt, line cap=round, color=arccolour] (-1,0) to[out=85, in=240, looseness=.8] ($(0,3)+(150:1.5)$) arc[start angle=150, end angle=30, radius=1.5] to[out=300, in=95, looseness=.8] (1,0);
\begin{scope}[decoration={markings,mark=at position 0.55 with {\arrow[black]{Stealth[length=4.2pt]}}}]
\draw[line width=.5pt, line cap=round, postaction={decorate}] (-3,0) -- (3,0);
\draw[line width=0pt, line cap=round, postaction={decorate}] (-3,0) -- (-1.25,0);
\draw[line width=0pt, line cap=round, postaction={decorate}] (1.75,0) -- (3,0);
\end{scope}
\node[font=\scriptsize] at (0,-.75) {$p$};
\node[font=\scriptsize] at (-2.25,-.75) {$u$};
\node[font=\scriptsize] at (2.25,-.75) {$v$};
\draw[dash pattern=on 3.9pt off 2pt, line width=.5pt, line cap=round, looseness=2.9] (-3,0) to[bend left=90] (3,0);
\end{scope}
\begin{scope}[xshift=9em] 
\draw[line width=.5pt] (0,3) circle(.8em);
\draw[line width=.5pt, fill=black] (0,2.2) circle(.15em);
\draw[line width=.75pt, line cap=round, color=arccolour] (-1,0) to ($(0,3)+(240:.8)$) (1,0) to ($(0,3)+(300:.8)$) ($(0,3)+(180:.8)$) to ++(180:1.65) ($(0,3)+(110:.8)$) to ++(110:1.25) ($(0,3)+(0:.8)$) to ++(0:1.65);
\begin{scope}[yshift=3em,decoration={markings,mark=at position 0.55 with {\arrow[black]{Stealth[length=4.2pt]}}}]
\draw[line width=0pt, line cap=round, postaction={decorate}] (205:.79) -- (198:.806);
\begin{scope}[rotate=-70]
\draw[line width=0pt, line cap=round, postaction={decorate}] (205:.79) -- (198:.806);
\end{scope}
\begin{scope}[rotate=115]
\draw[line width=0pt, line cap=round, postaction={decorate}] (205:.79) -- (198:.806);
\end{scope}
\node[font=\scriptsize] at (210:1.35) {$q_1$};
\node[font=\scriptsize] at (150:1.4) {$q_2$};
\node[font=\scriptsize] at (-30:1.45) {$q_n$};
\end{scope}
\draw[line width=.5pt, line cap=round, postaction={decorate}] (-3,0) -- (3,0);
\draw[line width=0pt, line cap=round, postaction={decorate}] (-3,0) -- (-1.25,0);
\draw[line width=0pt, line cap=round, postaction={decorate}] (1.75,0) -- (3,0);
\node[font=\scriptsize] at (0,-.75) {$p$};
\node[font=\scriptsize] at (-2.25,-.75) {$u$};
\node[font=\scriptsize] at (2.25,-.75) {$v$};
\draw[dash pattern=on 3.9pt off 2pt, line width=.5pt, line cap=round, looseness=2.9] (-3,0) to[bend left=90] (3,0);
\end{scope}
\begin{scope}[xshift=18em, yshift=2.5em, rotate=180, decoration={markings,mark=at position 0.55 with {\arrow[black]{Stealth[length=4.2pt]}}}] 
\draw[dash pattern=on 3.9pt off 2pt, line width=.5pt, line cap=round] (0,0) circle(3em);
\draw[dash pattern=on 3.9pt off 2pt, line width=.5pt, line cap=round] (1.5,0) ellipse[x radius=.5, y radius=1];
\begin{scope}[xshift=-1em]
\draw[line width=.5pt, line cap=round] (0,0) circle(1em);
\draw[line width=.5pt, fill=black] (0,1) circle(.15em);
\begin{scope}[rotate=-15]
\draw[line width=0pt, postaction={decorate}] (140:1) -- +(140-80:.1);
\draw[line width=0pt, postaction={decorate}] (225:1) -- +(225-80:.1);
\draw[line width=0pt, postaction={decorate}] (277:1) -- +(277-80:.1);
\draw[line width=0pt, postaction={decorate}] (335:1) -- +(335-80:.1);
\draw[line width=0pt, postaction={decorate}] (55:1) -- +(55-80:.1);
\end{scope}
\node[font=\scriptsize] at (135:1.55) {$v_l$};
\node[font=\scriptsize] at (215:1.5) {$v_1$};
\node[font=\scriptsize] at (270:1.5) {$p$};
\node[font=\scriptsize] at (330:1.6) {$u_k$};
\node[font=\scriptsize] at (45:1.5) {$u_1$};
\draw[line width=.75pt, line cap=round, color=arccolour] (155:1) -- +(155:1);
\draw[line width=.75pt, line cap=round, color=arccolour] (25:1) -- +(15:1.3);
\draw[line width=.75pt, line cap=round, color=arccolour] (190:1) -- +(190:1);
\draw[line width=.75pt, line cap=round, color=arccolour] (-10:1) -- +(-10:1);
\end{scope}
\draw[line width=.75pt, line cap=round, color=arccolour] ($(-1,0)+(110:1)$) to[out=110, in=180, looseness=1.2] (90:2.66) arc[start angle=90, end angle=-90, radius=2.66] to[out=180, in=240, looseness=1.2] ($(-1,0)+(240:1)$);
\draw[line width=.75pt, line cap=round, color=arccolour] ($(-1,0)+(70:1)$) to[out=70, in=150, looseness=1.2] (60:2.33) arc[start angle=60, end angle=-60, radius=2.33] to[out=210, in=300, looseness=1.2] ($(-1,0)+(300:1)$);
\end{scope}
\begin{scope}[yshift=-8em] 
\begin{scope}[xshift=-7em]
\node[font=\small] at (0,2) {$(A, d)$};
\end{scope}
\begin{scope}[decoration={markings,mark=at position 0.55 with {\arrow[black]{Stealth[length=4.2pt]}}}] 
\draw[line width=.5pt, line cap=round, postaction={decorate}] (-3,0) -- (3,0);
\node[font=\scriptsize] at (0,-.75) {$v p u$};
\draw[dash pattern=on 3.9pt off 2pt, line width=.5pt, line cap=round, looseness=2.9] (-3,0) to[bend left=90] (3,0);
\end{scope}
\begin{scope}[xshift=9em] 
\node[font=\scriptsize] at (0,3) {$\times$};
\draw[line width=.5pt, fill=black] (0,2.3) circle(.1em);
\draw[line width=.75pt, line cap=round, color=arccolour] (-1,0) to ($(0,3)+(240:.3)$) (1,0) to ($(0,3)+(300:.3)$) ($(0,3)+(180:.3)$) to ++(180:2.15) ($(0,3)+(110:.3)$) to ++(110:1.75) ($(0,3)+(0:.3)$) to ++(0:2.15);
\begin{scope}[yshift=3em, decoration={markings,mark=at position 1 with {\arrow[black]{Stealth[length=4.2pt]}}}]
\draw[line width=.5pt, line cap=round] (240:.8) arc[start angle=240, end angle=190, radius=.8];
\draw[line width=.5pt, line cap=round] (175:.8) arc[start angle=175, end angle=120, radius=.8];
\draw[line width=.6pt, line cap=round, dash pattern=on 0pt off 1.2pt] (60:.8) arc[start angle=64, end angle=46, radius=.8];
\draw[line width=.5pt, line cap=round] (-5:.8) arc[start angle=-5, end angle=-55, radius=.8];
\begin{scope}[rotate=-15]
\draw[line width=0pt, line cap=round, postaction={decorate}] (205:.79) -- (198:.806);
\begin{scope}[rotate=-70]
\draw[line width=0pt, line cap=round, postaction={decorate}] (205:.79) -- (198:.806);
\end{scope}
\begin{scope}[rotate=115]
\draw[line width=0pt, line cap=round, postaction={decorate}] (205:.79) -- (198:.806);
\end{scope}
\end{scope}
\node[font=\scriptsize] at (210:1.35) {$q_1$};
\node[font=\scriptsize] at (150:1.4) {$q_2$};
\node[font=\scriptsize] at (-30:1.45) {$q_n$};
\end{scope}
\begin{scope}[decoration={markings,mark=at position 0.55 with {\arrow[black]{Stealth[length=4.2pt]}}}]
\draw[line width=.5pt, line cap=round, postaction={decorate}] (-3,0) -- (3,0);
\draw[line width=0pt, line cap=round, postaction={decorate}] (-3,0) -- (-1.25,0);
\draw[line width=0pt, line cap=round, postaction={decorate}] (1.75,0) -- (3,0);
\node[font=\scriptsize] at (0,-.75) {$p$};
\node[font=\scriptsize] at (-2.25,-.75) {$u$};
\node[font=\scriptsize] at (2.25,-.75) {$v$};
\draw[dash pattern=on 3.9pt off 2pt, line width=.5pt, line cap=round, looseness=2.9] (-3,0) to[bend left=90] (3,0);
\end{scope}
\end{scope}
\begin{scope}[xshift=18em, yshift=2.5em, rotate=180, decoration={markings,mark=at position 1 with {\arrow[black]{Stealth[length=4.2pt]}}}] 
\draw[dash pattern=on 3.9pt off 2pt, line width=.5pt, line cap=round] (0,0) circle(3em);
\draw[dash pattern=on 3.9pt off 2pt, line width=.5pt, line cap=round] (1.5,0) ellipse[x radius=.5, y radius=1];
\begin{scope}[xshift=-1em]
\node[font=\scriptsize] at (0,0) {$\times$};
\draw[line width=.5pt, fill=black] (0,.8) circle(.1em);
\draw[line width=.5pt, line cap=round] (296:1) arc[start angle=296, end angle=250, radius=1];
\draw[line width=.5pt, line cap=round] (236:1) arc[start angle=236, end angle=200, radius=1];
\draw[dash pattern=on 0pt off 1.2pt, line width=.6pt, line cap=round] (180:1) arc[start angle=180, end angle=165, radius=1];
\draw[line width=.5pt, line cap=round] (151:1) arc[start angle=151, end angle=120, radius=1];
\draw[line width=.5pt, line cap=round] (66:1) arc[start angle=66, end angle=35, radius=1];
\draw[dash pattern=on 0pt off 1.2pt, line width=.6pt, line cap=round] (12:1) arc[start angle=12, end angle=-3, radius=1];
\draw[line width=.5pt, line cap=round] (-14:1) arc[start angle=-14, end angle=-50, radius=1];
\draw[line width=0pt, postaction={decorate}] (113:1) -- +(113-80:.01);
\node[font=\scriptsize] at (135:1.55) {$v_l$};
\draw[line width=0pt, postaction={decorate}] (193:1) -- +(193-80:.01);
\node[font=\scriptsize] at (215:1.5) {$v_1$};
\draw[line width=0pt, postaction={decorate}] (243:1) -- +(243-80:.01);
\node[font=\scriptsize] at (270:1.45) {$p$};
\draw[line width=0pt, postaction={decorate}] (303:1) -- +(303-80:.01);
\node[font=\scriptsize] at (330:1.6) {$u_k$};
\draw[line width=0pt, postaction={decorate}] (23:1) -- +(23-80:.01);
\node[font=\scriptsize] at (45:1.5) {$u_1$};
\draw[line width=.75pt, line cap=round, color=arccolour] (155:.3) -- +(155:1.7);
\draw[line width=.75pt, line cap=round, color=arccolour] (20:.3) -- +(20:2);
\draw[line width=.75pt, line cap=round, color=arccolour] (190:.3) -- +(190:1.65);
\draw[line width=.75pt, line cap=round, color=arccolour] (-10:.3) -- +(-10:1.7);
\end{scope}
\draw[line width=.75pt, line cap=round, color=arccolour] ($(-1,0)+(110:.3)$) to ($(-1,0)+(110:1)$) to[out=110, in=180, looseness=1.2] (90:2.66) arc[start angle=90, end angle=-90, radius=2.66] to[out=180, in=240, looseness=1.2] ($(-1,0)+(240:1)$) to ($(-1,0)+(240:.3)$);
\draw[line width=.75pt, line cap=round, color=arccolour] ($(-1,0)+(70:.3)$) to ($(-1,0)+(70:1)$) to[out=70, in=150, looseness=1.2] (60:2.33) arc[start angle=60, end angle=-60, radius=2.33] to[out=210, in=300, looseness=1.2] ($(-1,0)+(300:1)$) to ($(-1,0)+(300:.3)$);
\end{scope}
\end{scope} 
\end{tikzpicture}
\caption{Local pictures of the three types of (smooth) surfaces for the underlying gentle algebra $A$ of a DG gentle algebra $(A, d)$, and the corresponding orbifold surface modelled by $(A, d)$ illustrating the geometric interpretation of the differentials \eqref{eq:d1}--\eqref{eq:d3}.}
\label{fig:differential}
\end{figure}
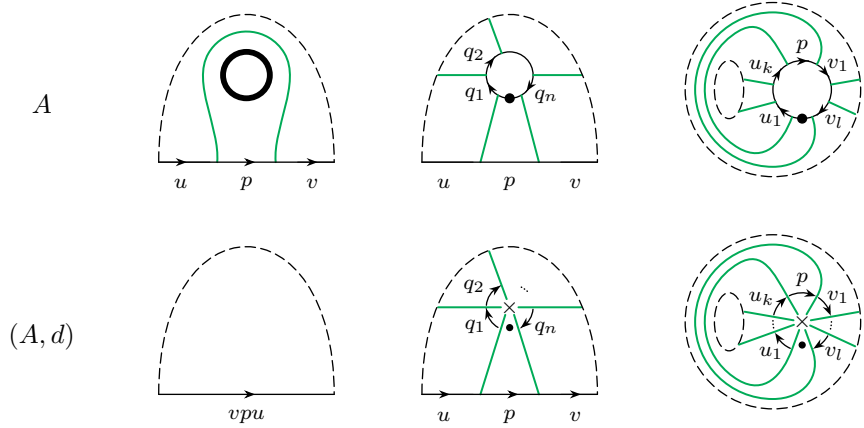

Combining Proposition \ref{proposition:dgdissection} with the main idea of \cite[Theorem 8.6]{barmeierschrollwang} we can also characterize when a DG gentle algebra is formal. (In particular, a DG gentle algebra whose differential has a nonzero component of type \eqref{eq:d3} is never formal.)

\begin{theorem}
\label{theorem:formality}
Let $A$ be a DG gentle algebra such that the associated orbifold surface is not a disk with one stop in the boundary and two orbifold points. Then $A$ is formal if and only if for every arrow $p$ such that $d (p) \neq 0$ we either have that $d (p)$ is a path of length $0$ or $1$ or that $p$ cannot be composed nontrivially with any other arrow.
\end{theorem}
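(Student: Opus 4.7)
The plan is to reduce the question to a geometric statement about a DG dissection of a graded orbifold surface and then invoke the formality criterion for partially wrapped Fukaya categories of orbifold surfaces from \cite[Theorem 8.6]{barmeierschrollwang}.

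By Proposition \ref{proposition:dgdissection}, there exist a graded orbifold surface $\mathbf{S}$ and a DG dissection $\Delta$ of $\mathbf{S}$ such that the DG gentle algebra $A$ is quasi-isomorphic to the associated DG category $\mathbf{A}_\Delta$. Since formality is invariant under quasi-isomorphism, it suffices to characterize the formality of $\mathbf{A}_\Delta$. Comparing Lemma \ref{lemma:differential} with Fig.~\ref{fig:differential}, the three admissible local forms \eqref{eq:d1}--\eqref{eq:d3} of the differential have direct geometric readings: a loop $p$ with $d(p) = e_{\mathrm{s}(p)}$ corresponds to an arc bordering a boundary component with a full boundary stop; an arrow $p$ with $d(p)$ of path-length one corresponds to an arc adjacent to a single orbifold point; and $d(p)$ of path-length $n \geq 2$ (whether of parallel type or of type $v p u$) corresponds to a configuration in which one side of the arc carrying $p$ is a polygon containing $n$ orbifold points arranged in sequence.

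For the ``if'' direction, if every arrow $p$ with $d(p) \neq 0$ has $d(p)$ of length at most one, an explicit A$_\infty$ gauge transformation (of the kind used in the proof of \cite[Theorem 8.6]{barmeierschrollwang}) absorbs the differential without introducing higher products, showing that $\mathbf{A}_\Delta$ is quasi-isomorphic to its cohomology algebra. If instead some arrow $p$ has $d(p)$ of length $\geq 2$ but $p$ cannot be composed nontrivially with any other arrow, the polygon of $\Delta$ carrying the relevant orbifold points is disconnected from the remainder of $\Delta$, and by the cosheaf property (Theorem \ref{theorem:cosheaf}) it contributes a formal direct summand; the two observations combine to give formality.

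The substantive content is the converse, and this is where the main obstacle lies. Suppose some arrow $p$ has $d(p)$ of length $n \geq 2$ and admits a nontrivial composition $qp \neq 0$ or $pq \neq 0$ with another arrow $q$. Using the geometric picture of Fig.~\ref{fig:differential} together with the cosheaf description of Theorem \ref{theorem:cosheaf}, I would identify an immersed polygon in $\mathbf{S}$ bounded by the arcs carrying $p$, $q$ and the arrows appearing in $d(p)$, winding past the intermediate orbifold points. This polygon contributes a nonzero higher product $m_k$ (for some $k \geq 3$) to the minimal model of $\mathbf{A}_\Delta$, obstructing formality. The technical heart of the argument is to verify that the resulting product is not a Hochschild coboundary in $\H^\bullet \mathbf{A}_\Delta$; this will be done by tracking a single primitive Reeb chord appearing in its output and checking that no lower-order product can cancel it. The disk with one stop and two orbifold points must be excluded precisely because there the only candidate arrow with $d(p)$ of length two is never composable with another arrow, so this obstruction mechanism is vacuous; formality of that sporadic case has to be decided by direct inspection and lies outside the scope of the stated criterion.
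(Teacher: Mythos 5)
Your proposal identifies the right high-level landmarks — Proposition \ref{proposition:dgdissection}, the surface/polygon picture of Fig.~\ref{fig:differential}, and the cited formality criterion \cite[Theorem~8.6]{barmeierschrollwang} — but it stops short of the actual work exactly where the theorem has content, and it misreads the structure of the paper's argument in a few places.

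First, you treat all differentials with $d(p)$ of length $\geq 2$ uniformly, whereas the paper's proof is explicitly case-based: the parallel-path case \eqref{eq:d2} is disposed of by citing \cite[Theorem~8.6]{barmeierschrollwang}, the loop case \eqref{eq:d1} is shown not to affect formality, and the only genuinely new argument is for the ``sandwich'' case \eqref{eq:d3} where $d(p) = v p u$. Note that in case \eqref{eq:d3} the arrow $p$ is \emph{always} nontrivially composable (with $u_k$ on one side and $v_1$ on the other, since $vpu \neq 0$), so that case always obstructs formality and never contributes to your ``disconnected polygon'' branch. Your proposal never isolates \eqref{eq:d3} as the case requiring a new argument, and the proof therefore never engages with it.

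Second, the core of the paper's proof is an explicit Hochschild-cohomological obstruction: it restricts to the two-vertex subalgebra $eAe$ containing $p$, $u$, $v$, writes down a candidate $2$-cocycle $\phi$ for $\H^\bullet(eAe)$ built from the ambiguities $v[vp]u$ and $[vp]u^2$ in a Chouhy--Solotar-type resolution, verifies the cocycle condition and non-coboundary property by hand, and concludes $\dim \HH^2(\H^\bullet(eAe)) > \dim \HH^2(eAe)$, which is incompatible with $\mathrm A_\infty$-quasi-isomorphism; then \cite[Proposition~2.8]{barmeierschrollwang} propagates non-formality from $eAe$ to $A$. Your sketch replaces this with ``I would identify an immersed polygon contributing a nonzero higher product $m_k$'' and ``the technical heart... will be done by tracking a single primitive Reeb chord.'' These are announcements of an argument, not the argument. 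In particular you do not verify that the putative product survives in the minimal model and is not a coboundary, which is precisely where the difficulty lies. The geometric route you gesture at could in principle be made to work, but as written it has no content.

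Third, your handling of the loop case and your account of the excluded surface both go wrong. The paper removes a loop $p$ with $d(p) = e_{\mathrm s(p)}$ not by an $\mathrm A_\infty$ gauge transformation (which preserves the underlying graded space) but by exhibiting a quasi-isomorphism $eAe \to A$ onto a subalgebra that omits the vertex $\mathrm s(p)$; this is a genuine contraction, and ``gauge transformation'' is the wrong mechanism. And your explanation of the exclusion — that in the disk with one stop and two orbifold points ``the only candidate arrow with $d(p)$ of length two is never composable with another arrow, so this obstruction mechanism is vacuous'' — does not match the theorem's statement: if an arrow is non-composable, the theorem's criterion simply predicts formality and would apply; the exclusion must instead signal that the iff fails or the proof mechanism breaks in that degenerate case (the bound $\dim \HH^2(eAe) \leq 1$ used in the paper's comparison is one candidate obstruction). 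As stated, your explanation is speculative and does not account for why that particular surface must be carved out.

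In short: the proposal has the right geometric intuition but skips the computation that constitutes the proof of the ``only if'' direction, does not isolate the case \eqref{eq:d3} where new work is needed, uses the wrong mechanism for the loop case, and misdiagnoses the exceptional surface.
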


\begin{proof}
The statement is proven in \cite[Theorem 8.6]{barmeierschrollwang} for the case when the differential is nonvanishing on an arrow of type \eqref{eq:d2}. It remains to prove that $A$ is not formal whenever its differential $d$ is nonvanishing on an arrow of type \eqref{eq:d3}, and that nonvanishing on a loop of type \eqref{eq:d1} does not hinder it being formal.

The latter follows from the observation that there is a quasi-isomorphism $e A e \to A$ where $e A e$ is the subalgebra determined by the idempotent $e$ given by the sum of all idempotents excluding the idempotents which lie in the image of $d$. (See also \eqref{eq:obs3} in the proof of Theorem \ref{theorem:deformationwrapped} below.)

The former can be proved as follows. Let $p$ be an arrow such that $d (p) = v_l \dotsb v_2 v_1 p u_k \dotsb u_2 u_1$ as in \eqref{eq:d3} and let $e = e_{\mathrm s (p)} + e_{\mathrm t (p)}$ be the idempotent corresponding to the starting and ending vertices of $p$. We claim that the subalgebra $e A e \subset A$ is not formal. Writing $v = v_l \dotsb v_1$ and $u = u_k \dotsb u_1$, we have
\begin{equation*}
\begin{tikzpicture}[x=1em,y=1em,baseline=.5em]
\begin{scope}
\node[left] at (-1.1,0) {$e A e \simeq \biggl( \Bbbk \Bigl($};
\node[shape=circle, scale=.7, inner sep=1pt] (1) at (0,0) {};
\node[shape=circle, scale=.7, inner sep=1pt] (2) at (2.75,0) {};
\path[->, line width=.5pt, font=\scriptsize] (1) edge node[below=-.3ex] {$p$} (2);
\path[->, line width=.5pt, font=\scriptsize] (1) edge[out=220, in=140, looseness=40] node[pos=.2, below=-.3ex] {$u$} (1);
\path[<-, line width=.5pt, font=\scriptsize] (2) edge[out=-40, in=40, looseness=40] node[pos=.2, below=-.3ex] {$v$} (2);
\node[right] at (3.85,0) {$\Bigr), d (p) = v p u \biggr)$};
\draw[dash pattern=on 0pt off 1.2pt, line width=.6pt, line cap=round] ($(0,0)+(205:.6em)$) arc[start angle=205, end angle=150, radius=.6em];
\draw[dash pattern=on 0pt off 1.2pt, line width=.6pt, line cap=round] ($(2.75,0)+(25:.6em)$) arc[start angle=25, end angle=-30, radius=.6em];
\end{scope}
\begin{scope}[yshift=-3em]
\node[left] at (-1.1,0) {$\H^\bullet (e A e) \simeq \phantom{\biggl(} \Bbbk \Bigl($};
\node[shape=circle, scale=.7, inner sep=1pt] (1) at (0,0) {};
\node[shape=circle, scale=.7, inner sep=1pt] (2) at (2.75,0) {};
\path[->, line width=.5pt, font=\scriptsize] (0.1,.175) edge node[above=-.4ex, pos=.55] {$[pu]$} (2.65,.175);
\path[->, line width=.5pt, font=\scriptsize] (.1,-.175) edge node[below=-.4ex, pos=.4] {$[vp]$} (2.65,-.175);
\path[->, line width=.5pt, font=\scriptsize] (1) edge[out=220, in=140, looseness=40] node[pos=.2, below=-.3ex] {$u$} (1);
\path[->, line width=.5pt, font=\scriptsize] (2) edge[out=-40, in=40, looseness=40] node[pos=.2, below=-.3ex] {$v$} (2);
\node[right] at (3.85,0) {$\Bigr) \bigm / I$};
\draw[dash pattern=on 0pt off 1.2pt, line width=.6pt, line cap=round] ($(0,0)+(205:.6em)$) arc[start angle=205, end angle=150, radius=.6em];
\draw[dash pattern=on 0pt off 1.2pt, line width=.6pt, line cap=round] ($(2.75,0)+(25:.6em)$) arc[start angle=25, end angle=-30, radius=.6em];
\draw[dash pattern=on 0pt off 1.2pt, line width=.6pt, line cap=round] ($(0,0)+(130:.6em)$) to[bend left=45] (.5,.3);
\draw[dash pattern=on 0pt off 1.2pt, line width=.6pt, line cap=round] ($(2.75,0)+(-45:.6em)$) to[bend left=45] (2.15,-.3);
\end{scope}
\end{tikzpicture}
\end{equation*}
where $[pu]$ and $[vp]$ denote the cohomology classes of (the cocycles) $pu$ and $vp$ and $I$ is the ideal generated by the monomial relations indicated by dots together with the element $[vp] u - v [pu]$. We claim that $\H^\bullet (e A e)$ cannot be a minimal model for the DG algebra $e A e$ unless $\H^\bullet (e A e)$ carries nontrivial higher structure. The underlying algebra of $e A e$ is a gentle algebra. It follows similar to the proof of Theorem \ref{theorem:hochschild} below that $\dim \HH^2 (e A e, e A e) \leq 1$. However, one sees that $\H^\bullet (e A e)$, viewed as graded associative algebra without higher structure, has at least one additional nontrivial $2$-cocycle whence
\[
\dim \HH^2 (\H^\bullet (e A e), \H^\bullet (e A e)) \geq \dim \HH^2 (e A e, e A e) + 1.
\]
As in Theorem \ref{theorem:hochschild}, we may use the Chouhy--Solotar resolution for $\H^\bullet (e A e)$ associated to the reduction system $\{ (u^2, 0), ([pu] u, 0), ([vp] u, v [pu]), (v [vp], 0), (v^2, 0) \}$ (see \cite{chouhysolotar,barmeierwang1}) to compute $\HH^\bullet$. We have a cochain $\phi$ determined by
\begin{align*}
\phi \colon v [vp] u &\mapsto (-1)^{|vp|} [vp] \\
[vp] u^2 &\mapsto  [pu].
\end{align*}
Since $v [vp] u$ and $[vp] u^2$ are overlap ambiguities ({\it $1$-ambiguities} in the notation of \cite[Chapter 3]{barmeierwang1}), $\phi$ defines a $3$-cochain in the underlying cochain complex. Since $\lvert u \rvert + \lvert v \rvert = 1$, this cochain is a degree $-1$ map and hence defines a cochain of total degree $2$, namely $|\varphi|=2$. We claim that $\phi$ is a cocycle. Indeed, we may check that $\partial^\bullet \phi = 0$ as follows. There are two $2$-ambiguities involving the two overlap ambiguities on which $\phi$ is nonvanishing, namely $v [vp] u^2$ and $v^2 [vp] u$. One checks that $\partial^\bullet \phi$ vanishes on both:
\begin{align*}
(\partial^\bullet \phi) (v [vp] u^2) &=  -(-1)^{|\s v||\phi|}  v \cdot \phi ([vp] u^2) + (-1)^{|\s v [vp] u|-|\phi|} \phi (v [vp] u) \cdot u\\
&= -(-1)^{|vp|} [vp] u + (-1)^{|vp|} v [pu] = 0 \\
(\partial^\bullet \phi) (v^2 [vp] u) &= -(-1)^{|\s v||\phi|} v \cdot \phi (v [vp] u) = -(-1)^{|vp|} v [vp] = 0
\end{align*}
where the differential is given similar to  \eqref{differentialsmaller} and we have used $|\phi|=2$ for the sign computations. Clearly $\partial^\bullet \phi$ trivially vanishes on all other $2$-ambiguities and hence defines a $2$-cocycle. To see that $\phi$ is not a coboundary, consider a $2$-cochain $\psi$. In order for $\partial^\bullet \psi$ to have the possibility of cancelling terms of $\phi$, it is enough to consider $\psi$ to be of the general form
\begin{align*}
\psi \colon v [vp] &\mapsto \alpha_1 [pu] + \alpha_2 [vp] + \alpha_3 \hair v [pu]\\
[vp] u &\mapsto \alpha_4 [pu] + \alpha_5 [vp] + \alpha_6 \hair v [pu] \\
[pu] u &\mapsto \alpha_7 [pu] + \alpha_8 [vp] + \alpha_9 \hair v [pu] \\ 
u^2 &\mapsto \alpha_{10} e_{\mathrm s (p)} + \alpha_{11} u\\
v^2 & \mapsto \alpha_{12} e_{\mathrm t (p)} + \alpha_{13} v
\end{align*}
for some constants $\alpha_1, \dotsc, \alpha_{13} \in \Bbbk$. Then we have that
\begin{align*}
(\partial^\bullet \psi) (v [vp] u) &= \pm \alpha_2 [vp] u \pm \alpha_4 v [pu] \pm \alpha_9 [pu] \pm \alpha_{10} \hair v [pu]\\
&= (\pm \alpha_2 \pm \alpha_4 \pm \alpha_{10}) \hair v [pu] \pm \alpha_9 [pu]\\
(\partial^\bullet \psi) ([vp] u^2) &= \pm \alpha_5 v [pu] \pm \alpha_7 v [pu]  \pm \alpha_{10} [vp] \pm \alpha_{11} v[pu]\\
&=(\pm \alpha_5 \pm \alpha_7 \pm \alpha_{11}) v[pu] \pm \alpha_{10} [vp].
\end{align*}
Here the precise signs are irrelevant, as none of these terms appear in $\phi (v [vp] u)$ and $\phi ([vp] u^2)$, respectively, whence $\phi$ is not a coboundary.

Thus $\phi$ defines a nontrivial $2$-cocycle which does not exist for $e A e$ and we must have $\dim \HH^2 (\H^\bullet (e A e), \H^\bullet (e A e)) > \dim \HH^2 (e A e, e A e)$. This implies that $\H^\bullet (e A e)$ without higher structure cannot be A$_\infty$-quasi-isomorphic to $e A e$, since any A$_\infty$-quasi-isomorphism induces an isomorphism on Hochschild cohomology. 

We conclude that $e A e$ is not formal. It now follows from \cite[Proposition 2.8]{barmeierschrollwang} that also $A$ is not formal, since it contains a non-formal full subcategory (when viewing $A$ as a DG category whose objects are given by the vertices).
\end{proof}

\subsection{Hochschild cohomology of DG gentle algebras}
\label{subsection:hochschildwrapped}

The following lemma shows that up to coboundaries, the Hochschild cocycles of a DG gentle algebra $A$ are determined by their values on maximal overlaps for the monomial relations.

\begin{lemma}\label{lemma:killoverlap}
Let $A$ be a DG gentle algebra and let $\phi \in \Hom_{\Bbbk Q_0^\e} (\s^m \Bbbk W_m, A)$ be a cocycle for $m \geq 1$. In case $m = 1$, assume that $x$ does not appear as a summand of $\phi (\s x)$ for each $x \in Q_1$.

Then $\phi$ is cohomologous to a cocycle $\varphi' \in \Hom_{\Bbbk Q_0^\e} (\s^m \Bbbk W_{m}, A)$ where $\varphi' (\s^m w) =0$ for any overlap $w \in W_m$ which is a proper subpath of a maximal overlap (excluding cycles with full relations). 
\end{lemma}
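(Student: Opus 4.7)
The plan is to kill the values of $\phi$ on proper subpaths of maximal overlaps by subtracting an iteratively constructed coboundary $\delta \psi$. The central combinatorial fact to exploit is that in a gentle algebra, the condition that at each vertex there are at most two in-arrows and two out-arrows, together with the quadratic monomial relations pairing them uniquely, implies that the extensions of any non-maximal overlap $w \in W_m$ (to a longer element of $W_{m+r}$) are uniquely determined on each side. Thus every non-maximal $w$ is properly contained in a unique maximal overlap $W \in W_n$ with $n > m$, provided $W$ is not a cycle with full relations (which is precisely why such cycles are excluded from the conclusion).

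Fix a non-maximal $w = w_1 \cdots w_m$ properly contained in $W$; say $W$ extends $w$ on the right by $w_{m+1}$, so that $w \cdot w_{m+1} \in W_{m+1}$ (the left-extension case is symmetric). The cocycle condition applied to $v = w_1 \cdots w_{m+1}$ gives
\[
0 = \delta\phi(\s^{m+1} v) = -(-1)^{(|w_1|-1)|\phi|}\, w_1\, \phi(\s^m w_{2\cdots m+1}) + (-1)^{|\s w_{1\cdots m}|-|\phi|}\, \phi(\s^m w)\, w_{m+1},
\]
which, read together with the analogous equations obtained by sliding the window through $W$, forces $\phi(\s^m w) \in e_{\mathrm t(w)} A e_{\mathrm s(w)}$ to lie in the small subspace of paths parallel to $w$ in $A$. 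By the gentle dichotomy at $\mathrm s(w_1)$ and $\mathrm t(w_m)$, every such parallel path factors on the right through $w_m$ (or, symmetrically, on the left through $w_1$), so one can write $\phi(\s^m w) = \alpha\cdot w_m$ for a well-defined $\alpha \in A$. Define $\psi_w \in \Hom_{\Bbbk Q_0^\e}(\s^{m-1}\Bbbk W_{m-1}, A)$ to be supported only on $w_1 \cdots w_{m-1} \in W_{m-1}$, with value a suitably signed multiple of $\alpha$. The Bardzell-type coboundary formula \eqref{differentialsmaller} then gives $\delta \psi_w(\s^m w) = \phi(\s^m w)$ exactly, while $\delta \psi_w$ vanishes on every other element of $W_m$ because uniqueness of the gentle relations prevents $w_1 \cdots w_{m-1}$ from being a subpath of any competing overlap.

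Iterate: process the non-maximal overlaps in an order compatible with their "depth" inside the containing maximal overlap (say, those closest to an extremity of $W$ first), and at each stage subtract the freshly constructed $\delta \psi_w$. The earlier-handled overlaps remain zero because each $\psi_w$ has support confined to a single element of $W_{m-1}$ that, by gentle uniqueness, does not feed back into already-cleared targets in $W_m$. After finitely many steps the non-maximal overlaps are exhausted, yielding the desired cohomologous cocycle $\varphi'$.

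The main obstacle is precisely the combinatorial bookkeeping of the iteration: showing that the coboundary killing $\phi$ on one non-maximal overlap does not re-introduce values on another. This is secured by the uniqueness of the gentle relations and by excluding cycles with full relations (where there is no ``outermost'' subpath at which to start). The extra assumption in the $m=1$ case, that $\phi(\s x)$ contains no summand proportional to $x$ itself, is required because such a summand corresponds to a scalar rescaling of the arrow $x$ that is not absorbed by any $\delta \psi$ of the above form; imposing it fixes the normalisation needed for the induction to begin.
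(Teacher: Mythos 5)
There is a genuine gap in the step where you write $\phi(\s^m w) = \alpha\cdot w_m$. You attribute this factorization to ``the gentle dichotomy at $\mathrm s(w_1)$ and $\mathrm t(w_m)$'', but the gentle condition alone does not force a parallel path $p$ to begin or end with an arrow of $w$: there can be a second outgoing arrow at $\mathrm s(w)$ and a second incoming arrow at $\mathrm t(w)$, and a priori $p$ can use both of the ``other'' arrows. The constraint you need is \emph{not} a consequence of local quiver combinatorics, but is extracted from the cocycle condition $\delta\phi(\s^{m+1}w x_{m+1}) = 0$ (and, in the case where the summand $p$ ends with $x_m$ and hence $p x_{m+1} = 0$, from the other equation $\delta\phi(\s^{m+1}x_0 w) = 0$). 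Moreover, the conclusion one derives is not that $p$ factors through $x_m$, but that $p$ factors through $x_1$ (i.e.\ $p = x_1 p'$). Your construction of $\psi_w$ is therefore set up on the wrong side: in the subcase where $p$ does not end with $x_m$, the required $(m-1)$-cochain is supported on $x_2\cdots x_m$ and has value a multiple of $p'$, not on $w_1\cdots w_{m-1}$.

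A second gap is the claim that $\delta\psi_w$ ``vanishes on every other element of $W_m$''. This is false in the subcase just described: $\delta\psi_w$ is also nonzero on the adjacent overlap $x_2\cdots x_{m+1}$, with value $\pm\lambda_p p' x_{m+1}$. The iteration therefore does introduce new terms, and the paper copes with this by fixing the order of processing (``$w$ is the leftmost overlap in $v$ with nonzero value''), which ensures the coboundary only disturbs overlaps that are yet to be cleared; the already-cleared overlap $x_0\cdots x_{m-1}$ to the left is unaffected. Your proposed order ``those closest to an extremity first'' does not encode this, and without the explicit ``$\phi(\s^m x_0\cdots x_{m-1}) = 0$'' normalization the second case (where one needs $x_0 p = 0$ to conclude $p$ starts with $x_1$) does not go through. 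In short, the broad strategy -- subtract a coboundary of the Bardzell complex concentrated on a single $(m-1)$-overlap to kill one summand of $\phi$ on $w$, and iterate along the maximal overlap -- is right, but the key algebraic inputs (the cocycle constraint forcing $p = x_1 p'$, the case split according to the last arrow of $p$, the leftmost-first ordering, and the propagation of $\delta\psi$ to the right neighbor) are missing or stated incorrectly.
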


\begin{proof}
Assume that $\phi (\s^m w) \neq 0$ for some overlap $w=x_1\cdots x_m$ which is a proper subpath of a maximal overlap $v$. Write  $v=x_{-k}\cdots x_0 w x_{m+1} \cdots x_n$ (all $x_i$'s are distinct). When $m =1$, we further assume that $\varphi(\s w)$ has no multiples of $w$ as a summand. 

Without loss of generality, we may assume that $\varphi (\s^m x_{j+1} \cdots x_{j+m})=0$ for all $j \leq -1$, namely $w$ is the left most overlap in $v$ satisfying $\varphi (\s^m w) \neq 0$. In particular, $\varphi (\s^m x_0 \cdots x_{m-1}) = 0$. Assume that $\lambda_p p$ is a nonzero summand of $\varphi (\s^m w)$, where $p$ is an irreducible path  parallel to $w$. Since $\delta (\varphi) = 0$ we have
\begin{align*}
\delta(\varphi) (\s^{m+1} w x_{m+1}) &= -(-1)^{\lvert \s x_1 \rvert \lvert \varphi \rvert} x_1 \varphi (\s^m x_2 \cdots x_{m+1}) + (-1)^{|w|-|\varphi|} \varphi(\s^m w) x_{m+1} \\
&= 0. 
\end{align*}

We have two cases. In the first case where the last arrow of $p$ is not $x_m$ so that $px_{m+1} \neq 0$ (since $A$ is gentle), it follows that $\varphi(\s^m w) x_{m+1}$ has a nonzero summand $px_{m+1}$. Then from the above equation we obtain that $\varphi(\s^m x_2 \cdots x_{m+1})$ has a summand $(-1)^{|x_1||\varphi|-|w|}\lambda_p p' x_{m+1}$  such that $p=x_1p'$.  Consider the element $\psi \in \Hom_{\Bbbk Q_0^\e} (\s^{m-1} \Bbbk W_{m-1}, A)$ defined as  
\begin{align*}
\psi (\s^{m-1} x_2 \cdots x_m) &= -(-1)^{|\s x_1||\varphi|} \lambda_p p'
\end{align*}
and $\psi (\s^{m-1} u) = 0$ if $u \neq x_2 \cdots x_m \in W_{m-1}$. Then 
\begin{align*}
\delta (\psi) (\s^m x_1 \cdots x_m)     &= \lambda_p x_1 p' = \lambda_p p \\
\delta (\psi) (\s^m x_2 \cdots x_{m+1}) &= -(-1)^{\epsilon-|\varphi|+|\s x_1||\varphi|} \lambda_p p' x_{m+1}
\end{align*}
and $\delta (\psi) (\s^m v) = 0$ for all the other $v \in W_m$. 

In the second case where the ending arrow of $p$ is $x_m$ (This only happens for $m >1$). Since $\delta(\varphi)(\s^{m+1} x_0w) = 0$ and $\varphi(\s^mx_0\cdots x_{m-1})=0$ it follows that
$x_0p = 0$ (since $A$ is gentle). Namely, $p=x_1p'x_m$ has the starting arrow $x_1$ and the ending arrow $x_m$. Consider the element $\psi \in \Hom_{\Bbbk Q_0^\e}(\s^{m-1}\Bbbk W_{m-1}, A)$ defined as  
\begin{align*}
 \psi(\s^{m-1} x_1\cdots x_{m-1})&=  (-1)^{\epsilon_{m-1}-|\varphi|} \lambda_px_1p'
\end{align*}  
and $\psi(\s^{m-1}u) = 0$ if $u\neq x_1\cdots x_{m-1} \in W_{m-1}$. Then 
\begin{align*}
\delta(\psi)(\s^mx_0\cdots x_{m-1}) &= \pm \lambda_p x_0x_1p'=0\\
\delta(\psi)(\s^mx_1\cdots x_m) &= \lambda_px_1 p'x_{m} =\lambda_pp
\end{align*}
and $\delta(\psi)(\s^mv)=0$ for all the other $v \in W_m$. 

In any case of the above two cases, define $\varphi_1=\varphi-\delta(\psi)$. We find that the number of nonzero summands of 
$\varphi_1(\s^m w)$ is one less than the one of $\varphi(\s^mw)$, since  $\varphi(\s^mw)$ has the summand $\lambda_p p$ but $\varphi_1(\s^nw)$ does not. Moreover, $\varphi_1(\s^mv) = \varphi(\s^mv)$ for all $v \in W_m\backslash \{ w, x_2\cdots x_{m+1}\}$. 

Apply the above process for $\varphi_1$ and iterate this,   we may get a cocycle $\varphi_2$ which is cohomologous to $\varphi$ and satisfies $\varphi_2(\s^mx_{j+1}\cdots x_{j+m}) = 0$  for $j \leq 0$. 
\end{proof}

We now prove our main theorem on the Hochschild cohomology of $\mathcal W (\mathbf S)$ --- which can be related to certain boundary components of $\mathbf S$ --- in terms of the Hochschild cohomology of the corresponding DG gentle algebra. This is a first indication about the relationship between deformations and partial compactifications which will be proved in Theorem~\ref{theorem:deformationwrapped}.

\begin{theorem}
\label{theorem:hochschild}
Let $\mathbf S = (S, \Sigma, \eta)$ be a graded orbifold surface with stops in the boundary and let $\mathcal W (\mathbf S)$ be its partially wrapped Fukaya category. Then the dimension of $\HH^2 (\mathcal W (\mathbf S), \mathcal W (\mathbf S))$ is given by the total number of the following types of boundary components:
\begin{enumerate}
\item boundary components with a single boundary stop and winding number $1$
\item boundary components with a full boundary stop and winding number $1$ or $2$
\item boundary components without stops and winding number $1$ or $2$.
\end{enumerate}
\end{theorem}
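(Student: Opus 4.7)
The plan is to combine Keller's theorem with the Bardzell-type resolution of Section~\ref{subsection:bardzellresolution} and work locally at each boundary component of $\mathbf S$. Namely, choosing the standard formal generator $\Gamma$ of Proposition~\ref{proposition:generator}, the Keller isomorphism \eqref{equation:isomorphismhh} reduces the computation to $\HH^2(A,A)$ where $A = \End(\Gamma)$ is the DG gentle algebra associated with the quiver of Fig.~\ref{fig:quiver}. Since the underlying quadratic monomial data of $A$ has arrows whose differentials vanish on any arrow occurring in a relation, the DG Bardzell resolution of Section~\ref{subsection:bardzellresolution} applies, and $\HH^2(A,A)$ is the degree $2$ cohomology of the complex $\mathrm P^\bullet(Q,R) = \prod_{i\geq 0}\Hom_{\Bbbk Q_0^\e}(\s^i\Bbbk W_i,A)$ with differential $\delta$.

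Next I would use Lemma~\ref{lemma:killoverlap} to restrict to cocycles supported on \emph{maximal} overlaps. This is the key simplification: the complex $\mathrm P^\bullet(Q,R)$ is governed by the combinatorics of maximal overlaps in $W_n$, and the quiver of Fig.~\ref{fig:quiver} is a concatenation of local ``pieces'', one for each boundary component type (\textsc{i})--(\textsc{vi}). Away from interior vertices (those with two incoming and two outgoing arrows, connected by the $u_i$'s) the local pieces are essentially decoupled in the overlap complex, so one may analyse each boundary component independently. For each local piece I then list the maximal overlaps, check the degree condition for a degree~$2$ cocycle (the degree of $\phi(\s^m w)$ in $A$ must equal $|w|-m+2$), and either exhibit an explicit nonzero class or show the candidate cocycle is a coboundary or is killed by the internal differential coming from the DG structure of $A$.

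The case analysis then proceeds by type. For type~(\textsc{i}) (genus) the three arrows $p^\I,q^\I,r^\I$ form a single vertex with full quadratic relations and all in degree~$0$; the candidate overlaps have degree~$0$ and no nonzero degree~$2$ cocycle survives. For type~(\textsc{ii}) (order~$2$ orbifold points) the internal differential $d(p^\II_i)=q^\II_i$ makes the relevant cocycles exact, contributing~$0$. For type~(\textsc{iii}) a boundary component with $s\geq 1$ boundary stops and winding number $w$ yields a single ``arc'' $q^\III$ of degree $w$ and $s$ arrows $p^\III_{\bullet,\bullet}$ in degree~$0$; the maximal overlap is a cyclic word of the form $q^\III p^\III_s\cdots p^\III_1$. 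A straightforward degree count shows this supports a degree~$2$ cocycle precisely when $s=1$ and $w=1$. For type~(\textsc{iv}) (full boundary stop) the loop $p^\IV$ has degree $1-w$ with ${p^\IV}^2=0$, giving maximal overlaps $(p^\IV)^n$ for all $n\geq 2$. The degree equation $|\phi((p^\IV)^n)| = n(-w)+2$ admits a nonzero target in $A\in\{e_{\mathrm s(p^\IV)},p^\IV\}$ exactly when $w\in\{1,2\}$ (with $n=2$ or $n=3$), yielding a one-dimensional contribution after checking nontriviality modulo coboundaries. For type~(\textsc{v}) (boundary without stops) the loop $q^\V$ of degree $w$ has no self-relation, so the maximal overlaps are those involving the surrounding relations $u^\V q^\V$, $q^\V u^\V$; the same degree analysis, together with cyclic invariance, produces a contribution precisely for $w\in\{1,2\}$. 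Type~(\textsc{vi}) (arrows $p^\VI_i$ along $\partial_0 S$) provides only relations internal to the chosen boundary component $\partial_0 S$ and does not produce extra classes. Finally, the vanishing of $\HH^2$-contributions from boundary components with winding number $\notin\{0,1,2\}$ is automatic: in all relevant local pieces the candidate cocycle would have to land in a path of negative degree or of degree larger than the longest path at the endpoint, neither of which exists.

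The main technical obstacle is the interaction between the internal differential $\partial^{\,\mathrm{int}}$ (from the DG structure) and the external Bardzell differential $\partial^{\,\mathrm{ext}}$, particularly around orbifold points where $d(p^\II)=q^\II$ kills would-be cocycles, and in comparing classes coming from overlaps of different lengths $n$ supported on the same loop (e.g.\ $(p^\IV)^2$ vs.\ $(p^\IV)^3$). Handling this requires tracking the spectral-sequence-type filtration by overlap length and verifying, by explicit computation analogous to the coboundary argument in Lemma~\ref{lemma:killoverlap}, that exactly one class per contributing boundary component survives. Summing the local contributions gives the stated formula.
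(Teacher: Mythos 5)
Your overall approach (Keller's isomorphism, the DG Bardzell resolution, Lemma~\ref{lemma:killoverlap}) is the same as the paper's, but the key organizational claim in your sketch --- that the overlap complex decomposes into independent pieces, one per boundary component, so that each component can be analyzed locally --- is false, and its failure is precisely responsible for one of the classes you need to produce. The relations of the standard DG gentle algebra chain together through the $u$-arrows into a single long maximal overlap $w = w^\VI w^\V w^\IV w^\III w^\II w^\I$ that visits every block of Fig.~\ref{fig:quiver}. The cocycle $\varphi^\VI$ supported on $w$ (the paper's case~(c)) is the one giving the contribution of the distinguished boundary component $\partial_0 S$ when $s_0 = 1$ and $\w_\eta(\partial_0 S) = 1$. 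A strictly local analysis --- and in particular your claim that type~(\textsc{vi}) ``does not produce extra classes'' --- misses this contribution. You would then obtain the wrong value of $\dim\HH^2$ whenever every boundary component with boundary stops has exactly one stop and winding number~$1$ (precisely the surfaces underlying the pillowcase deformations of Section~\ref{section:pillowcase}), since no relabelling of $\partial_0 S$ then lets you dismiss the long overlap.

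There are also two local errors. In type~(\textsc{iii}), the arrows $p^\III_{j,1},\dotsc,p^\III_{j,s_j}$ satisfy \emph{no} quadratic relations (they are absent from the relation list of Proposition~\ref{proposition:generator}), so the cyclic word $q^\III p^\III_s\cdots p^\III_1$ you invoke is not an overlap at all; the contributing class when $s_j = 1$ is the single-arrow cocycle $p^\III_{j,1}\mapsto q^\III_j$ (the paper's case~(a)), supported on the length-$1$ overlap $p^\III_{j,1}$. In type~(\textsc{v}), the contributing class is a vertex-supported ($m=0$) cochain $e^\V_i \mapsto {q^\V_i}^2$ or $e^\V_i \mapsto q^\V_i$ (the paper's case~(d)), not an overlap cocycle; since Lemma~\ref{lemma:killoverlap} only applies for $m\geq 1$, the $m=0$ stratum of the complex needs separate treatment, which your plan of ``restricting to cocycles supported on maximal overlaps'' omits.
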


We use the isomorphism $\HH^2 (\mathcal W (\mathbf S), \mathcal W (\mathbf S)) \simeq \HH^2 (A, A)$ \eqref{equation:isomorphismhh} where $A = \End (\Gamma)$ is the DG endomorphism algebra of the generator $\Gamma$ corresponding to the dissection of Fig.~\ref{fig:dissection}. The DG quiver with relations for $A$ is given in Proposition \ref{proposition:generator}. We may then use the projective resolution $\mathrm P_\bullet (Q, R)$ given in Section~\ref{subsection:bardzellresolution}.

\begin{proof}[Proof of Theorem \ref{theorem:hochschild}]
As in Proposition \ref{proposition:generator} we choose a boundary component $\partial_0 S$ with $s_0 \geq 1$ stops which exists due to our standing assumptions on $\mathbf S$.

Since $A$ is a DG gentle algebra, it is quadratic monomial, so that we may use the Bardzell resolution of Section~\ref{subsection:bardzellresolution} to compute its Hochschild cohomology.

By Lemma \ref{lemma:killoverlap} we obtain that $\HH^2 (A, A)$ is spanned by cocycles $\phi \in \Hom (\s^n W_n, A)$ of the form $\phi (\s^n w_{1 \dotsc n}) = v$ where $w$ is either (a) a single arrow (i.e.\ $n = 1$), (b) a primitive cycle of length $n \geq 1$ with full relations, (c) a maximal overlap of length $n \geq 1$, or (d) a single vertex (i.e.\ $n = 0$), and $v$ is a path parallel to $w$ and $\phi (w') = 0$ for all other elements of $\s^n W_n$.

By the explicit description of the quiver for $A$ we have the following cocycles corresponding to the cases (a)--(d).

(a) Recall that $\partial_0 S$ denotes the distinguished boundary component of $\mathbf S$. Let us denote the remaining boundary components with boundary stops by $\partial^\III_1 S, \dotsc, \partial^\III_l S$ and let $J \subset \{ 1, \dotsc, l \}$ denote the subset of $1 \leq j \leq l$ such that $\partial^\III_j S$ contains exactly one boundary stop (i.e.\ the boundary components for which $s_j = 1$ in the notation of Proposition \ref{proposition:generator}).

For all $j \in J$ we have cocycles 
\begin{equation}
\label{eq:phiIII}
\phi^\III_j \colon p^\III_{j,1} \mapsto q^\III_j
\end{equation}
which are nontrivial $2$-cocycles whenever $\w_\eta (\partial^\III_j S) = 1$. 

We claim that any other nontrivial $2$-cocycle of form $\varphi(w) = \sum_p \lambda_p p$, where $w$ is a single arrow and $p$ is irreducible path parallel to $w$, is a linear combination of the above $2$-cocycles. Indeed, note that $w$ itself cannot appear as a summand (otherwise, $\varphi$ is in total degree $1$ which can never be a cocycle in $\HH^2(A, A)$). Since $\varphi$ is nonzero in $\HH^2(A, A)$,  by  Lemma \ref{lemma:killoverlap}, $w$ cannot be a proper subpath of a maximal overlap, namely there are no relations involving $w$. As a result, $w$ cannot appear in the parts (\I), (\II) and (\VI) of Fig.~\ref{fig:quiver}. 
Note that there are similar $2$-cocycles $\varphi^\II_j \colon p^\II_{j,1} \mapsto q^\II_j$ for the superscript $\II$, but these are $2$-coboundaries since they are in the image of the (internal) differential of $A$, namely $d (p^\II_{j,1}) = q^\II_j$. If $w=q_i^\V$ with $\lvert q_i^\V \rvert =1$ in the part $(\V)$ of Fig.~\ref{fig:quiver} then $$\varphi(w) = (q_i^\V)^2$$ is a $2$-cocycle which is actually a coboundary. Therefore, $\varphi$ can only be a linear combination of $\varphi^\III_j$. This proves the claim. 

In conclusion, the $2$-cocycles \eqref{eq:phiIII} correspond one-to-one to the boundary components with a single boundary stop and winding number $1$. 

(b) The only primitive cycles with full relations are the cycles $p^\IV_i$ for $1 \leq i \leq m$ corresponding to boundary components $\partial^\IV_1 S, \dotsc, \partial^\IV_m S$ with full boundary stops. A general cochain is of the form ${p^\IV_i}^N \mapsto \lambda e^\IV_i + \mu p^\IV_i$ where $e^\IV_i$ is the source and target of $p^\IV_i$. (Note that $e^\IV_i$ and $p^\IV_i$ are the only parallel paths since ${p^\IV_i}^2 = 0$.) Since $\lvert p^\IV_i \rvert =1- \w_\eta (\partial^\IV_i S)$, the $2$-cochains are spanned by $\phi^\IV_i$ where either 
\begin{align*}
\phi^\IV_i (\s {p^\IV_i}) = e^\IV_i \quad \text{if $\w_\eta (\s^2 \partial^\IV_i S) = 2$} \qquad \text{or} \qquad \phi^\IV_i (\s^2 {p^\IV_i}^2) = e^\IV_i \quad \text{if $\w_\eta (\partial^\IV_i S) = 1$} 
\end{align*}
and $\phi^\IV_i (\s^j w') = 0$ for all other elements of $\s^j W_j$. We claim that $\phi^\IV_i $ is a $2$-cocycle. Indeed, for the first case by the formula we have 
\begin{align*}
\delta(\phi^\IV_i) (\s^2 {p^\IV_i}^2) &= -(-1)^{(|p^\IV_i|-1)|\phi^\IV_i|} p^\IV_i \phi^\IV_i(\s p^\IV_i) + (-1)^{(|p^\IV_i|-1)-|\phi^\IV_i|} \phi^\IV_i(\s p^\IV_i) p^\IV_i\\
&=-p^\IV_i  + p^\IV_i =0
\end{align*}
where we use $|p^\IV_i| =-1$ and $|\phi^\IV_i|=2$.
For the second case we have 
\begin{align*}
\delta(\phi^\IV_i) ( \s^3 {p^\IV_i}^3) &= -(-1)^{(|p^\IV_i|-1)|\phi^\IV_i|} p^\IV_i \phi^\IV_i(\s^2 {p^\IV_i}^2) + (-1)^{2(|p^\IV_i|-1)-|\phi^\IV_i|} \phi^\IV_i(\s^2 {p^\IV}_i^2) p^\IV_i\\
&=-p^\IV_i  + p^\IV_i =0
\end{align*}
where we use $|p^\IV_i|=0$. This shows that $\phi^\IV_i$ is a cocycle. Note that it can never be a coboundary (no options for coboundaries), so it is nonzero in $\HH^2(A, A)$. 

In conclusion, the above $2$-cocycles correspond to the boundary components with a full boundary stop and winding number $1$ or $2$. 

(c) The quiver for $A$ has a maximal overlap $w = w^\VI \dotsb w^\II w^\I$ of length $>1$  where
\begin{align*}
w^\I   &= u^\I_g r^\I_g q^\I_g p^\I_g \dotsb u^\I_2 r^\I_2 q^\I_2 p^\I_2 u^\I_1 r^\I_1 q^\I_1 p^\I_1 & w^\IV &= u^\IV_m \dotsb u^\IV_2 u^\IV_1 \\
w^\II  &= u^\II_k q^\II_k \dotsb u^\II_2 q^\II_2 u^\II_1 q^\II_1                                     & w^\V  &= q^\V_n \dotsb u^\V_2 q^\V_2 u^\V_1 q^\V_1 \\
w^\III &= u^\III_l q^\III_l \dotsb u^\III_2 q^\III_2 u^\III_1 q^\III_1                               & w^\VI &= p^\VI_{s_0 - 2} \dotsb p^\VI_2 p^\VI_1.
\end{align*}
This overlap has a parallel irreducible path $v = u^\V_{n-1} \dotsb p^\I_1 q^\I_1r^\I_1$ if and only if $s_0 = 1$ so that there are no arrows of type $\VI$. Note that $s_0=1$ means that $\partial_0 S$ contains exactly one boundary stop. Consider the cochain $\varphi^\VI (\s^j w) = v$ and $\varphi^\VI (\s^j w') = 0$ for all other elements of $\s^j W_j$. Note that $\varphi^\VI$ is a cocycle automatically since $w$ is a maximal overlap so that $\delta (\varphi^\VI) = 0$. 

We claim that $\varphi^\VI \neq 0$ in $\HH^\bullet(A, A)$. Indeed, assume that there exists a cochain $\psi$ such that $\delta(\psi) (\s^n w) = \varphi(\s^n w)=v$. On the other hand, we have $$\delta(\psi) (\s^n w) = \pm \psi (\s^{n-1} w') p^\I_1 \pm p^\V_n \psi(\s^{n-1} w'')$$
which cannot be equal to $v$ since $v$ does not start with $p^\I_1$ nor end with $p^\V_n$. This proves the claim. 

It follows from the Poincaré--Hopf index theorem that $\varphi^\V$ is a degree $2$ map if and only if $\w_\eta (\partial_0 S) = 1$. By Lemma \ref{lemma:killoverlap} we know that there are no other nonzero $2$-cocycles involving any proper suboverlap of $w$. In particular, if $\mathbf S$ contains some boundary component with at least two boundary stops, we may choose $\partial_0 S$ to be this boundary component, in which case $w$ does not contribute to $\HH^2 (A, A)$.

(d) Finally, let $e$ be some idempotent at a vertex of the quiver in Fig.~\ref{fig:quiver}. Then we have the following two cases. In the first case,  the cochain $\varphi$ has the following form 
\begin{align*}
\varphi(e_{2i-1}^\I) = \lambda_i q_i^\I r_i^\I \quad \text{and} \quad  \varphi(e^\I_{2i}) = \mu_i p_i^\I q_i^\I \quad \text{for $1\leq i \leq g$}
\end{align*}
where $\lambda_i, \mu_i \in \Bbbk$. If $\varphi$ is a cocycle, i.e. $\delta(\varphi) = 0$ then $\delta(\varphi) ( p_i^\I) =  \pm \lambda_i p_i^\I q_i^\I r_i^\I =0 $ which implies that $\lambda_i = 0$ and $\delta (\varphi)(r_i^\I) = \pm \mu_i p_i^\I q_i^\I r_i^\I  = 0$ which implies that $\mu_i = 0$ for each $1\leq i \leq g$. Therefore, $\varphi = 0$, namely there does not exist such a nonzero cocycle. 

In the second case, let $e^\V_i$ denote the idempotent for the source and target of $p^\V_i$ for $1 \leq i \leq n$ and let $\partial^\V_i S$ denote the corresponding boundary components without stops. Since $\lvert p^\V_i \rvert = \w_\eta (\partial^\V_i S)$, we have $2$-cochains $\phi^\V$ given by either 
\begin{align*}
\phi^\V (e^\V_i) &= {q^\V_i}^2 \qquad \text{if $\w_\eta (\partial^\IV_i S) = 2$} 
\end{align*}
or 
\begin{align*}
\phi^\V (e^\V_i) &= q^\V_i \qquad \text{if $\w_\eta (\partial^\IV_i S) = 1$}.
\end{align*}
Similarly we may compute that $\delta(\phi^\V) = 0$.

In conclusion, the above $2$-cocycles correspond to the boundary components without stops and winding number $1$ or $2$. 
\end{proof}

\begin{remark}[Relation to symplectic cohomology]
In Theorem \ref{theorem:hochschild} we give a purely algebraic proof of the fact that Hochschild cohomology of $\mathcal W (\mathbf S)$ can be read off the boundary components with certain winding numbers which is a graded analogue of \cite{chaparroschrollsolotarsuarez}. Although we have only given the result for $\HH^2$, an analogous result can be shown in general. (See \cite{opper,bianschrollsolotarwangwen} for the case of graded gentle and skew-gentle algebras, respectively.) From a more geometric perspective, Hochschild cohomology may be related to symplectic cohomology via the closed--open map
\[
\mathcal C \mathcal O \colon \mathrm{SH}^\bullet (S, \Sigma, \eta) \to \HH^\bullet (\mathcal W (\mathbf S), \mathcal W (\mathbf S))
\]
which is an isomorphism in the fully wrapped case \cite{ganatra1,ganatra2}. 
\end{remark}

\begin{remark}[A$_\infty$ deformations from general dissections]
\label{remark:general}
In our description thus far, we have chosen a special generator of $\mathcal W (\mathbf S)$ whose endomorphism algebra is a DG gentle algebra $A$ such that all $2$-cocycles correspond to deformations of $A$ as a curved DG algebra. Other choices of generators may naturally lead to $2$-cocycles corresponding to curved A$_\infty$ deformations with nontrivial higher multiplications. Whereas the relationship of $\HH^2 (\mathcal W (\mathbf S), \mathcal W (\mathbf S))$ to certain boundary components of $\mathbf S$ as in Theorem \ref{theorem:hochschild} remains true, the $2$-cocycles will take a different shape and the corresponding deformations will generally also feature nontrivial higher multiplications. For example, a boundary component with winding number $1$ and one stop $\sigma$ will contribute to an A$_\infty$ deformation with a higher product of the form
\begin{equation}
\label{eq:higherproduct}
\mu^{n-1} (\s p_{n-1} \otimes \dotsb \otimes \s p_2 \otimes \s p_1) = \s q_k \dotsb q_2 q_1
\end{equation}
whenever the stop $\sigma$ lies inside an $n$-gon. In this case we have $\sum_{1 \leq i \leq k} \lvert q_i \rvert - \sum_{1 \leq j \leq {n-1}} \lvert p_j \rvert = 3 - n$ and the endomorphism algebra of the corresponding generator of $\mathbf S$ contains a subquiver of the form
\[
\begin{tikzpicture}[x=1em,y=1em]
\node[shape=circle,inner sep=1.5pt, scale=.7] (0) at (0,0) {$\bullet$};
\node[shape=circle,inner sep=1.5pt, scale=.7] (1) at (4,0) {$\bullet$};
\node[shape=circle,inner sep=1.5pt] (d) at (8,0) {$...$};
\node[shape=circle,inner sep=1.5pt, scale=.7] (n) at (12,0) {$\bullet$};
\node[shape=circle,inner sep=1.5pt, scale=.7] (1') at (2.9,1.9) {$\bullet$};
\node[shape=circle,inner sep=1.5pt] (d') at (6,2.4) {$...$};
\node[shape=circle,inner sep=1.5pt, scale=.7] (n-1') at (9.1,1.9) {$\bullet$};
\path[->, line width=.4pt] (0) edge node[font=\scriptsize, below=-.3ex] {$p_1$} (1) (1) edge node[font=\scriptsize, below=-.3ex] {$p_2$} (d) (d) edge node[font=\scriptsize, below=-.3ex] {$p_{n-1}$} (n) (0) edge[bend left=6] node[font=\scriptsize, above=-.1ex] {$q_1$} (1'.203) (1') edge[bend left=6] node[font=\scriptsize, above=-.3ex] {$q_2$} (d'.180) (d'.0) edge[bend left=6] (n-1') (n-1') edge[bend left=6] node[font=\scriptsize, above=-.1ex] {$q_k$} (n);
\draw[dash pattern=on 0pt off 1.2pt, line width=.6pt, line cap=round] (1) ++(172:.9em) arc[start angle=172, end angle=5, radius=.9em] (7.8,0) ++(172:.9em) arc[start angle=172, end angle=120, radius=.9em] (8,0) ++(8:.9em) arc[start angle=8, end angle=60, radius=.9em];
\end{tikzpicture}
\]
and the A$_\infty$ deformation corresponds to introducing the nontrivial higher product $\mu^{n-1}$ \eqref{eq:higherproduct} on this subquiver. Considering only standard dissections, as we do in this article, ensures that this stop $\sigma$ lies in a $2$-gon (see Fig.~\ref{fig:dissection}), whence in this case the $2$-cocycle corresponds to a differential $\mu^1$ giving rise to a DG deformation (changing the differential). An explicit description of the A$_\infty$ structure for $2$-cocycles from more general dissections is given in \cite{barmeierschrollwang}.

Similarly, a boundary component with full boundary stop and winding number $1$ will give rise to a $2$-cocycle defined by
\[
\mu^{2n} (p_{i+2n} \otimes \dotsb \otimes \s p_{i + 2} \otimes \s p_{i+1}) = \s e_{\mathrm t (p_i)}, \qquad 1 \leq i \leq n
\]
(see \cite{barmeierwang4}) where the indices are to be understood cyclically and $p_1, \dotsc, p_n$ form a cycle with full quadratic relations, i.e.\ $p_{i+1} p_i = 0$ for all $i$. Again, the special case of a standard dissection ensures that $n = 1$, giving rise to a cocycle $\mu^2 (\s p \otimes \s p) = e_{\mathrm t (p)}$ parametrizing again a DG deformation (changing the underlying associative structure).
\end{remark}

\section{A solution to the curvature problem via weak duality}
\label{section:curved}

In this section, we show that the curvature problem exists for $\mathcal W (\mathbf S)$ whenever $\mathbf S$ has boundary components without stops whose winding number is $1$ or $2$. We then introduce a notion of {\it weak duality} for graded gentle and DG gentle algebras to bypass the curvature problem.

In the study of deformations of Fukaya categories, often considered in the context of {\it relative Fukaya categories}, a common strategy is to embrace the curvature and study formal curved A$_\infty$ deformations of Fukaya categories, see for example \cite{seidel1,sheridan2,perutzsheridan}. In the context of partially wrapped Fukaya categories of surfaces, it turns out that -- after a few intermediate steps -- we can completely circumvent the curvature problem. This has a number of pleasant consequences. Firstly, the absence of curved deformations allows us to deploy the usual techniques for pretriangulated A$_\infty$ categories. Notably, we are able to pass to the (triangulated) homotopy category after deformation. As we are able to construct not only formal deformations, but also a semi-universal family of ``strict'' deformations over $\mathbb A^{\dim \HH^2}$ in the sense of Section~\ref{subsection:families}, we are able to speak of individual fibers of this family, each fiber being obtained by evaluating the deformation parameters to constants. In particular, the fibers of this family are pretriangulated DG categories free of any formal deformation parameters and can be interpreted as intrinsic geometric objects: as partially wrapped Fukaya categories obtained via partial orbifold compactifications of the initial surface (see Section~\ref{section:deformation}).

\subsection{Two examples with curvature problem}

Before addressing the general case, we discuss the curvature problem and its resolution for the simplest two graded surfaces where the curvature problem arises: a cylinder with one boundary stop with two different choices of grading structure. In this case, weak duality --- which is introduced in Section \ref{subsection:weakduality} to resolve the curvature problem for an arbitrary graded (orbifold) surface --- coincides with classical Koszul duality. Koszul duality is not able to resolve the general case, but our notion of weak duality is closely related and these two examples will be useful in illustrating the general strategy for resolving the curvature problem.  

Let $S = [0, 1] \times \mathrm S^1$ be the cylinder/annulus with boundary components $\partial_0 S = \{ 0 \} \times \mathrm S^1$ and $\partial_1 S = \{ 1 \} \times \mathrm S^1$. Let $\Sigma = \{ (0, 1) \}$, consisting of a single boundary stop on $\partial_0 S$, and let $\eta_1$ and $\eta_2$ be the two line fields on $S$ such that the winding numbers along the boundary component $\partial_1 S$ without stop are $\mathrm w_{\eta_1} (\partial_1 S) = 1$ and $\mathrm w_{\eta_2} (\partial_1 S) = 2$, respectively.

Setting $\mathbf S_1 = (S, \Sigma, \eta_1)$ and $\mathbf S_2 = (S, \Sigma, \eta_2)$ we have that
\[
\mathcal W (\mathbf S_1) \simeq \tw (A_1) \qquad \text{and} \qquad \mathcal W (\mathbf S_2) \simeq \tw (A_2)
\]
where $A_1 = \Bbbk [q_1]$ and $A_2 = \Bbbk [q_2]$ are the univariate polynomial rings with a single generator $q_1$ resp.\ $q_2$ of degree $1$ resp.\ $2$. See Fig.~\ref{fig:cylinder} for an illustration of $\mathbf S_1$ and $\mathbf S_2$ with its standard dissection, where we have also drawn the two line fields $\eta_1$ and $\eta_2$.

\begin{figure}
\centering
\begin{tikzpicture}[x=1em,y=1em,decoration={markings,mark=at position 0.52 with {\arrow[black]{Stealth[length=4.2pt]}}}]
\begin{scope} 
\node[font=\scriptsize,left] at (-2.5,2.5) {$\mathbf S_1$};
\node[font=\scriptsize, right, align=center] at (4em, 0) {$A_1 = \Bbbk [q_1]$ \\[.5em] $\lvert q_1 \rvert = 1$};
\draw[line width=.5pt] (0,0) circle(3.186em);
\node[font=\scriptsize] at (0,0) {$\times$};
\draw[fill=black, color=black] (90:3.186em) circle(.15em);
\begin{scope}[rotate=0]
\draw[line width=.1pt] (180:3.18) to (0,0);
\draw[line width=.1pt] (2*19:3.18) to[bend left=26] (-2*19:3.18);
\draw[line width=.1pt] (2*27:3.18) to[bend left=40] (-2*27:3.18);
\draw[line width=.1pt] (2*33.3:3.18) to[bend left=50,looseness=1.07] (-2*33.3:3.18);
\draw[line width=.1pt] (2*38.9:3.18) to[bend left=57,looseness=1.19] (-2*38.9:3.18);
\draw[line width=.1pt] (2*43.6:3.18) to[bend left=60,looseness=1.34] (-2*43.6:3.18);
\draw[line width=.1pt] (2*48:3.18) to[bend left=62,looseness=1.51] (-2*48:3.18);
\draw[line width=.1pt] (2*52.2:3.18) to[bend left=64,looseness=1.69] (-2*52.2:3.18);
\draw[line width=.1pt] (2*56.2:3.18) to[bend left=66,looseness=1.9] (-2*56.2:3.18);
\draw[line width=.1pt] (2*60:3.18) to[bend left=68,looseness=2.13] (-2*60:3.18);
\draw[line width=.1pt] (2*63.5:3.18) to[bend left=70,looseness=2.39] (-2*63.5:3.18);
\draw[line width=.1pt] (2*67.1:3.18) to[bend left=72,looseness=2.73] (-2*67.1:3.18);
\draw[line width=.1pt] (2*70.5:3.18) to[bend left=74,looseness=3.14] (-2*70.5:3.18);
\draw[line width=.1pt] (2*73.9:3.18) to[bend left=76,looseness=3.72] (-2*73.9:3.18);
\draw[line width=.1pt] (2*77.1:3.18) to[bend left=78,looseness=4.52] (-2*77.1:3.18);
\draw[line width=.1pt] (2*80.3:3.18) to[bend left=82,looseness=5.78] (-2*80.3:3.18);
\draw[line width=.1pt] (2*83.5:3.18) to[bend left=86,looseness=8.3] (-2*83.5:3.18);
\draw[line width=.1pt] (2*86.8:3.18) to[bend left=89,looseness=16.15] (-2*86.8:3.18);
\draw[line width=.5pt,fill=white] (0,0) circle(1.41em);
\end{scope}
\draw[-, line width=.75pt,color=arccolour] (270:1.41em) to (270:3.186em);
\draw[line width=0pt,postaction={decorate}] (85:1.41em) to ++(0:.05em);
\node[font=\scriptsize, below] at (90:1.4em) {$q_1$};
\end{scope}
\begin{scope}[xshift=15em] 
\node[font=\scriptsize,left] at (-2.5,2.5) {$\mathbf S_2$};
\node[font=\scriptsize, right, align=center] at (4em, 0) {$A_2 = \Bbbk [q_2]$ \\[.5em] $\lvert q_2 \rvert = 2$};
\draw[fill=black, color=black] (90:3.186em) circle(.15em);
\draw[clip] (0,0) circle(3.18em);
\foreach \r in {-4.25,-4,...,4.25} {%
\draw[line width=.1pt] (-4.5,\r) -- (4.5,\r);
}
\draw[line width=.5pt,fill=white] (0,0) circle(1.41em);
\draw[-, line width=.75pt,color=arccolour] (270:1.41em) to (270:3.186em);
\draw[line width=0pt,postaction={decorate}] (85:1.41em) to ++(0:.05em);
\node[font=\scriptsize, below] at (90:1.4em) {$q_2$};
\end{scope}
\end{tikzpicture}
\caption{Two grading structures on the cylinder with one boundary stop and the corresponding endomorphism algebras of the generating curve}
\label{fig:cylinder}
\end{figure}
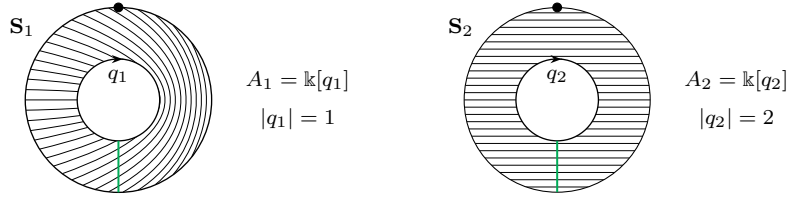

\subsubsection{The curvature problem}
\label{subsubsection:curvature}

It follows from the proof of Theorem \ref{theorem:hochschild} that we have $\dim \HH^2 (A_1, A_1) = \dim \HH^2 (A_2, A_2) = 1$ and $A_1$ and $A_2$ both have a natural $2$-cocycle $\phi_1 \colon \Bbbk \to A_1$ resp.\ $\phi_2 \colon \Bbbk \to A_2$ parametrizing a $1$-parameter family of curved deformations. These two $2$-cocycles are given by
\begin{equation}
\label{eq:phi}
\phi_1 (1) = q_1^2 \qquad \text{and} \qquad \phi_2 (1) = q_2.
\end{equation}
(Note that these cocycles are of Hochschild degree $0$, but being maps of ``internal'' degree $2$, they have total degree $2$, thus defining $2$-cocycles in the total degree.) Correspondingly, for both $A_1$ and $A_2$ we may define a family $\{ A_{1, \lambda} \}_{\lambda \in \mathbb A^1}$ resp.\ $\{ A_{2, \lambda} \}_{\lambda \in \mathbb A^1}$ of curved deformations over $\mathbb A^1$. For each $\lambda \in \Bbbk$ we set
\begin{equation}
\label{eq:curvedfamily}
A_{1, \lambda} = (A_1, \mu^0_\lambda) \qquad \text{and} \qquad A_{2, \lambda} = (A_2, \mu^0_\lambda)
\end{equation}
where for $A_{1, \lambda}$ resp.\ $A_{2, \lambda}$ the curvature term is given by $\mu^0_\lambda = \lambda q_1^2$ resp.\ $\mu^0_\lambda = \lambda q_2$. Note that for $\lambda = 0$, the curvature term vanishes and we recover $A_1 = A_{1, 0}$ and $A_2 = A_{2, 0}$.

As we will show in Section \ref{subsection:curvatureproblem}, there does not exist another classical generator of $\mathcal W (\mathbf S_1)$ or $\mathcal W (\mathbf S_2)$ such that its endomorphism algebra does not admit any curved deformations. Indeed, any classical generator must contain at least one curve connecting the two boundary components, and thus contain a copy of the curve present in the dissection in Fig.~\ref{fig:cylinder}.

\subsubsection{A weak generator and the Koszul dual}

Whereas no classical generator of $\mathrm D \mathcal W (\mathbf S_1)$ and $\mathrm D \mathcal W (\mathbf S_2)$ has an endomorphism algebra without curved deformations, by Theorem \ref{theorem:weak} both categories admit a {\it weak generator} $\Gamma^\vee$ corresponding to the curve around the boundary component as illustrated in Fig.~\ref{fig:cylinderweak}. We call $B_i = A_i^\vee = \End (\Gamma^\vee)$ the {\it weak dual} of $A_i$, which in this case coincides with the Koszul dual.

\begin{figure}
\centering
\begin{tikzpicture}[x=1em,y=1em,decoration={markings,mark=at position 0.52 with {\arrow[black]{Stealth[length=4.2pt]}}}]
\begin{scope} 
\node[font=\scriptsize,left] at (-2.5,2.5) {$\mathbf S_1$};
\node[font=\scriptsize, right, align=center] at (4em, 0) {$B_1 = \Bbbk [p_1] / (p_1^2)$ \\[.5em] $\lvert p_1 \rvert = 0$};
\draw[line width=.5pt] (0,0) circle(3.186em);
\node[font=\scriptsize] at (0,0) {$\times$};
\draw[fill=black, color=black] (90:3.186em) circle(.15em);
\draw[line width=.5pt,fill=white] (0,0) circle(1.41em);
\draw[line width=.75pt, line cap=round, color=arccolour] (245:3.186em) to[out=85, in=240, looseness=.8] (150:1.8em) arc[start angle=150, end angle=30, radius=1.8] to[out=300, in=95, looseness=.8] (295:3.186em);
\draw[line width=0pt,postaction={decorate}] (272:3.186em) to ++(0:.05em);
\node[font=\scriptsize, below] at (270:3.186em) {$p_1$};
\end{scope}
\begin{scope}[xshift=15em] 
\node[font=\scriptsize,left] at (-2.5,2.5) {$\mathbf S_2$};
\node[font=\scriptsize, right, align=center] at (4em, 0) {$B_2 = \Bbbk [p_2] / (p_2^2)$ \\[.5em] $\lvert p_2 \rvert = -1$};
\draw[fill=black, color=black] (90:3.186em) circle(.15em);
\draw (0,0) circle(3.18em);
\draw[line width=.5pt,fill=white] (0,0) circle(1.41em);
\draw[line width=.75pt, line cap=round, color=arccolour] (245:3.186em) to[out=85, in=240, looseness=.8] (150:1.8em) arc[start angle=150, end angle=30, radius=1.8] to[out=300, in=95, looseness=.8] (295:3.186em);
\draw[line width=0pt,postaction={decorate}] (272:3.186em) to ++(0:.05em);
\node[font=\scriptsize, below] at (270:3.186em) {$p_2$};
\end{scope}
\end{tikzpicture}
\caption{Weak generators of $\mathrm D \mathcal W (\mathbf S_1)$ and $\mathrm D \mathcal W (\mathbf S_2)$ given by a curve around the boundary component without stop and their endomorphism algebras}
\label{fig:cylinderweak}
\end{figure}
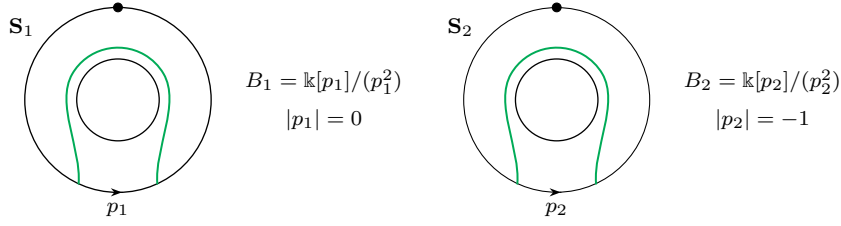

We have $B_1 \simeq {A_1}^!$ and $B_2 \simeq {A_2}^!$ and by \cite[10.5 Lemma]{keller94} triangulated equivalences
\[
\per (A_1) \simeq \mathrm D^\b (B_1) \qquad \text{and} \qquad \per (A_2) \simeq \mathrm D^\b (B_2).
\]
These triangulated equivalences can be lifted to the pretriangulated level as follows.

The algebras $B_1$ and $B_2$ are finite-dimensional graded gentle algebras (which are proper as DG algebras). Their bounded derived categories admit a natural DG enhancement given by the category $\Tw^{-, \b} (B_i)$ for $i = 1, 2$ (see Section \ref{subsection:unbounded}). In this simple example, $\Tw^{-, \b} (B_i)$ is the category generated by the unbounded one-sided twisted complex consisting of the semi-free (cofibrant) resolution of the simple $B_i$-module $B_i / \rad (B_i) \simeq \Bbbk$ (concentrated in degree $0$) on which $p_i$ acts by $0$. The superscript in $\Tw^{-, \b}$ is supposed to be reminiscent of the equivalence $\mathrm K^{-, \b} (\proj B) \simeq \mathrm D^{\b} (B)$ for a finite-dimensional zero-graded algebra $B$, in which case we indeed have $\H^0 \Tw^{-, \b} (\add B) \simeq \mathrm K^{-, \b} (\proj B)$.

Taking into account the grading, we have that $\Tw^{-, \b} (B_1)$ is generated by a single twisted complex whose corresponding DG module is given by the cochain complex
\begin{equation*}
\begin{tikzpicture}[baseline=-2.6pt]
\matrix (m) [matrix of math nodes, row sep=.75em, text height=1.5ex, column sep=1.5em, text depth=0.25ex, ampersand replacement=\&, inner sep=3pt]
{
       \& \text{\footnotesize $-5$} \& \text{\footnotesize $-4$} \& \text{\footnotesize $-3$} \& \text{\footnotesize $-2$} \& \text{\footnotesize $-1$} \& \text{\footnotesize $0$} \& \text{\footnotesize $1$} \& \\
\cdots \& (\Bbbk \, p_1) [4] \& (\Bbbk \, 1) [4] \& (\Bbbk \, p_1) [2] \& (\Bbbk \, 1) [2] \& \Bbbk \, p_1 \& \Bbbk \, 1 \& 0 \& \cdots \\
};
\path[->,line width=.4pt]
(m-2-1) edge node[font=\scriptsize, above=-.2ex] {$p_1 \cdot$} (m-2-2)
(m-2-3) edge node[font=\scriptsize, above=-.2ex] {$p_1 \cdot$} (m-2-4)
(m-2-5) edge node[font=\scriptsize, above=-.2ex] {$p_1 \cdot$} (m-2-6)
;
\end{tikzpicture}
\end{equation*}
(see \eqref{eq:veeK} for the representation as twisted complex) and $\Tw^{-, \b} (B_2)$ is generated by
\begin{equation*}
\begin{tikzpicture}[baseline=-2.6pt]
\matrix (m) [matrix of math nodes, row sep=.75em, text height=1.5ex, column sep=1.5em, text depth=0.25ex, ampersand replacement=\&, inner sep=3pt]
{
       \& \text{\footnotesize $-4$} \& \text{\footnotesize $-3$} \& \text{\footnotesize $-2$} \& \text{\footnotesize $-1$} \& \text{\footnotesize $0$} \& \text{\footnotesize $1$} \& \\
\& (\Bbbk \, 1) [4]   \& (\Bbbk \, 1) [3]   \& (\Bbbk \, 1) [2]   \& (\Bbbk \, 1) [1]   \& \Bbbk \, 1   \& \& \\[-.95em]
\cdots \& \oplus \& \oplus \& \oplus \& \oplus \& \oplus \& 0 \& \cdots \\[-.95em]
\& (\Bbbk \, p_2) [4] \& (\Bbbk \, p_2) [3] \& (\Bbbk \, p_2) [2] \& (\Bbbk \, p_2) [1] \& \Bbbk \, p_2 \& \& \\
};
\path[->,line width=.4pt]
(m-2-2.south east) edge node[font=\scriptsize, above=.2ex, pos=.7] {$p_2 \cdot$} (m-4-3.north west)
(m-2-3.south east) edge node[font=\scriptsize, above=.2ex, pos=.7] {$p_2 \cdot$} (m-4-4.north west)
(m-2-4.south east) edge node[font=\scriptsize, above=.2ex, pos=.7] {$p_2 \cdot$} (m-4-5.north west)
(m-2-5.south east) edge node[font=\scriptsize, above=.2ex, pos=.7] {$p_2 \cdot$} (m-4-6.north west)
;
\end{tikzpicture}
\end{equation*}
which in Loewy notation is nothing but the projective resolution $\cdots \to \begin{smallmatrix} 1 \\ 1 \end{smallmatrix} \to \begin{smallmatrix} 1 \\ 1 \end{smallmatrix} \to \begin{smallmatrix} 1 \\ 1 \end{smallmatrix}$ of the simple $B_2$-module $1$ (here $1$ denoting the vertex of $B_2$).

On the pretriangulated level, we thus obtain equivalences
\begin{equation}
\label{eq:equiv}
\mathcal W (\mathbf S_1) \simeq \tw (A_1) \simeq \Tw^{-, \b} (B_1) \qquad \text{and} \qquad \mathcal W (\mathbf S_2) \simeq \tw (A_2) \simeq \Tw^{-, \b} (B_2).
\end{equation}

Note that the endomorphism algebras of these two unbounded twisted complexes are isomorphic to $A_1$ and $A_2$, respectively, thus recovering the original algebra after a double dual.

\subsubsection{A solution to the curvature problem}

Phrased in terms of $A_i$ for $i = 1, 2$, we saw in Section \ref{subsubsection:curvature} that deformations of $\mathcal W (\mathbf S_i) \simeq \tw (A_i)$ are naturally subject to the curvature problem, as $A_i$ only admits curved deformations. However, the equivalences \eqref{eq:equiv} now show that deformations of $\mathrm D \mathcal W (\mathbf S_i) \simeq \mathrm D^\b (B_i)$ are equivalent to deformations of $\Tw^{-, \b} (B_i)$ (see Section \ref{subsection:solutioncurvature}) which in turn are equivalent to deformations of $B_i$, where $B_i$ has no curved deformations. Indeed, $B_1 = \Bbbk [p_1] / (p_1^2)$ with $\lvert p_1 \rvert = 0$ admits a $2$-cocycle determined by $\psi_1 (p_1 \otimes p_1) = 1$ which gives rise to a $1$-parameter family of associative deformations of $B_1$
\[
B_{1, \lambda} = \Bbbk [p_1] / (p_1^2 - \lambda).
\]
Similarly, $B_2$ admits a $2$-cocycle $\psi_2 (p_2) = 1$ giving rise to a $1$-parameter family of DG deformations of $B_2$
\[
B_{2, \lambda} = (\Bbbk [p_2] / (p_2^2), \mu^1_\lambda)
\]
where $\mu^1_\lambda$ is the differential determined by $\mu^1_\lambda (p_1) = \lambda \, 1$. Note that $\psi_1$ and $\psi_2$ are ``dual'' to $\phi_1$ and $\phi_2$ in \eqref{eq:phi} --- $\psi_1$ maps a length $2$ path to a length $0$ path whereas $\phi_1$ maps a length $0$ path to a length $2$ path and, similarly, $\psi_2$ maps a length $1$ path to a length $0$ path, whereas $\phi_2$ maps a length $0$ to a length $1$ path. 

\subsubsection{Geometric interpretation of the nontrivial deformations}

For $\lambda \in \Bbbk \smallsetminus \{ 0 \}$, the fibers $A_{i, \lambda}$ of the family \eqref{eq:curvedfamily} are curved A$_\infty$ algebras, for $i = 1, 2$, thus obstructing the usual passage to the triangulated homotopy category. In contrast, the fibers $B_{i, \lambda}$, for both $i = 1, 2$, are DG algebras. Thus the category $\Tw^{-, \b} (B_{1, \lambda})$ is still pretriangulated for all $\lambda \in \Bbbk$.

The case $\lambda = 0$ recovers $\mathcal W (\mathbf S_1)$ and $\mathcal W (\mathbf S_2)$, and for $\lambda \neq 0$, we have
\[
B_{1, \lambda} \simeq \Bbbk \times \Bbbk \qquad \text{and} \qquad B_{2, \lambda} \overset{\text{\tiny qis}}\simeq 0.
\]
In fact, we then have
\[
\Tw^{-, \b} (B_{i, \lambda}) \simeq \mathcal W (\mathbf S_{i, \lambda}), \quad i = 1, 2
\]
where for all $\lambda \neq 0$, we have
\[
\begin{tikzpicture}[x=1em,y=1em,decoration={markings,mark=at position 0.52 with {\arrow[black]{Stealth[length=4.2pt]}}}]
\begin{scope} 
\node[font=\scriptsize,left] at (-2.5,2.5) {$\mathbf S_{1, \lambda}$};
\draw[line width=.5pt] (0,0) circle(3.186em);
\node[font=\scriptsize] at (0,0) {$\times$};
\draw[fill=black, color=black] (90:3.186em) circle(.15em);
\path[<-, line width=.5pt] (4.25em,0) edge node[font=\scriptsize, above=-.2ex] {2:1} ++(2.5em,0);
\begin{scope}[rotate=0]
\draw[line width=.1pt] (180:3.18) to (0,0);
\draw[line width=.1pt] (2*19:3.18) to[bend left=26] (-2*19:3.18);
\draw[line width=.1pt] (2*27:3.18) to[bend left=40] (-2*27:3.18);
\draw[line width=.1pt] (2*33.3:3.18) to[bend left=50,looseness=1.07] (-2*33.3:3.18);
\draw[line width=.1pt] (2*38.9:3.18) to[bend left=57,looseness=1.19] (-2*38.9:3.18);
\draw[line width=.1pt] (2*43.6:3.18) to[bend left=60,looseness=1.34] (-2*43.6:3.18);
\draw[line width=.1pt] (2*48:3.18) to[bend left=62,looseness=1.51] (-2*48:3.18);
\draw[line width=.1pt] (2*52.2:3.18) to[bend left=64,looseness=1.69] (-2*52.2:3.18);
\draw[line width=.1pt] (2*56.2:3.18) to[bend left=66,looseness=1.9] (-2*56.2:3.18);
\draw[line width=.1pt] (2*60:3.18) to[bend left=68,looseness=2.13] (-2*60:3.18);
\draw[line width=.1pt] (2*63.5:3.18) to[bend left=70,looseness=2.39] (-2*63.5:3.18);
\draw[line width=.1pt] (2*67.1:3.18) to[bend left=72,looseness=2.73] (-2*67.1:3.18);
\draw[line width=.1pt] (2*70.5:3.18) to[bend left=74,looseness=3.14] (-2*70.5:3.18);
\draw[line width=.1pt] (2*73.9:3.18) to[bend left=76,looseness=3.72] (-2*73.9:3.18);
\draw[line width=.1pt] (2*77.1:3.18) to[bend left=78,looseness=4.52] (-2*77.1:3.18);
\draw[line width=.1pt] (2*80.3:3.18) to[bend left=82,looseness=5.78] (-2*80.3:3.18);
\draw[line width=.1pt] (2*83.5:3.18) to[bend left=86,looseness=8.3] (-2*83.5:3.18);
\draw[line width=.1pt] (2*86.8:3.18) to[bend left=89,looseness=16.15] (-2*86.8:3.18);
\end{scope}
\draw[-, line width=.75pt,color=arccolour] (0,0) to (270:3.186em);
\end{scope}
\begin{scope}[xshift=11em] 
\node[font=\scriptsize,left] at (-2.5,2.5) {$\widetilde{\mathbf S}_{1, \lambda}$};
\draw[fill=black, color=black] (135:3.186em) circle(.15em);
\draw[fill=black, color=black] (-45:3.186em) circle(.15em);
\node at (7em, 0) {and};
\draw[clip] (0,0) circle(3.18em);
\foreach \r in {-4.25,-4,...,4.25} {%
\draw[line width=.1pt] (-4.5,\r) -- (4.5,\r);
}
\draw[-, line width=.75pt,color=arccolour] (225:3.186em) to (45:3.186em);
\end{scope}
\begin{scope}[xshift=25em] 
\node[font=\scriptsize,left] at (-2.5,2.5) {$\mathbf S_{2, \lambda}$};
\draw[fill=black, color=black] (90:3.186em) circle(.15em);
\draw[clip] (0,0) circle(3.18em);
\foreach \r in {-4.25,-4,...,4.25} {%
\draw[line width=.1pt] (-4.5,\r) -- (4.5,\r);
}
\end{scope}
\end{tikzpicture}
\]
For $\lambda \neq 0$ the surface $\mathbf S_{1, \lambda}$ is an orbifold disk with one stop, obtained from $\mathbf S_1$ by ``partially compactifying'' the inner boundary component to a point. Since the inner boundary component of $\mathbf S_1$ has winding number $1$, this point is necessarily an orbifold point of order $2$. Here we have also drawn the double cover $\widetilde{\mathbf S}_{1, \lambda}$ which is a smooth disk with two stops. Note that $\mathcal W (\mathbf S_{1, \lambda}) \simeq (\mathcal W (\widetilde{\mathbf S}_{1, \lambda}) / \mathbb Z_2)^\natural$ (see \cite{chokim,amiotplamondon,barmeierschrollwang}). The category $\mathcal W (\widetilde{\mathbf S}_{1, \lambda})$ of the double cover contains exactly one object $X$ with $\End (X) \simeq \Bbbk$, whence its orbit category contains a single object $X$ with endomorphism algebra $\Bbbk \otimes \Bbbk [\mathbb Z_2] \simeq \Bbbk \times \Bbbk$. The idempotent completion (denoted by $-^\natural$) thus contains exactly two objects without any nontrivial morphisms between them, sometimes represented by pair of arcs, one ``tagged'' and one ``untagged''.

Similarly, for $\lambda \neq 0$ we have that $\mathbf S_{2, \lambda}$ is a (smooth) disk with one stop, again obtained from $\mathbf S_2$ by compactifying the inner boundary component to a (smooth) point. The fact that $B_{2, \lambda}$ is quasi-isomorphic to the zero algebra is also consistent with the fact that all curves in $\mathbf S_{2, \lambda}$ are homotopic to the boundary. From a more geometric perspective, any (exact) Lagrangian submanifold $\gamma$ has an acyclic partially wrapped Floer complex, since any such curve admits a boundary morphism $p_2$ of degree $-1$ which appears in a pseudoholomorphic disk defining a Floer differential $\partial (p_2) = \id_\gamma$.

This shows that the heuristic that algebraic deformations should correspond to partial compactifications can be shown to hold ``on the nose'' for strict deformations, after passing to the intermediary category $\Tw^{-, \b} (B_i)$ that avoid the curvature problem.

\subsection{The curvature problem}
\label{subsection:curvatureproblem}

If $\mathbf S$ has boundary components without stops whose winding number is $1$ or $2$, it turns out that the Hochschild cohomology of $\mathcal W (\mathbf S)$ computed via {\it any} classical generator $\Gamma$ has nontrivial $2$-cocycles parametrizing curved deformations of the endomorphism algebra of $\Gamma$.

Curvature entails that the differential on $\Hom$-spaces no longer squares to zero. In particular, the homotopy category (cf.\ Remark \ref{remark:triangulated}) is no longer well-defined. This is known as the {\it curvature problem}, as it is no longer straightforward to associate a meaningful triangulated category to such a curved deformation \cite{positselski,kellerlowennicolas,lowenvandenbergh3}.

There are several recent developments to circumvent or overcome the curvature problem, for example by considering deformations of triangulated categories or stable $\infty$-categories together with a t-structure \cite{genoveselowenvandenbergh,genoveselowensymonsvandenbergh,iwanari} or by defining new triangulated categories in the presence of curvature \cite{positselski,guanlazarev,guanholsteinlazarev,lehmann,lehmannlowen,lehmannlowen2}.

The following proposition shows that describing deformations of $\mathcal W (\mathbf S)$ via any classical generator always leads to the curvature problem.

\begin{proposition}
\label{proposition:curveddeformations}
Let $\mathbf S$ be a surface with boundary components without stops and winding number $1$ and/or $2$. Then the endomorphism algebra of any classical generator of $\mathcal W (\mathbf S)$ admits curved deformations. Consequently, the full deformation theory of $\mathcal W (\mathbf S)$ cannot be described in terms of classical generators without considering curved deformations.
\end{proposition}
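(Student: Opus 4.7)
My plan is to construct, for any classical generator $\Gamma$ of $\mathcal W(\mathbf S)$ and any boundary component $C$ of $\mathbf S$ without stops of winding number $w := \w_\eta(C) \in \{1, 2\}$, an explicit Hochschild $0$-cochain in $\mathrm C^\bullet(\End(\Gamma), \End(\Gamma))$ of total degree $2$ whose cohomology class is nonzero and whose Hochschild-degree-$0$ component remains nonzero in $\H^\bullet(\End(\Gamma))$ modulo the differential. Any A$_\infty$ deformation along such a class will then necessarily carry a nonzero curvature term.

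First I would use Theorem \ref{theorem:weak} to extract an arc $\gamma \in \Gamma$ wrapping around $C$: since no component of $\mathbf S \smallsetminus \Gamma$ is of type \itemiv{} and $C$ has no stops on which arcs of $\Gamma$ can terminate, the complement $\mathbf S \smallsetminus \Gamma$ near $C$ is forced into one of the remaining three types, and in each case at least one arc of $\Gamma$ detects the wrapping and contributes a loop $q \in \End(\gamma)$ of degree $w$, generating a polynomial subalgebra $\Bbbk[q] \subset \End(\gamma)$. The target cocycle is then $\phi_C \colon e_\gamma \mapsto q^{2/w}$ (i.e.\ $q^2$ if $w=1$ and $q$ if $w=2$), mirroring case \itemiv{} in the proof of Theorem \ref{theorem:hochschild}. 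I would check that it is a $2$-cocycle by verifying that $\delta \phi_C = \pm [q^{2/w}, -]$ vanishes on the incident arrows at $\gamma$, possibly after modifying by a small coboundary to absorb commutators that are themselves coboundaries.

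The critical step will be to show that the Hochschild-$0$ component of $[\phi_C]$ defines a nonzero class in $\H^\bullet(\End(\Gamma))$. Since there are no Hochschild cochains in negative Hochschild degrees, any coboundary $\delta \psi$ in $\mathrm C^\bullet(\End(\Gamma), \End(\Gamma))$ has Hochschild-$0$ component equal to $d_A(\psi^0)$ with $\psi^0 \in \End(\Gamma)$. Hence the Hochschild-$0$ part of any cohomology class is a well-defined invariant in $\H^\bullet(\End(\Gamma))$, and if it is nonzero there, every representative of the class has a nonvanishing $\mu^0$ component. I would then show that $q^{2/w}$ is not a $d_A$-coboundary by invoking the tight restrictions on differentials in DG gentle algebras from Lemma \ref{lemma:differential}: none of the three allowed local forms \eqref{eq:d1}--\eqref{eq:d3} can produce the primitive wrapping class $q^{2/w}$ as a boundary.

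The resulting infinitesimal deformation $\mu^0_\lambda = \lambda q^{2/w}$ is then an honest curved deformation of $\End(\Gamma)$, proving the proposition. The hardest part will be making the construction of $q$ and the verification that $\phi_C$ is a cocycle uniform across all classical generators, whose endomorphism quivers can be significantly more intricate than the standard one of Proposition \ref{proposition:generator}. A pragmatic route would be to first handle the standard generator directly (as in case \itemiv{} of the proof of Theorem \ref{theorem:hochschild}) and then transport the result via the derived equivalence of any two classical generators, tracking the Hochschild-$0$ filtration through the $B_\infty$-level Keller isomorphism of \cite{keller03}.
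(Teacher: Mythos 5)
Your overall strategy matches the paper's: both construct a Hochschild $0$-cochain of total degree $2$ supported on a curve $\gamma$ in the generator that meets the stopless boundary component $C$, and both show it is a nontrivial $2$-cocycle, hence a curvature term. The added observation that the Hochschild-degree-$0$ component of a cohomology class is a well-defined invariant in $\H^\bullet(\End(\Gamma))$ (because the external differential strictly raises Hochschild degree) is correct and is a somewhat cleaner way to justify non-triviality than the paper's brief remark that a $0$-cochain ``cannot be a coboundary.'' However, there are two genuine gaps.

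First, you assume that the wrapping around $C$ is always recorded by a single loop $q$ generating a polynomial subalgebra $\Bbbk[q] \subset \End(\gamma)$. This only covers the cases where the other endpoint of $\gamma$ lies on a \emph{different} boundary component (or at an orbifold point). When both endpoints of $\gamma$ lie on $C$, the classification of morphisms gives \emph{two} Reeb chords $q_1, q_2$ with $q_1^2 = q_2^2 = 0$ but $q_1 q_2, q_2 q_1 \neq 0$; there is no single ``loop'' and $\End(\gamma)$ is not $\Bbbk[q]$. The paper handles this third case separately, using the cocycle $e_\gamma \mapsto q_1 q_2 q_1 q_2 + q_2 q_1 q_2 q_1$ (for winding number $1$; $q_1 q_2 + q_2 q_1$ for winding number $2$). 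Your proposed cocycle $\phi_C(e_\gamma) = q^{2/w}$ simply does not exist in this case, so your construction is incomplete.

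Second, your invocation of Theorem~\ref{theorem:weak} and Lemma~\ref{lemma:differential} is not justified at the stated generality. Theorem~\ref{theorem:weak} assumes the curves do not intersect in the interior, and Lemma~\ref{lemma:differential} assumes the endomorphism algebra is gentle. A general classical generator of $\mathcal W(\mathbf S)$ can consist of curves with interior self-intersections and mutual intersections, and its $\End$-algebra need not be a (DG) gentle algebra --- it can carry intersection morphisms and nontrivial higher A$_\infty$ products. The paper instead argues directly from the classification of morphisms in \cite{arnesenlakingpauksztello,opperplamondonschroll}, which applies to arbitrary curves and yields that $q$ (resp.\ $q_1,q_2$) composes trivially with all generators of $\End(\Gamma)$ other than its partner chord. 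Your ``pragmatic route'' of handling only the standard generator and transporting via Keller's $B_\infty$ isomorphism does not fix this: the $B_\infty$ isomorphism identifies Hochschild cohomology classes, not the Hochschild filtration, so the image of a Hochschild-degree-$0$ class under the isomorphism need not be representable by a Hochschild-degree-$0$ cochain on the other side. One really must argue directly for each generator.
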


\begin{proof}
Let $\partial^\V_j S$ be a boundary component with winding number $1$ and no stops. Any classical generator $\Gamma$ must contain at least one curve $\gamma$ connecting to $\partial^\V_j S$ (see Fig.~\ref{fig:dissection}). Since $\mathrm D \mathcal W (\mathbf S) \simeq (\mathrm D \mathcal W (\widetilde{\mathbf S}) / \mathbb Z_2)^\natural$ \cite{chokim,amiotplamondon,barmeierschrollwang}, we may view objects in $\mathrm D \mathcal W (\mathbf S)$ as $\mathbb Z_2$-invariant objects in the (derived) partially wrapped Fukaya category $\mathrm D \mathcal W \smash{(\widetilde{\mathbf S})}$ of a double cover $\widetilde{\mathbf S}$ of $\mathbf S$, possibly together with an idempotent (see \cite[Section 2.4]{barmeierschrollwang}) and use the classification of morphisms for the latter \cite{arnesenlakingpauksztello,opperplamondonschroll}. This classification implies that $\End_{\mathrm D \mathcal W (\mathbf S)} (\Gamma)$ is generated by boundary paths, orbifold paths and pairs of morphisms corresponding to intersections of curves in $\Gamma$ in the interior of $\mathbf S$. Now consider $\End (\gamma)$ in which case we have only boundary paths from $\gamma$ to itself and pairs of morphisms for each self-intersection of $\gamma$. We assume that one endpoint of $\gamma$ lies on $\partial^\V_j S$. There are now three situations: (1) the other endpoint lies on a different boundary component $\partial_k S \neq \partial^\V_j S$, (2) the other endpoint is an orbifold point, and (3) the other endpoint of $\gamma$ also lies on $\partial^\V_j S$. In the first and second case, let $q$ denote the boundary morphism from $\gamma$ to itself along $\partial^\V_j S$. If this boundary component has winding number $1$, we claim that the $0$-cochain $\phi_1$ determined by $\phi_1 (e_\gamma) = q^2$ defines a nonzero $2$-cocycle. Similarly, if the winding number is $2$, we claim that $\phi_2$ determined by $\phi_2 (e_\gamma) = q$ defines a nonzero $2$-cocycle. Indeed, since $q$ composes trivially with any other generator of $\End (\Gamma)$, we find that $\partial^\bullet \phi_i = 0$ in both cases $i = 1, 2$.

In the third case, let $q_1, q_2$ denote the two boundary morphisms from $\gamma$ to itself along $\partial^\V_j S$. Note that $q_1^2 = 0$ and $q_2^2 = 0$ but both $q_1 q_2$ and $q_2 q_1$ are nonzero. If the winding number along $\partial^\V_j S$ is $1$, we have that $\lvert q_1 \rvert + \lvert q_2 \rvert = 1$ and we claim that $\phi_1$ determined by $\phi_1 (e_\gamma) = q_1 q_2 q_1 q_2 + q_2 q_1 q_2 q_1$ defines a nonzero $2$-cocycle. As before, $q_1, q_2$ compose trivially with all other generators of $\End (\Gamma)$, except for $q_2, q_1$, respectively. However, also for $q_1$ and $q_2$ we have
\begin{align*}
(\partial^\bullet \phi_1) (q_1) &= - q_1 \cdot (q_1 q_2 q_1 q_2 + q_2 q_1 q_2 q_1) + (q_1 q_2 q_1 q_2 + q_2 q_1 q_2 q_1) \cdot q_1  \\
&= - q_1 q_2 q_1 q_2 q_1 + q_1 q_2 q_1 q_2 q_1 = 0
\end{align*}
where the sign difference follows from the sign in the differential $\partial_1$ (cf.\ \eqref{differentialsmaller} or \cite[Section 5.2]{barmeierwang1}).  Similarly, we have $(\partial^\bullet \phi_1) (q_2) = 0$. Since $\phi_1$ is a $0$-cochain of the underlying cochain complex, it cannot be a coboundary, and since it is a map of degree $2$, it defines a nontrivial $2$-cocycle in the Hochschild cohomology of $\End (\Gamma)$. Since $\phi_1$ is a $0$-cochain, it gives rise to a nontrivial curved deformation of $\End (\Gamma)$, by introducing a curvature term, similar to \eqref{eq:curvedfamily}.

The argument is completely analogous when the winding number of $\partial^\V_j S$ is $2$, in which case $\lvert q_1 \rvert + \lvert q_2 \rvert = 2$. Then the $0$-cochain $\phi_2$ determined by $\phi_2 (e_\gamma) = q_1 q_2 + q_2 q_1$ is a map of degree $2$ and defines a nontrivial $2$-cocycle, likewise parametrizing a curved deformation of $\End (\Gamma)$.
\end{proof}

\subsection{Weak duality}
\label{subsection:weakduality}

For $\mathcal W (\mathbf S) \simeq \tw (\add A)$ it turns out that we can avoid the curvature problem by passing from $A$ to the endomorphism algebra of a weak generator. In Section~\ref{subsubsection:weak} we gave a geometric classification of weak generators. For our purposes it suffices to consider a certain weak generator that is obtained from the standard dissection of $\mathbf S$. The passage from $A$ to the endomorphism algebra of this weak generator satisfies a type of duality, closely related to Koszul duality, that we call {\it weak duality}.

For most results in this section we assume that $A$ is a {\it locally proper} DG gentle algebra, i.e.\ $\dim \H^i (A) < \infty$ for all $i \in \mathbb Z$. In Section \ref{subsection:localization} we show how an arbitrary DG gentle algebra can be obtained from a locally proper DG gentle algebra via localization. Note that $A$ is locally proper if and only if the corresponding surface has no boundary components of winding number $0$ without stops in which case $\H^i (A)$ is infinite-dimensional only for $i = 0$. 

The following lemma will be useful to give a conceptual explanation of weak duality.

\begin{lemma}
\label{lemma:propersmooth}
Let $A = (\Bbbk Q / I, d)$ be the DG gentle algebra associated to the standard dissection and for each $i \in Q_0$ let $P_i = e_i A$ be the corresponding indecomposable object of $\add A$. Then the $P_i$'s fall into three pairwise distinct classes.
\begin{enumerate}
\item $\End (P_i)$ is proper and not homologically smooth
\item $\End (P_i)$ is homologically smooth and not proper
\item $\End (P_i)$ is both homologically smooth and proper.
\end{enumerate}
In these three cases, we have, respectively,
\begin{enumerate}
\item $\End (P_i) \simeq \Bbbk [p_i] / (p_i^2)$
\item $\End (P_i) \simeq \Bbbk [q_i]$
\item $\End (P_i) \simeq \Bbbk$.
\end{enumerate}
Moreover, if $A$ is locally proper, then $\lvert q_i \rvert \neq 0$.
\end{lemma}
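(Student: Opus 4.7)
The plan is a direct computation of the DG subalgebra $e_{i} A e_{i}$ for each vertex $i$ of the quiver with relations in Proposition~\ref{proposition:generator}, using the identification $\End(P_{i}) = \End_{A}(e_{i} A) = e_{i} A e_{i}$. The gentle property (at most two incoming and two outgoing arrows at each vertex, with quadratic monomial relations) severely restricts non-zero returning paths, so only a short list of local configurations can arise.

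I would proceed by case analysis over the vertices of Fig.~\ref{fig:quiver}, and I expect exactly three patterns:
\begin{itemize}
\item ``Trivial'' vertices, where no non-trivial returning cycle exists, contribute $\End(P_{i}) \simeq \Bbbk$, which is both proper and homologically smooth for trivial reasons.
\item Vertices carrying either a loop $p^{IV}_{i}$ with $(p^{IV}_{i})^{2} = 0$, or one of the length-two cycles $q^{I}_{i} r^{I}_{i}$, $p^{I}_{i} q^{I}_{i}$ at the handle vertices of part~(\I), whose squares vanish by the relations $r^{I}_{i} q^{I}_{i} = 0$ and $q^{I}_{i} p^{I}_{i} = 0$; these give $\End(P_{i}) \simeq \Bbbk[p_{i}]/(p_{i}^{2})$, which is $2$-dimensional (hence proper) but not homologically smooth, since its simple module admits no finite projective bimodule resolution.
\item Vertices carrying a loop $q^{V}_{i}$ from a boundary component without stops, on which no relation acts, give $\End(P_{i}) \simeq \Bbbk[q^{V}_{i}]$; this is homologically smooth via the length-one Koszul bimodule resolution $\Bbbk[q]^{\e} \to \Bbbk[q]^{\e} \to \Bbbk[q]$, but is not proper when $|q^{V}_{i}| \neq 0$.
\end{itemize}
Compatibility with the differential is immediate: Proposition~\ref{proposition:generator} shows that $d$ is non-zero only on $p^{II}_{i}$, which is not a loop but an arrow between two distinct vertices, so each of the cycles identified above is $d$-closed and the induced DG structure on $\End(P_{i})$ is trivial, matching the three bare algebras claimed.

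For the final claim, suppose $A$ is locally proper but $|q_{i}| = 0$ for some case-(\emph{ii}) vertex. Then $\H^{0}(\End(P_{i})) = \Bbbk[q_{i}]$ is infinite-dimensional. Since $d(e_{i}) = 0$ by Lemma~\ref{lemma:differential}, the idempotent $e_{i}$ commutes with the differential, so $\H^{\bullet}(e_{i} A e_{i}) = e_{i} \H^{\bullet}(A) e_{i}$ is a direct summand of $\H^{\bullet}(A)$. Hence $\H^{0}(A)$ would be infinite-dimensional, contradicting local properness.

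The main obstacle is the case analysis itself, particularly the verification that the handle vertices of part~(\I) genuinely fall into case (\emph{i}) via length-two cycles (rather than case (\emph{iii})), and that no ``globe-trotting'' cycle arises once one accounts for the connecting arrows $u^{I}_{i}, u^{II}_{i}, \dotsc, u^{V}_{i}$: one must check that the prescribed relations $u^{I}_{i} r^{I}_{i} = 0$, $p^{I}_{i+1} u^{I}_{i} = 0$, $q^{II}_{1} u^{I}_{g} = 0$, and their analogues at the junctions between parts, effectively prevent any long cycle from completing. This is tedious but mechanical given the gentle combinatorics.
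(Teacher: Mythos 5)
Your proposal is correct and follows essentially the same route as the paper's proof: read off $\End(P_i) = e_iAe_i$ directly from the quiver of Proposition~\ref{proposition:generator}, observe that any returning cycle at a vertex is either a type-(\IV) loop, a type-(\V) loop, or a length-two cycle at the handle vertices of part~(\I), and classify the resulting local algebras. Your supplementary remarks (that $d$ only hits the non-loop arrows $p^\II_i$ so the DG structure on $e_iAe_i$ is trivial, and the idempotent argument for the final claim) fill in details the paper leaves implicit but are consistent with it.
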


\begin{proof}
The separation into the three cases for homologically smooth and/or proper follows directly from the explicit form of the endomorphism algebras which can be read off the quiver in Fig.~\ref{fig:quiver}. If $i$ is a vertex without oriented cycles starting and ending at $i$, then $\End (P_i) \simeq \Bbbk$ which is both homologically smooth and proper. If $i$ is a vertex such that there is a nonzero cycle starting and ending at $i$, then this cycle is necessarily of length $1$ or $2$. There are two types of length $1$ cycles (loops). At the vertices $i = \mathrm s (p_j^\IV)$ for $1 \leq j \leq m$ we have a loop $p_j^\IV$ with ${p_j^\IV}^2 = 0$, in which case $\End (P_i) = \Bbbk [p_j^\IV] / ({p_j^\IV})^2$ as in {\itemiii} which is proper but not homologically smooth. At the vertices $i = \mathrm s (q_j^\V)$ for $1 \leq j \leq n$ we have a loop $q_j^\V$ without relations, in which case $\End (P_i) = \Bbbk [q_j^\V]$ as in {\itemii} which is homologically smooth but not proper. Finally, if $i = \mathrm s (p^\I_j)$ or $i = \mathrm t (p^\I_j)$ for $1 \leq j \leq g$, there are length $2$ cycles $q^\I_j r^\I_j$ and $p^\I_j q^\I_j$, respectively, which due to the zero relations $r^\I_j q^\I_j = 0$ and $q^\I_j p^\I_j = 0$ also square to zero. Again, we have $\End (P_i) \simeq \Bbbk [p_i] / (p_i^2)$ as in {\itemi} which again is proper but not homologically smooth.

Since $A$ is locally proper if and only if there are no oriented cycles of degree $0$, this corresponds precisely to the case when $\lvert q_i \rvert \neq 0$, where $q_i$ corresponds to one of the loops without relations $q^\V_1, \dotsc, q^\V_n$.
\end{proof}

The set of $P_i$'s constitutes a classical generator of $\mathcal W (\mathbf S)$. The main idea of weak duality is to replace those $P_j$'s falling into type {\itemii} in Lemma \ref{lemma:propersmooth} --- which are responsible for $A$ being nonproper --- by objects $P_j^{\vee_J}$ such that the resulting collection is still a {\it weak} generator, but now with a finite-dimensional endomorphism algebra. (See Section~\ref{subsection:unbounded} below for how to recover $\mathcal W (\mathbf S)$ from such a weak generator.)

For simplicity, we only formulate this notion of weak duality for the particular DG gentle algebra associated to the standard dissection of Fig.~\ref{fig:dissection}, although a similar idea appears to work for any DG gentle algebra.

Let $A = \Bbbk Q / I$ be the DG gentle algebra associated to the standard dissection of Fig.~\ref{fig:dissection} whose quiver with relations is given in Proposition \ref{proposition:generator}.

Let $J \subset Q_0$ denote the subset of all vertices of $Q$ such that $\End (P_j)$ is not proper. (Note that by Lemma \ref{lemma:propersmooth}, this implies that $\End (P_j)$ is homologically smooth.) For each $j \in J$, let $P^{\vee_J}_j$ denote the twisted complex
\begin{equation}
\label{eq:PveeJ}
P^{\vee_J}_j = \Bigl( P_j [1 - \lvert q_j \rvert] \oplus P_j, \bigl( \begin{smallmatrix} 0 & 0 \\ q_j & 0 \end{smallmatrix} \bigr) \Bigr).
\end{equation}

Geometrically, $P_j$ corresponds to the unique curve in the dissection connecting the boundary component $\partial_0 S$ to a boundary component $\partial_j^\V S$ and $P_j^{\vee_J}$ corresponds to the curve around this boundary component as illustrated in Fig.~\ref{fig:weak}. Algebraically, passing from $P_j$ to $P_j^{\vee_J}$ turns a type {\itemii} object into a type {\itemi} object. 

\begin{lemma}
\label{lemma:EndveeJ}
Let $P^{\vee_J}_j$ be the twisted complex in \eqref{eq:PveeJ}. Then $\End_{\H^0 \tw (\add A)} (P^{\vee_J}_j) \simeq \Bbbk [p_j] / (p_j^2)$ with $\lvert p_j \rvert = 1 - \lvert q_j \rvert$, where $q_j$ is the generator of $\End (P_j)$.
\end{lemma}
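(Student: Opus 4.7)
The plan is to compute the DG endomorphism algebra $\End_{\tw (\add A)} (P_j^{\vee_J})$ directly as a $2 \times 2$ matrix algebra and then pass to cohomology, identifying the result with the claimed Koszul-dual-like algebra $\Bbbk [p_j]/(p_j^2)$. The underlying conceptual picture is that $P_j^{\vee_J}$ is (a shift of) the cone of the closed degree-$0$ morphism $q_j \colon P_j \to P_j [\lvert q_j \rvert]$, and the computation below is a one-variable instance of Koszul duality between $\Bbbk [q_j]$ and its trivial module, extending the examples of Section~\ref{subsubsection:curvature} to arbitrary $\lvert q_j \rvert$.

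First, I would unpack the DG structure. By Proposition~\ref{proposition:generator}, the internal differential of $A$ vanishes on $q_j$, hence on all of $\End (P_j) \simeq \Bbbk [q_j]$. Therefore the only differential on $\End (P_j^{\vee_J})$ is the commutator with the twist $\delta = \bigl(\begin{smallmatrix} 0 & 0 \\ q_j & 0 \end{smallmatrix}\bigr)$. For $f = \bigl(\begin{smallmatrix} f_{11} & f_{12} \\ f_{21} & f_{22} \end{smallmatrix}\bigr)$ of total degree $d$, a direct computation yields
\[
d (f) = \begin{pmatrix} -(-1)^d f_{12} q_j & 0 \\ q_j f_{11} - (-1)^d f_{22} q_j & q_j f_{12} \end{pmatrix}.
\]
Using the shift convention $\Hom^d (P_j [a], P_j [b]) \simeq \Hom^{d+b-a} (P_j, P_j)$, each entry $f_{ij}$ is identified with a polynomial in $q_j$ of the appropriate degree; in particular $\delta$ itself has total degree $1$ as required.

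Next, I would determine $Z^d$ and $B^d$. Since $\Bbbk [q_j]$ is a commutative integral domain, the conditions $f_{12} q_j = 0$ and $q_j f_{12} = 0$ force $f_{12} = 0$, while $q_j f_{11} = (-1)^d f_{22} q_j$ forces $f_{11} = (-1)^d f_{22}$. Thus $Z^d$ consists of matrices $\bigl(\begin{smallmatrix} a & 0 \\ c & (-1)^d a \end{smallmatrix}\bigr)$ with $a \in \Hom^d (P_j, P_j)$ and $c \in \Hom^{d-1+\lvert q_j \rvert} (P_j, P_j)$. Similarly, $B^d$ consists of matrices of the same form whose diagonal entry lies in $q_j \cdot \Hom^{d-\lvert q_j \rvert} (P_j, P_j)$ and whose $(2,1)$-entry lies in $q_j \cdot \Hom^{d-1} (P_j, P_j)$.

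Taking cohomology, the diagonal quotient $\Hom^d (P_j, P_j)/q_j \Hom^{d-\lvert q_j \rvert} (P_j, P_j)$ equals $\Bbbk$ for $d = 0$ (spanned by $\id$) and $0$ otherwise, since every nonzero homogeneous element of $\Bbbk [q_j]$ of degree a positive multiple of $\lvert q_j \rvert$ lies in $q_j \cdot \Bbbk [q_j]$. The off-diagonal quotient $\Hom^{d-1+\lvert q_j \rvert} (P_j, P_j)/q_j \Hom^{d-1} (P_j, P_j)$ is nonzero only when $d-1+\lvert q_j \rvert = 0$, i.e.\ $d = 1 - \lvert q_j \rvert$; indeed for that $d$ the numerator is $\Hom^0 (P_j, P_j) = \Bbbk \cdot \id$, while the denominator $q_j \cdot \Hom^{-\lvert q_j \rvert} (P_j, P_j)$ vanishes by degree. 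Hence
\[
\H^\bullet \End (P_j^{\vee_J}) \simeq \Bbbk \, \oplus \, \Bbbk \cdot p_j
\]
with the generator $p_j$ represented by $\bigl(\begin{smallmatrix} 0 & 0 \\ \id & 0 \end{smallmatrix}\bigr)$ in degree $1 - \lvert q_j \rvert$.

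Finally, I would verify the algebra structure: direct matrix multiplication gives $p_j^2 = \bigl(\begin{smallmatrix} 0 & 0 \\ \id & 0 \end{smallmatrix}\bigr)^{\!2} = 0$, so $\H^\bullet \End (P_j^{\vee_J}) \simeq \Bbbk [p_j]/(p_j^2)$ with $\lvert p_j \rvert = 1 - \lvert q_j \rvert$. The main obstacle is not conceptual but bookkeeping --- tracking signs from the Koszul rule in $d (f) = [\delta, f]$ and keeping the shift convention consistent, particularly since $\lvert q_j \rvert$ may be negative (corresponding to boundary components with negative winding number). The argument is however uniform in $\lvert q_j \rvert$ because it only relies on $\Bbbk [q_j]$ being a commutative integral domain concentrated in degrees $\{ n \lvert q_j \rvert \mid n \geq 0 \}$.
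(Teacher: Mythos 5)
Your proof is correct and follows essentially the same route as the paper's: compute $\End_{\tw(\add A)}(P_j^{\vee_J})$ as $2\times 2$ matrices over $\Bbbk[q_j]$, identify cocycles from the commutator with the twist $\bigl(\begin{smallmatrix} 0 & 0 \\ q_j & 0 \end{smallmatrix}\bigr)$, and kill the superfluous terms with coboundaries. You are more careful than the paper about the Koszul signs and about verifying degree-by-degree that the diagonal contributes only $\Bbbk\cdot\mathrm{id}$ in cohomology, which fills in details the paper's proof leaves implicit.
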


\begin{proof}
An endomorphism of twisted complexes is given by an endomorphism of the underlying object (considered without differential). Such an endomorphism gives rise to a nonzero morphism in the homotopy category $\H^0 \tw (\add A)$ if it is a cocycle for the differential $\mu^1_{\tw (\add A)}$. For $P^{\vee_J}_j$ \eqref{eq:PveeJ} an endomorphism of the underlying direct sum $P_j [1 - \lvert q_j \rvert] \oplus P_j$ is given by a $2 \times 2$ matrix $\bigl( \begin{smallmatrix} f_1 & f_2 \\ f_3 & f_4 \end{smallmatrix} \bigr)$ where $f_1, \dotsc, f_4 \in \End (P_j)$.  It defines a morphism in the homotopy category if and only if it satisfies
\begin{equation*}
\begin{pmatrix}
f_1 & f_2 \\
f_3 & f_4
\end{pmatrix}
\begin{pmatrix}
0 & 0 \\
q_j & 0
\end{pmatrix}
=
\begin{pmatrix}
0 & 0 \\
q_j & 0
\end{pmatrix}
\begin{pmatrix}
f_1 & f_2 \\
f_3 & f_4
\end{pmatrix}
\end{equation*}
This shows that we must have $f_2 = 0$ and $f_1 = f_4$. Whereas $f_3$ could in principle be given by multiplication by any polynomial in $q_j$, the terms of order $\geq 1$ in $q_j$ can be removed by a coboundary 
\[
\begin{pmatrix}
0 & 0 \\
0 & q_j^k
\end{pmatrix}
\mapsto \begin{pmatrix}
0 & 0 \\
0 & q_j^k
\end{pmatrix} \begin{pmatrix}
0 & 0 \\
q_j & 0
\end{pmatrix} 
-\begin{pmatrix}
0 & 0 \\
q_j & 0
\end{pmatrix}  \begin{pmatrix}
0 & 0 \\
0 & q_j^k
\end{pmatrix}
=
\begin{pmatrix}
0 & 0 \\
q_j^{k+1} & 0
\end{pmatrix}.
\]
Therefore, we have that $\End (P^{\vee_J}_j)$ is generated by $p_j := \bigl( \begin{smallmatrix} 0 & 0 \\ 1 & 0 \end{smallmatrix} \bigr)$ which indeed has degree $1 - \lvert q_j \rvert$ as it defines a degree $0$ map $P_j [1 - \lvert q_j \rvert] \oplus P_j \to P_j [2 - 2 \lvert q_j \rvert] \oplus P_j [1 - \lvert q_j \rvert]$.
\end{proof}

We have the following important observation.

\begin{figure}
\begin{tikzpicture}[x=1em,y=1em]
\begin{scope}[decoration={markings,mark=at position 0.55 with {\arrow[black]{Stealth[length=4.2pt]}}}]
\node[font=\footnotesize, left] at (-2.5,4.5) {$\mathbf S$};
\draw[line width=.5pt] (0,3) circle(.8em);
\draw[line width=.75pt, line cap=round, color=arccolour] (0,0) to ($(0,3)+(270:.8)$);
\begin{scope}[yshift=3em]
\draw[line width=0pt, line cap=round, postaction={decorate}] (88:.79) -- (80:.806);
\node[font=\scriptsize] at (90:1.35) {$q_j$};
\end{scope}
\node[font=\scriptsize] at (.6,1.2) {$P_j$};
\draw[line width=.5pt, line cap=round] (-3,0) -- (3,0);
\draw[line width=0pt, line cap=round, postaction={decorate}] (-3,0) -- (0,0);
\draw[line width=0pt, line cap=round, postaction={decorate}] (0,0) -- (3,0);
\node[font=\scriptsize] at (-1.5,-.75) {$u$};
\node[font=\scriptsize] at (1.5,-.75) {$v$};
\draw[dash pattern=on 3.9pt off 2pt, line width=.5pt, line cap=round, looseness=2.9] (-3,0) to[bend left=90] (3,0);
\end{scope}
\begin{scope}
\begin{scope}[yshift=-4.5em]
\node[font=\footnotesize, left] at (-2.5,1.5) {$A$};
\node[shape=circle, scale=.7] (L) at (-3,0) {};
\node[shape=circle, scale=.7] (R) at (3,0) {};
\node[shape=circle, scale=.7] (M) at (0,0) {};
\draw[line width=1pt, fill=black] (0,0) circle(0.2ex);
\path[->, line width=.5pt, font=\scriptsize] (M) edge[out=130, in=50, looseness=20] node[right=-.1ex,pos=.8] {$q_j$} (M);
\path[->, line width=.5pt, font=\scriptsize] (L) edge node[below=-.3ex] {$u$} (M);
\path[->, line width=.5pt, font=\scriptsize] (M) edge node[below=-.3ex] {$v$} (R);
\draw[dash pattern=on 0pt off 1.2pt, line width=.6pt, line cap=round] ($(173:1em)$) arc[start angle=173, end angle=135, radius=1em];
\draw[dash pattern=on 0pt off 1.2pt, line width=.6pt, line cap=round] ($(7:1em)$) arc[start angle=7, end angle=45, radius=1em];
\end{scope}
\end{scope}
\begin{scope}[xshift=11em, decoration={markings,mark=at position 0.6 with {\arrow[black]{Stealth[length=4.2pt]}}}]
\node[font=\footnotesize, left] at (-2.5,4.5) {$\mathbf S$};
\draw[line width=.5pt] (0,3) circle(.8em);
\draw[line width=.75pt, line cap=round, color=arccolour] (-1,0) to[out=85, in=240, looseness=.8] ($(0,3)+(150:1.3em)$) arc[start angle=150, end angle=30, radius=1.3] to[out=300, in=95, looseness=.8] (1,0);
\draw[line width=.5pt, line cap=round] (-3,0) -- (3,0);
\draw[line width=0pt, line cap=round, postaction={decorate}] (-3,0) -- (-1,0);
\draw[line width=0pt, line cap=round, postaction={decorate}] (-1,0) -- (1,0);
\draw[line width=0pt, line cap=round, postaction={decorate}] (1,0) -- (3,0);
\node[font=\scriptsize] at (-2,-.75) {$u'$\strut};
\node[font=\scriptsize] at (0,-.75) {$p_j$\strut};
\node[font=\scriptsize] at (2,-.75) {$v'$\strut};
\node[font=\scriptsize] at (-.2,1.2) {$P_j^{\vee_J}$};
\draw[dash pattern=on 3.9pt off 2pt, line width=.5pt, line cap=round, looseness=2.9] (-3,0) to[bend left=90] (3,0);
\end{scope}
\begin{scope}[xshift=22em, decoration={markings,mark=at position 0.6 with {\arrow[black]{Stealth[length=4.2pt]}}}]
\draw[<->, line width=.5pt] (-4.5,2) -- (-6.5,2);
\node[font=\footnotesize, left] at (-2.5,4.5) {$\mathbf S^{\vee_J}$};
\draw[line width=.2em] (0,3) circle(.8em);
\draw[line width=.75pt, line cap=round, color=arccolour] (-1,0) to[out=85, in=240, looseness=.8] ($(0,3)+(150:1.3em)$) arc[start angle=150, end angle=30, radius=1.3] to[out=300, in=95, looseness=.8] (1,0);
\draw[line width=.5pt, line cap=round] (-3,0) -- (3,0);
\draw[line width=0pt, line cap=round, postaction={decorate}] (-3,0) -- (-1,0);
\draw[line width=0pt, line cap=round, postaction={decorate}] (-1,0) -- (1,0);
\draw[line width=0pt, line cap=round, postaction={decorate}] (1,0) -- (3,0);
\node[font=\scriptsize] at (-2,-.75) {$u'$\strut};
\node[font=\scriptsize] at (0,-.75) {$p_j$\strut};
\node[font=\scriptsize] at (2,-.75) {$v'$\strut};
\draw[dash pattern=on 3.9pt off 2pt, line width=.5pt, line cap=round, looseness=2.9] (-3,0) to[bend left=90] (3,0);
\end{scope}
\begin{scope}[xshift=22em, yshift=-4.5em]
\node[font=\footnotesize, left] at (-2.5,1.5) {$A^{\vee_J}$};
\node[shape=circle, scale=.7] (L) at (-3,0) {};
\node[shape=circle, scale=.7] (R) at (3,0) {};
\node[shape=circle, scale=.7] (M) at (0,0) {};
\draw[line width=1pt, fill=black] (0,0) circle(0.2ex);
\path[->, line width=.5pt, font=\scriptsize] (M) edge[out=130, in=50, looseness=20] node[right=-.1ex,pos=.8] {$p_j$} (M);
\path[->, line width=.5pt, font=\scriptsize] (L) edge node[below=-.3ex] {$u'$} (M);
\path[->, line width=.5pt, font=\scriptsize] (M) edge node[below=-.3ex] {$v'$} (R);
\draw[dash pattern=on 0pt off 1.2pt, line width=.6pt, line cap=round] ($(190:.8em)$) arc[start angle=190, end angle=350, radius=.8em];
\draw[dash pattern=on 0pt off 1.2pt, line width=.6pt, line cap=round] ($(118:.8em)$) arc[start angle=118, end angle=62, radius=.8em];
\end{scope}
\end{tikzpicture}
\caption{Modifying the standard dissection $\Gamma$ to $\Gamma^{\vee_J}$ as illustrated gives rise to a weak generator whenever $A = \End (\Gamma)$ is locally proper. Its endomorphism algebra is the DG gentle algebra $A^{\vee_J}$ whose surface model $\mathbf S^{\vee_J}$ is obtained from $\mathbf S$ by adding a full boundary stop to the boundary component without stop.}
\label{fig:weak}
\end{figure}
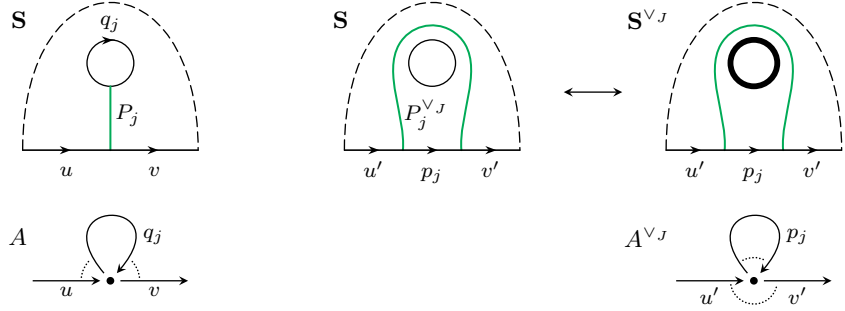

\begin{proposition}
\label{proposition:weakgenerator}
Let $A = (\Bbbk Q / I, d)$ be the DG gentle algebra associated to the standard dissection $\Gamma$ of $\mathbf S$ and let $J$ be the subset of all vertices of $Q$ such that $\End (P_j)$ is not proper. If $A$ is locally proper, then the object $\Gamma^{\vee_J} = \bigoplus_{i \in Q_0 \smallsetminus J} P_i \oplus \bigoplus_{j \in J} P_j^{\vee_J}$ is a weak generator of $\mathcal W (\mathbf S)$ and $\End (\Gamma^{\vee_J})$ is a proper DG gentle algebra.
\end{proposition}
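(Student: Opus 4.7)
The plan is to treat the two assertions separately, using the geometric picture of Fig.~\ref{fig:weak} as the main organizing principle.

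For the weak generation statement, I would pass to the curve model. By \eqref{eq:PveeJ} the twisted complex $P_j^{\vee_J}$ is the cone on $q_j \colon P_j[1-|q_j|] \to P_j$, and geometrically this mapping cone is represented by the arc that wraps once around the boundary component $\partial^\V_j S$, as shown in Fig.~\ref{fig:weak}. Therefore the collection of curves representing $\Gamma^{\vee_J}$ cuts $\mathbf S$ into exactly the same connected components as $\Gamma$, except that for each $j \in J$ the two polygons meeting along the arc to $\partial^\V_j S$ are replaced by a single annular region containing $\partial^\V_j S$ on one side. Since $A$ is locally proper, each $\partial^\V_j S$ has winding number $\neq 0$, so this annular piece is of type {\itemiv} in Theorem~\ref{theorem:weak}, while all other components satisfy {\itemi}--{\itemiii}. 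Applying Theorem~\ref{theorem:weak} then yields that $\Gamma^{\vee_J}$ is a weak generator of $\mathrm D\mathcal W(\mathbf S)$, and hence of $\mathcal W(\mathbf S)$.

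For the second claim, I would identify $\End(\Gamma^{\vee_J})$ with the DG endomorphism algebra of the standard generator of a \emph{modified} graded surface $\mathbf S^{\vee_J}$, obtained from $\mathbf S$ by adding a full boundary stop to every boundary component $\partial^\V_j S$ with $j \in J$ (so every boundary component of $\mathbf S^{\vee_J}$ now carries at least one stop). Under this modification the wrapping arc representing $P_j^{\vee_J}$ becomes an ordinary arc of the standard dissection of $\mathbf S^{\vee_J}$, and the whole collection $\Gamma^{\vee_J}$ becomes the standard generator described in Proposition~\ref{proposition:generator} for $\mathbf S^{\vee_J}$. By that proposition the resulting endomorphism algebra is a DG gentle algebra, and by Remark~\ref{remark:propersmooth} it is proper because every boundary component of $\mathbf S^{\vee_J}$ contains a stop. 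Concretely, the type {\itemii} loops $q^\V_j$ of $A$ are replaced by type {\itemi} loops $p_j$ with the relation $p_j^2 = 0$, matching Lemma~\ref{lemma:EndveeJ}, and the quadratic monomial relations of $A$ involving $q^\V_j$ transfer directly to the corresponding relations involving $p_j$.

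The main obstacle is verifying this identification rigorously on the level of $\Hom$-spaces: one has to check that the cross-morphisms $\Hom(P_j^{\vee_J}, P_{j'}^{\vee_J})$ and $\Hom(P_j^{\vee_J}, P_i)$ computed via the twisted complex formula
\[
\Hom(P_j^{\vee_J}, X) = \bigl(\Hom(P_j, X)[-(1-|q_j|)] \oplus \Hom(P_j, X), \mu^1 + [q_j, \blank]\bigr)
\]
match, as graded vector spaces with differential, the boundary-path $\Hom$-spaces in the standard dissection of $\mathbf S^{\vee_J}$. By Lemma~\ref{lemma:EndveeJ} this is already established for $X = P_j^{\vee_J}$, and for the remaining cases the computation reduces to a finite check of the two local situations near $\partial^\V_j S$ (the endpoint of the arc to $\partial^\V_j S$ meeting either another boundary stop or a cycle coming from part (\I) of Fig.~\ref{fig:quiver}). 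The finite-dimensionality of each $\Hom$-space then follows from the general fact that the morphism spaces in $\mathcal W(\mathbf S^{\vee_J})$ between arcs of a dissection are spanned by Reeb chords between endpoints lying on boundary components each equipped with at least one stop, of which there are only finitely many between any two fixed arcs. Combining this with the DG structure inherited from Proposition~\ref{proposition:generator} gives that $\End(\Gamma^{\vee_J})$ is a proper DG gentle algebra, as required.
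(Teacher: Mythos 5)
Your argument for weak generation is essentially the paper's own proof: pass to the curve model, observe that replacing each $P_j$ ($j\in J$) by $P_j^{\vee_J}$, i.e.\ by a curve around $\partial^\V_j S$, produces an annular component satisfying condition \itemiv\ of Theorem~\ref{theorem:weak}, and note that local properness of $A$ forces $\w_\eta(\partial^\V_j S)\neq 0$ so that this condition actually applies. One small geometric inaccuracy: the arc $P_j$ is adjacent to a \emph{single} polygon of the standard dissection (it appears twice on that polygon's boundary, with $\partial^\V_j S$ between the two occurrences), so the replacement splits this one polygon into an annulus and a new smooth disk rather than merging two polygons into an annulus; this does not affect the conclusion, since the new disk still satisfies condition \itemi. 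For the second assertion, note that the paper's own proof of this proposition does not address properness of $\End(\Gamma^{\vee_J})$ at all --- that claim is established later as part of Theorem~\ref{theorem:weakdual}. Your argument, identifying $\End(\Gamma^{\vee_J})$ with the endomorphism algebra of a standard dissection of the modified surface $\mathbf S^{\vee_J}$ and then invoking Remark~\ref{remark:propersmooth}, is essentially the argument given there; the matching of cross-$\Hom$-spaces that you flag as ``the main obstacle'' is delegated in the paper to the classification of morphisms in \cite{opperplamondonschroll}, whereas you sketch a direct local verification. Your proposal is thus more self-contained than the paper's proof of this specific proposition, a touch less rigorous on the $\Hom$-space identification, but follows the same route.
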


\begin{proof}
Generalizing the situation illustrated in Fig.~\ref{fig:cylinder} and Fig.~\ref{fig:cylinderweak}, the object $\Gamma^{\vee_J}$ is given by curves in $\mathbf S$ obtained from the standard dissection by replacing each curve connecting to a boundary component without stops by a curve around this boundary component as illustrated in Fig.~\ref{fig:weak}. If $A$ is locally proper, then $\mathbf S$ contains no boundary components with winding number $0$ and without stops. Thus the curves in $\Gamma^{\vee_J}$ around the boundary components without stops satisfy the conditions of Theorem \ref{theorem:weak} and thus define a weak generator of $\mathrm D \mathcal W (\mathbf S)$.
\end{proof}

Similarly (and, as we shall see, dually), we denote by $K \subset Q_0$ the subset of all vertices of $Q$ such that $P_k$ is given by a curve around a boundary component with full boundary stop. Note that $\End (P_k)$ is not homologically smooth but proper (see Lemma \ref{lemma:propersmooth}). For each $k \in K$, let $P^{\vee_K}_k$ denote the unbounded, one-sided twisted complex
\begin{equation}
\label{eq:veeK}
P_k^{\vee_K} = \left( \dotsb \oplus P_k [2 - 2 \lvert p_k \rvert] \oplus P_k [1 - \lvert p_k \rvert] \oplus P_k, \left( \begin{smallmatrix} \raisebox{1.5pt}{\scriptsize$\cdot$}\mkern-1mu{\cdot}\mkern-1mu\raisebox{-1.5pt}{\scriptsize$\cdot$} & & & \raisebox{2.1pt}{\scriptsize$\cdot$}\mkern-5.1mu{\cdot}\mkern-5.1mu\raisebox{-2.1pt}{\scriptsize$\cdot$} \\ & 0 & 0 & 0 \\[.15em] & \mathclap{p_k} & 0 & 0 \\ \cdots & 0 & \mathclap{p_k} & 0 \end{smallmatrix} \right) \right).
\end{equation}

Geometrically, $P_k^{\vee_K}$ is represented by a curve spiralling ever closer to the fully stopped boundary component enclosed by the curve corresponding to $P_k$. Algebraically, passing from $P_k$ to $P_k^{\vee_J}$ turns a type {\itemi} object into a type {\itemii} object as mentioned in Lemma \ref{lemma:propersmooth}. 

\begin{lemma}
\label{lemma:EndveeK}
Let $P^{\vee_K}_k$ be the twisted complex in \eqref{eq:veeK} and let $p_k$ denote the generator of $\End (P_k)$. Then $\End_{\H^0 \tw (\add A)} (P^{\vee_K}_k) \simeq \Bbbk \llbracket q_k \rrbracket$ if $\lvert p_k \rvert = 1$ and else $\End_{\H^0 \tw (\add A)} (P^{\vee_K}_k) \simeq \Bbbk [q_k]$. In both cases, we have $\lvert q_k \rvert = 1 - \lvert p_k \rvert$.
\end{lemma}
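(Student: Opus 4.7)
The plan is to adapt the argument of Lemma~\ref{lemma:EndveeJ} to the one-sided infinite twisted complex $P_k^{\vee_K}$. Writing $P^{(n)} := P_k[n(1 - \lvert p_k \rvert)]$ so that $P_k^{\vee_K} = \bigoplus_{n \geq 0} P^{(n)}$, with twisted differential $D$ whose only nonzero entries are $D_{n-1, n} = p_k$, I would parameterize an endomorphism $f$ by its (column-finite) matrix $(f_{mn})_{m, n \geq 0}$ with entries in $\End(P_k) \simeq \Bbbk \cdot 1 \oplus \Bbbk \cdot p_k$. Using the shift formula, an entry of total degree $r$ acts on $P_k$ by a morphism of internal degree $r + (m - n)(1 - \lvert p_k \rvert)$; writing $f_{mn} = \alpha_{mn} \cdot 1 + \beta_{mn} \cdot p_k$, this constrains $\alpha_{mn}$ to be nonzero only when $(m - n)(1 - \lvert p_k \rvert) = -r$ and $\beta_{mn}$ only when $(m - n)(1 - \lvert p_k \rvert) = \lvert p_k \rvert - r$.

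Next I would impose the cocycle condition $D \circ f = \pm f \circ D$, which after a short matrix computation translates into the recurrence $\alpha_{m+1, n+1} = \alpha_{m, n}$ (and the analogous relation for $\beta$) for all $m, n \geq 0$, together with the boundary relation at $n = 0$ arising from the absence of $P^{(-1)}$, namely $\alpha_{m, 0} = 0$ for $m \geq 1$. Consequently $\alpha_{mn}$ and $\beta_{mn}$ depend only on the shift $j := n - m$ and must vanish for $j < 0$. The natural generator $q_k$ is the shift endomorphism $e_n \mapsto e_{n-1}$ (identity on each $P_k$ summand), whose total degree is $1 - \lvert p_k \rvert$ by the degree calculation above; its powers $q_k^j$ for $j \geq 0$ then exhaust all surviving cocycles, after one argues (in the spirit of Lemma~\ref{lemma:differential} and Lemma~\ref{lemma:EndveeJ}) that every residual $\beta$-contribution is either forbidden by the degree constraint or is a coboundary, obtained by an explicit rescaling on lower-indexed summands.

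The main subtlety, and the reason for the dichotomy between $\Bbbk[q_k]$ and $\Bbbk \llbracket q_k \rrbracket$, lies in the finiteness of cocycles in each total degree. When $\lvert q_k \rvert = 1 - \lvert p_k \rvert \neq 0$, the powers $q_k^j$ lie in distinct total degrees $j \lvert q_k \rvert$, each of dimension $1$, producing the polynomial ring $\Bbbk[q_k]$. When $\lvert p_k \rvert = 1$, however, all $q_k^j$ live in total degree $0$; here column-finiteness still permits arbitrary sequences $(a_j)_{j \geq 0}$ (each column $n$ has at most $n + 1$ nonzero entries $\alpha_{0, n}, \dotsc, \alpha_{n, n}$), no degree~$-1$ endomorphism exists to provide a coboundary, and composition of such cocycles reproduces the formal convolution product, yielding $\Bbbk \llbracket q_k \rrbracket$. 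A more conceptual alternative, which I would present in parallel, is to recognize $P_k^{\vee_K}$ as a semi-free resolution of the simple right $R$-module $\Bbbk = R / (p_k)$ over $R = \End(P_k) \simeq \Bbbk[p_k]/(p_k^2)$, so that $\End_{\H^0 \tw(\add A)}(P_k^{\vee_K}) \simeq \Ext_R^*(\Bbbk, \Bbbk) \simeq \Bbbk[q_k]$ as a graded algebra; the completion $\Bbbk \llbracket q_k \rrbracket$ appearing in the degenerate case reflects the fact that morphism spaces in $\tw$ and $\Tw^{-_J, \b}$ are direct products rather than direct sums in each total degree, so that infinite formal combinations of the $q_k^j$ become legitimate morphisms precisely when they all lie in a single degree.
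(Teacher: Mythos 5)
Your ``more conceptual alternative'' is precisely the paper's proof: $P_k^{\vee_K}$ is a semi-free resolution of the simple $R$-module $\Bbbk = R/(p_k)$ over $R = \End(P_k) \simeq \Bbbk[p_k]/(p_k^2)$, and the endomorphism ring in the homotopy category is computed as $\Hom_R(P_k^{\vee_K}, \Bbbk)$, a countable \emph{product} of shifted copies of $\Bbbk$; whether this realizes $\Bbbk[q_k]$ or $\Bbbk\llbracket q_k\rrbracket$ depends solely on whether the shifts $n(1 - \lvert p_k\rvert)$ are distinct or collapse to a single degree, which is exactly the $\lvert p_k\rvert = 1$ dichotomy.

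The matrix computation you sketch first is a valid alternative, with one local slip worth flagging: because $p_k^2 = 0$, the cocycle condition $Df = \pm fD$ constrains only the $\alpha$-coefficients and says \emph{nothing} about the $\beta$'s, so there is no ``analogous relation for $\beta$''. Instead, any $\beta$-matrix supported on the admissible off-diagonal is automatically a cocycle, and one must separately argue that it is always a coboundary by solving the telescoping recurrence $\beta_{m,n} = \alpha^g_{m+1,n} \pm \alpha^g_{m,n-1}$ for a column-finite $g$. Note that the $g$ needed lives one off-diagonal below $\beta$, not on the main diagonal, so this is more than a ``rescaling of summands'' in the sense of the coboundary used in Lemma~\ref{lemma:EndveeJ}. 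All of this is fillable and gives the same answer, with the benefit of making column-finiteness and the power-series phenomenon explicit; the semi-free resolution viewpoint is what the paper uses because it sidesteps this bookkeeping entirely.
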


\begin{proof}
Note that as a complex the cohomology of $P_k^{\vee_K}$ is $1$-dimensional concentrated in degree $0$. That is, $P_k^{\vee_K}$ is a semi-free resolution of the simple module of $\End (P_k)\simeq \Bbbk[p_i]/(p_i^2)$. Thus we have
\begin{align*}
\End_{\H^0 \tw (\add A)} (P^{\vee_K}_k)& \simeq \Hom_{\End (P_k)}(P^{\vee_K}_k, \Bbbk)  \\
& \simeq \Hom_\Bbbk \left (\bigoplus_{n=1}^\infty \Bbbk [n-n\lvert p_k\rvert], \Bbbk\right) \\
&\simeq \begin{cases}
\Bbbk\llbracket q_k \rrbracket & \text{if $\lvert p_k\rvert =1$}\\
\Bbbk[q_k] & \text{otherwise}.
\end{cases}
\end{align*}
If $\lvert p_k\rvert=1$ then the complex $\bigoplus_{n=1}^\infty \Bbbk [n-n \lvert p_k \rvert]$ is concentrated in degree $0$ which is infinite dimensional and $q_k$ is the identity map from $\Bbbk [1 - \lvert p_k \rvert]$ to $\Bbbk$ which is of degree $1 - \lvert p_k \rvert$.
\end{proof}

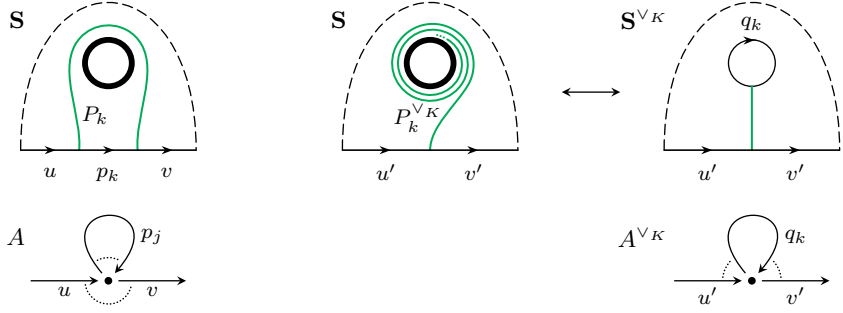
\begin{figure}
\begin{tikzpicture}[x=1em,y=1em]
\begin{scope}[xshift=-11em, decoration={markings,mark=at position 0.6 with {\arrow[black]{Stealth[length=4.2pt]}}}]
\node[font=\footnotesize, left] at (-2.5,4.5) {$\mathbf S$};
\draw[line width=.2em] (0,3) circle(.8em);
\draw[line width=.75pt, line cap=round, color=arccolour] (-1,0) to[out=85, in=240, looseness=.8] ($(0,3)+(150:1.3em)$) arc[start angle=150, end angle=30, radius=1.3] to[out=300, in=95, looseness=.8] (1,0);
\draw[line width=.5pt, line cap=round] (-3,0) -- (3,0);
\draw[line width=0pt, line cap=round, postaction={decorate}] (-3,0) -- (-1,0);
\draw[line width=0pt, line cap=round, postaction={decorate}] (-1,0) -- (1,0);
\draw[line width=0pt, line cap=round, postaction={decorate}] (1,0) -- (3,0);
\node[font=\scriptsize] at (-2,-.75) {$u$\strut};
\node[font=\scriptsize] at (0,-.75) {$p_k$\strut};
\node[font=\scriptsize] at (2,-.75) {$v$\strut};
\node[font=\scriptsize] at (-.45,1.2) {$P_k$};
\draw[dash pattern=on 3.9pt off 2pt, line width=.5pt, line cap=round, looseness=2.9] (-3,0) to[bend left=90] (3,0);
\end{scope}
\begin{scope}[xshift=-11em, yshift=-4.5em]
\node[font=\footnotesize, left] at (-2.5,1.5) {$A$};
\node[shape=circle, scale=.7] (L) at (-3,0) {};
\node[shape=circle, scale=.7] (R) at (3,0) {};
\node[shape=circle, scale=.7] (M) at (0,0) {};
\draw[line width=1pt, fill=black] (0,0) circle(0.2ex);
\path[->, line width=.5pt, font=\scriptsize] (M) edge[out=130, in=50, looseness=20] node[right=-.1ex,pos=.8] {$p_j$} (M);
\path[->, line width=.5pt, font=\scriptsize] (L) edge node[below=-.3ex] {$u$} (M);
\path[->, line width=.5pt, font=\scriptsize] (M) edge node[below=-.3ex] {$v$} (R);
\draw[dash pattern=on 0pt off 1.2pt, line width=.6pt, line cap=round] ($(190:.8em)$) arc[start angle=190, end angle=350, radius=.8em];
\draw[dash pattern=on 0pt off 1.2pt, line width=.6pt, line cap=round] ($(118:.8em)$) arc[start angle=118, end angle=62, radius=.8em];
\end{scope}
\begin{scope}[decoration={markings,mark=at position 0.55 with {\arrow[black]{Stealth[length=4.2pt]}}}]
\draw[line width=.2em] (0,3) circle(.8em);
\draw[line width=.75pt, line cap=round, color=arccolour] (0,0) to[out=90, in=-120, looseness=.8] ($(0,3)+(-30:1.5em)$) arc[start angle=-30, end angle=150, radius=1.4] arc[start angle=150, end angle=330, radius=1.3] arc[start angle=-30, end angle=150, radius=1.2] arc[start angle=150, end angle=330, radius=1.1] arc[start angle=-30, end angle=60, radius=1];
\draw[line width=.75pt, line cap=round, color=arccolour, dash pattern=on 0pt off 1.2pt] ($(0,3)+(62:1em)$) arc[start angle=62, end angle=78, radius=.9];
\node[font=\footnotesize, left] at (-2.5,4.5) {$\mathbf S$};
\node[font=\scriptsize] at (-.4,1.1) {$P_k^{\vee_K}$};
\draw[line width=.5pt, line cap=round] (-3,0) -- (3,0);
\draw[line width=0pt, line cap=round, postaction={decorate}] (-3,0) -- (0,0);
\draw[line width=0pt, line cap=round, postaction={decorate}] (0,0) -- (3,0);
\node[font=\scriptsize] at (-1.5,-.75) {$u'$};
\node[font=\scriptsize] at (1.5,-.75) {$v'$};
\draw[dash pattern=on 3.9pt off 2pt, line width=.5pt, line cap=round, looseness=2.9] (-3,0) to[bend left=90] (3,0);
\end{scope}
\begin{scope}[xshift=11em, decoration={markings,mark=at position 0.55 with {\arrow[black]{Stealth[length=4.2pt]}}}]
\draw[line width=.5pt] (0,3) circle(.8em);
\draw[line width=.75pt, line cap=round, color=arccolour] (0,0) to ($(0,3)+(270:.8)$);
\begin{scope}[yshift=3em]
\draw[line width=0pt, line cap=round, postaction={decorate}] (88:.79) -- (80:.806);
\node[font=\scriptsize] at (90:1.35) {$q_k$};
\end{scope}
\node[font=\footnotesize, left] at (-2.5,4.5) {$\mathbf S^{\vee_K}$};
\draw[<->, line width=.5pt] (-4.5,2) -- (-6.5,2);
\draw[line width=.5pt, line cap=round] (-3,0) -- (3,0);
\draw[line width=0pt, line cap=round, postaction={decorate}] (-3,0) -- (0,0);
\draw[line width=0pt, line cap=round, postaction={decorate}] (0,0) -- (3,0);
\node[font=\scriptsize] at (-1.5,-.75) {$u'$};
\node[font=\scriptsize] at (1.5,-.75) {$v'$};
\draw[dash pattern=on 3.9pt off 2pt, line width=.5pt, line cap=round, looseness=2.9] (-3,0) to[bend left=90] (3,0);
\end{scope}
\begin{scope}[xshift=11em, yshift=-4.5em]
\node[font=\footnotesize, left] at (-2.5,1.5) {$A^{\vee_K}$};
\node[shape=circle, scale=.7] (L) at (-3,0) {};
\node[shape=circle, scale=.7] (R) at (3,0) {};
\node[shape=circle, scale=.7] (M) at (0,0) {};
\draw[line width=1pt, fill=black] (0,0) circle(0.2ex);
\path[->, line width=.5pt, font=\scriptsize] (M) edge[out=130, in=50, looseness=20] node[right=-.1ex,pos=.8] {$q_k$} (M);
\path[->, line width=.5pt, font=\scriptsize] (L) edge node[below=-.3ex] {$u'$} (M);
\path[->, line width=.5pt, font=\scriptsize] (M) edge node[below=-.3ex] {$v'$} (R);
\draw[dash pattern=on 0pt off 1.2pt, line width=.6pt, line cap=round] ($(173:1em)$) arc[start angle=173, end angle=135, radius=1em];
\draw[dash pattern=on 0pt off 1.2pt, line width=.6pt, line cap=round] ($(7:1em)$) arc[start angle=7, end angle=45, radius=1em];
\end{scope}
\end{tikzpicture}
\caption{The counterpart to the situation illustrated in Fig.~\ref{fig:weak}. Replacing an object $P_k$ for $k \in K$ by the infinite twisted complex $P_k^{\vee_K}$ corresponds to a curve spiralling into a fully stopped boundary component. Its endomorphism algebra is the DG gentle algebra $A^{\vee_J}$ whose surface model $\mathbf S^{\vee_K}$ is obtained from $\mathbf S$ by adding a removing the full boundary stop and viewing $P_k^{\vee_K}$ simply as a curve connecting to this boundary component without stop.}
\label{fig:weak2}
\end{figure}

\begin{definition}
\label{definition:weakdual}
Let $A = (\Bbbk Q / I, d)$ be the DG gentle algebra associated to the standard dissection. Let $L \subset Q_0$ be any subset. Then the {\it weak dual} with respect to $L$ is the algebra
\[
A^{\vee_L} := \End \bigl( \textstyle\bigoplus_{i \in Q_0} P_i^{\vee_L} \bigr)
\]
where we set $P_i^{\vee_K} = P_i^{\vee_J} = P_i$ whenever $i \in Q_0 \smallsetminus (J \cup K)$.
\end{definition}

\begin{theorem}
\label{theorem:weakdual}
Let $A = (\Bbbk Q / I, d)$ be the DG gentle algebra associated to the standard dissection of a graded orbifold surface $\mathbf S$. If $A$ is locally proper, then for any subset $L \subset Q_0$ we have the following.
\begin{enumerate}
\item $A^{\vee_L}$ is a DG gentle algebra whose associated surface model is $\mathbf S^{\vee_L} = (S, \Sigma^{\vee_L}, \eta)$ obtained from $\mathbf S$ by removing a full boundary stop whenever the curve around the corresponding boundary component corresponds to a vertex in $L$ and by adding a full boundary stop to an unstopped boundary component if the curve connecting to this boundary component corresponds to a vertex in $L$ (see Fig.~\ref{fig:weak} and Fig.~\ref{fig:weak2}). Note that if $L \supset J \cup K$, then every full boundary stop is removed, and a full boundary stop is added for every unstopped boundary component.
\item We have the following duality
\[
(A^{\vee_L})^{\vee_L} \simeq A.
\]
\item Let $J \subset Q_0$ be the subset of all $j \in Q_0$ such that $\End (P_j)$ is not proper. Then $A^{\vee_J}$ is proper. In particular, if $A$ is proper, then $J = \varnothing$ and $A^{\vee_J} \simeq A$.
\item Let $K \subset Q_0$ be the subset of all $k \in Q_0$ such that $\End (P_k)$ is not homologically smooth. Then $A^{\vee_K}$ is homologically smooth. In particular, if $A$ is homologically smooth, then $K = \varnothing$ and $A^{\vee_K} \simeq A$.
\end{enumerate}
\end{theorem}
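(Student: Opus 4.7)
The overall strategy is to combine the explicit local computations of Lemmas~\ref{lemma:propersmooth}, \ref{lemma:EndveeJ} and \ref{lemma:EndveeK} with the geometric surface picture (Figs.~\ref{fig:weak} and~\ref{fig:weak2}), so that each statement becomes an instance of Proposition~\ref{proposition:dgdissection} and Remark~\ref{remark:propersmooth}. Since both operations $\vee_J$ and $\vee_K$ act only on indecomposable summands of $\add A$, it suffices to treat each vertex separately.

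For \itemi\ I would first check that the collection $\Gamma^{\vee_L}$ corresponds, on the level of curves, to the modified dissection $\Delta^{\vee_L}$ of the candidate surface $\mathbf S^{\vee_L}$ described in the statement. Concretely, for $i\in L\cap J$ the twisted complex $P_i^{\vee_J}$ of~\eqref{eq:PveeJ} is the cone of $q_i$ and represents the small loop enclosing the unstopped boundary $\partial^{\V}_i S$; adding a full boundary stop to this component turns this loop into a curve of the type considered in \cite[Section~5]{barmeierschrollwang}. Dually, for $i\in L\cap K$ the unbounded twisted complex $P_i^{\vee_K}$ of~\eqref{eq:veeK} is precisely the semi-free resolution of the simple module associated with the loop $P_k$, and geometrically realises a curve spiralling into the boundary once the full boundary stop has been removed. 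Putting these two modifications together gives a (formal) dissection of $\mathbf S^{\vee_L}$. Then I would compute $\End(\Gamma^{\vee_L})$ arrow-by-arrow: the internal endomorphisms are given by Lemmas~\ref{lemma:EndveeJ} and~\ref{lemma:EndveeK}, while the morphisms between distinct summands can be read off from boundary paths and orbifold paths in $\mathbf S^{\vee_L}$ using the classification of morphisms in \cite{opperplamondonschroll,barmeierschrollwang}. By Proposition~\ref{proposition:dgdissection}, the resulting DG quiver with relations coincides with the DG gentle algebra $\mathbf A_{\Delta^{\vee_L}}$ modelling $\mathbf S^{\vee_L}$.

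For \itemii, the main point is that the operations $\vee_J$ and $\vee_K$ interchange the three classes of Lemma~\ref{lemma:propersmooth}: for $i\in L\cap J$ the summand $P_i^{\vee_J}$ is of type~\itemi\ with generator $p_i$ of degree $1-|q_i|$, hence lies in the subset $K'\subset Q_0$ for the algebra $A^{\vee_L}$; and symmetrically for $i\in L\cap K$, the summand $P_i^{\vee_K}$ is of type~\itemii\ and lies in $J'$. Since $L$ is interpreted consistently inside $Q_0=Q_0^{\vee_L}$, applying the weak dual a second time invokes $\vee_{K'}$ where the first pass invoked $\vee_J$, and $\vee_{J'}$ where the first pass invoked $\vee_K$. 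On twisted complexes this means forming $(P_j^{\vee_J})^{\vee_{K'}}$ and $(P_k^{\vee_K})^{\vee_{J'}}$; the mapping cone / telescope description combined with Lemmas~\ref{lemma:EndveeJ}--\ref{lemma:EndveeK} shows that these are quasi-isomorphic to $P_j$ and $P_k$ respectively, up to contractible summands which vanish on passage to the homotopy category. Alternatively, one reads this off part~\itemi: the geometric operation adding then removing (or removing then adding) a full boundary stop on the same boundary component is the identity on $(S,\Sigma,\eta)$, so $(\mathbf S^{\vee_L})^{\vee_L}=\mathbf S$, and hence $(A^{\vee_L})^{\vee_L}\simeq \mathbf A_{\Delta^{\vee_L\vee_L}}=\mathbf A_\Delta \simeq A$.

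Parts \itemiii\ and \itemiv\ are then immediate consequences of \itemi\ and the characterisations recalled in Remark~\ref{remark:propersmooth}. Taking $L=J$, the modified surface $\mathbf S^{\vee_J}$ has a full boundary stop inserted on every previously unstopped boundary component, so every boundary component of $\mathbf S^{\vee_J}$ now carries at least one stop; by Remark~\ref{remark:propersmooth} the Fukaya category $\mathcal W(\mathbf S^{\vee_J})\simeq\tw(\add A^{\vee_J})^\natural$ is proper, whence so is $A^{\vee_J}$. Taking $L=K$, the operation removes every full boundary stop, producing a surface $\mathbf S^{\vee_K}$ whose stop data contains no full boundary stops; Remark~\ref{remark:propersmooth} then yields homological smoothness. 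The statements $A^{\vee_J}\simeq A$ when $A$ is already proper (resp.\ $A^{\vee_K}\simeq A$ when $A$ is already homologically smooth) hold because in these cases $J=\varnothing$ (resp.\ $K=\varnothing$) and each $P_i^{\vee_L}=P_i$ by definition.

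The main technical obstacle I expect is step~\itemi, specifically verifying that the morphism complexes between an unbounded twisted complex $P_k^{\vee_K}$ and a classical summand $P_i$ (or another $P_l^{\vee_K}$) reproduce precisely the DG quiver and differential predicted by the surface model $\mathbf S^{\vee_L}$; this requires a careful semi-free-resolution computation in the spirit of Lemma~\ref{lemma:EndveeK}, together with a convergence argument for the infinite direct sums, handled via the dual description in Section~\ref{subsection:unbounded}.
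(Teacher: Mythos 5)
Your proposal follows essentially the same route as the paper: establish the surface-model correspondence for $\Gamma^{\vee_L}$ by reading off the curves in Figs.~\ref{fig:weak} and~\ref{fig:weak2}, use local properness and the classification of morphisms (via Proposition~\ref{proposition:unboundedskewgentle} and Lemma~\ref{lemma:EndveeK}) to identify $\End(\Gamma^{\vee_L})$ with the DG gentle algebra of $\mathbf S^{\vee_L}$, and then deduce \itemii--\itemiv\ from the geometric characterisations in Remark~\ref{remark:propersmooth}. The paper's own proof is terser, concentrating on part \itemi\ and treating the rest as immediate; your more explicit treatment of \itemii--\itemiv, and in particular the observation that the geometric operation is an involution on $(S,\Sigma,\eta)$, is the correct way to fill in what the paper leaves implicit.
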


\begin{proof}
The relationship between $A$ and $A^{\vee_L}$ is illustrated in Fig.~\ref{fig:weak} for the case that $l \in L$ corresponds to a curve connecting to a boundary component without stop, and in Fig.~\ref{fig:weak2} for the case that $l \in L$ corresponds to a curve around a boundary component with a full boundary stop. The assumption that $A$ be locally proper implies that $P_l$ is not infinite-dimensional and concentrated in a single degree. This ensures that $A^{\vee_L}$ can be understood as the endomorphism algebra of $\Gamma^{\vee_L} = \bigoplus_{l \in L} P_l^{\vee_L} \oplus \bigoplus_{i \in Q_0 \smallsetminus L} P_i$ as follows from the illustrations in Fig.~\ref{fig:weak} and Fig.~\ref{fig:weak2} together with the following observation. Let $k \in L$ be a vertex of $Q_0$ such that $P_k$ corresponds to a curve around a boundary component with full boundary stop. Then $P_k^{\vee_L}$ is the unbounded twisted complex \eqref{eq:veeK}. It follows from the classification of morphisms in \cite{opperplamondonschroll} that the morphisms between the $P_i$'s and $P_i^{\vee_L}$'s (which by Proposition \ref{proposition:unboundedskewgentle} we may view as $\mathbb Z_2$-invariant twisted complexes for a graded gentle algebra) correspond to boundary morphisms as illustrated in Fig.~\ref{fig:weak} and Fig.~\ref{fig:weak2} with the exception that $P_k^{\vee_L}$ has an extra non-identity endomorphism as shown in Lemma \ref{lemma:EndveeK}.
\end{proof}

In terms of the surface, $A$ is locally proper if and only if $\mathbf S$ contains no boundary components with winding number $0$ and no stops. (By \cite[Corollary 9.4.5]{boothgoodbodyopper} this condition also implies that $\mathcal W (\mathbf S)$ is {\it reflexive}.) 

Weak duality is closely related to Koszul duality as the following theorem shows.

\begin{theorem}
\label{theorem:weakkoszul}
$A^{\vee_{Q_0}} = A^{\vee_{J \cup K}}$ is derived equivalent to the Koszul dual $A^!$ of $A$.
\end{theorem}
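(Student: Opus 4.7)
The plan is to identify each summand $P_i^{\vee_{Q_0}}$ with a semi-free resolution of the $i$-th simple right $A$-module $S_i$, so that the weak dual
\[
A^{\vee_{Q_0}} = \End\bigl(\textstyle\bigoplus_{i \in Q_0} P_i^{\vee_{Q_0}}\bigr)
\]
computes the DG Ext-algebra $\mathrm{R}\Hom_A(S, S)$ with $S = \bigoplus_i S_i$. A derived equivalence between $A^{\vee_{Q_0}}$ and the Koszul dual $A^!$ will then follow from (a generalized form of) Koszul duality, which identifies $A^!$ with $\mathrm{R}\Hom_A(S, S)$ up to quasi-isomorphism.

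First I would verify, case by case using Lemma \ref{lemma:propersmooth}, that each $P_i^{\vee_{Q_0}}$ represents $S_i$ in $\D(A)$. For $i \in Q_0 \smallsetminus (J \cup K)$ we have $\End(P_i) \simeq \Bbbk$, so $P_i = P_i^{\vee_{Q_0}}$ is already simple. For $j \in J$, the two-term twisted complex $P_j^{\vee_J}$ of \eqref{eq:PveeJ} totalizes to the Koszul-type resolution $\Bbbk[q_j] \toarg{q_j} \Bbbk[q_j]$ of the simple module over $\End(P_j) \simeq \Bbbk[q_j]$. For $k \in K$, the unbounded twisted complex $P_k^{\vee_K}$ of \eqref{eq:veeK} totalizes to the minimal semi-free resolution $\dotsb \to P_k \to P_k \to P_k$ of the simple over $\End(P_k) \simeq \Bbbk[p_k]/(p_k^2)$. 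Local properness of $A$ guarantees that these totalizations are well-defined at the DG level and that the resulting endomorphism DG algebra has finite-dimensional cohomology in each degree.

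Second, by the standard DG homological-algebra fact that the endomorphism algebra of a semi-free replacement computes derived endomorphisms, the preceding step yields a quasi-isomorphism $A^{\vee_{Q_0}} \simeq \mathrm{R}\Hom_A(S, S)$. It then remains to identify this with $A^!$. For the underlying graded gentle algebra (the case $d = 0$) this is the classical Koszul duality statement: $A^!$ is quasi-isomorphic to $\Ext^\bullet_A(S, S) = \mathrm{R}\Hom_A(S, S)$ as an A$_\infty$ algebra, and the functor $\mathrm{R}\Hom_A(S, -)$ induces the derived equivalence $\operatorname{thick}(S) \subset \D(A) \simeq \per(A^!)$. For nonzero differential on $A$, the same argument produces a DG algebra $\mathrm{R}\Hom_A(S, S)$ whose cohomology is the Koszul dual of the cohomology of $A$, and the derived equivalence with the (DG) bar--cobar Koszul dual $A^!$ is preserved.

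The main obstacle will be the final identification of $\mathrm{R}\Hom_A(S, S)$ with the Koszul dual $A^!$ in the DG gentle setting, since $A^!$ is then not simply the quadratic dual of the underlying quiver algebra but involves higher A$_\infty$ corrections coming from the differential $d$. Handling this rigorously will require a careful analysis of how $d$ interacts with the shifts in \eqref{eq:PveeJ} and \eqref{eq:veeK}, producing precisely the A$_\infty$ structure on the Koszul dual predicted by Koszul duality for (curved) DG algebras; in our locally proper setting this structure is manageable because the resolutions are either finite or have finite-dimensional pieces in each degree.
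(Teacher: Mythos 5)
Your argument breaks down at the very first step, and the breakdown is fatal. For $i \in Q_0 \smallsetminus (J \cup K)$ you claim that ``$\End(P_i) \simeq \Bbbk$, so $P_i = P_i^{\vee_{Q_0}}$ is already simple.'' This is false: $P_i = e_i A$ is the indecomposable \emph{projective} (right DG) module at $i$, and having $\End(P_i) = e_i A e_i = \Bbbk$ only means there are no non-scalar self-maps; it does not mean $P_i$ has length one. (Already for the path algebra of $1 \to 2$ the projective $P_1$ has $\End(P_1) = \Bbbk$ but is not simple.) Consequently the collection $\bigoplus_i P_i^{\vee_{Q_0}}$ is \emph{not} a replacement of $S = \bigoplus_i S_i$: for $i \neq i'$ both outside $J \cup K$ one has $\Hom(P_i^{\vee_{Q_0}}, P_{i'}^{\vee_{Q_0}}) = e_{i'} A e_i$ which is generally nonzero, whereas $\Hom_{\D(A)}(S_i, S_{i'}) = 0$. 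So $A^{\vee_{Q_0}}$ and $\mathrm R\Hom_A(S, S)$ have different morphism spaces and cannot be quasi-isomorphic, and the reduction to classical Koszul duality does not go through.

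There is a second, conceptual issue: you aim at a quasi-isomorphism $A^{\vee_{Q_0}} \simeq A^!$, but the statement only asserts \emph{derived equivalence}, and that is all that can be true. For instance, when $A$ is already proper and homologically smooth one has $J = K = \varnothing$, so $A^{\vee_{Q_0}} = A$, which is derived equivalent to but typically not quasi-isomorphic to $A^!$. The paper's proof takes an entirely different, geometric route: by Theorem \ref{theorem:weakdual} the surface model of $A^{\vee_{Q_0}}$ is obtained from $\mathbf S$ by swapping boundary components with no stop and those with a full boundary stop, and by \cite{opperplamondonschroll,liqiuzhou} (passing to a smooth double cover in the orbifold case) this is exactly the surface model of the Koszul dual. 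Thus $A^{\vee_{Q_0}}$ and $A^!$ correspond to two dissections of the same surface, whence they are derived equivalent by the general dissection results of \cite{haidenkatzarkovkontsevich,opperplamondonschroll,chokim,barmeierschrollwang}. If you wish to salvage an algebraic route, you would need to replace \emph{every} $P_i$, not just those indexed by $J \cup K$, by a resolution of $S_i$ — but that is a different construction from $A^{\vee_{Q_0}}$, and you would still only obtain derived equivalence with $A^{\vee_{Q_0}}$, not a quasi-isomorphism.
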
 

\begin{proof}
The relationship between the surface models of a graded gentle algebra and its Koszul dual algebra was studied for smooth surfaces in \cite{opperplamondonschroll,liqiuzhou}. Concretely, the underlying surface and line field coincide, but a full boundary stop is added for every boundary components without stops and every original full boundary stop is removed. Passing from $\mathbf S$ to its smooth double cover, if necessary, one finds that the surface models for $A^{\vee_{Q_0}}$ and $A^!$ coincide, thus both algebras correspond to different dissections of the same surface model and are therefore derived equivalent by the general results on dissections of surfaces \cite{haidenkatzarkovkontsevich,opperplamondonschroll,chokim,barmeierschrollwang}.
\end{proof}

\subsection{Partially wrapped Fukaya categories as categories of unbounded twisted complexes}
\label{subsection:unbounded}

Whereas $\mathcal W (\mathbf S)$ is first and foremost described as the ``twisted completion'' $\tw (\add A)$ of a set $\Gamma$ of generating Lagrangians (see Section~\ref{section:fukayacategories}), we now proceed to describe $\mathcal W (\mathbf S)$ as a category of unbounded twisted complexes over the weak dual $A^{\vee_J}$ of the algebra $A$ associated to the standard dissection, where $J$ corresponds to the set of objects $P_j$ of $\add A$ with nonproper endomorphism algebra as in Lemma \ref{lemma:propersmooth} {\itemii}. By Proposition \ref{proposition:weakgenerator}, these objects together with the $P_i$ for $i \in Q_0 \smallsetminus J$ are a weak generator of $\mathrm D \mathcal W (\mathbf S)$. We shall prove an equivalence
\[
\mathcal W (\mathbf S) \simeq \tw (\add A) \simeq \Tw^{-_J, \b} (\add A^{\vee_J})
\]
whenever $A$ is locally proper. Trading classical generation with weak generation thus comes at the price of enlarging the category from bounded to certain unbounded twisted complexes. However, as $A^{\vee_J}$ is proper (Theorem~\ref{theorem:weakdual}) this tradeoff ends up circumventing the curvature problem when describing the deformation theory of $\mathcal W (\mathbf S)$ in terms of a classical generator (see Section~\ref{subsection:solutioncurvature}), as we saw in the proof of Theorem \ref{theorem:hochschild} that proper DG gentle algebras do not admit any curved deformations.

Since $A^{\vee_J}$ is a (proper) DG gentle algebra whose differential is of the form \eqref{eq:d2}, i.e.\ the differential maps certain arrows to a parallel arrow and all other paths to zero. It follows from Theorem \ref{theorem:formality} that $A^{\vee_J}$ is formal, i.e.\ A$_\infty$-quasi-isomorphic to its cohomology algebra, which by \cite[Section~8]{barmeierschrollwang} is a graded skew-gentle algebra (see also \cite{barmeierwang4}). In other words, $\H^\bullet (A^{\vee_J})$ is a finite-dimensional graded skew-gentle algebra which is quasi-isomorphic to $A^{\vee_J}$.

For a DG category $\mathcal C$, we let $\Tw_{\pm} (\mathcal C)$ denote the category of two-sided unbounded twisted complexes over $\mathcal C$ as in \cite{annologvinenko}. For $\mathcal C = \add A^{\vee_J}$ we denote by $\Tw^{-, \b} (\add A^{\vee_J}) \subset \Tw_{\pm} (\add A^{\vee_J})$ the full subcategory generated by the semi-free resolutions of the simple $\H^\bullet (A^{\vee_J})$-modules, realized as unbounded twisted complexes over $\add A^{\vee_J}$. Here the superscript $^{-, \b}$ is to remind us that when $B$ is an arbitrary associative algebra concentrated in degree $0$, then $\add B \simeq \proj B$ and the category $\Tw^{-, \b} (\add B)$ is a DG enhancement of the homotopy category $\mathrm K^{-, \b} (\proj B)$ of finitely generated projective $B$-modules which is triangle equivalent to the bounded derived category $\mathrm D^\b (B)$.

Likewise for $A^{\vee_J}$, the category $\Tw^{-, \b} (\add A^{\vee_J})$ is a DG enhancement of the category $\mathrm D^{\b} (A^{\vee_J})$ of DG $A^{\vee_J}$-modules with finite-dimensional cohomology, since the latter is also generated by the simple $\H^\bullet (A^{\vee_J})$-modules. Note that the isomorphism classes of the simple $\H^\bullet (A^{\vee_J})$-modules are in bijection with the vertices of the quiver for $A^{\vee_J}$ and with the indecomposable objects in $\add A^{\vee_J}$. Let us denote the objects in $\add A^{\vee_J}$ by $P_i$ for $i \in Q_0$, reminiscent of the fact that when $A^{\vee_J}$ is concentrated in degree $0$, the objects $P_i$ are precisely the indecomposable projective $A^{\vee_J}$-modules.

It follows from the classification of objects in $\mathrm D^\b (B)$ for any proper graded gentle algebra $B$ \cite{opperplamondonschroll} that every object of $\Tw^{-, \b} (\add B)$ corresponds to a curve in the surface $\mathbf S$. Whereas curves in $\tw (\add A^{\vee_J})$ correspond to closed curves or to ``open'' curves whose endpoints lie on $\partial S \smallsetminus \Sigma$, the larger category $\Tw^{-, \b} (\add B)$ also contains curves with one or two infinite ends spiralling around one resp.\ two different boundary components with full boundary stops.

This classification can be generalized to skew-gentle algebras as follows.

\begin{proposition}
\label{proposition:unboundedskewgentle}
Let $\widetilde B$ be a proper graded gentle algebra with a $\mathbb Z_2$-action and let $B \simeq \widetilde B \ast \mathbb Z_2$ be the corresponding proper graded skew-gentle algebra. Then we have quasi-equivalences
\begin{align*}
(\tw (\add \widetilde B) / \mathbb Z_2)^\natural &\simeq \tw (\add B) \\
(\Tw^{-, \b} (\add \widetilde B) / \mathbb Z_2)^\natural &\simeq \Tw^{-, \b} (\add B)
\end{align*}
In particular, every possibly unbounded twisted complex of $\Tw^{-, \b} (\add B)$ may be viewed as a $\mathbb Z_2$-invariant twisted complex in $\Tw^{-, \b} (\add \widetilde B)$.
\end{proposition}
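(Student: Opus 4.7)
The plan is to reduce the second equivalence to the first by showing that the equivalence at the level of bounded twisted complexes extends functorially to unbounded twisted complexes with bounded cohomology. The first equivalence $(\tw(\add \widetilde B)/\mathbb Z_2)^\natural \simeq \tw(\add B)$ is an instance of the Morita equivalence between $B = \widetilde B \ast \mathbb Z_2$-modules and $\mathbb Z_2$-equivariant $\widetilde B$-modules, lifted to pretriangulated hulls, and is essentially proved in \cite{barmeierschrollwang}. At the level of indecomposable summands, induction along $\widetilde B \hookrightarrow B$ sends an indecomposable summand $P_i$ of $\widetilde B$ either to an indecomposable summand of $B$ (when the $\mathbb Z_2$-orbit of $i$ has size two) or, when $i$ is $\mathbb Z_2$-fixed, to a direct sum $P_i^+ \oplus P_i^-$; the latter splitting is exactly what the idempotent completion on the left provides.

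First I would extend this picture to unbounded twisted complexes. The $\mathbb Z_2$-action on $\widetilde B$ extends to $\Tw_\pm(\add \widetilde B)$ by permuting summands and leaving structure maps invariant, and this action preserves the class of semi-free resolutions of simple $\H^\bullet(\widetilde B)$-modules, hence preserves the subcategory $\Tw^{-, \b}(\add \widetilde B)$. Composing induction with passage to the orbit category and idempotent completion then produces a DG functor
\[
\Phi \colon (\Tw^{-, \b}(\add \widetilde B)/\mathbb Z_2)^\natural \longrightarrow \Tw^{-, \b}(\add B)
\]
which, on objects, matches semi-free resolutions of simple $\widetilde B$-modules with semi-free resolutions of the corresponding simple $B$-modules, the idempotent completion on the left supplying the two tagged/untagged simples on the right that arise from each $\mathbb Z_2$-fixed vertex.

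Second, to show that $\Phi$ is a quasi-equivalence the main step is the computation of morphism complexes via the induction--restriction adjunction: for $X, Y$ in $\Tw^{-, \b}(\add \widetilde B)$ one has
\[
\Hom_{\Tw^{-, \b}(\add B)}(\mathrm{Ind}(X), \mathrm{Ind}(Y)) \simeq \Hom_{\Tw^{-, \b}(\add \widetilde B)}(X, \mathrm{Res}\,\mathrm{Ind}(Y)) \simeq \bigoplus_{g \in \mathbb Z_2} \Hom_{\Tw^{-, \b}(\add \widetilde B)}(X, g \cdot Y),
\]
which recovers exactly the Hom complex in the orbit category. This adjunction argument is valid in the unbounded setting because $\widetilde B$ is proper and the twisted complexes involved have bounded cohomology, so each morphism complex is finite-dimensional in every degree. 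Essential surjectivity then follows since both sides are generated, under shifts and finite or infinite cones, by their respective sets of simple objects, which match under $\Phi$ by the first step.

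The main obstacle will be handling the $\mathbb Z_2$-fixed simples, where passage to the idempotent completion interacts with the one-sided infinite semi-free resolution. For such a fixed vertex $i$, the induced automorphism on the endomorphism algebra of the semi-free resolution $\widetilde R_i$ produces (since $\mathrm{char}\,\Bbbk = 0$) a pair of orthogonal idempotents $\tfrac{1 \pm g}{2}$, and I expect the key technical check to be that the corresponding idempotent splittings of $\widetilde R_i$ under $\Phi$ give precisely the two semi-free resolutions of the tagged and untagged simple $B$-modules built as in \eqref{eq:veeK}, strictly compatibly with the (possibly infinite) differentials. This is the extension of the bounded-case analysis of \cite{barmeierschrollwang} to the one-sided infinite setting and should follow from a direct inspection of the explicit resolutions with no further conceptual input.
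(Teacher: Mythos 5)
Your proof is correct and its core ingredients --- the induction/restriction adjunction for the skew group algebra $B = \widetilde B \ast \mathbb Z_2$, the Mackey-type orbit formula $\mathrm{Res}\,\mathrm{Ind}(Y) \simeq \bigoplus_{g} g\cdot Y$, matching of simple modules, and the idempotent splitting $\tfrac{1 \pm g}{2}$ at $\mathbb Z_2$-fixed vertices --- coincide with those in the paper's argument. The difference is one of route: the paper first establishes the equivalence $\mathcal D(B) \simeq (\mathcal D(\widetilde B)/\mathbb Z_2)^\natural$ for the full unbounded derived category by invoking Christ's $\infty$-categorical group-quotient results, then restricts by observing that the left adjoint $F\colon X \mapsto X \oplus (-1)\cdot X$ preserves $\mathrm D^\b$ and that $F(\mathrm D^\b(\widetilde B))$ split-generates because the simple $B$-modules lie in its idempotent closure; you instead build the DG comparison functor $\Phi$ directly on $\Tw^{-,\b}$ and verify the quasi-equivalence by hand via the adjunction isomorphism and generation by simples. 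The paper's route buys a trivial boundedness check (a finite direct sum obviously preserves $\mathrm D^\b$), while yours is somewhat more self-contained, avoiding the stable $\infty$-categorical machinery and reading the equivalence off at the DG level. One point worth making explicit in your essential-surjectivity step is that $\Phi$ preserves the one-sided infinite twisted complexes (equivalently, the relevant filtered colimits of bounded pieces), which holds because induction is additive and exact; without this remark, the passage from ``simples match'' to ``$\Phi$ is essentially surjective onto $\Tw^{-,\b}(\add B)$'' is not quite immediate.
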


\begin{proof}
The statement about bounded twisted complexes follows from the results of \cite{chokim,amiotplamondon,barmeierschrollwang}, thus we need to give a proof for the statement about the unbounded twisted complexes.

It follows from \cite[Lemma~2.11]{christ} that there is an adjunction between the derived $\infty$-category $\mathcal D (\widetilde B)$ of $\widetilde B$ (which is stable $\Bbbk$-linear) and its group quotient $\infty$-category $\mathcal D (\widetilde B)_{\mathbb Z_2}$, where the latter $\infty$-category can be identified with $\mathcal D (B) \simeq \mathcal D (\widetilde B \ast \mathbb Z_2) \simeq (\mathcal D (\widetilde B) / \mathbb Z_2)^\natural$ by \cite[Proposition~3.5]{christ}, where as usual we denote by $^\natural$ idempotent completion. This adjunction induces an adjunction
\[
\begin{tikzpicture}[x=1em, y=1em]
\node[left, minimum height=3ex] (L) at (0,0) {$\mathrm D (B)$\strut};
\node[right, minimum height=3ex] (R) at (4,0) {$\mathrm D (\widetilde B)$\strut};
\path[<-, line width=.5pt, transform canvas={yshift=.9ex}] (L) edge node[font=\scriptsize, above=-.3ex] {$F$} (R);
\path[->, line width=.5pt, transform canvas={yshift=-.7ex}] (L) edge node[font=\scriptsize, below=-.3ex] {$G$} (R);
\node[font=\tiny] at (2,.05) {$\perp$};
\end{tikzpicture}
\]
on the level of (triangulated) unbounded derived categories. Using this adjunction, one may check that the equivalence $(\mathrm D (\widetilde B) / \mathbb Z_2)^\natural \simeq \mathrm D (B)$ restricts to an equivalence $(\mathrm D^\b (\widetilde B) / \mathbb Z_2)^\natural \simeq \mathrm D^\b (B)$ on the level of {\it bounded} derived categories. The left adjoint functor $F$ maps any object $X$ to $X \oplus -1 \cdot X$, where $-1$ is the generator of $\mathbb Z_2 \simeq \mathbb Z^\times$. Up to the identification $(\mathrm D (\widetilde B) / \mathbb Z_2)^\natural \simeq \mathrm D (B)$ one thus sees that $F (\mathrm D^\b (\widetilde B)) \subset \mathrm D^\b (B)$ since the direct sum of two complexes with finite-dimensional cohomology has finite-dimensional cohomology. We claim that the image of this embedding split-generates $\mathrm D^\b (B)$. Since $\mathrm D^\b (B)$ is generated by the simple $B$-modules, it suffices to show that every simple $B$-module lies in the idempotent closure of $F (\mathrm D^\b (\widetilde B))$. Indeed, if $i$ is a vertex of $\widetilde Q$ that is not fixed under the $\mathbb Z_2$-action, the corresponding simple module $S_i = e_i B / \rad (e_i B)$ is mapped to a simple $B$-module at the corresponding vertex in the quiver $Q$ for $B$. If $i$ is a vertex of $\widetilde Q$ that is fixed under the $\mathbb Z_2$-action, the corresponding simple module $S_i$ is mapped to the direct sum of the two simple modules corresponding to the vertex $i$ in $Q$ endowed with an idempotent loop $\varepsilon$. Up to direct summands, we thus find that all simple $B$-modules lie in the image of $F$, proving the claim.

Lastly, since $\Tw^{-, \b} (\add \widetilde B) / \mathbb Z_2$ is a DG enhancement of $\mathrm D^\b (\widetilde B) / \mathbb Z_2$ and likewise $\Tw^{-, \b} (\add B)$ an enhancement of $\mathrm D^\b (B)$, we can represent any object in $\mathrm D^\b (B)$ by a $\mathbb Z_2$-invariant object in $\Tw^{-, \b} (\add \widetilde B)$ (together with a choice of idempotent).
\end{proof}

The description in Proposition \ref{proposition:unboundedskewgentle} shows that in the case $B = A^{\vee_J}$, every object of $\mathrm D^\b (A^{\vee_J})$ can be represented by an unbounded twisted complex containing infinitely many copies of at most two of the $P_i$'s.

We now define a subcategory of $\Tw^{-, \b} (\add A^{\vee_J})$ as follows.

\begin{definition}
\label{definition:Tw}
Let $A$ be the DG gentle algebra associated to the standard dissection of a graded orbifold surface $\mathbf S$. Let $A^{\vee_J}$ be the proper weak dual of $A$ as above. We denote by
\[
\Tw^{-_J, \b} (\add A^{\vee_J}) \subset \Tw^{-, \b} (\add A^{\vee_J})
\]
the full subcategory of twisted complexes $X$ such that the number of shifted copies of $P_i$ appearing in $X$ is finite whenever $i \not\in J$.
\end{definition}

Note that $P_j$ for $j \in J$ corresponds to the curves around the boundary components without stops in $\mathbf S$ as in Fig.~\ref{fig:weak}. Allowing twisted complexes with infinitely many copies of $P_j$ for $j \in J$, the category $\Tw^{-_J, \b} (\add A^{\vee_J})$ contains precisely those unbounded twisted complexes representing curves spiralling around these boundary components, omitting the curves that spiral around those boundary components that are already fully stopped in $\mathbf S$.

With these definitions, we can now show the following.

\begin{lemma}
$\Tw^{-_J, \b} (\add A^{\vee_J})$ is an idempotent-complete pretriangulated DG category.
\end{lemma}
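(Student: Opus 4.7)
The plan is to verify the three defining properties separately, exploiting the fact that the ambient category $\Tw^{-, \b} (\add A^{\vee_J})$ is already known to be a pretriangulated DG enhancement of the idempotent-complete triangulated category $\mathrm D^\b (A^{\vee_J})$ (which is idempotent complete since $A^{\vee_J}$ is proper by Theorem \ref{theorem:weakdual}, and $\mathrm D^\b$ of a proper DG algebra is Karoubi closed). It then suffices to check that the finiteness condition carving out $\Tw^{-_J, \b}$ is stable under shifts, mapping cones, and splitting of idempotents.

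First I would observe that shifting a twisted complex does not alter the multiset of indecomposables $P_i$ appearing in its underlying graded object, only their internal degrees, so closure under $[\pm 1]$ is immediate. For closure under cones, given a closed degree-zero morphism $f \colon X \to Y$ between objects of $\Tw^{-_J, \b}$, the mapping cone has underlying graded module $X[1] \oplus Y$, and for each $i \notin J$ the multiplicity of $P_i$ in the cone is the sum of the multiplicities in $X$ and $Y$ and hence finite. Combined with stability under finite direct sums (which is equally immediate), this gives pretriangulatedness.

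For idempotent completeness, let $e \in \End (X)$ be a homotopy idempotent on some $X \in \Tw^{-_J, \b}$. Viewing $e$ inside the larger category $\Tw^{-, \b} (\add A^{\vee_J})$, there is a splitting $X \simeq X_e \oplus X_{1-e}$ with both summands in $\Tw^{-, \b} (\add A^{\vee_J})$. The key point is to show that $X_e$ can be represented by a twisted complex whose underlying graded module is, termwise, a summand of the one underlying $X$, so that the multiplicity of each $P_i$ in $X_e$ is bounded by that in $X$ and hence finite for $i \notin J$. Here I would invoke Proposition \ref{proposition:unboundedskewgentle} to transfer the question to the $\mathbb Z_2$-cover, where the classification of indecomposable objects in \cite{opperplamondonschroll} identifies indecomposable summands with curves in the surface; the indecomposables corresponding to $P_j$ for $j \in J$ are precisely those curves wrapping around a distinguished unstopped boundary component, and the summand $X_e$ decomposes into indecomposables that already appear in $X$.

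The main obstacle I foresee is the last step: homotopy idempotents in a DG category need not split in a way that visibly preserves the underlying graded object, and one must argue that after passing to an appropriate quasi-isomorphic model the bookkeeping on multiplicities of the $P_i$ with $i \notin J$ still works out. The cleanest way is to combine the classification via Proposition \ref{proposition:unboundedskewgentle} with the fact that any indecomposable object in $\mathrm D^\b (A^{\vee_J})$ has a canonical minimal representative as an unbounded twisted complex, so that $X_e$ may be written as a direct sum of such minimal representatives whose $P_i$-multiplicities are controlled by those of $X$.
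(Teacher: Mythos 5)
Your proof follows the same overall strategy as the paper's, which simply observes that $\Tw^{-_J, \b}$ is closed under shifts and cones (by the additivity of the multiplicities of the $P_i$, exactly as you say) and that it inherits idempotent completeness from the ambient $\Tw^{-, \b} (\add A^{\vee_J})$. However, the paper's treatment of the idempotent-completeness step is a one-line assertion, whereas you correctly flag that a full pretriangulated subcategory of an idempotent-complete DG category is not automatically itself idempotent complete, and that one must check the direct summand $X_e$ can be re-represented within $\Tw^{-_J, \b}$. Your resolution — passing to minimal representatives of indecomposables via Proposition \ref{proposition:unboundedskewgentle} and the classification of \cite{opperplamondonschroll}, and bounding the $P_i$-multiplicities ($i \notin J$) of summands by those of $X$ — is precisely the argument the paper tacitly relies on (the same classification of indecomposables appears explicitly two statements later in the proof of Theorem \ref{theorem:dualequivalence}), so your version supplies a justification the paper omits rather than replacing it with a different one.
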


\begin{proof}
By the definition of $\Tw^{-_J, \b} (\add A^{\vee_J})$, it is closed under shifts and mapping cones. Since $\Tw^{-, \b} (\add A^{\vee_J})$ is idempotent-complete, so is $\Tw^{-_J, \b} (\add A^{\vee_J})$.
\end{proof}

\begin{theorem}
\label{theorem:dualequivalence}
Let $A = (\Bbbk Q / I, d)$ be the DG gentle algebra obtained from the standard dissection of $\mathbf S$ and let $J \subset Q_0$ be the set of all $j \in Q_0$ such that $\End (P_j)$ is nonproper. If $A$ is locally proper, then we have the following equivalence
\[
\mathcal W (\mathbf S) \simeq \tw (\add A) \simeq \Tw^{-_J, \b} (\add A^{\vee_J}).
\]
\end{theorem}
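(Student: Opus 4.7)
The first equivalence is the content of Proposition~\ref{proposition:generator}; the substantive claim is the second equivalence $\tw (\add A) \simeq \Tw^{-_J, \b} (\add A^{\vee_J})$. The plan is to exhibit a DG functor
\[
\Psi \colon \Tw^{-_J, \b} (\add A^{\vee_J}) \longrightarrow \tw (\add A)
\]
determined on the indecomposable objects of $\add A^{\vee_J}$ by $P_i \mapsto P_i$ for $i \in Q_0 \smallsetminus J$ and $P_j \mapsto P_j^{\vee_J}$ for $j \in J$, where $P_j^{\vee_J}$ is the two-step twisted complex of \eqref{eq:PveeJ}. On morphisms, $\Psi$ is the identification $A^{\vee_J} = \End_{\tw (\add A)} (\Gamma^{\vee_J})$ of Definition~\ref{definition:weakdual}, so its action on the $\Hom$ complexes of generators is tautologically a quasi-isomorphism, and this extends to twisted complexes by the standard universal property of the DG pretriangulated completion.

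To upgrade full faithfulness from the generators to all of $\Tw^{-_J, \b} (\add A^{\vee_J})$, I would argue by induction on the bounded twisted-complex structure of one of the two arguments, using that the boundedness-above condition of the source, combined with the properness of $A^{\vee_J}$ (Theorem~\ref{theorem:weakdual}), ensures that the relevant $\Hom$ complexes are computed by only finitely many contributions in each fixed cohomological degree. For essential surjectivity it then suffices to hit every indecomposable $P_i \in \tw (\add A)$ for $i \in Q_0$ up to quasi-isomorphism, since these classically generate $\tw (\add A)$. The case $i \notin J$ is immediate, while for $j \in J$ the triangle
\[
P_j [-\lvert q_j \rvert] \toarg{\; q_j \;} P_j \longrightarrow P_j^{\vee_J} \longrightarrow P_j [1 - \lvert q_j \rvert]
\]
in $\tw (\add A)$ induced by \eqref{eq:PveeJ} can be iterated: the cone of the next $q_j$-multiplication is again a shift of $P_j^{\vee_J}$, and telescoping yields an unbounded twisted complex $T_j \in \Tw^{-_J, \b} (\add A^{\vee_J})$ whose components are shifted copies of the indecomposable $P_j$ of $\add A^{\vee_J}$ and whose differential is built from the degree $\lvert p_j \rvert = 1 - \lvert q_j \rvert$ generator $p_j \in \End (P_j^{\vee_J})$ of Lemma~\ref{lemma:EndveeJ}. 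Up to shift, $T_j$ is the semi-free resolution of the simple $A^{\vee_J}$-module at $j$ (compare the cylinder computation of Section~\ref{section:curved}), and $\Psi (T_j)$ is quasi-isomorphic to $P_j$.

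The hard part will be the essential-surjectivity step, specifically verifying that the image $\Psi (T_j)$ -- a priori an infinite one-sided twisted complex over $\add A$ built out of copies of the two-term complex $P_j^{\vee_J}$ -- actually represents the same object as the bounded $P_j \in \tw (\add A)$. This rests crucially on local properness of $A$: Lemma~\ref{lemma:propersmooth} ensures $\lvert q_j \rvert \neq 0$ for $j \in J$, so the shifts in $T_j$ strictly diverge and only finitely many summands contribute to any given cohomological degree, which is precisely the numerical input guaranteeing convergence of the telescope. Once this convergence is controlled, full faithfulness established above promotes $\Psi$ to a quasi-equivalence of idempotent-complete pretriangulated DG categories, proving the theorem.
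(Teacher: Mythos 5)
The central difficulty with your argument is that the DG functor $\Psi \colon \Tw^{-_J, \b}(\add A^{\vee_J}) \to \tw(\add A)$ is not actually well-defined as stated. The source contains genuinely unbounded twisted complexes (the $T_j$ built from infinitely many shifted copies of $P_j$, and more generally any object with one or two infinite tails in the sense of the paper's classification), and applying $\Psi$ componentwise to such an object produces an unbounded one-sided twisted complex over $\add A$ — which is not an object of the target $\tw(\add A)$. The ``standard universal property of the DG pretriangulated completion'' only gives you the extension $\tw(\add A^{\vee_J}) \to \tw(\add A)$; it does not tell you how to extend across the inclusion $\tw(\add A^{\vee_J}) \subset \Tw^{-_J, \b}(\add A^{\vee_J})$, which is precisely the content of the theorem. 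You correctly identify the telescope collapse $\Psi(T_j) \simeq P_j$ as something that needs checking, but you miscast it as an essential-surjectivity issue when it is in fact a prerequisite for $\Psi$ to make sense at all, and you would need to do this not just for the $T_j$ but for \emph{every} unbounded object of the source (and then also verify full faithfulness on such objects — your proposed ``induction on the bounded twisted-complex structure'' has no base to terminate on when the complex is genuinely infinite, so at best it needs to be supplemented by a limit/continuity argument that you do not supply).

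The paper sidesteps all of this by running the argument in the opposite logical direction. It first shows (from Proposition~\ref{proposition:unboundedskewgentle} and the gentle-algebra classification) that every object of $\Tw^{-_J, \b}(\add A^{\vee_J})$ is a \emph{finite} iterated mapping cone of the $P_i$ ($i \in Q_0$) and the semi-infinite telescopes $T_j$ ($j \in J$), so that $\Gamma^{\vee_J} = \bigoplus_{j \in J} T_j \oplus \bigoplus_{i \notin J} P_i$ is a classical generator; it then verifies (this is where local properness and $\lvert q_j \rvert \neq 0$ enter, exactly as you observe) that $\End(\Gamma^{\vee_J}) \simeq A$; and since $\Tw^{-_J, \b}(\add A^{\vee_J})$ is already known to be an idempotent-complete pretriangulated DG category, classical generation by an object with endomorphism algebra $A$ pins it down as $\tw(\add A)$ with no need to exhibit a functor on unbounded complexes directly. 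Your underlying intuitions — the identification $\End(\Gamma^{\vee_J}) \simeq A$ coming from $(A^{\vee_J})^{\vee_J} \simeq A$, the telescope realizing $P_j$, and the role of $\lvert q_j \rvert \neq 0$ — are all correct and appear in the paper; what is missing is the classification step that reduces every unbounded object to a finite cone construction, which is what replaces the ill-posed functor extension.
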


\begin{proof}
The classification of objects in $\Tw^{-, \b} (\add A^{\vee_J})$ shows that each indecomposable object has at most two periodic ends which for $A^{\vee_J}$ are $2$-periodic (see Proposition \ref{proposition:unboundedskewgentle}). Each element in $\Tw^{-, \b} (\add A^{\vee_J})$ is of one of the three types which may be represented diagrammatically as follows
\begin{equation*}
\begin{tikzpicture}[x=4em, y=.75em]
\draw[line cap=round, line width=.5pt, dash pattern=on 3pt off 1pt, rounded corners=.2em] (-.1,1) rectangle (1.5,-1);
\node at (.755,0) {$X$};
\node[left] at (-5,0) {\it first type\strut};
\begin{scope}[yshift=-2.75em]
\node[left] at (-5,0) {\it second type\strut};
\node (M0) at (-4,0) {$\dotsb$};
\node (M1) at (-3,0) {$P_l$};
\node (M2) at (-2,0) {$P_l$};
\node (M3) at (-1,0) {$P_l$};
\node (M4) at (0,0) {\phantom{$P_l$}};
\path[->] (M0) edge (M1);
\path[->] (M1) edge node[above=-.3ex, font=\scriptsize] {$p_l$} (M2);
\path[->] (M2) edge node[above=-.3ex, font=\scriptsize] {$p_l$} (M3);
\path[->] (M3) edge (M4);
\draw[line cap=round, line width=.5pt, dash pattern=on 3pt off 1pt, rounded corners=.2em] (-.1,1) rectangle (1.5,-1);
\node at (.755,0) {$Y$};
\end{scope}
\begin{scope}[yshift=-5.95em]
\node[left] at (-5,0) {\it third type\strut};
\node (T0) at (-4,1) {$\dotsb$};
\node (T1) at (-3,1) {$P_l$};
\node (T2) at (-2,1) {$P_l$};
\node (T3) at (-1,1) {$P_l$};
\node (T4) at (0,1) {\phantom{$P_l$}};
\node (B0) at (-4,-1) {$\dotsb$};
\node (B1) at (-3,-1) {$P_m$};
\node (B2) at (-2,-1) {$P_m$};
\node (B3) at (-1,-1) {$P_m$};
\node (B4) at (0,-1) {\phantom{$P_l$}};
\path[->] (T0) edge (T1);
\path[->] (T1) edge node[above=-.3ex, font=\scriptsize] {$p_l$} (T2);
\path[->] (T2) edge node[above=-.3ex, font=\scriptsize] {$p_l$} (T3);
\path[->] (T3) edge (T4);
\path[->] (B0) edge (B1);
\path[->] (B1) edge node[above=-.3ex, font=\scriptsize] {$p_m$} (B2);
\path[->] (B2) edge node[above=-.3ex, font=\scriptsize] {$p_m$} (B3);
\path[->] (B3) edge (B4);
\draw[line cap=round, line width=.5pt, dash pattern=on 3pt off 1pt, rounded corners=.2em] (-.1,1.6) rectangle (1.5,-1.6);
\node at (.755,0) {$Z$};
\end{scope}
\end{tikzpicture}
\end{equation*}
Here the dashed boxes represent bounded twisted complexes $X, Y, Z \in \tw (\add A^{\vee_J})$ and for simplicity we have suppressed the shifts from the notation. The last two types of objects belong to $\Tw^{-_J,\b} (\add A^{\vee_J})$ if and only if $l$ resp.\ $l, m$ belong to $J$.

Now note that the infinite ends can be represented by $P_j^{\vee_J}$ for $j \in J$. In particular, every object is a (finite) iterated mapping cone of the $P_i$'s for all $i$ together with the $P_j^{\vee_J}$'s for $j \in J$. This shows that $\Tw^{-_J, \b} (\add A^{\vee_J})$ is generated by the $P_i$'s together with the $P_j^{\vee_J}$'s for $j \in J$, i.e.\ $\bigoplus_{j \in J} P_j^{\vee_J} \oplus \bigoplus_{i \in Q_0} P_i$ is a generator. But for each $j \in J$, the object $P_j$ is a mapping cone of a map $P_j^{\vee_J} \to P_j^{\vee_J} [1]$, so the $P_j$'s are superfluous and $\Gamma^{\vee_J} := \bigoplus_{j \in J} P_j^{\vee_J} \oplus \bigoplus_{i \in Q_0 \smallsetminus J} P_i$ also generates. We now have that $\End (\Gamma^{\vee_J}) \simeq A$, so that $\Tw^{-_J, \b} (\add A^{\vee_J}) \simeq \tw (\add A)$.
\end{proof}

\subsection{Deformation equivalence and solution to the curvature problem}
\label{subsection:solutioncurvature}

\begin{proposition}
\label{proposition:Binfinityalgebra}
Let $A^{\vee_J}$ be the (proper) weak dual of a locally proper DG algebra $A$. Then $A$ and $A^{\vee_J}$ have isomorphic Hochschild cochain complexes in the homotopy category of B$_\infty$ algebras.
\end{proposition}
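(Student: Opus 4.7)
The plan is to chain together a sequence of B$_\infty$-quasi-isomorphisms connecting $\mathrm C^\bullet(A)$ to $\mathrm C^\bullet(A^{\vee_J})$, using the DG equivalence of Theorem \ref{theorem:dualequivalence} as the bridge. First I would invoke Theorem \ref{theorem:dualequivalence} to obtain the quasi-equivalence of pretriangulated DG categories
$$\tw(\add A) \simeq \Tw^{-_J,\b}(\add A^{\vee_J}).$$
Since the Hochschild cochain complex is a derived Morita invariant in the homotopy category of B$_\infty$ algebras (as recalled in Section \ref{subsection:keller}), this equivalence induces an isomorphism
$$\mathrm C^\bullet(\tw(\add A)) \simeq \mathrm C^\bullet(\Tw^{-_J,\b}(\add A^{\vee_J}))$$
in $\mathrm{Ho}(\mathsf{B}_\infty)$.

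Next, I would invoke Keller's theorem \cite{keller03} on the left-hand side: since $A$ is the DG endomorphism algebra of a classical generator of $\tw(\add A)$, the inclusion $\add A \hookrightarrow \tw(\add A)$ induces a B$_\infty$-quasi-isomorphism $\mathrm C^\bullet(A) \simeq \mathrm C^\bullet(\tw(\add A))$. This step is immediate from the version of Keller's theorem already cited in \eqref{equation:isomorphismhh} of Section \ref{subsection:keller}.

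The hard part will be the analogous step on the right-hand side, namely establishing a B$_\infty$-quasi-isomorphism
$$\mathrm C^\bullet(A^{\vee_J}) \simeq \mathrm C^\bullet(\Tw^{-_J,\b}(\add A^{\vee_J})).$$
The subtlety is that $A^{\vee_J}$ is only a \emph{weak} generator of $\Tw^{-_J,\b}(\add A^{\vee_J})$ (Proposition \ref{proposition:weakgenerator}), so Keller's theorem does not apply directly. My plan here is to exploit the properness of $A^{\vee_J}$ (part \textit{(iii)} of Theorem \ref{theorem:weakdual}), which makes $A^{\vee_J}$ a compact object in the ambient unbounded derived category. Together with the explicit description from Proposition \ref{proposition:unboundedskewgentle} and the proof of Theorem \ref{theorem:dualequivalence}---which shows that every object of $\Tw^{-_J,\b}(\add A^{\vee_J})$ is a finite iterated mapping cone involving the $P_i$ and the $P_j^{\vee_J}$---one sees that $A^{\vee_J}$ is a compact weak generator, hence a classical generator of the subcategory of compact objects. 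The inclusion $\tw(\add A^{\vee_J}) \hookrightarrow \Tw^{-_J,\b}(\add A^{\vee_J})$ is then a Morita-type embedding: the additional unbounded twisted complexes are obtained as semi-free resolutions of finitely many simple DG $A^{\vee_J}$-modules and do not alter the Hochschild cochain complex in $\mathrm{Ho}(\mathsf{B}_\infty)$.

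Chaining the three quasi-isomorphisms
$$\mathrm C^\bullet(A) \xrightarrow{\sim} \mathrm C^\bullet(\tw(\add A)) \xrightarrow{\sim} \mathrm C^\bullet(\Tw^{-_J,\b}(\add A^{\vee_J})) \xleftarrow{\sim} \mathrm C^\bullet(A^{\vee_J})$$
then yields the desired isomorphism in $\mathrm{Ho}(\mathsf{B}_\infty)$. The main technical obstacle, as indicated, is justifying the rightmost quasi-isomorphism, where the interplay between weak generation and the properness of $A^{\vee_J}$ is essential; without properness, $A^{\vee_J}$ would fail to be compact and the Hochschild complex of the weak dual need not capture that of the unbounded twisted completion.
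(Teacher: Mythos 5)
Your overall strategy --- chaining
\[
\mathrm C^\bullet(A) \to \mathrm C^\bullet(\tw(\add A)) \to \mathrm C^\bullet(\Tw^{-_J,\b}(\add A^{\vee_J})) \leftarrow \mathrm C^\bullet(A^{\vee_J})
\]
using the quasi-equivalence of Theorem \ref{theorem:dualequivalence} as the bridge --- is the same as the paper's, and you correctly identify the rightmost arrow as the crux. However, your resolution of that step has a genuine gap. You argue that the inclusion $\tw(\add A^{\vee_J}) \hookrightarrow \Tw^{-_J,\b}(\add A^{\vee_J})$ is a ``Morita-type embedding'' whose extra objects ``do not alter the Hochschild cochain complex,'' but this is precisely the claim that needs to be established, not a justification for it. In particular, the two categories are \emph{not} derived Morita equivalent (their homotopy categories are $\per(A^{\vee_J})$ and $\mathrm D^\b(A^{\vee_J})$, which differ since $A^{\vee_J}$ is not homologically smooth in general), so Morita invariance of the Hochschild complex does not apply. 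Your observation that ``$A^{\vee_J}$ is a compact weak generator, hence a classical generator of the subcategory of compact objects'' is also a tautology that does not advance the argument: the compact objects form $\per(A^{\vee_J})\simeq\tw(\add A^{\vee_J})$, so this says nothing about the larger category $\Tw^{-_J,\b}(\add A^{\vee_J})$. The appeal to properness and compactness of $A^{\vee_J}$ is a red herring here; it is needed for $\tw(\add A^{\vee_J})$ to sit inside $\Tw^{-_J,\b}(\add A^{\vee_J})$ at all, but it does not by itself produce the B$_\infty$-quasi-isomorphism.

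The paper closes this gap by a uniform and more conceptual argument. It embeds both rows into module categories,
\[
A^{\vee_J} \subset \Tw^{-_J,\b}(\add A^{\vee_J}) \subset \mathrm{DGMod}(A^{\vee_J}), \qquad
A \subset \tw(\add A) \subset \mathrm{DGMod}(A),
\]
and invokes \cite[Theorem~4.4.1]{lowenvandenbergh1}, which shows that the restriction maps on Hochschild complexes along all these full DG inclusions are B$_\infty$-quasi-isomorphisms. This treats both sides symmetrically, eliminates the need for a case-by-case argument on the right, and in particular handles the fact that $A^{\vee_J}$ is only a weak (not classical) generator of $\Tw^{-_J,\b}(\add A^{\vee_J})$. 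Keller's theorem is only used, as you use it, for the invariance of $\mathrm C^\bullet(-)$ under the quasi-equivalence $\tw(\add A) \simeq \Tw^{-_J,\b}(\add A^{\vee_J})$. To repair your proof, replace the ``Morita-type embedding'' heuristic with the Lowen--Van den Bergh theorem applied to the chain $A^{\vee_J} \subset \Tw^{-_J,\b}(\add A^{\vee_J}) \subset \mathrm{DGMod}(A^{\vee_J})$; properness and compactness can be dropped from the argument entirely.
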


\begin{proof}
Viewing $A$ and $A^{\vee_J}$ as DG categories as usual, we have the following inclusions of DG categories
\begin{align*}
A          &\subset \eqmakebox[Twtw]{$\tw (\add A)$}                    \subset \mathrm{DGMod} (A) \\
A^{\vee_J} &\subset \eqmakebox[Twtw]{$\Tw^{-_J, \b} (\add A^{\vee_J})$} \subset \mathrm{DGMod} (A^{\vee_J})
\end{align*}
where $\mathrm{DGMod} (A)$ is the DG category formed by all (right) DG $A$-modules. In both cases, the first inclusion is given by viewing an object as a one-term twisted complex and the second is induced by the Yoneda embedding. Then by \cite[Theorem 4.4.1]{lowenvandenbergh1} it follows that the induced restriction maps between Hochschild complexes
\begin{equation}
\label{eq:qis}
\begin{tikzpicture}[baseline=-2.6pt]
\matrix (m) [matrix of math nodes, row sep=.75em, text height=1.5ex, column sep=1.5em, text depth=0.25ex, ampersand replacement=\&, inner sep=3pt]
{
\mathrm C^\bullet (\mathrm{DGMod} (A))          \& \mathrm C^\bullet (\tw (\add A))               \& \mathrm C^\bullet (A) \\
\mathrm C^\bullet (\mathrm{DGMod} (A^{\vee_J})) \& \mathrm C^\bullet (\Tw^{-_J, \b} (A^{\vee_J})) \& \mathrm C^\bullet (A^{\vee_J}) \\
};
\path[->,line width=.4pt]
(m-1-1) edge (m-1-2)
(m-1-2) edge (m-1-3)
(m-2-1) edge (m-2-2)
(m-2-2) edge (m-2-3)
;
\end{tikzpicture}
\end{equation}
are all quasi-isomorphisms of B$_\infty$ algebras. (Here we have written $\mathrm C^\bullet (\mathcal C)$ for $\mathrm C^\bullet (\mathcal C, \mathcal C)$ for brevity.) By \cite{keller03} the quasi-equivalence $\tw (\add A) \simeq \Tw^{-_J, \b} (\add A^{\vee_J})$ in Theorem \ref{theorem:dualequivalence} induces a quasi-isomorphism $\mathrm C^\bullet (\tw (\add A)) \to \mathrm C^\bullet (\Tw^{-_J, \b} (\add A^{\vee_J}))$ so that also the rows of \eqref{eq:qis} are isomorphic in the homotopy category of B$_\infty$ algebras. 
\end{proof}

Collecting the various results obtained above, we can now show that weak duality gives a solution to the curvature problem for $\mathbf S$. 

\begin{theorem}
\label{theorem:solution}
Let $\mathbf S$ be any graded orbifold surface with stops in the boundary. If $\mathcal W (\mathbf S)$ is locally proper, then there exists a proper DG gentle algebra $B$ such that
\begin{itemize}
\item $\mathcal W (\mathbf S) \simeq \Tw^{-_J, \b} (\add B)$ as pretriangulated A$_\infty$ categories
\item $\Tw^{-_J, \b} (\add B)$ and $B$ have equivalent deformation theories
\item $B$ admits a semi-universal family $\widetilde B = \{ B_\lambda \}_{\lambda \in \mathbb A^d}$ of (uncurved) DG deformations.
\end{itemize}
In particular, the full deformation theory $\mathcal W (\mathbf S)$ can be described via (uncurved) DG deformations of $B$.
\end{theorem}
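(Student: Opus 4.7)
The plan is to take $B = A^{\vee_J}$, where $A$ is the DG gentle algebra associated to the standard dissection of $\mathbf S$ via Proposition~\ref{proposition:generator}, and $J \subset Q_0$ is the subset of vertices $j$ such that $\End(P_j)$ is non-proper, as in Lemma~\ref{lemma:propersmooth}. Since $\mathcal W(\mathbf S)$ is locally proper, so is $A$, and hence by Theorem~\ref{theorem:weakdual} the weak dual $B = A^{\vee_J}$ is a proper DG gentle algebra. The first bullet then follows directly from Theorem~\ref{theorem:dualequivalence}, which supplies an equivalence $\mathcal W(\mathbf S) \simeq \tw(\add A) \simeq \Tw^{-_J,\b}(\add B)$. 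The second bullet is handled by Proposition~\ref{proposition:Binfinityalgebra}: the Hochschild cochain complexes of $B$ and of $\Tw^{-_J,\b}(\add B)$ are isomorphic in the homotopy category of B$_\infty$ algebras, so they control equivalent deformation theories (including Gerstenhaber brackets and all higher obstruction maps).

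For the third bullet, the key point is that $B$, being proper, admits no curved deformations: an inspection of the classes enumerated in the proof of Theorem~\ref{theorem:hochschild} shows that the $2$-cocycles of $B$ come only from types~(\textit{i}) and from the idempotent-valued cocycles on loops at vertices whose endomorphism algebra is $\Bbbk[p]/(p^2)$; none of these has Hochschild degree $0$. Consequently every class in $\HH^2(B,B)$ lifts to a genuine DG deformation (adjusting $\mu^1$ or $\mu^2$, never introducing a curvature $\mu^0$). Let $\{\phi_1, \dotsc, \phi_d\}$ be a basis of $\HH^2(B,B)$ as described in Theorem~\ref{theorem:hochschild} (transported to $B$ via the equivalence of deformation theories). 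I will define the family
\[
\widetilde B = \bigl( B \otimes \Bbbk[x_1, \dotsc, x_d],\; \widetilde \mu^1,\; \widetilde \mu^2 \bigr)
\]
by setting $\widetilde \mu^i = \mu^i_B + \sum_{j=1}^d x_j \, \phi_j^{(i)}$, where $\phi_j^{(i)}$ is the component of $\phi_j$ that modifies $\mu^i$. The Kodaira--Spencer map at $\lambda = 0$ then sends $\partial_{x_j}$ to $[\phi_j]$ and is therefore an isomorphism, so $\widetilde B$ is automatically semi-universal at the central fiber provided one verifies that $\widetilde \mu^1, \widetilde \mu^2$ satisfy the A$_\infty$ equations \eqref{eq:ainfinityequations} over $R = \Bbbk[x_1,\dotsc,x_d]$.

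The main obstacle, therefore, is to show that the naive linear family above is actually closed under the A$_\infty$ relations, i.e.\ that no polynomial corrections in higher powers of the $x_j$ are needed. Equivalently, the classical obstruction map
\[
\mathrm{obs} \colon \Sym^2 \HH^2(B,B) \longrightarrow \HH^3(B,B),\quad [\phi] \otimes [\psi] \longmapsto [\phi \circ \psi + \psi \circ \phi]
\]
must vanish on the chosen cocycles (and similarly for higher Massey brackets). Concretely, I will verify this by direct calculation on the Bardzell complex of Section~\ref{subsection:bardzellresolution} applied to $B$: for each pair $(\phi_j, \phi_k)$ from the explicit bases produced in the proof of Theorem~\ref{theorem:hochschild}, the Gerstenhaber bracket is either identically zero (because the supports of $\phi_j$ and $\phi_k$ are disjoint subquivers corresponding to distinct boundary components) or a coboundary which can be killed by rescaling generators, reflecting the fact that different boundary components can be deformed independently. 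This unobstructedness is what promotes the first-order family to an honest algebraic family over $\mathbb A^d$ and delivers the semi-universal property; versality at non-central points then follows from the observation, to be exploited in Theorem~\ref{theorem:deformationwrapped}, that the generic fibers $B_\lambda$ remain DG gentle with the same or smaller Hochschild cohomology.
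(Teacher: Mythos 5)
Your proposal takes the same route as the paper: set $B = A^{\vee_J}$, invoke Theorem~\ref{theorem:dualequivalence} for the first bullet and Proposition~\ref{proposition:Binfinityalgebra} for the second, and then define the family $\widetilde B$ by perturbing $\mu^1$ and $\mu^2$ linearly by the basis of $2$-cocycles from Theorem~\ref{theorem:hochschild}. The paper does exactly this, then simply records that the modified structure maps still define a DG gentle $R$-algebra --- the cocycles are supported on pairwise disjoint two- or three-vertex subquivers attached to different boundary components, so no cross-terms arise, and within each subquiver the DG gentle axioms are checked by inspection.

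There is one imprecision in the final step that you should tighten. You allow that the Gerstenhaber bracket of a pair of basis cocycles might not vanish identically, but only be \emph{a coboundary} which ``can be killed by rescaling.'' If that were the situation, the linear family $\widetilde\mu^i = \mu^i_B + \sum_j x_j \phi_j^{(i)}$ would \emph{not} satisfy the A$_\infty$ equations over $R$: a nonzero coboundary obstruction forces a quadratic correction term $\psi_{jk}\, x_j x_k$, contradicting your claim that ``no polynomial corrections in higher powers of the $x_j$ are needed.'' What actually happens for the explicit cocycles at hand is that all the relevant circle products vanish on the nose. The $\phi^\III_j$'s satisfy $\phi^\III_j(q^\III_j) = 0$ so the self-composition vanishes; the degree-$2$ cocycles $\phi^\IV_i$ take values in idempotents, so the obstruction on the overlap ${p^\IV_i}^3$ is $e_i\, p^\IV_i - p^\IV_i\, e_i = 0$; and cocycles on distinct boundary components act on disjoint arrows and hence have identically zero circle products. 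So you should replace the hedge by the sharper statement that all obstruction cochains vanish as cochains, not merely as cohomology classes; otherwise the argument as written does not deliver the naive linear family.

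A smaller point: since $B$ is itself a DG gentle algebra with its own surface $\mathbf S^{\vee_J}$, you can apply Theorem~\ref{theorem:hochschild} directly to $B$ to obtain the explicit cocycle basis; there is no need to transport a basis through the equivalence of deformation theories, and doing so obscures the fact that the construction of $\widetilde B$ is entirely concrete at the level of the quiver for $B$.
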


\begin{proof}
We may take $A$ to be the DG gentle algebra associated to the standard dissection of $\mathbf S$ and let $B = A^{\vee_J}$ be its (proper) weak dual which is a DG gentle algebra. The first assertion is proved in Theorem \ref{theorem:dualequivalence}. The second assertion is proved in Proposition \ref{proposition:Binfinityalgebra}. It thus remains to prove the third assertion.

As in the proof of Theorem \ref{theorem:hochschild}, let $L$ denote the number of boundary components with one boundary stop and winding number $1$, and let $M_1$ and $M_2$ denote the number of boundary components with a full boundary stop and winding number $1$ and $2$, respectively. By Theorem \ref{theorem:hochschild} we have $L + M_1 + M_2 = \dim \HH^2 (B, B) =: d$. We now construct a family $\widetilde B$ of DG algebras from the $2$-cocycles for $B$ given in the proof of Theorem \ref{theorem:hochschild}. For simplicity, we shall relabel the $2$-cocycles as follows
\[
(\phi_1, \dotsc, \phi_d) = (\phi^\III_{j_1}, \dotsc, \phi^\III_{j_L}, \phi^\IV_{j_1, 1}, \dotsc, \phi^\IV_{j_1, M_1}, \phi^\IV_{j_2, 1}, \dotsc, \phi^\IV_{j_2, M_2})
\]
and similarly for the arrows $p^\III_{j_i}, q^\III_{j_i}, p^\IV_{j_1, i}, p^\IV_{j_2, i}$.

Let $R = \Bbbk [x_1, \dotsc, x_d]$ and let $\widetilde B = (B \otimes R, \widetilde \mu^1, \widetilde \mu^2)$ be the DG $R$-algebra which is defined on the same graded quiver as $B$ (see Proposition \ref{proposition:generator}), but with the relations and differential modified as follows:
\begin{align*}
\widetilde \mu^1 (p_i) &= x_i q_i & 1 &\leq i \leq L \\
\widetilde \mu^2 (p_i \otimes p_i) &= x_i e_i & L + 1 &\leq i \leq L + M_1 \\
\widetilde \mu^1 (p_i) &= x_i e_i & L + M_1 + 1 &\leq i \leq L + M_1 + M_2.
\end{align*}
Note that $\widetilde B$ is a DG gentle $R$-algebra. Here $x_1, \dotsc, x_d$ are considered the coordinate functions on $\mathbb A^d$. A (closed) point $\lambda \in \mathbb A^d \simeq \Spec \Bbbk [x_1, \dotsc, x_d]$ corresponds to the maximal ideal $\mathfrak m_\lambda = (x_1 - \lambda_1, \dotsc, x_d - \lambda_d)$. We write $\Bbbk_\lambda = R / \mathfrak m_\lambda$. 
For any (closed) point $\lambda = (\lambda_1, \dotsc, \lambda_d)$ of $\mathbb A^d$, given by evaluating $x_i \mapsto \lambda_i$, we thus obtain a ``strict'' deformation $B_\lambda = \widetilde B \otimes \Bbbk_\lambda$ of $B$ corresponding to the cocycle $\phi_\lambda = \sum_{1 \leq i \leq d} \lambda_i \phi_i$. Consider the basis $t_1, \dotsc, t_d$ of $\mathrm T_\lambda \mathbb A^d \simeq \Hom (\mathfrak m_\lambda / \mathfrak m_\lambda^2, \Bbbk)$ given by $t_j (x_i - \lambda_i + \mathfrak m_\lambda^2) = \delta_{ij}$ for $1 \leq i, j \leq d$. In this basis the Kodaira--Spencer map $\mathrm T_\lambda \mathbb A^d \to \HH^2 (B_\lambda, B_\lambda)$ is given by $t_i \mapsto [\phi_i]$, where $[\phi_i] \in \HH^2 (B_\lambda, B_\lambda)$ is the cohomology class of the $2$-cocycle $\phi_i$. The semi-universality at $\lambda = 0$ follows by construction since $[\phi_1], \dotsc, [\phi_d]$ span all of $\HH^2 (B, B)$. The versality at any $\lambda$ follows from the observation that $B_\lambda$ is a deformation of $B$, and all of the $2$-cocycles $\phi_1, \dotsc, \phi_d$ for $B$ continue to be $2$-cocycles for $B_\lambda$, but when $\lambda_i \neq 0$, the cocycle $\phi_i$ becomes a coboundary. Thus the Kodaira--Spencer is surjective but for $\lambda \neq 0$ no longer an isomorphism.
\end{proof}

\begin{remark}[Curved Morita deformations]
For any DG (or A$_\infty$) category $\mathcal A$ and a commutative Noetherian $\Bbbk$-algebra $R$, there is a natural map \cite{kellerlowen, lowenvandenbergh3} 
\[
\Theta\colon \mathrm{Def}_{\mathrm{tMor}}(\mathcal A, R) \to \mathrm{MC}(\mathcal A, R)
\]
where $\mathrm{Def}_{\mathrm{tMor}}(\mathcal A, R)$ is the set of Morita $R$-deformations of $\mathcal A$ up to torsion Morita equivalence of deformations, and $\mathrm{MC}(\mathcal A, R)$ is the set of Maurer--Cartan elements in the Hochschild cochain complex of $\mathcal A$ over $R$ up to gauge equivalence. 

Precisely, let $(\mathcal B, \mathcal M, \widetilde{\mathcal B})$ be a Morita $R$-deformation of $\mathcal A$. That is, $\mathcal B$ is a $\Bbbk$-linear DG category, $\mathcal M$ is a Morita $\mathcal A$-$\mathcal B$-bimodule and $\widetilde{\mathcal B}$ is a $R$-linear A$_\infty$ category with $\widetilde{\mathcal B} \otimes_R \Bbbk = \mathcal B$, see \cite[Definition 5.2]{lowenvandenbergh3}. The $R$-linear A$_\infty$ structure on $\widetilde{\mathcal B}$ naturally defines an element $\varphi_{\widetilde{\mathcal B}} \in  \mathrm{MC}(\mathcal B, R)$. The Morita bimodule $\mathcal M$ induces a canonical isomorphism 
\[
\Phi_{\mathcal M} \colon \mathrm{MC}(\mathcal B, R) \to  \mathrm{MC}(\mathcal A, R).
\]
Define $\Theta((\mathcal B, \mathcal M, \widetilde{\mathcal B})) = \Phi_{\mathcal M}(\varphi_{\widetilde{\mathcal B}})$. It is shown in \cite[Theorem 5.4]{lowenvandenbergh3} that $\Theta$ is injective for $R = \Bbbk \llbracket t \rrbracket$. 

In recent work \cite{lehmannlowen,lehmann} the above injection $\Theta$ is extended into a bijection for $R = \Bbbk [t] / (t^n)$
\[
\mathrm{cDef}_{\mathrm{tMor}}(\mathcal A, R) \to \mathrm{MC}(\mathcal A, R)
\]
where $\mathrm{cDef}_{\mathrm{tMor}}(\mathcal A, R)$ is the set of curved Morita $R$-deformations of $\mathcal A$ up to torsion Morita equivalence of deformations.

In our case, let $A$ be a locally proper DG gentle algebra associated to the standard dissection of $\mathbf S$ let $B = A^{\vee_J}$ be its proper weak dual as in Theorem \ref{theorem:solution}. Now let $\mathcal A = \tw(\add A)$ and $\mathcal B = \tw(\add B)$ and $\mathcal B' = \Tw^{-_J, \b} (\add B)$. Since $B = A^{\vee_J}$ is proper and hence admits no curved deformations, it follows from \cite{lowenvandenbergh3,lehmannlowen} that the maps $\mathrm{Def}_{\mathrm{tMor}} (-, R) \to \mathrm{cDef}_{\mathrm{tMor}} (-, R) \to \mathrm{MC} (-, R)$ are bijective when applied to both $\mathcal B$ and $\mathcal B'$. Therefore, we obtain the following diagram
\[
\begin{tikzpicture}[baseline=-2.6pt]
\matrix (m) [matrix of math nodes, row sep=2em, text height=1.5ex, column sep=3em, text depth=0.25ex, ampersand replacement=\&, inner sep=3pt]
{
\mathrm{Def}_{\mathrm{tMor}}(\mathcal A, R)  \& \mathrm{cDef}_{\mathrm{tMor}}(\mathcal A,  R) \& \mathrm{MC} (\mathcal A, R) \\
\mathrm{Def}_{\mathrm{tMor}}(\mathcal B', R) \& \mathrm{cDef}_{\mathrm{tMor}}(\mathcal B', R) \& \mathrm{MC} (\mathcal B', R) \\
\mathrm{Def}_{\mathrm{tMor}}(\mathcal B, R)  \& \mathrm{cDef}_{\mathrm{tMor}}(\mathcal B, R)  \& \mathrm{MC} (\mathcal B, R) \\
};
\path[->,line width=.4pt]
(m-1-2) edge node[font=\footnotesize, above=-.75ex, pos=.45] {$\sim$} (m-1-3)
(m-2-1) edge node[font=\footnotesize, above=-.75ex, pos=.45] {$\sim$} (m-2-2)
(m-2-2) edge node[font=\footnotesize, above=-.75ex, pos=.45] {$\sim$} (m-2-3)
(m-3-1) edge node[font=\footnotesize, above=-.75ex, pos=.45] {$\sim$} (m-3-2)
(m-3-2) edge node[font=\footnotesize, above=-.75ex, pos=.45] {$\sim$} (m-3-3)
(m-1-2) edge node[font=\footnotesize, right=-.55ex, pos=.45] {\rotatebox{90}{$\sim$}} (m-2-2)
(m-1-3) edge node[font=\footnotesize, right=-.55ex, pos=.45] {\rotatebox{90}{$\sim$}} (m-2-3)
(m-2-1) edge node[font=\footnotesize, right=-.55ex, pos=.45] {\rotatebox{90}{$\sim$}} (m-3-1)
(m-2-2) edge node[font=\footnotesize, right=-.55ex, pos=.45] {\rotatebox{90}{$\sim$}} (m-3-2)
(m-2-3) edge node[font=\footnotesize, right=-.55ex, pos=.45] {\rotatebox{90}{$\sim$}} (m-3-3)
;
\path[{Hooks[right]}->, line width=.4pt]
(m-1-1) edge (m-1-2)
;
\end{tikzpicture}
\]
The top row of vertical maps is induced  by the quasi-equivalence $\mathcal A \simeq \mathcal B'$. In this way, we obtain a bijection $\mathrm{cDef}_{\mathrm{tMor}}(\mathcal A, R) \simeq \mathrm{Def}_{\mathrm{tMor}}(\mathcal B, R)$.   
\end{remark}

\begin{remark}[Curved deformations and derived categories of the second kind]
Although we have circumvented the curvature problem for $\mathcal W (\mathbf S)$, it is equally possible to construct a semiuniversal family $\{ A_\lambda \}_{\lambda \in \mathbb A^d}$ of curved deformations for any graded gentle algebra from the natural basis of $2$-cocycles for $A$ given in Theorem \ref{theorem:hochschild} --- including the cocycles giving curvature terms. It would be interesting to understand if there is a family of categories naturally associated to this family of curved algebras --- such as the coderived categories \cite{positselski} or the compactly supported derived categories of the second kind \cite{guanlazarev,guanholsteinlazarev} --- which recovers the family $\{ \Tw^{-_J, \b} (B_\lambda) \}_{\lambda \in \mathbb A^d}$ directly from $\{ A_\lambda \}_{\lambda \in \mathbb A^d}$.
\end{remark}

\subsection{Deformation, localization and completion}
\label{subsection:localization}

In order to tackle the description of the deformation theory of $\mathcal W (\mathbf S)$ in full generality, we need to discuss one final issue that already appeared in the statements of the results of Sections \ref{subsection:weakduality}--\ref{subsection:solutioncurvature} some of which assumed that $A$ is locally proper. Namely, when $\mathbf S$ has both boundary components of winding number $0$ without stops as well as boundary components of winding number $1$ or $2$ without stops, the latter type of boundary component leads to the curvature problem (Section \ref{subsection:curvatureproblem}). However, using weak duality to bypass the curvature problem as in Section \ref{subsection:solutioncurvature}, we would obtain an equivalence (Theorem \ref{theorem:dualequivalence})
\begin{equation}
\label{eq:completion}
\tw (\add \widehat A) \simeq \Tw^{-_J, \b} (\add A^{\vee_J}).
\end{equation}
The former type of boundary component now implies that $A$ is not locally proper so that the completion on the left-hand side is nontrivial, that is, $A \not\simeq \widehat A$. Thus the equivalence \eqref{eq:completion} only recovers $\mathcal W (\mathbf S) \simeq \tw (\add A)$ from the weak dual $A^{\vee_J}$ ``up to completion''. (See \cite{opperplamondonschroll} for further results in this direction for the case of graded gentle algebras.)

In order to recover $\mathcal W (\mathbf S)$ ``on the nose", we observe that $\mathcal W (\mathbf S)$ can be obtained as the localization of another partially wrapped Fukaya category $\mathcal W (\bar{\mathbf S})$ as follows. Writing $\mathbf S = (S, \Sigma, \eta)$ as usual, let $\bar{\mathbf S} = (S, \Sigma \cup \bar\Sigma, \eta)$ be the surface obtained from $\mathbf S$ by adding exactly one boundary stop to each boundary component of winding number $0$ without stop. That is, $\bar \Sigma = \{ \bar \sigma_i \}_i$ where $i$ runs over the boundary components of $S$ such that $\partial_i S \cap \Sigma = \varnothing$ and $\mathrm w_\eta (\partial_i S) = 0$.

Let $\Gamma$ and $\bar \Gamma$ be standard dissections of $\mathbf S$ and $\bar{\mathbf S}$, respectively, and let $A = \End (\Gamma)$ and $\bar A = \End (\bar\Gamma)$ as usual. Since $\mathbf S$ and $\bar{\mathbf S}$ only differ in boundary components of winding number $0$, the difference between $\Gamma$ and $\bar \Gamma$ and $A$ and $\bar A$ can be seen locally, as shown on the left and right of the following picture:
\begin{equation}
\label{eq:loc}
\begin{tikzpicture}[x=1em,y=1em,decoration={markings,mark=at position 0.54 with {\arrow[black]{Stealth[length=4.2pt]}}}, baseline=2em]
\node[font=\footnotesize, left] at (-2.5,4.5) {$\bar A$};
\draw[line width=.5pt] (0,3) circle(.8em);
\draw[line width=.5pt, fill=black] (0,2.2) circle(.15em);
\draw[line width=.75pt, line cap=round, color=arccolour] (-1,0) to ($(0,3)+(240:.8)$) (1,0) to ($(0,3)+(300:.8)$);
\begin{scope}[yshift=3em]
\draw[line width=0pt, line cap=round, postaction={decorate}] (88:.79) -- (80:.806);
\node[font=\scriptsize] at (90:1.35) {$q$};
\node[font=\scriptsize] at (-90:1.35) {$\bar \sigma_i$};
\end{scope}
\draw[line width=.5pt, line cap=round, postaction={decorate}] (-3,0) -- (3,0);
\draw[line width=0pt, line cap=round, postaction={decorate}] (-3,0) -- (-1.25,0);
\draw[line width=0pt, line cap=round, postaction={decorate}] (1.75,0) -- (3,0);
\node[font=\scriptsize] at (0,-.75) {$p$};
\node[font=\scriptsize] at (-2.25,-.75) {$u$};
\node[font=\scriptsize] at (2.25,-.75) {$v$};
\draw[dash pattern=on 3.9pt off 2pt, line width=.5pt, line cap=round, looseness=2.9] (-3,0) to[bend left=90] (3,0);
\begin{scope}[xshift=11em]
\node[font=\footnotesize, left] at (-2.5,4.5) {$\bar A [W^{-1}]$};
\draw[line width=.5pt] (0,3) circle(.8em);
\draw[line width=.75pt, line cap=round, color=arccolour] (-1,0) to ($(0,3)+(240:.8)$) (1,0) to ($(0,3)+(300:.8)$);
\begin{scope}[yshift=3em]
\draw[line width=0pt, line cap=round, postaction={decorate}] (88:.79) -- (80:.806);
\begin{scope}[rotate=177]
\draw[line width=0pt, line cap=round, postaction={decorate}] (88:.79) -- (80:.806);
\end{scope}
\node[font=\scriptsize] at (90:1.35) {$q$};
\node[font=\scriptsize] at (-90:1.35) {$p\mathrlap{^{-1}}$};
\end{scope}
\draw[line width=.5pt, line cap=round, postaction={decorate}] (-3,0) -- (3,0);
\draw[line width=0pt, line cap=round, postaction={decorate}] (-3,0) -- (-1.25,0);
\draw[line width=0pt, line cap=round, postaction={decorate}] (1.75,0) -- (3,0);
\node[font=\scriptsize] at (0,-.75) {$p$};
\node[font=\scriptsize] at (-2.25,-.75) {$u$};
\node[font=\scriptsize] at (2.25,-.75) {$v$};
\draw[dash pattern=on 3.9pt off 2pt, line width=.5pt, line cap=round, looseness=2.9] (-3,0) to[bend left=90] (3,0);
\node at (5em,2em) {$\overset{\text{\tiny Morita}}\sim$};
\end{scope}
\begin{scope}[xshift=21em]
\node[font=\footnotesize, left] at (-2.5,4.5) {$A$};
\draw[line width=.5pt] (0,3) circle(.8em);
\draw[line width=.75pt, line cap=round, color=arccolour] (0,0) to ($(0,3)+(270:.8)$);
\begin{scope}[yshift=3em]
\draw[line width=0pt, line cap=round, postaction={decorate}] (88:.79) -- (80:.806);
\node[font=\scriptsize] at (90:1.35) {$q$};
\end{scope}
\draw[line width=.5pt, line cap=round] (-3,0) -- (3,0);
\draw[line width=0pt, line cap=round, postaction={decorate}] (-3,0) -- (0,0);
\draw[line width=0pt, line cap=round, postaction={decorate}] (0,0) -- (3,0);
\node[font=\scriptsize] at (-1.5,-.75) {$u$};
\node[font=\scriptsize] at (1.5,-.75) {$v$};
\draw[dash pattern=on 3.9pt off 2pt, line width=.5pt, line cap=round, looseness=2.9] (-3,0) to[bend left=90] (3,0);
\end{scope}
\end{tikzpicture}
\end{equation}
Letting $W$ denote the set of arrows of $\bar A$ given by the arrows $p$ in the left picture, the middle picture is explained in the following result.

\begin{proposition}
\label{proposition:localization}
Let $A$ be the DG gentle algebra associated to a standard dissection of a graded orbifold surface $\mathbf S$ and let $\bar A$ be corresponding dissection of $\bar{\mathbf S}$.
There is a localization functor
\[
l \colon \add \bar A \to \add \bar A [W^{-1}]
\]
such that $\bar A [W^{-1}]$ is Morita equivalent to $A$. Moreover, this localization induces a functor
\[
L \colon \mathcal W (\bar{\mathbf S}) \to \mathcal W (\mathbf S)
\]
which may be identified with the stop removal functor, removing $\bar \Sigma$.
\end{proposition}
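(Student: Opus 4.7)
The plan is to define $W$ explicitly as the boundary morphisms near each added stop, carry out a DG localization, verify the Morita equivalence with $A$ by a direct comparison of quivers and relations, and finally identify the induced functor with the stop removal functor. First, I would define the set $W$ as follows. For each boundary component $\partial_i S$ of $\mathbf S$ of winding number $0$ with no stop in $\Sigma$, the standard dissection of $\bar{\mathbf S}$ contains, locally near the added stop $\bar\sigma_i$, two arcs meeting $\partial_i S$ on either side of $\bar\sigma_i$, yielding two indecomposable objects $P_i^{\mathrm L}, P_i^{\mathrm R} \in \add \bar A$, a boundary morphism $p_i \colon P_i^{\mathrm L} \to P_i^{\mathrm R}$ (the one labelled $p$ on the left of \eqref{eq:loc}), and a complementary boundary morphism $q_i$ wrapping around $\partial_i S$ the opposite way. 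Take $W = \{ p_i \}_i$.

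Next, I would construct $\add \bar A[W^{-1}]$ as a DG localization, formally adjoining a two-sided inverse $p_i^{-1}$ for each $p_i$ together with homotopies witnessing $p_i^{-1} p_i \simeq \id$ and $p_i p_i^{-1} \simeq \id$. Since each $p_i$ then becomes a quasi-isomorphism, $P_i^{\mathrm L}$ and $P_i^{\mathrm R}$ become quasi-isomorphic in $\add \bar A[W^{-1}]$, so removing one of them (say $P_i^{\mathrm R}$) yields a Morita-equivalent DG category whose set of indecomposable objects matches that of $\add A$. A direct arrow-by-arrow comparison using Proposition \ref{proposition:generator} then shows that under this identification the composite $q_i p_i^{-1}$ becomes precisely the self-loop labelled $q$ on the right of \eqref{eq:loc}, while the arrows $u, v$ and the monomial relations of $\bar A$ transport verbatim to the corresponding arrows and relations of $A$. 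This establishes the Morita equivalence $\add \bar A[W^{-1}] \simeq \add A$.

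Applying $\tw(-)^\natural$ to the localization functor $l$ then produces
\[
L \colon \mathcal W(\bar{\mathbf S}) \simeq \tw(\add \bar A)^\natural \longrightarrow \tw(\add \bar A[W^{-1}])^\natural \simeq \tw(\add A)^\natural \simeq \mathcal W(\mathbf S).
\]
Geometrically, each $p_i \in W$ is a short Reeb chord traversing the added stop $\bar\sigma_i$, and by the general stop-removal results of \cite{sylvan,ganatrapardonshende1,ganatrapardonshende2}---reduced to a local model around each $\bar\sigma_i$ via the cosheaf description of Theorem \ref{theorem:cosheaf}---the stop removal functor $\mathcal W(\bar{\mathbf S}) \to \mathcal W(\mathbf S)$ is the localization along exactly this collection of chords. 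Since $L$ inverts the same morphisms and is determined by its restriction to a classical generator, $L$ coincides with stop removal.

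The hard part will be the verification in the second paragraph that $\add \bar A[W^{-1}]$ is Morita equivalent to $\add A$ \emph{as DG categories}, not merely as graded categories, because the DG localization can \emph{a priori} introduce higher homotopies that must be reconciled with the differential on $A$ given in Proposition \ref{proposition:generator}. This will be tractable because the local shape of the quiver of $\bar A$ around each $\bar\sigma_i$ depicted in \eqref{eq:loc} places strong constraints on the possible homotopies, so that the natural comparison functor from $A$ into the localization is visibly a quasi-equivalence.
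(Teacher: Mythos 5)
Your proposal follows the direct-computation route that the paper explicitly offers (the proof states the Morita equivalence ``follows either by direct computation, or from \cite[Theorem~7.13]{barmeierschrollwang}''), so it is essentially the same approach, just written out in detail. The DG subtlety you flag in your final paragraph is in fact benign: by Proposition~\ref{proposition:generator} the differential vanishes on every arrow appearing in the local picture~\eqref{eq:loc} (it is nonzero only on $p^\II_i$ at orbifold vertices), and since the inverted arrow $p$ lies in no quadratic relation, the localization is a purely associative Ore-type computation in degree $0$ and no higher homotopies can arise.
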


\begin{proof}
The existence of the localization functor follows from the fundamental constructions of localizations for DG and A$_\infty$ categories, see for example \cite{drinfeld,toen,lyubashenkoovsienko} as well as \cite{ohtanaka,pascaleff,canonacoornaghistellari}. The Morita equivalence between $\bar A [W^{-1}]$ and $A$ follows either by direct computation, or from \cite[Theorem~7.13]{barmeierschrollwang} by observing as in \eqref{eq:loc} that they correspond to different dissections of the orbifold surface $\mathbf S$.
\end{proof}

(See \cite[Section 6]{ganatrapardonshende2} for stop removal functors in general partially wrapped Fukaya categories.)

\begin{theorem}
\label{theorem:localization}
The localization functor $l \colon \add \bar A \to \add \bar A [W^{-1}]$ gives rise to maps between Hochschild complexes
\begin{equation}
\label{eq:hochschildcomplexes}
\mathrm C^\bullet (\bar A, \bar A) \to \mathrm C^\bullet (\bar A, \bar A [W^{-1}]) \leftarrow \mathrm C^\bullet (\bar A [W^{-1}], \bar A [W^{-1}]) \simeq \mathrm C^\bullet (A, A)
\end{equation}
which induce isomorphisms
\[
\HH^n (\bar A, \bar A) \simeq \HH^n (A, A)
\]
for all $n \geq 2$. In particular, every first-order deformation of $\mathcal W (\mathbf S)$ is induced by a first-order deformation of $\mathcal W (\bar{\mathbf S})$.
\end{theorem}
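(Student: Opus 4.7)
The plan is to analyze the two maps in \eqref{eq:hochschildcomplexes} separately and then extract the deformation-theoretic consequence. The right-hand quasi-isomorphism $\mathrm{C}^\bullet(\bar A [W^{-1}], \bar A[W^{-1}]) \simeq \mathrm{C}^\bullet(A, A)$ will follow directly from the Morita equivalence of Proposition \ref{proposition:localization} together with the Morita invariance of Hochschild cohomology \cite{keller03}. The intermediate map $\mathrm{C}^\bullet(\bar A[W^{-1}], \bar A[W^{-1}]) \to \mathrm{C}^\bullet(\bar A, \bar A [W^{-1}])$ induced by restriction of scalars along $l$ will be a quasi-isomorphism by a base-change argument: inverting arrows in a DG category is a Cohn/Ore-type localization, for which the natural comparison map on bimodule $\mathrm{R}\Hom$ complexes is an equivalence.

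The main step is to show that the left-hand map $\mathrm{C}^\bullet(\bar A, \bar A) \to \mathrm{C}^\bullet(\bar A, \bar A[W^{-1}])$ is a quasi-isomorphism in degrees $\geq 2$. I would consider the distinguished triangle
\[
\bar A \longrightarrow \bar A [W^{-1}] \longrightarrow C \longrightarrow \bar A[1]
\]
in $\mathrm D(\bar A \otimes \bar A^{\mathrm{op}})$ and the associated long exact sequence obtained by applying $\mathrm{R}\Hom_{\bar A^{\mathrm e}}(\bar A, -)$. The claim then reduces to showing that $\mathrm{Ext}^n_{\bar A^{\mathrm e}}(\bar A, C)$ vanishes for all $n \geq 1$. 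Since the passage from $A$ to $\bar A$ modifies the algebra only in a neighbourhood of each added stop $\bar \sigma_i$ (see \eqref{eq:loc}), the cofiber $C$ decomposes as a direct sum of local contributions $C_i$ supported on the relevant vertices. For each $\bar \sigma_i$ I would resolve $C_i$ using a suitable perturbation of the Bardzell-style resolution from Section \ref{subsection:bardzellresolution}, reducing the computation of $\mathrm{Ext}^n_{\bar A^{\mathrm e}}(\bar A, C_i)$ to a finite local check. The desired vanishing is ultimately ensured by the geometric criterion of Theorem \ref{theorem:hochschild}: the boundary component hosting $\bar \sigma_i$ has winding number $0$, precisely the case in which that boundary makes no contribution to $\mathrm{HH}^n$ for $n \geq 2$.

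The deformation-theoretic consequence is then immediate from Section \ref{subsection:keller}. First-order A$_\infty$ deformations up to equivalence are classified by $\mathrm{HH}^2$, and the isomorphism $\mathrm{HH}^2(\bar A, \bar A) \simeq \mathrm{HH}^2(A, A)$, together with the naturality in $l$, shows that every first-order deformation of $\mathcal W(\mathbf S)$ is the image of a first-order deformation of $\mathcal W(\bar{\mathbf S})$ under the localization functor $L$ of Proposition \ref{proposition:localization}.

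The main obstacle will be the cofiber analysis above. Identifying $C$ concretely and checking $\mathrm{Ext}^n_{\bar A^{\mathrm e}}(\bar A, C) = 0$ in all positive degrees requires a careful analysis of the higher overlap ambiguities in the Bardzell resolution that involve the new arrows $p \in W$. Combining the locality of the modification, the explicit Bardzell calculus, and the winding-number criterion of Theorem \ref{theorem:hochschild} should make this tractable, but the bookkeeping at the higher overlaps is the most delicate part of the argument.
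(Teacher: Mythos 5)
Your proposal takes a genuinely different route from the paper. The paper does not argue map-by-map. Instead, after invoking Morita invariance for the right-hand identification, it directly compares the Hochschild cohomologies of $\bar A$ and $\bar A[W^{-1}]$ using the explicit Bardzell-type resolution from Section~\ref{subsection:bardzellresolution}: since the quivers for $\bar A$ and $\bar A[W^{-1}]$ agree away from the arrows $p$, $q$ and the inverses $p^{-1}$, and the only additional $2$-cocycles $\phi_p$ sending $p^{-1}p \mapsto e_{\mathrm s(p)}$, $pp^{-1} \mapsto e_{\mathrm t(p)}$ are coboundaries because the relations $p^{-1}p = e_{\mathrm s(p)}$, $pp^{-1} = e_{\mathrm t(p)}$ already hold in $\bar A[W^{-1}]$, the discrepancy between the two cohomologies is confined to total degree $\leq 1$. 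That argument is self-contained and matches the cochain-level maps in \eqref{eq:hochschildcomplexes} explicitly.

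Two places in your argument would need substantial additional work and, as stated, do not quite close. First, the claim that the intermediate restriction map $\mathrm C^\bullet(\bar A[W^{-1}], \bar A[W^{-1}]) \to \mathrm C^\bullet(\bar A, \bar A[W^{-1}])$ is a quasi-isomorphism ``by base change'' requires knowing that $\bar A \to \bar A[W^{-1}]$ is a homological epimorphism, i.e.\ that $\bar A[W^{-1}] \otimes^{\mathrm L}_{\bar A} \bar A[W^{-1}] \simeq \bar A[W^{-1}]$. For DG (or A$_\infty$) localizations at a set of morphisms this is a nontrivial hypothesis, not a formal consequence of being a Cohn/Ore-type localization, and you would need to verify it in this specific DG gentle setting — for instance by using the same Bardzell-type resolution you want to avoid. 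Second, your cofiber analysis needs $\HH^m(\bar A, C) = 0$ for all $m \geq 1$, where $C$ is the cofiber of $\bar A \to \bar A[W^{-1}]$. You propose deducing this from ``the geometric criterion of Theorem~\ref{theorem:hochschild},'' but that theorem computes $\HH^\bullet(\bar A, \bar A)$, i.e.\ cohomology with coefficients in the diagonal bimodule, whereas here the coefficients are the bimodule $C$. The winding-number criterion does not transfer automatically: one would still have to resolve $\bar A$ explicitly and evaluate against $C$, which is essentially the same Bardzell bookkeeping the paper carries out directly. So while your decomposition via the distinguished triangle is conceptually appealing, it reroutes rather than avoids the explicit overlap analysis, and the winding-number shortcut as invoked is not valid.
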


\begin{proof}
Note that both $\bar A$ and $A$ are DG gentle algebras. Since $\bar A [W^{-1}]$ and $A$ are Morita-equivalent by Proposition \ref{proposition:localization}, they have isomorphic Hochschild complexes (see Section \ref{subsection:keller}). The maps in \eqref{eq:hochschildcomplexes} between Hochschild complexes are induced by the pre- and postcomposition maps given by the localization, i.e.\ any cochain $\bar A^{\otimes n} \to \bar A$ induces a cochain $\bar A^{\otimes n} \to \bar A \to \bar A [W^{-1}]$ and any cochain $(\bar A [W^{-1}])^{\otimes n} \to \bar A [W^{-1}]$ induces a cochain $\bar A^{\otimes n} \to (\bar A [W^{-1}])^{\otimes n} \to \bar A [W^{-1}]$.

The statement about the induced isomorphism on $\HH^n$ for $n \geq 2$ now follows from the computation in the proof of Theorem \ref{theorem:hochschild} using the following three observations. Firstly, the arrows in $W$ --- denoted $p$ in \eqref{eq:loc} --- are of degree $0$ and in $\bar A$ they do not appear in any overlaps. Secondly, these arrows have a unique parallel path --- denoted $q$ in \eqref{eq:loc} --- which is likewise of degree $0$. Thirdly, the formal inverses of the arrows in $W$ --- denoted $p^{-1}$ in \eqref{eq:loc} --- do not have any parallel paths. Moreover, the surfaces and quivers for $\bar A$ and $\bar A [W^{-1}]$ are identical away from the paths involving $p$, $q$ and $p^{-1}$. The only difference between the computation of $\HH^n$ for $n \geq 2$ now lies in the existence of $2$-cocycles $\phi_p$ which are determined by
\begin{align*}
\phi_p \colon p^{-1} p &\mapsto e_{\mathrm s (p)} \\
p p^{-1} &\mapsto e_{\mathrm t (p)}.
\end{align*}
Because the relations $p^{-1} p = e_{\mathrm s (p)}$ and $p p^{-1} = e_{\mathrm t (p)}$ are already present in $\bar A [W^{-1}]$, these $2$-cocycles are coboundaries (see \cite[Example 9.18]{barmeierwang1}). These observations now imply that the difference between the Hochschild cohomology of $\bar A$ and of $\bar A [W^{-1}]$ differs at most in cochains of total degree $1$, yielding the isomorphisms for $\HH^n$ for all $n \geq 2$. 
\end{proof}

When $\mathcal W (\mathbf S)$ is subject to the curvature problem, we may view $\mathcal W (\mathbf S)$ as the localization of the locally proper category $\mathcal W (\bar{\mathbf S})$ and deform the latter in order to avoid the need to consider completions of DG gentle algebras that are not locally proper.

\begin{remark}[An algebro-geometric perspective]
The passage from $\mathbf S$ to $\bar{\mathbf S}$ and back via localization can be understood from a B~side perspective by adding or removing a point ``at infinity''. For example, consider the case when $\mathbf S$ is the cylinder/annulus with one stop on one of the boundary components, equipped with a line field of winding number $0$ around both boundary components. Then $\bar{\mathbf S}$ is given by a cylinder with one boundary stop on both boundary components. We then have equivalences
\begin{align*}
\mathrm D \mathcal W (\mathbf S) &\simeq \mathrm D^\b (\mathbb A^1) \\
\mathrm D \mathcal W (\bar{\mathbf S}) &\simeq \mathrm D^\b (\mathbb P^1)
\end{align*}
where the localization is induced by the inclusion $\mathbb A^1 \subset \mathbb P^1$.
\end{remark}

\section{Deformations of partially wrapped Fukaya categories}
\label{section:deformation}

In his ICM 2002 address ``Fukaya categories and deformations'' \cite{seidel1} P.~Seidel outlines a programme for studying Fukaya categories of compact symplectic manifolds via A$_\infty$ deformations of wrapped Fukaya categories of (noncompact) exact symplectic manifolds. Seidel shows how partial compactifications of exact symplectic manifolds give rise to Hochschild cocycles parametrizing A$_\infty$ deformations of their Fukaya categories. Much progress has since been made on both the algebraic and geometric aspects of this programme. On the algebraic side, there is a well-developed theory of deformations of $\Bbbk$-linear categories in various setups: for example, for Abelian categories \cite{lowenvandenbergh1,lowenvandenbergh2,barmeierwang2}, DG and A$_\infty$ categories \cite{keller03,kontsevichsoibelman,genoveselowenvandenbergh} and $\Bbbk$-linear $\infty$-categories \cite{lurie,blanckatzarkovpandit,iwanari}. In each setup, deformations are controlled by a version of the Hochschild complex.

On the geometric side, the relationship of deformations Fukaya categories and partial compactifications envisioned by Seidel have been studied in the context of relative Fukaya categories, see for example \cite{seidel3,sheridan1,sheridan2,perutzsheridan}.

In the context of moduli of quadratic differentials and stability conditions, the relationship between partial compactifications and deformations of $3$-Calabi--Yau categories associated to surfaces has also been studied in \cite{qiu}. In the $\mathbb Z_2$-graded and fully wrapped setting, deformations of gentle algebras and their mirror partners have been studied in \cite{bocklandtvandekreeke1,bocklandtvandekreeke2}.

\subsection{Deformations of \texorpdfstring{$\mathcal W (\mathbf S)$}{W(S)}}
\label{subsection:deformationwrapped}

We are now able to prove our main theorem about the deformation theory of partially wrapped Fukaya categories of graded orbifold surfaces. This theorem shows that any abstract A$_\infty$ deformation has a geometric interpretation as the partially wrapped Fukaya category of another graded orbifold surface.

\begin{theorem}
\label{theorem:deformationwrapped}
Let $\mathbf S$ be a graded orbifold surface with stops and denote by $d = \dim \HH^2 (\mathcal W (\mathbf S), \mathcal W (\mathbf S))$. Then there is a family $\{ \mathcal W_\lambda \}_{\lambda \in \mathbb A^d}$ of idempotent-complete pretriangulated DG categories induced by a family $\{ B_\lambda \}_{\lambda \in \mathbb A^d}$ of DG algebras with the following properties:
\begin{enumerate}
\item $\mathcal W_0 = \mathcal W (\mathbf S)$
\item \label{item2:deformationwrapped} For every $\lambda \in \mathbb A^d$, there is a surface $\mathbf S_\lambda$ obtained from $\mathbf S$ by partial compactification such that $\mathcal W_\lambda = \mathcal W (\mathbf S_\lambda)$. More precisely, the surface $\mathbf S_\lambda$ is obtained from $\mathbf S$ by replacing
\[
\begin{tikzpicture}
\begin{scope}
\node[left, font=\small] at (-2.75em,0) {$\w_\eta (\partial^\III_{j_i} S) = 1$};
\draw[dash pattern=on 3.1pt off 1.6pt, line width=.4pt, line cap=round] (1.25em,0) arc[start angle=0, end angle=360, radius=1.25em];
\draw[line width=.5pt, line cap=round] (0,0) circle(.5em);
\draw[fill=black] (0,-.5em) circle(.15em);
\node[left, font=\scriptsize, fill=white, shape=ellipse, inner sep=0] at (90:.88em) {$\partial_{j_i}^\III \! S$};
\node at (2.5em,0) {$\mapsto$};
\end{scope}
\begin{scope}[xshift=5em]
\node[font=\scriptsize] at (0,0) {$\times$};
\draw[dash pattern=on 3.1pt off 1.6pt, line width=.4pt, line cap=round] (1.25em,0) arc[start angle=0, end angle=360, radius=1.25em];
\node[right] at (2.5em,0) {\strut\textup{if $\lambda^\III_{j_i} \neq 0$}};
\node[right] at (8.5em,0) {\strut\textup{for $1 \leq i \leq L$}};
\end{scope}
\begin{scope}[yshift=-3.5em]
\begin{scope}
\node[left, font=\small] at (-2.75em,0) {$\w_\eta (\partial^\IV_{j_1,i} S) = 1$};
\draw[dash pattern=on 3.1pt off 1.6pt, line width=.4pt, line cap=round] (1.25em,0) arc[start angle=0, end angle=360, radius=1.25em];
\draw[line width=.15em, line cap=round] (0,0) circle(.5em);
\node[left, font=\scriptsize, fill=white, shape=ellipse, inner sep=0] at (90:.88em) {$\partial_{j_1, i}^\IV S$};
\node at (2.5em,0) {$\mapsto$};
\end{scope}
\begin{scope}[xshift=5em]
\node[font=\scriptsize] at (0,0) {$\times$};
\draw[dash pattern=on 3.1pt off 1.6pt, line width=.4pt, line cap=round] (1.25em,0) arc[start angle=0, end angle=360, radius=1.25em];
\node[right] at (2.5em,0) {\strut\textup{if $\lambda^\IV_{j_1, i} \neq 0$}};
\node[right] at (8.5em,0) {\strut\textup{for $1 \leq i \leq M_1$}};
\end{scope}
\end{scope}
\begin{scope}[yshift=-7em]
\begin{scope}
\node[left, font=\small] at (-2.75em,0) {$\w_\eta (\partial^\IV_{j_2,i} S) = 2$};
\draw[dash pattern=on 3.1pt off 1.6pt, line width=.4pt, line cap=round] (1.25em,0) arc[start angle=0, end angle=360, radius=1.25em];
\draw[line width=.15em, line cap=round] (0,0) circle(.5em);
\node[left, font=\scriptsize, fill=white, shape=ellipse, inner sep=0] at (90:.88em) {$\partial_{j_2, i}^\IV S$};
\node at (2.5em,0) {$\mapsto$};
\end{scope}
\begin{scope}[xshift=5em]
\draw[dash pattern=on 3.1pt off 1.6pt, line width=.4pt, line cap=round] (1.25em,0) arc[start angle=0, end angle=360, radius=1.25em];
\node[right] at (2.5em,0) {\strut\textup{if $\lambda^\IV_{j_2, i} \neq 0$}};
\node[right] at (8.5em,0) {\strut\textup{for $1 \leq i \leq M_2$}};
\end{scope}
\end{scope}
\begin{scope}[yshift=-10.5em]
\begin{scope}
\node[left, font=\small] at (-2.75em,0) {$\w_\eta (\partial^\V_{j_1,i} S) = 1$};
\draw[dash pattern=on 3.1pt off 1.6pt, line width=.4pt, line cap=round] (1.25em,0) arc[start angle=0, end angle=360, radius=1.25em];
\draw[line width=.5pt, line cap=round] (0,0) circle(.5em);
\node[left, font=\scriptsize, fill=white, shape=ellipse, inner sep=0] at (90:.88em) {$\partial_{j_1, i}^\V S$};
\node at (2.5em,0) {$\mapsto$};
\end{scope}
\begin{scope}[xshift=5em]
\node[font=\scriptsize] at (0,0) {$\times$};
\draw[dash pattern=on 3.1pt off 1.6pt, line width=.4pt, line cap=round] (1.25em,0) arc[start angle=0, end angle=360, radius=1.25em];
\node[right] at (2.5em,0) {\strut\textup{if $\lambda^\V_{j_1, i} \neq 0$}};
\node[right] at (8.5em,0) {\strut\textup{for $1 \leq i \leq N_1$}};
\end{scope}
\end{scope}
\begin{scope}[yshift=-14em]
\begin{scope}
\node[left, font=\small] at (-2.75em,0) {$\w_\eta (\partial^\V_{j_2,i} S) = 2$};
\draw[dash pattern=on 3.1pt off 1.6pt, line width=.4pt, line cap=round] (1.25em,0) arc[start angle=0, end angle=360, radius=1.25em];
\draw[line width=.5pt, line cap=round] (0,0) circle(.5em);
\node[left, font=\scriptsize, fill=white, shape=ellipse, inner sep=0] at (90:.88em) {$\partial_{j_2, i}^\V S$};
\node at (2.5em,0) {$\mapsto$};
\end{scope}
\begin{scope}[xshift=5em]
\draw[dash pattern=on 3.1pt off 1.6pt, line width=.4pt, line cap=round] (1.25em,0) arc[start angle=0, end angle=360, radius=1.25em];
\node[right] at (2.5em,0) {\strut\textup{if $\lambda^\V_{j_2, i} \neq 0$}};
\node[right] at (8.5em,0) {\strut\textup{for $1 \leq i \leq N_2$}};
\end{scope}
\end{scope}
\end{tikzpicture}
\]
where
$$\lambda = (\lambda^\III_{j_1}, \dotsc, \lambda^\III_{j_L}, \lambda^\IV_{j_1, 1}, \dotsc, \lambda^\IV_{j_1, M_1}, \lambda^\IV_{j_2, 1}, \dotsc, \lambda^\IV_{j_2, M_2},  \lambda^\V_{j_1, 1}, \dotsc, \lambda^\V_{j_1, N_1}, \lambda^\V_{j_2, 1}, \dotsc, \lambda^\V_{j_2, N_2}).$$
\item The family $\{ B_\lambda \}_{\lambda \in \mathbb A^d}$ is versal at each $\lambda \in \mathbb A^d$ and semi-universal at $0 \in \mathbb A^d$, i.e.\ the Kodaira--Spencer map $\mathrm T_\lambda \mathbb A^d \to \HH^2 (B_\lambda, B_\lambda)$ is surjective for all $\lambda$ and an isomorphism for $\lambda = 0$.
\item For each $\lambda \in \mathbb A^d$, the DG algebra $B_\lambda$ is formal.
\item If $\mathbf S$ has a boundary component that does not contribute to $\HH^2 (\mathcal W (\mathbf S), \mathcal W (\mathbf S))$, then $\mathcal W_\lambda$ is generically rigid, i.e.\ $\HH^2 (\mathcal W_\lambda, \mathcal W_\lambda) = 0$ for generic $\lambda$. 
\end{enumerate}
\end{theorem}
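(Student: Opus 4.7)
The plan is to assemble Theorems \ref{theorem:localization}, \ref{theorem:solution}, and \ref{theorem:hochschild} into a geometric statement. First, by Theorem \ref{theorem:localization}, every first-order deformation of $\mathcal W(\mathbf S)$ is induced by one of $\mathcal W(\bar{\mathbf S})$, where $\bar{\mathbf S}$ is the locally proper surface obtained from $\mathbf S$ by adding one stop to each winding-number-$0$ unstopped boundary component. So it suffices to construct a family over $\mathbb A^d$ for $\mathcal W(\bar{\mathbf S})$ and then apply the stop-removal localization $-[W^{-1}]$ fiberwise; the central fiber recovers $\mathcal W(\mathbf S)$ by Proposition \ref{proposition:localization}. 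Let $\bar A$ be the DG gentle algebra of the standard dissection of $\bar{\mathbf S}$ and let $B=\bar A^{\vee_J}$ be its proper weak dual. Theorem \ref{theorem:solution} produces a semi-universal family $\widetilde B=\{B_\lambda\}_{\lambda\in\mathbb A^d}$ of (uncurved) DG deformations of $B$, built from the explicit basis of $\HH^2(B,B)$ given in the proof of Theorem \ref{theorem:hochschild}. This immediately gives properties \itemi\ and \itemiii, and the equivalence $\mathcal W(\bar{\mathbf S})\simeq\Tw^{-_J,\b}(\add B)$ turns $\widetilde B$ into the desired family of pretriangulated DG categories $\mathcal W_\lambda:=\Tw^{-_J,\b}(\add B_\lambda)[W^{-1}]$.

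The geometric core of the theorem is property \itemii. Here I would analyze each of the three types of 2-cocycles in Theorem \ref{theorem:hochschild} in turn, and show that turning on the corresponding deformation parameter $\lambda_i$ changes $B_\lambda$, up to quasi-isomorphism, into the weak dual of the DG gentle algebra associated to a standard dissection of the partially compactified surface $\mathbf S_\lambda$. For a type (\III) cocycle $\phi_j^\III\colon p_{j,1}^\III\mapsto q_j^\III$, the deformed differential makes $p_{j,1}^\III$ a coboundary, and after localization the pair $(p_{j,1}^\III,q_j^\III)$ collapses to an orbifold relation, matching the DG gentle algebra of a dissection where $\partial_j^\III S$ has been compactified to an order-$2$ orbifold point. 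For a type (\IV) cocycle with $\w_\eta=1$, introducing $(p_j^\IV)^2=\lambda_j e$ causes the idempotent $e$ to split (the loop becomes $\lambda$-times a square root of the identity), and a direct Morita analysis identifies the result with the dissection for a surface where $\partial_j^\IV S$ is compactified to an orbifold point; when $\w_\eta=2$, introducing $dp_j^\IV=\lambda_j e$ makes the corresponding twisted complex acyclic, matching smooth compactification. Type (\V) cocycles are analogous. In each case I would match both the combinatorial data (quiver with relations) and the grading data (line field, via the winding number formulae in Proposition \ref{proposition:generator}) to the standard dissection of $\mathbf S_\lambda$. Formality of $B_\lambda$ (property \itemiv) then follows from Theorem \ref{theorem:formality}, since the deformed differentials remain of the simple forms \eqref{eq:d1}--\eqref{eq:d2} and the associative deformations preserve the DG gentle shape, so no component of type \eqref{eq:d3} is ever introduced.

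For generic rigidity (property \itemv), one combines the dimension count of Theorem \ref{theorem:hochschild} with \itemii: under partial compactification, every boundary component that contributed to $\HH^2(\mathcal W(\mathbf S),\mathcal W(\mathbf S))$ is erased in $\mathbf S_\lambda$ as soon as its parameter $\lambda_i$ is nonzero, while boundary components that never contributed to $\HH^2$ continue not to contribute to $\HH^2(\mathcal W(\mathbf S_\lambda),\mathcal W(\mathbf S_\lambda))$. Hence if at least one boundary component does not contribute, the generic fiber has $\HH^2=0$, and the Kodaira--Spencer map being surjective (versality at every $\lambda$, which must be proved in parallel with \itemiii by checking that each $\phi_i$ becomes a coboundary exactly when $\lambda_i\neq 0$) then forces $\mathcal W_\lambda$ to be rigid generically.

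The hard part will be the geometric identification in \itemii, i.e.\ showing, after the explicit algebraic deformation of $B$ by a linear combination of the Theorem \ref{theorem:hochschild} cocycles, that the resulting DG algebra is quasi-isomorphic to the weak dual of the DG gentle algebra of a standard dissection of $\mathbf S_\lambda$. The type (\IV) case is the most delicate because an associative (not merely differential) deformation must be recognized as an idempotent-splitting phenomenon, after which a Morita reduction and a re-application of weak duality (Theorem \ref{theorem:weakdual}) is needed to land back in the world of dissections. Once this matching is done component-by-component, the multi-parameter case follows because the different cocycles $\phi_i$ are supported on disjoint local pieces of the quiver in Fig.~\ref{fig:quiver} and therefore deform independently.
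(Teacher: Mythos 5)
Your plan follows the paper's own proof almost exactly: reduce to a locally proper dissection algebra via the localization $\mathcal W(\mathbf S)\simeq\mathcal W(\bar{\mathbf S})[W^{-1}]$ of Theorem~\ref{theorem:localization}, pass to the proper weak dual $B=\bar A^{\vee_J}$, invoke the semi-universal family of Theorem~\ref{theorem:solution}, and then verify item~{\itemii} by matching $B_\lambda$ up to quasi-isomorphism against the DG gentle algebra of a standard dissection of $\mathbf S_\lambda$, treating the type~(\III), (\IV), (\V) cocycles one at a time. The paper carries out exactly this matching via three explicit algebra (quasi-)isomorphisms \eqref{eq:obs1}--\eqref{eq:obs3}, and your heuristics (idempotent splitting for the winding-number-$1$ associative deformation, acyclicity for the winding-number-$2$ differential deformation) are the correct content of those observations. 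Formality {\itemiv} indeed drops out because the modified differential is always of the form \eqref{eq:d2}.

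There is one genuine gap. Your case analysis treats only the cocycles attached to individual boundary components, but Theorem~\ref{theorem:hochschild} has a fourth type of cocycle, the one supported on the \emph{maximal overlap} $w = w^\VI\dotsb w^\I$ (case (c) of its proof). That cocycle appears whenever the chosen stopped boundary component $\partial_0 S$ has exactly one stop and winding number $1$, and deforming along it compactifies $\partial_0 S$ as well. The paper avoids this by assuming that some stopped boundary component has either $\geq 2$ stops or winding number $\neq 1$ (so one may choose $\partial_0 S$ to be non-contributing) and explicitly defers the remaining case --- when every stopped boundary component has exactly one stop and winding number $1$ --- to the pillowcase analysis of Section~\ref{section:pillowcase}. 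In that case the deformed surface has empty boundary, the DG gentle / weak-duality machinery no longer applies verbatim (there is no stopped boundary component left to anchor a standard dissection), and one needs the A$_\infty$ family of Definition~\ref{definition:fukayapillowcase}. Without handling this, items~{\itemi}--{\itemiv} are unproven for that class of surfaces. A second, minor point: your argument for {\itemv} is slightly muddled --- once {\itemii} identifies the generic fiber as $\mathcal W(\mathbf S_\lambda)$ and Theorem~\ref{theorem:hochschild} shows $\dim\HH^2(\mathcal W(\mathbf S_\lambda),\mathcal W(\mathbf S_\lambda))=0$ (because every contributing boundary component has been compactified away while the assumed non-contributing one persists), rigidity follows directly, with no need to invoke surjectivity of the Kodaira--Spencer map.
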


See Fig.~\ref{fig:examplepartialcompactification} for an illustration of the effect of deformations of $\mathcal W (\mathbf S)$ described in Theorem \ref{theorem:deformationwrapped}.

\begin{figure}
\begin{tikzpicture}[x=1.3em,y=1.3em,decoration={markings,mark=at position 0.99 with {\arrow[black]{Stealth[length=4.8pt]}}}, scale=.6]
\draw[line width=.5pt] (0,0) ++ (180:4 and 1) arc[start angle=180,end angle=360,x radius=4,y radius=1] to[out=90,in=190,looseness=1] ++(2.5,4) arc[start angle=-90,end angle=90,x radius=.25,y radius=1] to[out=170,in=290,looseness=1] ++(-1,1) arc[start angle=20,end angle=135,radius=5] to[out=225,in=0,looseness=1] (-11,6) ++ (0,-2) to[out=0,in=90] (-9,0) arc[start angle=180,end angle=360,x radius=1,y radius=.25] arc[start angle=180,end angle=0,radius=1.5];
\begin{scope}
\node[font=\tiny, align=center] at (-11.2,7.5) {winding \\ number $1$};
\node[font=\small, align=center] at (-5,8.5) {$\mathbf S$};
\node[font=\tiny, align=center] at (-8,-1.7) {winding \\ number $1$};
\node[font=\tiny, align=center] at (7.4,2.4) {winding \\ number $2$};
\path[->,line width=.6pt] (4,10.4) edge[bend left=30] node[above, font=\small] {deformation of $\mathcal W (\mathbf S)$} (15,10.4);
\draw[line width=.5pt,line cap=round] (5.5,7) ++(200:5) ++(0:2.5) arc[start angle=0,end angle=90,radius=2.5];
\draw[line width=.5pt,line cap=round] (5.5,7) ++(200:5) ++(0:2.5) ++(90:2.5) ++(280:2.5) arc[start angle=280,end angle=170,radius=2.5];
\end{scope}
\begin{scope}[shift={(-2.35,-2.35)}]
\draw[line width=.5pt,line cap=round] (5.5,7) ++(200:5) ++(0:2.5) arc[start angle=0,end angle=90,radius=2.5];
\draw[line width=.5pt,line cap=round] (5.5,7) ++(200:5) ++(0:2.5) ++(90:2.5) ++(280:2.5) arc[start angle=280,end angle=170,radius=2.5];
\end{scope}
\draw[line width=.6pt,dash pattern=on 0pt off 1.5pt,line cap=round] (0,0) ++ (180:4 and 1) arc[start angle=180,end angle=0,x radius=4,y radius=1] (6.5,4) arc[start angle=270,end angle=90,x radius=0.25,y radius=1] (-9,0) arc[start angle=180,end angle=0,x radius=1,y radius=.25];
\draw[line width=.1em,color=black] (-11,6) arc[start angle=90,end angle=450,x radius=.25,y radius=1];
\draw[fill=black, color=black] (85:4 and 1) circle(.15em);
\draw[fill=black, color=black] (235:4 and 1) circle(.15em);
\draw[fill=black, color=black] (325:4 and 1) circle(.15em);
\draw[fill=black, color=black] (-8,0) ++(260:1 and .25) circle(.15em);
\begin{scope}[xshift=30em]
\node[font=\small, overlay, align=center] at (-5.4,8.5) {$\mathbf S_\lambda$};
\draw[line width=.5pt] (0,0) ++ (180:4 and 1) arc[start angle=180,end angle=360,x radius=4,y radius=1] to[out=90,in=270,looseness=.9] ++(2,5) arc[start angle=0,end angle=135,radius=5.5] to[out=225,in=0,looseness=1] (-11,5) to[out=0,in=90] (-8,0) arc[start angle=180,end angle=0,radius=2];
\node[font=\tiny] at (-11,5) {$\times$};
\node[font=\tiny] at (-8,0) {$\times$};
\begin{scope}
\draw[line width=.5pt,line cap=round] (5.5,7) ++(200:5) ++(0:2.5) arc[start angle=0,end angle=90,radius=2.5];
\draw[line width=.5pt,line cap=round] (5.5,7) ++(200:5) ++(0:2.5) ++(90:2.5) ++(280:2.5) arc[start angle=280,end angle=170,radius=2.5];
\end{scope}
\begin{scope}[shift={(-2.35,-2.35)}]
\draw[line width=.5pt,line cap=round] (5.5,7) ++(200:5) ++(0:2.5) arc[start angle=0,end angle=90,radius=2.5];
\draw[line width=.5pt,line cap=round] (5.5,7) ++(200:5) ++(0:2.5) ++(90:2.5) ++(280:2.5) arc[start angle=280,end angle=170,radius=2.5];
\end{scope}
\draw[line width=.6pt,dash pattern=on 0pt off 1.5pt,line cap=round] (0,0) ++ (180:4 and 1) arc[start angle=180,end angle=0,x radius=4,y radius=1]; 
\draw[fill=black, color=black] (85:4 and 1) circle(.15em);
\draw[fill=black, color=black] (235:4 and 1) circle(.15em);
\draw[fill=black, color=black] (325:4 and 1) circle(.15em);
\end{scope}
\end{tikzpicture}
\caption{Illustration of the effect of deformation of the partially wrapped Fukaya category $\mathcal W (\mathbf S)$ of a graded surface $\mathbf S$ (Theorem \ref{theorem:deformationwrapped}). Here $\dim \HH^2 (\mathcal W (\mathbf S), \mathcal W (\mathbf S)) = 3$, coming from the three boundary components with specified winding number (Theorem \ref{theorem:hochschild}). For generic values $\lambda = (\lambda_1, \lambda_2, \lambda_3)$ of the deformation parameters, the resulting deformed category is the partially wrapped Fukaya category $\mathcal W (\mathbf S_\lambda)$ of an orbifold surface $\mathbf S_\lambda$, where two deformation parameters correspond to partial compactifications of the boundary components with winding number $1$ to orbifold points, and the third parameter to the partial compactification of a boundary component with winding number $2$.}
\label{fig:examplepartialcompactification}
\end{figure}
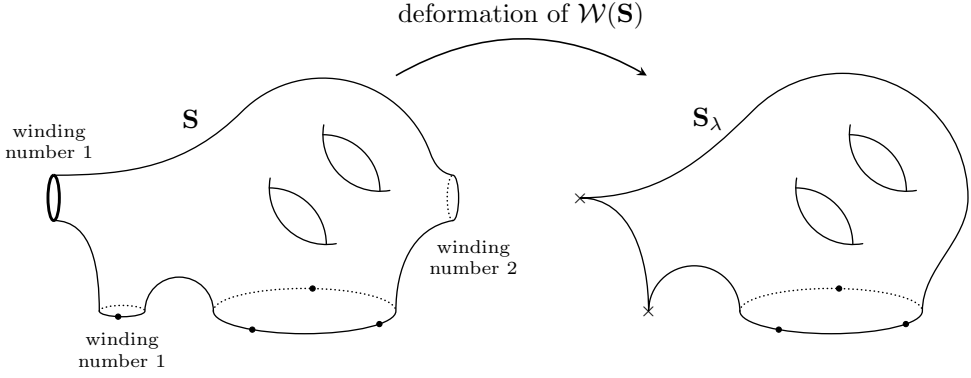

\begin{proof}
Let $A$ denote the algebra associated to a standard dissection of $\mathbf S$. If $A$ is proper, then its family of deformations is constructed in Theorem \ref{theorem:solution}. It remains to deduce the general case and give the geometric interpretation of the deformations. First, let us assume that $A$ is locally proper. Then its weak dual $B = A^{\vee_J}$ is a proper DG gentle algebra which again by Theorem \ref{theorem:solution} admits a semiuniversal family $\widetilde B$ of DG deformations and {\itemiii} is already shown in the proof of this theorem. Writing $B_\lambda = \widetilde B \otimes \Bbbk_\lambda$ as in the same proof, we set $\mathcal W_\lambda = \Tw^{-_J, \b} (\add B_\lambda)$. We have $B_0 \simeq B$ and by Theorem \ref{theorem:dualequivalence} an equivalence $\mathcal W_0 \simeq \Tw^{-_J, \b} (\add B_0) \simeq \tw (\add A) \simeq \mathcal W (\mathbf S)$, showing {\itemi}. 

The surface $\mathbf S^{\vee_J}$ of $B = A^{\vee_J}$ is obtained from $\mathbf S$ by replacing all boundary components without stops into boundary components with a full boundary stop illustrated as follows:
\[
\begin{tikzpicture}
\begin{scope}[yshift=-10.5em]
\begin{scope}
\draw[dash pattern=on 3.1pt off 1.6pt, line width=.4pt, line cap=round] (1.25em,0) arc[start angle=0, end angle=360, radius=1.25em];
\draw[line width=.5pt, line cap=round] (0,0) circle(.5em);
\node at (2.5em,0) {$\mapsto$};
\end{scope}
\begin{scope}[xshift=5em]
\draw[line width=.15em, line cap=round] (0,0) circle(.5em);
\draw[dash pattern=on 3.1pt off 1.6pt, line width=.4pt, line cap=round] (1.25em,0) arc[start angle=0, end angle=360, radius=1.25em];
\end{scope}
\end{scope}
\end{tikzpicture}
\]

For the rest of the proof we shall assume that $\mathbf S$ has one boundary component $\partial_0 S$ with either more than boundary stop, or with winding number $\w_\eta (\partial_0 S) \neq 1$. (The special case when all boundary components with boundary stops have exactly one stop and winding number $1$ will be dealt with in Section~\ref{section:pillowcase} where we will also complete the proof for this case.)

To prove {\itemii} and {\itemiv}, we let $\mathbf S_\lambda$ be the modification of $\mathbf S$ as in the statement of the theorem and let $\mathbf S_\lambda^\vee$ be the surface model obtained from $\mathbf S_\lambda$ by adding a full boundary stop to every boundary component of $\mathbf S_\lambda$ without stop. We claim that $B_\lambda$ is a DG gentle algebra with surface model $\mathbf S_\lambda^\vee$. Since $B = B_0$ is given by the dissection $\Delta$ as in Fig.~\ref{fig:dissection} of $\mathbf S^{\vee_J} = \mathbf S_0^\vee$, we can find a dissection $\Delta_\lambda$ of $\mathbf S_\lambda$ obtained from $\Delta$ by the following modifications
\[
\begin{tikzpicture}[decoration={markings,mark=at position 0.55 with {\arrow[black]{Stealth[length=4.2pt]}}}]
\begin{scope}
\draw[line width=.5pt, line cap=round] (0,0) circle(.5em);
\draw[fill=black] (0,-.5em) circle(.15em);
\node at (2.8em,-.75em) {$\mapsto$};
\draw[line width=0pt,postaction={decorate}] (-1.75em,-2.5em) -- (-1em,-2.5em);
\draw[line width=0pt,postaction={decorate}] (-1em,-2.5em) -- (1.25em,-2.5em);
\draw[line width=0pt,postaction={decorate}] (1.5em,-2.5em) -- (2em,-2.5em);
\draw[line width=0pt,postaction={decorate}] (70:.52em) -- ++(-2:.001em);
\draw[line width=.5pt, line cap=round] (-2.05em,-2.5em) -- (2.05em,-2.5em);
\draw[dash pattern=on 3.1pt off 1.6pt, line width=.4pt, line cap=round] (-2.05em,-2.5em) edge[out=85, in=95, looseness=3] (2.05em,-2.5em);
\draw[line width=.75pt, line cap=round, color=arccolour] (225:.5em) edge (-1em,-2.5em);
\draw[line width=.75pt, line cap=round, color=arccolour] (315:.5em) edge (1em,-2.5em);
\node[left, font=\scriptsize, fill=white, shape=ellipse, inner sep=0] at (95:.95em) {$\partial_{j_i}^\III \! S$};
\end{scope}
\begin{scope}[xshift=5.6em]
\node[font=\scriptsize] at (0,0) {$\times$};
\draw[line width=.5pt, line cap=round] (225:.5em) arc[start angle=225, end angle=-30, radius=.5em];
\draw[fill=black] (0,-.6em) circle(.1em);
\draw[line width=0pt,postaction={decorate}] (-1.75em,-2.5em) -- (-1em,-2.5em);
\draw[line width=0pt,postaction={decorate}] (-1em,-2.5em) -- (1.25em,-2.5em);
\draw[line width=0pt,postaction={decorate}] (1.5em,-2.5em) -- (2em,-2.5em);
\draw[line width=0pt,postaction={decorate}] (-49:.5em) -- ++(237:.001em);
\draw[line width=.5pt, line cap=round] (-2.05em,-2.5em) -- (2.05em,-2.5em);
\draw[dash pattern=on 3.1pt off 1.6pt, line width=.4pt, line cap=round] (-2.05em,-2.5em) edge[out=85, in=95, looseness=3] (2.05em,-2.5em);
\draw[line width=.75pt, line cap=round, color=arccolour] (240:.4em) edge (-1em,-2.5em);
\draw[line width=.75pt, line cap=round, color=arccolour] (300:.4em) edge (1em,-2.5em);
\end{scope}
\begin{scope}[xshift=12em]
\begin{scope}
\draw[line width=.15em, line cap=round] (0,0) circle(.3em);
\node at (2.8em,-.75em) {$\mapsto$};
\draw[line width=0pt,postaction={decorate}] (-1.75em,-2.5em) -- (-1em,-2.5em);
\draw[line width=0pt,postaction={decorate}] (-1em,-2.5em) -- (1.25em,-2.5em);
\draw[line width=0pt,postaction={decorate}] (1.5em,-2.5em) -- (2em,-2.5em);
\draw[line width=.5pt, line cap=round] (-2.05em,-2.5em) -- (2.05em,-2.5em);
\draw[dash pattern=on 3.1pt off 1.6pt, line width=.4pt, line cap=round] (-2.05em,-2.5em) edge[out=85, in=95, looseness=3] (2.05em,-2.5em);
\draw[line width=.75pt, line cap=round, color=arccolour] (-1em,-2.5em) to[out=85, in=225, looseness=.6] (135:.6em) arc[start angle=135, end angle=45, radius=.6em] to[out=-45, in=95, looseness=.6] (1em,-2.5em);
\node[left, font=\scriptsize, fill=white, shape=ellipse, inner sep=0] at (95:.95em) {$\partial_{j_1, i}^\IV S$};
\end{scope}
\begin{scope}[xshift=5.6em]
\node[font=\scriptsize] at (0,0) {$\times$};
\draw[line width=.5pt, line cap=round] (225:.5em) arc[start angle=225, end angle=-30, radius=.5em];
\draw[fill=black] (0,-.6em) circle(.1em);
\draw[line width=0pt,postaction={decorate}] (-1.75em,-2.5em) -- (-1em,-2.5em);
\draw[line width=0pt,postaction={decorate}] (-1em,-2.5em) -- (1.25em,-2.5em);
\draw[line width=0pt,postaction={decorate}] (1.5em,-2.5em) -- (2em,-2.5em);
\draw[line width=0pt,postaction={decorate}] (-49:.5em) -- ++(237:.001em);
\draw[line width=.5pt, line cap=round] (-2.05em,-2.5em) -- (2.05em,-2.5em);
\draw[dash pattern=on 3.1pt off 1.6pt, line width=.4pt, line cap=round] (-2.05em,-2.5em) edge[out=85, in=95, looseness=3] (2.05em,-2.5em);
\draw[line width=.75pt, line cap=round, color=arccolour] (240:.4em) edge (-1em,-2.5em);
\draw[line width=.75pt, line cap=round, color=arccolour] (300:.4em) edge (1em,-2.5em);
\end{scope}
\end{scope}
\begin{scope}[xshift=24em]
\begin{scope}
\draw[line width=.15em, line cap=round] (0,0) circle(.3em);
\node at (2.8em,-.75em) {$\mapsto$};
\draw[line width=0pt,postaction={decorate}] (-1.75em,-2.5em) -- (-1em,-2.5em);
\draw[line width=0pt,postaction={decorate}] (-1em,-2.5em) -- (1.25em,-2.5em);
\draw[line width=0pt,postaction={decorate}] (1.5em,-2.5em) -- (2em,-2.5em);
\draw[line width=.5pt, line cap=round] (-2.05em,-2.5em) -- (2.05em,-2.5em);
\draw[dash pattern=on 3.1pt off 1.6pt, line width=.4pt, line cap=round] (-2.05em,-2.5em) edge[out=85, in=95, looseness=3] (2.05em,-2.5em);
\draw[line width=.75pt, line cap=round, color=arccolour] (-1em,-2.5em) to[out=85, in=225, looseness=.6] (135:.6em) arc[start angle=135, end angle=45, radius=.6em] to[out=-45, in=95, looseness=.6] (1em,-2.5em);
\node[left, font=\scriptsize, fill=white, shape=ellipse, inner sep=0] at (95:.95em) {$\partial_{j_2, i}^\IV S$};
\end{scope}
\begin{scope}[xshift=5.6em]
\draw[line width=0pt,postaction={decorate}] (-1em,-2.5em) -- (1.25em,-2.5em);
\draw[line width=.5pt, line cap=round] (-2.05em,-2.5em) -- (2.05em,-2.5em);
\draw[dash pattern=on 3.1pt off 1.6pt, line width=.4pt, line cap=round] (-2.05em,-2.5em) edge[out=85, in=95, looseness=3] (2.05em,-2.5em);
\end{scope}
\end{scope}
\end{tikzpicture}
\]
whenever the corresponding entry of $\lambda$ is nonzero.

With these modifications, $\Delta_\lambda$ is a formal dissection of $\mathbf S_\lambda$ in the sense of \cite[Section~8]{barmeierschrollwang} (see also Theorem \ref{theorem:formality}) and we claim that the endomorphism algebra of the corresponding generator $\Gamma_\lambda$ is Morita equivalent to $B_\lambda$. Firstly, note that by rescaling $p^\III_{j_i}$, $p^\IV_{j_1, i}$ or $p^\IV_{j_2, i}$ we may assume that the entries of $\lambda$ are either $0$ or $1$. To see that $\End (\Gamma_\lambda)$ is Morita equivalent to $B_\lambda$, we use the following three observations.

\paragraph{\it Observation 1.} There is an algebra isomorphism 
\begin{equation}
\label{eq:obs1}
\begin{tikzpicture}[x=2.5em,y=1.5em,baseline=.5em]
\node[left] at (1,.5) {$\Bbbk \Biggl($};
\node[shape=circle, scale=.7, inner sep=1pt] (1) at (1,0) {$1$};
\node[shape=circle, scale=.7, inner sep=1pt] (2) at (2,0) {$2$};
\node[shape=circle, scale=.7, inner sep=1pt] (3) at (3,0) {$3$};
\path[->, line width=.5pt, font=\scriptsize] (1) edge node[below=-.3ex, pos=.3] {$v_1$} (2);
\path[->, line width=.5pt, font=\scriptsize] (2) edge[out=130, in=50, looseness=14] node[pos=.25, left=-.5ex] {$p$} (2);
\path[->, line width=.5pt, font=\scriptsize] (2) edge node[below=-.3ex, pos=.7] {$v_2$} (3);
\node[right] at (3,.5) {$\Biggr) \biggm/ \!(p^2 - e_2)$};
\draw[dash pattern=on 0pt off 1.2pt, line width=.6pt, line cap=round] ($(2,0)+(-172:.9em)$) arc[start angle=-172, end angle=-6, radius=.9em];
\begin{scope}[xshift=-15.7em]
\node[left] at (1,.5) {$\Bbbk \biggl($};
\node[shape=circle, scale=.7, inner sep=1pt] (1) at (1,.5) {$1$};
\node[shape=circle, scale=.7, inner sep=1pt] (2+) at (2,1.25) {$2^+$};
\node[shape=circle, scale=.7, inner sep=1pt] (2-) at (2,-.25) {$2^-$};
\node[shape=circle, scale=.7, inner sep=1pt] (3) at (3,.5) {$3$};
\path[->, line width=.5pt, font=\scriptsize] (1) edge node[above=-.3ex,pos=.35] {$a$} (2+);
\path[->, line width=.5pt, font=\scriptsize] (1) edge node[below=-.3ex,pos=.35] {$c$} (2-);
\path[->, line width=.5pt, font=\scriptsize] (2+) edge node[above=-.3ex] {$b$} (3);
\path[->, line width=.5pt, font=\scriptsize] (2-) edge node[below=-.5ex] {$d$} (3);
\node[right] at (3,.5) {$\biggr) \Bigm/ \!(b a - d c) \;\toarg{f}$};
\end{scope}
\end{tikzpicture}
\end{equation}
given by
\begin{align*}
f (e_1) &= e_1 & f (e_{2^+}) &= \tfrac12 (e_2 + p) & f (a) &= \tfrac12 (e_2 + p) v_1 & f (b) &= \tfrac12 v_2 (e_2 + p) \\
f (e_3) &= e_3 & f (e_{2^-}) &= \tfrac12 (e_2 - p) & f (c) &= \tfrac12 (e_2 - p) v_1 & f (d) &= \tfrac12 v_2 (e_2 + p).
\end{align*}

\paragraph{\it Observation 2.} There is a quasi-isomorphism
\begin{equation}
\label{eq:obs2}
\begin{tikzpicture}[x=2.5em,y=1.5em,baseline=.5em]
\node[left] at (1,.5) {$\Bbbk \biggl($};
\node[shape=circle, scale=.7, inner sep=1pt] (1) at (1,.5) {$1$};
\node[shape=circle, scale=.7, inner sep=1pt] (2+) at (2,1.25) {$2^+$};
\node[shape=circle, scale=.7, inner sep=1pt] (2-) at (2,-.25) {$2^-$};
\node[shape=circle, scale=.7, inner sep=1pt] (3) at (3,.5) {$3$};
\path[->, line width=.5pt, font=\scriptsize] (1) edge node[above=-.3ex,pos=.35] {$a$} (2+);
\path[->, line width=.5pt, font=\scriptsize] (1) edge node[below=-.3ex,pos=.35] {$c$} (2-);
\path[->, line width=.5pt, font=\scriptsize] (2+) edge node[above=-.3ex] {$b$} (3);
\path[->, line width=.5pt, font=\scriptsize] (2-) edge node[below=-.5ex] {$d$} (3);
\node[right] at (3,.5) {$\biggr) \! \Bigm/ \!(b a - d c) \; \toarg{F_1}$};
\begin{scope}[xshift=15.5em]
\node[left] at (1,.5) {$\biggl( \Bbbk \Bigl($};
\node[shape=circle, scale=.7, inner sep=1pt] (1) at (1,.25) {\strut $1$};
\node[shape=circle, scale=.7, inner sep=0pt] (2-) at (2.15,.25) {\strut $2^-$};
\node[shape=circle, scale=.7, inner sep=0pt] (2+) at (3.3,.25) {\strut $2^+$};
\node[shape=circle, scale=.7, inner sep=1pt] (3) at (4.45,.25) {\strut $3$};
\path[->, line width=.5pt, font=\scriptsize] (1) edge node[below=-.3ex] {$v_1$} (2-);
\path[->, line width=.5pt, font=\scriptsize] (2-) edge node[below=-.3ex] {$p$} (2+);
\path[->, line width=.5pt, font=\scriptsize] (2-) edge[bend left=40] node[above=-.3ex] {$q$} (2+);
\path[->, line width=.5pt, font=\scriptsize] (2+) edge node[below=-.3ex] {$v_2$} (3);
\node[right] at (4.45,.5) {$\Bigr), d (p) = q \biggr)$};
\draw[dash pattern=on 0pt off 1.2pt, line width=.6pt, line cap=round] ($(2.05,.25)+(172:.9em)$) arc[start angle=172, end angle=40, radius=.9em];
\draw[dash pattern=on 0pt off 1.2pt, line width=.6pt, line cap=round] ($(3.2,.25)+(122:.95em)$) arc[start angle=122, end angle=5, radius=.95em];
\end{scope}
\end{tikzpicture}
\end{equation}
given by
\begin{align*}
F_1 (e_1) &= e_1 & F_1 (e_{2^+}) &= e_{2^+} & F_1 (a) &= p v_1 & F_1 (b) &= v_2 \\
F_1 (e_3) &= e_3 & F_1 (e_{2^-}) &= e_{2^-} & F_1 (c) &= v_1   & F_1 (d) &= v_2 p
\end{align*}
for any grading satisfying the constraints $\lvert p \rvert = 0$, $\lvert q \rvert = 1$, $\lvert a \rvert = \lvert c \rvert = \lvert v_1 \rvert$ and $\lvert b \rvert = \lvert d \rvert = \lvert v_2 \rvert$. One easily verifies that $F_1$ is a morphism of DG algebras, where the left-hand side is considered as a DG algebra with trivial differential, inducing an isomorphism with the cohomology of the right-hand side. (Note for example, that $q$ is a coboundary and $p$ is not a cocycle and indeed neither $p$ nor $q$ lie in the image of $F_1$.) Combining $F_1$ and $f$ we also obtain a quasi-isomorphism from the right-hand side of \eqref{eq:obs1} to the right-hand side of \eqref{eq:obs2}.

\paragraph{\it Observation 3.} There is a quasi-isomorphism
\begin{equation}
\label{eq:obs3}
\begin{tikzpicture}[x=2.5em,y=1.5em,baseline=.5em]
\node[left] at (1,.5) {$\Bbbk \bigl($};
\node[shape=circle, scale=.7, inner sep=1pt] (1) at (1,.5) {$1$};
\node[shape=circle, scale=.7, inner sep=1pt] (3) at (2,.5) {$3$};
\path[->, line width=.5pt, font=\scriptsize] (1) edge node[above=-.3ex] {$a$} (3);
\node[right] at (2,.5) {$\bigr) \; \toarg{F_2}$};
\begin{scope}[xshift=7.92em]
\node[left] at (1,.5) {$\Biggl( \Bbbk \biggl($};
\node[shape=circle, scale=.7, inner sep=1pt] (1) at (1,0) {$1$};
\node[shape=circle, scale=.7, inner sep=1pt] (2) at (2,0) {$2$};
\node[shape=circle, scale=.7, inner sep=1pt] (3) at (3,0) {$3$};
\path[->, line width=.5pt, font=\scriptsize] (1) edge node[below=-.3ex, pos=.3] {$v_1$} (2);
\path[->, line width=.5pt, font=\scriptsize] (2) edge[out=130, in=50, looseness=14] node[pos=.25, left=-.5ex] {$p$} (2);
\path[->, line width=.5pt, font=\scriptsize] (2) edge node[below=-.3ex, pos=.7] {$v_2$} (3);
\node[right] at (3,.5) {$\biggr), d (p) = e_2 \Biggr)$};
\draw[dash pattern=on 0pt off 1.2pt, line width=.6pt, line cap=round] ($(2,0)+(-172:.9em)$) arc[start angle=-172, end angle=-6, radius=.9em];
\draw[dash pattern=on 0pt off 1.2pt, line width=.6pt, line cap=round] ($(2,0)+(117:.9em)$) arc[start angle=115, end angle=61, radius=.9em];
\end{scope}
\end{tikzpicture}
\end{equation}
given by
\begin{align*}
F_2 (e_1) &= e_1 & F_2 (e_3) &= e_3 & F_2 (a) = v_2 p v_1
\end{align*}
for any grading satisfying $\lvert p \rvert = -1$ and $\lvert a \rvert = \lvert v_1 \rvert + \lvert v_2 \rvert$.

These three observations assemble into a quasi-isomorphism $\End (\Gamma_\lambda) \simeq B_\lambda$. Namely, in \eqref{eq:obs1} $p$ takes on the role of $p^\IV_{j_1, i}$ for $1 \leq i \leq M_1$ whenever $\lambda^\IV_{j_1, i} \neq 0$ and $v_1, v_2$ take on the role of any path ending resp.\ starting at the source/target of $p^\IV_{j_1, i}$. Here we also use the quasi-isomorphism to the right-hand side of \eqref{eq:obs2}. Similarly, in \eqref{eq:obs2} $p$ and $q$ take on the role of $p^\III_{j_i, 1}$ and $q^\III_{j_i}$ for $1 \leq i \leq L$ whenever $\lambda^\III_{j_i} \neq 0$ and $v_1, v_2$ are again paths. And in \eqref{eq:obs3} $p$ takes on the role of $p^\IV_{j_2, i}$ for $1 \leq i \leq M_2$ whenever $\lambda^\IV_{j_2, i} \neq 0$. In particular, $B_\lambda$ is quasi-isomorphic to a DG gentle algebra, where the differential is again given by sending some arrows to parallel arrows, vanishing on all other paths. In particular, the definition of the category $\Tw^{-_J, \b} (\add B_\lambda)$ of certain unbounded twisted complexes (Definition \ref{definition:Tw}) remains well-defined. Since $\H^\bullet (B_\lambda)$ is again a graded skew-gentle algebra by \cite[Section~8]{barmeierschrollwang} (see also Section ), the results of Section \ref{subsection:unbounded} also apply to $B_\lambda$. In particular, we have that $\tw (\add B_\lambda) \simeq \mathcal W (\mathbf S_\lambda^\vee)$ and $\Tw^{-_J, \b} (\add B_\lambda) \simeq \mathcal W (\mathbf S_\lambda)$. This proves {\itemii} for the case that $\mathcal W (\mathbf S)$ is locally proper.

When $\mathcal W (\mathbf S)$ is not locally proper, we may view it as a localization of a locally proper category $\mathcal W (\bar{\mathbf S})$ (see Section \ref{subsection:localization}). Since by Theorem \ref{theorem:localization} every first-order deformation of $\mathcal W (\mathbf S)$ is induced by a deformation of $\mathcal W (\bar{\mathbf S})$, the family for deformations of $\mathcal W (\bar{\mathbf S})$ just constructed gives rise by localization to a family of deformations of $\mathcal W (\mathbf S) = \mathcal W (\bar{\mathbf S}) [W^{-1}]$.
\end{proof}

\begin{remark}
The deformed surface $\mathbf S_\lambda$ may not necessarily contain a boundary stop. For instance, if all boundary components of $\mathbf S$ with boundary stops have exactly $1$ stop and winding number $1$, but there are further boundary components with full boundary stops which cannot be deformed. A particular case where $\mathbf S$ has exactly $4$ boundary components with boundary stops in $\mathbf S$ have exactly $1$ stop and winding number $1$, then the deformed surface $\mathbf S_\lambda$ is compact without boundary. This is discussed in detail in the next section.
\end{remark}

\section{Fukaya categories of pillowcases}
\label{section:pillowcase}

The results in Section~\ref{subsection:deformationwrapped} show that deformations of $\mathcal W (\mathbf S)$ correspond to compactifications of certain boundary components of $\mathbf S$ with winding number $1$ or $2$. It is thus possible that through deformation {\it all} boundary components of $\mathbf S$ are compactified, resulting in a compact orbifold surface with empty boundary.

In this case, it turns out that the resulting surface is a {\it pillowcase}, a topological sphere with four orbifold points of order $2$. Besides the smooth torus, it is the only compact surface admitting a global line field and a pillowcase can be viewed as a $\mathbb Z_2$-quotient of a torus by an elliptic involution. A qualitative difference with the case of surfaces with boundary is that pillowcases naturally arise in a $1$-parameter family which from a symplectic viewpoint is explained by the existence of a parameter depending on the total volume of the sphere. From a mirror-symmetric viewpoint, we can view the B~side of a pillowcase as a weighted projective line with four weights of order $2$, each corresponding to one of the orbifold points. After fixing the first weighted points to be $\{ 0, 1, \infty \} \in \mathbb P^1$ by an automorphism of $\mathbb P^1$, the moduli of the position of the fourth orbifold point on $\mathbb P^1 \simeq \mathrm S^2$ is a number $\lambda \in \Bbbk \smallsetminus \{ 0, 1 \}$. Its double cover is an elliptic curve $\mathbb C / (\mathbb Z \oplus \lambda \mathbb Z)$.

Let $\mathbf S_\lambda$ be a pillowcase with parameter $\lambda \in \Bbbk \smallsetminus \{ 0, 1 \}$. Just as for orbifold surfaces with boundary, whose Fukaya categories were introduced in \cite{barmeierschrollwang}, one may define the Fukaya category $\mathcal W (\mathbf S_\lambda)$ of a pillowcase $\mathbf S_\lambda$ as the category of global sections of a cosheaf of pretriangulated A$_\infty$ categories, the cosheaf being defined on the ribbon graph (or ribbon complex) dual to a dissection of $\mathbf S_\lambda$ into polygons. The parameter $\lambda$ is related to the area of the polygons appearing in the A$_\infty$ relations (cf.\ Remark \ref{remark:volume}).

We now define a family of DG algebras which we use to give a direct definition of the Fukaya category $\mathcal W (\mathbf S_\lambda)$ of a pillowcase. This definition is based on a particular choice of dissection, illustrated on the right-hand side of Fig.~\ref{fig:pillowcase}. We will show independence of the choice of dissection and further properties of $\mathcal W (\mathbf S_\lambda)$ in forthcoming work. 

\begin{definition}
Consider the graded gentle algebra $A$
\[
\begin{tikzpicture}[x=2em,y=1.5em,baseline=.5em]
\centering
\foreach \label/\x in {0/0,3/3,6/6,9/9,12/12,15/15} {
\node[shape=circle, scale=.7] (\label) at (\x,0) {};
}
\foreach \x in {0,3,6,9,12,15} {
\draw[line width=1pt, fill=black] (\x,0) circle(0.2ex);
}
\path[->, line width=.5pt, font=\scriptsize] (0) edge node[below=-.5ex] {$p_1$} (3);
\path[->, line width=.5pt, font=\scriptsize, bend left=40] (0) edge node[above=-.5ex] {$q_1$} (3);
\path[->, line width=.5pt, font=\scriptsize] (3) edge node[below=-.5ex] {$u_1$} (6);
\path[->, line width=.5pt, font=\scriptsize] (6) edge node[below=-.5ex] {$p_2$} (9);
\path[->, line width=.5pt, font=\scriptsize, bend left=40] (6) edge node[above=-.5ex] {$q_2$} (9);
\path[->, line width=.5pt, font=\scriptsize] (9) edge node[below=-.5ex] {$u_2$} (12);
\path[->, line width=.5pt, font=\scriptsize] (12) edge node[below=-.5ex] {$p_3$} (15);
\path[->, line width=.5pt, font=\scriptsize, bend left=40] (12) edge node[above=-.5ex] {$q_3$} (15);

\draw[dash pattern=on 0pt off 1.2pt, line width=.6pt, line cap=round] ($(3,0)+(8:.8em)$) arc[start angle=8, end angle=140, radius=.8em];
\draw[dash pattern=on 0pt off 1.2pt, line width=.6pt, line cap=round] ($(6,0)+(172:.8em)$) arc[start angle=172, end angle=40, radius=.8em];
\draw[dash pattern=on 0pt off 1.2pt, line width=.6pt, line cap=round] ($(9,0)+(8:.8em)$) arc[start angle=8, end angle=140, radius=.8em];
\draw[dash pattern=on 0pt off 1.2pt, line width=.6pt, line cap=round] ($(12,0)+(172:.8em)$) arc[start angle=172, end angle=40, radius=.8em];
\end{tikzpicture}
\]
where $|q_1|=|q_2|=|q_3|=1$ and all other arrows are of degree $0$. The surface corresponding to $A$ is given on the left side in Fig.~\ref{fig:pillowcase}.

\begin{figure}
\begin{tikzpicture}[x=1em,y=1em,decoration={markings,mark=at position 0.7 with {\arrow[black]{Stealth[length=4.2pt]}}}]
\draw[line width=.5pt] circle(4.5em);
\draw[line width=0pt,postaction={decorate}] (211:4.5) -- (212:4.5);
\node[font=\scriptsize] at (210:5.25) {$u_1$};
\draw[line width=0pt,postaction={decorate}] (331:4.5) -- (332:4.5);
\node[font=\scriptsize] at (330:5.25) {$u_2$};
\draw[fill=black] (90:4.5) circle(.15em);
\begin{scope}[rotate=-120]
\begin{scope}[yshift=-2.5em]
\draw[line width=0pt,postaction={decorate}] (85:0.74) -- (75:.77);
\draw[line width=.5pt] circle(.75em);
\draw[fill=black] (0,-.75) circle(.15em);
\draw[line width=.75pt, line cap=round, color=arccolour] (225:.75) to[out=245, in=85, looseness=.5] (-.8,-1.9);
\draw[line width=.75pt, line cap=round, color=arccolour] (315:.75) to[out=295, in=95, looseness=.5] (.8,-1.9);
\draw[line width=0pt,postaction={decorate}] (0,-2) -- (.4,-2);
\node[font=\scriptsize] at (90:1.25) {$q_1$};
\node[font=\scriptsize] at (0,-2.75) {$p_1$};
\end{scope}
\end{scope}
\begin{scope}[rotate=0]
\begin{scope}[yshift=-2.5em]
\draw[line width=0pt,postaction={decorate}] (85:0.74) -- (75:.77);
\draw[line width=.5pt] circle(.75em);
\draw[fill=black] (0,-.75) circle(.15em);
\draw[line width=.75pt, line cap=round, color=arccolour] (225:.75) to[out=245, in=85, looseness=.5] (-.8,-1.9);
\draw[line width=.75pt, line cap=round, color=arccolour] (315:.75) to[out=295, in=95, looseness=.5] (.8,-1.9);
\draw[line width=0pt,postaction={decorate}] (0,-2) -- (.4,-2);
\node[font=\scriptsize] at (90:1.25) {$q_2$};
\node[font=\scriptsize] at (0,-2.75) {$p_2$};
\end{scope}
\end{scope}
\begin{scope}[rotate=120]
\begin{scope}[yshift=-2.5em]
\draw[line width=0pt,postaction={decorate}] (85:0.74) -- (75:.77);
\draw[line width=.5pt] circle(.75em);
\draw[fill=black] (0,-.75) circle(.15em);
\draw[line width=.75pt, line cap=round, color=arccolour] (225:.75) to[out=245, in=85, looseness=.5] (-.8,-1.9);
\draw[line width=.75pt, line cap=round, color=arccolour] (315:.75) to[out=295, in=95, looseness=.5] (.8,-1.9);
\draw[line width=0pt,postaction={decorate}] (0,-2) -- (.4,-2);
\node[font=\scriptsize] at (90:1.25) {$q_3$};
\node[font=\scriptsize] at (0,-2.75) {$p_3$};
\end{scope}
\end{scope}
\draw[->,line width=.5pt] (6.5em,0) -- (11.5em,0);
\node[font=\scriptsize] at (9em,.6em) {compactification};
\begin{scope}[xshift=18em]
\draw[line width=.3pt] circle(4.75em);
\node[font=\scriptsize] at (0,0) {$\times$};
\draw[fill=black] (0,.6) circle(.12em);
\begin{scope}[rotate=-120]
\node[font=\scriptsize] at (270:2.75) {$\times$};
\draw[fill=black] (0,-3.4) circle(.12em);
\draw[line width=.75pt, line cap=round, color=arccolour] (267:3) to[out=267, in=255+90, looseness=.5] (255:4.75);
\draw[line width=.75pt, line cap=round, color=arccolour] (273:3) to[out=273, in=285-90, looseness=.5] (285:4.75);
\draw[dash pattern=on 0pt off 1.5pt, line width=.85pt, line cap=round, color=arccolour] (240:.35) to[out=240, in=255-90, looseness=.5] (255:4.75);
\draw[dash pattern=on 0pt off 1.5pt, line width=.85pt, line cap=round, color=arccolour] (300:.35) to[out=300, in=285+90, looseness=.5] (285:4.75);
\draw[->, line width=.5pt, line cap=round] (246:1.8em) arc[start angle=246, end angle=294, radius=1.8em];
\node[font=\scriptsize] at (0,-1.2) {$p_1$};
\draw[->, line width=.5pt, line cap=round] (298:1.8em) arc[start angle=298, end angle=363, radius=1.8em];
\node[font=\scriptsize] at (-30:1.25) {$u_1$};
\begin{scope}[yshift=-2.8em]
\draw[line width=.5pt, line cap=round] (225:.5em) arc[start angle=225, end angle=-30, radius=.5em];
\draw[line width=0pt,postaction={decorate}] (-49:.5em) -- ++(237:.001em);
\node[font=\scriptsize] at (.85,-.6) {$q_1$};
\end{scope}
\end{scope}
\begin{scope}[rotate=0]
\node[font=\scriptsize] at (270:2.75) {$\times$};
\draw[fill=black] (0,-3.4) circle(.12em);
\draw[line width=.75pt, line cap=round, color=arccolour] (267:3) to[out=267, in=255+90, looseness=.5] (255:4.75);
\draw[line width=.75pt, line cap=round, color=arccolour] (273:3) to[out=273, in=285-90, looseness=.5] (285:4.75);
\draw[dash pattern=on 0pt off 1.5pt, line width=.85pt, line cap=round, color=arccolour] (240:.35) to[out=240, in=255-90, looseness=.5] (255:4.75);
\draw[dash pattern=on 0pt off 1.5pt, line width=.85pt, line cap=round, color=arccolour] (300:.35) to[out=300, in=285+90, looseness=.5] (285:4.75);
\draw[->, line width=.5pt, line cap=round] (246:1.8em) arc[start angle=246, end angle=294, radius=1.8em];
\node[font=\scriptsize] at (0,-1.4) {$p_2$};
\draw[->, line width=.5pt, line cap=round] (298:1.8em) arc[start angle=298, end angle=363, radius=1.8em];
\node[font=\scriptsize] at (-30:1.25) {$u_2$};
\begin{scope}[yshift=-2.8em]
\draw[line width=.5pt, line cap=round] (225:.5em) arc[start angle=225, end angle=-30, radius=.5em];
\draw[line width=0pt,postaction={decorate}] (-49:.5em) -- ++(237:.001em);
\node[font=\scriptsize] at (.85,-.6) {$q_2$};
\end{scope}
\end{scope}
\begin{scope}[rotate=120]
\node[font=\scriptsize] at (270:2.75) {$\times$};
\draw[fill=black] (0,-3.4) circle(.12em);
\draw[line width=.75pt, line cap=round, color=arccolour] (267:3) to[out=267, in=255+90, looseness=.5] (255:4.75);
\draw[line width=.75pt, line cap=round, color=arccolour] (273:3) to[out=273, in=285-90, looseness=.5] (285:4.75);
\draw[dash pattern=on 0pt off 1.5pt, line width=.85pt, line cap=round, color=arccolour] (240:.35) to[out=240, in=255-90, looseness=.5] (255:4.75);
\draw[dash pattern=on 0pt off 1.5pt, line width=.85pt, line cap=round, color=arccolour] (300:.35) to[out=300, in=285+90, looseness=.5] (285:4.75);
\draw[->, line width=.5pt, line cap=round] (246:1.8em) arc[start angle=246, end angle=294, radius=1.8em];
\node[font=\scriptsize] at (0,-1.3) {$p_3$};
\begin{scope}[yshift=-2.8em]
\draw[line width=.5pt, line cap=round] (225:.5em) arc[start angle=225, end angle=-30, radius=.5em];
\draw[line width=0pt,postaction={decorate}] (-49:.5em) -- ++(237:.001em);
\node[font=\scriptsize] at (.85,-.6) {$q_3$};
\end{scope}
\end{scope}
\end{scope}
\end{tikzpicture}
\caption{A genus $0$ surface with four boundary components of winding number $1$ (left) and its compactification to a pillowcase (right) modelled by a sphere with three orbifold points on the front hemisphere and one orbifold point on the back (compactified from to the outer boundary component on the left picture)}
\label{fig:pillowcase}
\end{figure}
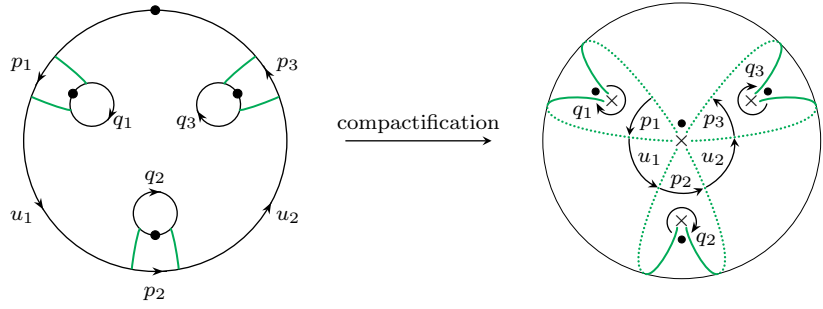

Note that $\dim \HH^2 (A, A) = 4$ with the standard cocycles 
\begin{align*}
\mu^1 (\s p_i) & = \lambda_i \s q_i  & \text{for $i =1, 2, 3$}\\
\mu^5 (\s q_3 \otimes \s u_2 \otimes \s q_2 \otimes \s u_1 \otimes \s q_1) & = \lambda_4 \s p_3 u_2 p_2 u_1 p_1
\end{align*}
and we obtain a $4$-parameter family $\{ A_{\lambda_1,\lambda_2,\lambda_3,\lambda_4} \}$ of A$_\infty$ deformations of $A = A_{0,0,0,0}$ given by the above $\mu^1, \mu^5$ together with the following extra products  
\begin{equation}
\label{eq:higher}
\begin{aligned}
\mu^4 (\s p_3 u_2 \otimes \s q_2 \otimes \s u_1 \otimes \s q_1) &= \lambda_3 \lambda_4 \, \s p_3 u_2 p_2 u_1 p_1 \\
\mu^4 (\s q_3 \otimes \s u_2 \otimes \s p_2 u_1 \otimes \s q_1) &= \lambda_2 \lambda_4 \, \s p_3 u_2 p_2 u_1 p_1 \\
\mu^4 (\s q_3 \otimes \s u_2 \otimes \s q_2 \otimes \s u_1 p_1) &= \lambda_1 \lambda_4 \, \s p_3 u_2 p_2 u_1 p_1 \\
           \mu^3 (\s p_3 u_2 \otimes \s q_2 \otimes \s u_1 p_1) &= \lambda_1 \lambda_3 \lambda_4 \, \s p_3 u_2 p_2 u_1 p_1 \\
           \mu^3 (\s q_3 \otimes \s u_2 \otimes \s p_2 u_1 p_1) &= \lambda_1 \lambda_2 \lambda_4 \, \s p_3 u_2 p_2 u_1 p_1 \\
                      \mu^2 (\s p_3 u_2 p_2 \otimes \s u_1 p_1) &= (1 - \lambda_1 \lambda_2 \lambda_3 \lambda_4) \s p_3 u_2 p_2 u_1 p_1.
\end{aligned}
\end{equation}
\end{definition}

From an algebraic perspective, the extra products \eqref{eq:higher} are necessary in order to obtain a well-defined A$_\infty$ algebra. From a geometric perspective, these products can be viewed as counting pseudo-holomorphic disks in a pillowcase. The counts of pseudo-holomorphic disks in a collection of Lagrangians define an A$_\infty$ category, where the area $a$ of a pseudo-holomorphic disk gives rise to coefficients $T^a$ of the higher multiplications $\mu^i$, $i \geq 1$, where $T$ is the (formal) Novikov parameter. In the case of the pillowcase, the coefficients $\lambda_1, \lambda_2, \lambda_3, \lambda_4$ in \eqref{eq:higher} can be viewed as $\lambda_i = \mathrm e^{-a_i}$, obtained from $T^{a_i}$ by evaluating the Novikov parameter $T$ to $\mathrm e^{-1}$, where $a_i$ is the area of the disk, similar to \cite[Section~4]{polishchukzaslow}. In particular, the case $\lambda_i = 1$ corresponds to the limiting case when the area of the corresponding pseudo-holomorphic disk tends to $0$. However, as a pillowcase has a (fixed) total volume, in a dissection into four polygons, such as one hexagon and three digons as in Fig.~\ref{fig:pillowcase}, only three of their areas can be taken to $0$ while keeping the total volume fixed. This leaves one degree of freedom for a pillowcase --- the total (complexified) volume of the sphere --- which is finite and nonzero.

A straightforward computation gives the following result.

\begin{lemma}
The cohomology algebra $\H^\bullet (A_{\lambda_1,\lambda_2,\lambda_3,\lambda_4})$ is concentrated in degree $0$ and may be given by the quiver
\[
\begin{tikzpicture}[x=5em,y=4em,baseline=.5em]
\begin{scope}[xshift=15.7em]
\node[left] at (1.24,.5) {$\Bbbk \left( \phantom{\frac{\displaystyle\sum_i^i}{2}}\right.$};
\node[shape=circle, scale=.7, inner sep=1pt] (1+) at (1,1) {$1^+$};
\node[shape=circle, scale=.7, inner sep=1pt] (1-) at (1,0) {$1^-$};
\node[shape=circle, scale=.7, inner sep=1pt] (2+) at (2,1) {$2^+$};
\node[shape=circle, scale=.7, inner sep=1pt] (2-) at (2,0) {$2^-$};
\node[shape=circle, scale=.7, inner sep=1pt] (3+) at (3,1) {$3^+$};
\node[shape=circle, scale=.7, inner sep=1pt] (3-) at (3,0) {$3^-$};
\path[->, line width=.5pt, font=\scriptsize] (1+) edge node[above=-.3ex] {$[u_1p_1]$} (2+);
\path[->, line width=.5pt, font=\scriptsize] (1+) edge node[left=.35ex,pos=.4] {$[p_2u_1p_1]$} (2-);
\path[->, line width=.5pt, font=\scriptsize] (1-) edge node[above=.5ex,pos=0] {$[u_1]$} (2+);
\path[->, line width=.5pt, font=\scriptsize] (1-) edge node[below=-.3ex] {$[p_2u_1]$} (2-);
\path[->, line width=.5pt, font=\scriptsize] (2+) edge node[above=-.3ex] {$[u_2p_2]$} (3+);
\path[->, line width=.5pt, font=\scriptsize] (2+) edge node[left=.35ex,pos=.4] {$[p_3u_2p_2]$} (3-);
\path[->, line width=.5pt, font=\scriptsize] (2-) edge node[above=.5ex,pos=0] {$[u_2]$} (3+);
\path[->, line width=.5pt, font=\scriptsize] (2-) edge node[below=-.5ex] {$[p_3u_2]$} (3-);
\node[right] at (2.7,.5) {$\left.\phantom{\frac{\displaystyle\sum_i^i}{2}}\right)$};
\end{scope}
\end{tikzpicture}
\]
with one square skew-commuting:
\[
[p_3 u_2 p_2] [u_1 p_1] = (1 - \lambda_1 \lambda_2 \lambda_3 \lambda_4) [p_3 u_2] [p_2 u_1 p_1]
\]
and the other squares being strictly commutative:
\[
[u_2 p_2] [u_1 p_1] = [u_2] [p_2 u_1 p_1], \quad [p_3 u_2 p_2] [u_1] = [p_3 u_2] [p_2 u_1], \quad [u_2 p_2] [u_1] = [u_2] [p_2 u_1]. 
\]
In particular, $A_{\lambda_1,\lambda_2,\lambda_3,\lambda_4}$ is formal and $\H^\bullet (A_{\lambda_1,\lambda_2,\lambda_3,\lambda_4})$ is a minimal model of $A_{\lambda_1,\lambda_2,\lambda_3,\lambda_4}$.
\end{lemma}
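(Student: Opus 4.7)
The plan is to proceed in four steps: compute the differential $\mu^1$ explicitly on all paths, determine the cohomology, extract the induced multiplication from $\mu^2$, and finally deduce formality from a degree argument.

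\emph{Step 1: the differential vanishes on paths of length $\geq 2$.} The first A$_\infty$ relation forces $\mu^1$ to be a graded derivation of $\mu^2$. Since $\mu^1(\s p_i) = \lambda_i \s q_i$ and $\mu^1$ vanishes on $e_j$ and on the $u_j$, Leibniz expresses $\mu^1$ of any monomial as a sum of paths obtained by replacing one $p_i$ factor by $\lambda_i q_i$. But the monomial relations $u_1 q_1 = q_2 u_1 = u_2 q_2 = q_3 u_2 = 0$ in $A$ kill every such term on a path of length at least two. In particular $\mu^1$ is supported on $p_1,p_2,p_3$.

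\emph{Step 2: cohomology is concentrated in degree $0$.} The underlying graded vector space has a monomial basis of $21$ paths in degree $0$ and the three paths $q_1,q_2,q_3$ in degree $1$. Step~1 shows $\ker \mu^1 \cap A^0$ is the span of all $21$ degree~$0$ monomials except $p_1,p_2,p_3$, and that $\operatorname{im}(\mu^1)\cap A^1 = \langle \lambda_1 q_1,\lambda_2 q_2,\lambda_3 q_3\rangle$. Assuming $\lambda_1,\lambda_2,\lambda_3\neq 0$ (the generic regime in which the pillowcase is compact), one obtains $\H^1=0$ and $\dim\H^0 = 18$, with $\H^i=0$ for $i\neq 0,1$ trivially.

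\emph{Step 3: the algebra structure on $\H^0$.} Choose representatives given by the $18$ degree~$0$ monomials surviving Step~2. Since the cohomology is concentrated in degree~$0$, the induced associative product on $\H^0$ is $[a]\cdot[b]=[\mu^2(\s a \otimes \s b)]$. On all pairs of the $8$ length-$1$ generators listed in the lemma the deformed $\mu^2$ agrees with the path algebra product of $A$, \emph{except} on the pair $(p_3u_2p_2,\,u_1p_1)$, where the last line of \eqref{eq:higher} gives the factor $1-\lambda_1\lambda_2\lambda_3\lambda_4$. Direct verification of the three remaining $2\times 2$ squares shows strict commutativity (both routes land on $u_2p_2u_1p_1$, $p_3u_2p_2u_1$, $u_2p_2u_1$ respectively in the path algebra), yielding the presentation in the statement; a dimension count ($6$ idempotents $+ 8$ arrows $+ 4$ length-$2$ paths modulo the four square relations) recovers $18$.

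\emph{Step 4: formality by a degree argument.} Let $(\H^\bullet, \{\mu^k_{\min}\}_{k\geq 2})$ be a minimal A$_\infty$-model of $A_{\lambda_1,\lambda_2,\lambda_3,\lambda_4}$ (which exists by Kadeishvili). By our sign conventions (Section~\ref{subsubsection:dgalgebras}) the map $\mu^k_{\min}$ has degree $2-k$. Since $\H^\bullet$ is concentrated in degree~$0$ by Step~2, the target space $\H^{2-k}$ of $\mu^k_{\min}$ vanishes whenever $k\neq 2$. Consequently $\mu^k_{\min}=0$ for all $k\neq 2$, so the minimal model is the associative algebra described in Step~3. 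This is precisely formality, and identifies $\H^\bullet(A_{\lambda_1,\lambda_2,\lambda_3,\lambda_4})$ as a minimal model. The only genuinely computational step is Step~3, where the bookkeeping of \eqref{eq:higher} must be kept straight for the skew-commutative square; everything else is either immediate from the Leibniz rule together with the gentle relations of $A$, or an automatic consequence of cohomological concentration in a single degree.
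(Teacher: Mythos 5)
Your proof plan is correct and, modulo the explicit bookkeeping, is exactly what the paper means by ``a straightforward computation'': the paper gives no further details, so your four steps supply the content. Step 1 (Leibniz plus the gentle relations $u_1 q_1 = q_2 u_1 = u_2 q_2 = q_3 u_2 = 0$ kill $\mu^1$ on paths of length $\geq 2$), Step 2 (dimension count, with $q_1,q_2,q_3$ hit exactly when $\lambda_1,\lambda_2,\lambda_3 \neq 0$), Step 3 (only the one product $\mu^2(\s p_3u_2p_2 \otimes \s u_1p_1)$ is deformed, and the identification of the $18$-dimensional algebra with the $6+8+4$ count in the quiver presentation), and Step 4 (concentration in one degree forces $\mu^k_{\min}=0$ for $k\neq 2$, hence formality) are all sound. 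You do well to flag the silent hypothesis $\lambda_1,\lambda_2,\lambda_3 \neq 0$ --- without it $\H^1 \neq 0$ and the statement fails, and indeed the paper only invokes the lemma under this assumption (the rescaling remark immediately afterward). The only cosmetic slip is calling the eight generating classes ``length-$1$ generators'' (they are monomial paths of various lengths in the original quiver, but arrows of length $1$ in the presented quiver of $\H^0$); the intent is clear.
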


If $\lambda_1, \lambda_2, \lambda_3 \neq 0$, then by an isomorphism may rescale these coefficients to $1$. We denote by $A_\lambda = \H^\bullet (A_{1,1,1,\lambda})$.

\begin{definition}[The Fukaya category of a pillowcase]
\label{definition:fukayapillowcase}
Let $\lambda \in \Bbbk \smallsetminus \{ 0, 1 \}$ and let $\mathbf S_\lambda$ be a pillowcase with parameter $\lambda$. The {\it Fukaya category of a pillowcase} is the ($\mathbb Z$-graded) pretriangulated DG category
\[
\mathcal W (\mathbf S_\lambda) := \tw (\add A_\lambda)^\natural.
\]
\end{definition}

\begin{remark}
Whereas $A_\lambda$ is naturally associated to a particular dissection of $\mathbf S_\lambda$, we show in forthcoming work that (up to quasi-equivalence), $\mathcal W (\mathbf S_\lambda)$ does not depend on the dissection and thus not directly on $A_\lambda$.

It would be interesting to understand the relationship of this definition to the Fukaya categories of pillowcases considered for their relations to knot homology in \cite{heddenheraldkirk1,heddenheraldkirk2,heddenheraldhogancampkirk}.
\end{remark}

\begin{remark}
\label{remark:volume}
In Definition \ref{definition:fukayapillowcase} we exclude the cases $\lambda = 0, 1$. Although the algebras $A_0$ and $A_1$ can naturally be added to the family $\{ A_\lambda \}$, there is a geometric reason for these two exceptions. The algebra $A_0$ is isomorphic to the skew-gentle algebra
\begin{equation}
\label{eq:skewgentle}
\begin{tikzpicture}[x=3.5em,y=1.5em,baseline=.5em]
\node[left] at (.62,.5) {$\Bbbk \Biggl($};
\node[shape=circle, scale=.7, inner sep=1pt] (1) at (1,0) {$1$};
\node[shape=circle, scale=.7, inner sep=1pt] (2) at (2,0) {$2$};
\node[shape=circle, scale=.7, inner sep=1pt] (3) at (3,0) {$3$};
\path[->, line width=.5pt, font=\scriptsize] (1) edge[out=130, in=50, looseness=14] node[pos=.25, left=-.5ex] {$p_1$} (1);
\path[->, line width=.5pt, font=\scriptsize] (2) edge[out=130, in=50, looseness=14] node[pos=.25, left=-.5ex] {$p_2$} (2);
\path[->, line width=.5pt, font=\scriptsize] (3) edge[out=130, in=50, looseness=14] node[pos=.25, left=-.5ex] {$p_3$} (3);
\path[->, line width=.5pt, font=\scriptsize] (1) edge node[below=-.3ex, pos=.5] {$u_1$} (2);
\path[->, line width=.5pt, font=\scriptsize] (2) edge node[below=-.3ex, pos=.5] {$u_2$} (3);
\node[right] at (3.2,.5) {$\Biggr) \biggm/ \!\bigl( p_i^2 - e_i \bigr)_{1 \leq i \leq 3}$};
\draw[dash pattern=on 0pt off 1.2pt, line width=.6pt, line cap=round] ($(2,0)+(-172:.9em)$) arc[start angle=-172, end angle=-6, radius=.9em];
\end{tikzpicture}
\end{equation}
whose surface model is a disk with three orbifold points and one stop in the boundary, corresponding to the fact that $\lambda = \mathrm e^{-\mathrm{vol} (\mathbf S_\lambda)} \to 0$ can be viewed as the limiting case $\lvert \mathrm{vol} (\mathbf S_\lambda) \rvert \to \infty$. Thus the corresponding surface is noncompact and it (or rather its ``inner Liouville domain'') should be represented by a surface with nonempty boundary.

The algebra $A_1$ can be defined as a degeneration of the family $A_\lambda$ which is the only algebra in the family with a monomial relation. Again, understanding $\lambda$ as $\mathrm e^{-\mathrm{vol} (\mathbf S_\lambda)}$, the case $\lambda = 1$ would correspond to a pillowcase with zero volume.
\end{remark}

We can now state our last main result, which shows how Fukaya categories of (compact) pillowcases can arise as deformations of partially wrapped Fukaya categories of certain genus $0$ surfaces with boundary. 

\begin{theorem}
\label{theorem:pillowcase}
Let $\mathbf S$ be a graded orbifold surface with
\begin{itemize}
\item $K \geq 0$ orbifold points
\item $L \geq 1$ boundary components with one boundary stop and winding number $1$
\item $M_1 \geq 0$ boundary components with a full boundary stop and winding number $1$
\item $M_2 \geq 0$ boundary components with a full boundary stop and winding number $2$
\item $N_1 \geq 0$ boundary components without stops and winding number $1$
\item $N_2 \geq 0$ boundary components without stops and winding number $2$
\end{itemize}
and no further boundary components. Then $K + L + M_1 + N_1 = 4$ and
\[
d = \dim \HH^2 (\mathcal W (\mathbf S), \mathcal W (\mathbf S)) = L + M_1 + M_2 + N_1 + N_2 = 4 - K + M_2 + N_2.
\]
For generic $\lambda \in \mathbb A^d$, the category $\mathcal W_\lambda \simeq \mathcal W (\mathbf S_\lambda)$ in the family $\{ \mathcal W_\lambda \}_{\lambda \in \mathbb A^d}$ of Theorem \ref{theorem:deformationwrapped} is equivalent to the Fukaya category of a pillowcase. 
\end{theorem}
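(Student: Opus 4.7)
The plan divides into three steps: verifying the combinatorial identity $K+L+M_1+N_1 = 4$, identifying the generic compactification $\mathbf{S}_\lambda$ as a pillowcase, and matching the resulting A$_\infty$ structure with that of Definition \ref{definition:fukayapillowcase}. For generic $\lambda \in \mathbb A^d$ every coordinate is nonzero, so Theorem \ref{theorem:deformationwrapped}.(\ref{item2:deformationwrapped}) compactifies every winding-$1$ boundary component of $\mathbf S$ to an order-$2$ orbifold point and every winding-$2$ boundary component to a smooth point. Because $\mathbf S$ is assumed to have no further boundary components, $\mathbf{S}_\lambda$ is then closed of genus $g$ with $K + L + M_1 + N_1$ order-$2$ orbifold points and inherits from $\eta$ a global line field. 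Existence of a global line field on a closed orbifold surface with order-$2$ points is equivalent to the vanishing of the orbifold Euler characteristic, giving $2 - 2g - \tfrac{1}{2}(K + L + M_1 + N_1) = 0$. Since $L \geq 1$ the only solution is $g = 0$ and $K + L + M_1 + N_1 = 4$; combining with Theorem \ref{theorem:hochschild} yields $d = L + M_1 + M_2 + N_1 + N_2 = 4 - K + M_2 + N_2$ and identifies $\mathbf{S}_\lambda$ as a sphere with four order-$2$ orbifold points, namely a pillowcase.

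The substantive step is to identify $\mathcal{W}_\lambda$ with the category $\mathcal{W}(\mathbf{S}_\lambda)$ of Definition \ref{definition:fukayapillowcase}. This requires completing the case of Theorem \ref{theorem:deformationwrapped} that was deferred to Section \ref{section:pillowcase}, namely when every boundary component of $\mathbf S$ with a boundary stop has exactly one stop of winding number $1$. My approach is to follow the template of the main proof: choose the standard dissection with DG gentle algebra $A = \End(\Gamma)$, form the weak dual $B = A^{\vee_J}$, and construct a family $\widetilde B$ by perturbing $B$ along the $d$ Hochschild $2$-cocycles identified in the proof of Theorem \ref{theorem:hochschild}. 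The novelty relative to Section \ref{subsection:deformationwrapped} is that the deferred hypothesis forces the length-$({>}\hair 1)$ \emph{maximal-overlap} cocycle of type (c) along $\partial_0 S$ to be active, since no other stopped boundary component exists to absorb it. Switching this cocycle on together with the length-$1$ cocycles $\phi^\III_j$ in the Maurer--Cartan equation of the Hochschild L$_\infty$ algebra forces the introduction of a higher A$_\infty$ product indexed by the length of the overlap, which for the pillowcase configuration specializes to the five-ary product $\mu^5$ of Definition \ref{definition:fukayapillowcase}.

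The principal obstacle is to verify that the higher structure forced by the Maurer--Cartan equation matches the explicit products in \eqref{eq:higher}. I would organize this by homological perturbation on the Bardzell resolution of Section \ref{subsection:bardzellresolution}: the perturbed defining relations of $B_\lambda$ propagate through the $2$- and $3$-ambiguities to produce the intermediate products $\mu^2, \mu^3, \mu^4$ of \eqref{eq:higher}, while a direct computation in the spirit of Theorem \ref{theorem:formality} shows that the only obstruction surviving in the minimal model $\H^\bullet(B_\lambda) \simeq \H^\bullet(A_\lambda)$ is the skew-commutativity factor $1 - \lambda_1 \lambda_2 \lambda_3 \lambda_4$ appearing in the definition of $A_\lambda$. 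Formality of the cohomology model combined with Theorem \ref{theorem:dualequivalence} then gives $\mathcal{W}_\lambda \simeq \tw(\add A_\lambda)^\natural = \mathcal{W}(\mathbf{S}_\lambda)$, and an explicit rescaling of three of the four parameters to $1$ recovers the one-parameter family $A_\lambda$ mirroring the complexified volume moduli of the pillowcase --- completing both this theorem and the deferred case of Theorem \ref{theorem:deformationwrapped}.
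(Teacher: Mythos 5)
Your proposal is correct and follows the same overall route as the paper, with one small but genuine variation in the combinatorial step and more detail in the deformation step. For the numerical identity, the paper applies the Poincar\'e--Hopf index theorem directly to the line field on the uncompactified $\mathbf S$ (a linear relation between the winding numbers of the boundary components, the genus, and the number of orbifold points), whereas you obtain the same relation $K + L + M_1 + N_1 = 4 - 4g$ by first compactifying to $\mathbf S_\lambda$ and then imposing vanishing of the orbifold Euler characteristic of a closed orbifold surface admitting a global line field. These two arguments are equivalent by construction of the compactification; your version makes the identification of $\mathbf S_\lambda$ with a pillowcase more immediate, while the paper's version keeps the index computation intrinsic to $\mathbf S$. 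For the substantive step, you correctly identify that the deferred hypothesis of Theorem~\ref{theorem:deformationwrapped} activates the maximal-overlap cocycle of type (c) from the proof of Theorem~\ref{theorem:hochschild}, which is precisely what makes this case require higher A$_\infty$ products rather than a purely DG perturbation. The paper handles this implicitly: Definition~\ref{definition:fukayapillowcase} is constructed so that the family $\{A_{\lambda_1,\lambda_2,\lambda_3,\lambda_4}\}$ with the products \eqref{eq:higher} \emph{is} the deformation family, and the proof of Theorem~\ref{theorem:pillowcase} then simply assembles the combinatorics. Your proposal to derive \eqref{eq:higher} by pushing the Maurer--Cartan equation through the Bardzell resolution via homological perturbation is a reasonable and consistent elaboration of this implicit step; it is more work than the paper writes out, but it is the natural way to verify that the only surviving datum in the minimal model $\H^\bullet(A_\lambda)$ is the skew-commutativity factor $1 - \lambda_1\lambda_2\lambda_3\lambda_4$. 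One minor imprecision: the five-ary product appears in Definition~\ref{definition:fukayapillowcase} only for the representative configuration $K = 0$, $L = 4$, $M_i = N_i = 0$; for other admissible configurations the arity of the maximal overlap differs, though the conclusion (generic deformation is a pillowcase Fukaya category) still holds.
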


\begin{proof}
By the Poincaré--Hopf index theorem we have 
\begin{equation*}
-K - L - M_1 - 2 M_2 - N_1 - 2 N_2 = 4 - 4 g - 2 b
\end{equation*}
where $b$ is the number of boundary components. Since $b = L + M_1 + M_2 + N_1 + N_2$ we get
\begin{equation}
\label{eq:index}
K + L + M_1 + N_1 = 4 - 4 g.
\end{equation}
Since $L \geq 1$ and the other numbers are $\geq 0$ it follows that we must have $g = 0$ and $K + L + M_1 + N_1 = 4$. In particular, there are no constraints on the numbers $M_2$ and $N_2$. The dimension of $\HH^2 (\mathcal W (\mathbf S), \mathcal W (\mathbf S))$ follows from Theorem \ref{theorem:hochschild}. 

By Theorem \ref{theorem:deformationwrapped}, the orbifold surface $\mathbf S_\lambda$, for generic $\lambda \in \mathbb A^d$,  is obtained from $\mathbf S$ by partial compactification, so that  there are $K+L+M_1+N_1=4$ orbifold points and all the other boundary components are compactified to smooth points. That is, $\mathbf S_\lambda$ is a pillowcase and the category $\mathcal W_\lambda$ is equivalent to the Fukaya category of a pillowcase $S_\lambda$. 
\end{proof}

\addtocontents{toc}{\SkipTocEntry}
\section*{Acknowledgements}

We would like to thank Merlin Christ, Sheel Ganatra, Fabian Haiden, Julian Holstein, Gustavo Jasso, Dmitry Kaledin, Kyoungmo Kim, Alessandro Lehmann, Sebastian Opper, Yu Qiu and Kyungmin Rho for helpful discussions and comments during the writing of this article.

This work originated during the Junior Trimester Program ``New Trends in Representation Theory'' at the Hausdorff Research Institute for Mathematics in Bonn, funded by the Deutsche Forschungsgemeinschaft (DFG, German Research Foundation) under Germany's Excellence Strategy -- EXC-2047/1 -- 390685813, and we would like to thank both the organisers and the institute for providing an excellent research environment. The third author was also supported by the National Key R\&D Program of China (2024YFA1013803), the Fundamental Research Funds for the Central Universities (No.\ 020314380037) and the NSFC (Nos.\ 13004005 and 12371043).

\end{document}